\documentclass[opre]{informs3}
\renewcommand{\theARTICLETOP}{}
\renewcommand{\theRRHSecondLine}{}
\renewcommand{\theLRHSecondLine}{}
\usepackage{amsmath}

\usepackage{hyperref}
\usepackage{cleveref}
\crefname{equation}{}{}
\usepackage{mathtools}

\OneAndAHalfSpacedXI %

\usepackage{endnotes}
\usepackage{bm}

\usepackage{color-edits}
\addauthor[Chiwei]{cy}{cyan}
\addauthor[Louis]{lc}{brown}
\addauthor[Roberto]{rs}{red}

\usepackage{algorithm}
\usepackage{algpseudocode}
\usepackage{tikz}
\usetikzlibrary{tikzmark}
\usetikzlibrary{arrows.meta,positioning,calc, fit, backgrounds}
\usepackage{subcaption}
\usepackage{diagbox}
\usetikzlibrary{arrows}
\usepackage{booktabs}
\usepackage{multirow}
\usepackage{array}
\usepackage{comment}
\usepackage{enumitem}
\usepackage{bbm}
\usepackage{pgfplots}
\usepackage{pgfplotstable}
\usepackage{threeparttable}
\usepgfplotslibrary{groupplots}
\usepackage{fix-cm}
\usetikzlibrary{matrix,calc}
\pgfplotsset{compat=1.18}
\usepackage{soul}

\usepackage{natbib}
 \bibpunct[, ]{(}{)}{,}{a}{}{,}%
 \def\bibfont{\small}%

\EquationsNumberedThrough    %

\TheoremsNumberedThrough     %
\ECRepeatTheorems  %

\begin{document}

\RUNAUTHOR{Chen, Xu, et al.}

\RUNTITLE{Meeting Uncertain Threats with Feedback}

\TITLE{Meeting Uncertain Threats with Feedback}

\ARTICLEAUTHORS{%
\AUTHOR{Louis L. Chen,\textsuperscript{*,a} Ang Xu,\textsuperscript{*,b} Roberto Szechtman,\textsuperscript{a} Chiwei Yan,\textsuperscript{b} Vince Vanterpool\textsuperscript{a}}
\AFF{\textsuperscript{a}Department of Operations Research, Naval Postgraduate School, Monterey, California, \\
\EMAIL{louis.chen@nps.edu; rszechtm@nps.edu; vince.vanterpool@nps.edu}\\
\textsuperscript{b}Department of Industrial Engineering \& Operations Research, University of California, Berkeley,  California, 
\EMAIL{angxu@berkeley.edu; chiwei@berkeley.edu}
}
} %

\ABSTRACT{%

Air and missile defense, naval force protection, and critical-infrastructure security require rapid allocation of scarce effectors against multiple incoming threats when neutralization is uncertain and observed only after a firing round. We formulate this defensive-allocation problem as a Markov decision process where a commander assigns a fixed number of effector capacity each round to heterogeneous threats under three operational objectives: minimizing expected threat-clearance time, maximizing probability of clearance by a deadline, and maximizing effective assignments before a deadline. Rather than expensive full dynamic optimization, we study simple time-oblivious policies suited to fast implementation. Remarkably, fair allocation—which ignores threat difficulty and spreads fire evenly—is highly effective: it is optimal for all three objectives under homogeneous threats or low-capacity engagements; as well, when effector capacity scales at least linearly with threat counts, its threat-clearance time trails that of the optimal policy by at most a constant number of rounds. We further develop threat-difficulty-aware greedy policies for each specific objective, including a constant-factor guarantee for effective assignment maximization. Numerically, greedy policies are near-optimal across heterogeneous instances, while fair allocation remains a principled choice when threat difficulties are unknown.
}

\KEYWORDS{Dynamic weapon-target assignment, Markov decision process} 
\HISTORY{First version, 07/2026.}

\maketitle

\newcommand{\marker}[1]{}

\section{Introduction}\label{sec:Intro}

A fundamental problem in modern defense operations is how to dynamically allocate scarce effectors over time to neutralize an evolving set of incoming threats. The problem arises in air and missile defense, naval force protection, and defense of critical infrastructure, where a commander must repeatedly decide how a small firing capacity should be split across heterogeneous threats under severe time pressure \citep{brown2005two, smith2024airmissiledefense, karako2017missiledefense2020}. Two features make the problem especially difficult. First, different threats respond differently to fire, so the value of an additional effector depends on the target. Second, kill assessment is available only at the end of each round: within a round, the defender must commit all shots before learning which threats have already been neutralized. The central challenge is therefore dynamic allocation under uncertain neutralization and round-end feedback.

The classical weapon-target assignment literature provides a strong foundation for one-shot allocation problems \citep{manne1958target, lloyd1986weapons, ahuja2007exact, bertsimas2025solving}. Modern defensive engagements, however, are sequential. After each round, some threats survive, the engagement state changes, and the defender reallocates the next round's limited capacity. This temporal structure creates intertemporal trade-offs that are absent in static formulations: when should the defender concentrate fire on hard threats, when should the defender spread fire to hedge against deadline risk, and how should the answer depend on the operational objective? %

Motivated by the above challenges, we study a stylized but operationally relevant defensive allocation model with a constant per-round capacity, threat-specific kill probabilities, and round-end feedback. We formulate the problem as a Markov decision process (MDP) and analyze three objectives---defusing time minimization (DTM), survival likelihood maximization (SLM), and effective assignment maximization (EAM)---corresponding respectively to expected time until all threats are cleared, the probability that all threats are neutralized by a deadline $\tau$ rounds from now, and the number of effective assignments completed before that deadline. A main theme of the paper is that these objectives are linked rather than isolated. In particular, we show that expected defusing time can be written in terms of deadline-indexed survival probabilities and that minimizing defusing time is equivalent to minimizing cumulative waste up to clearance. This perspective lets us compare policies across objectives and ask how much of the value of optimal dynamic control can be captured by simple, scalable rules.

While our setting is related to the dynamic weapon-target assignment (DWTA) problem, it differs in two fundamental ways. First, our perspective is defensive rather than offensive. More precisely, we attempt to optimize the employment of effectors to protect from incoming missiles, while the more classical DWTA aims to optimize the employment of assets to destroy targets. Second, the classic objective is the maximization of the expected value of the destroyed targets. In contrast, we propose and analyze a number of different objectives, as already mentioned, to better capture the operational priorities of a defensive engagement.

\smallskip

\textbf{Our results.} Our contributions are multifold. 

\begin{itemize}
    \item We begin by characterizing the structure of optimal policies in the two-threat setting. We show that, as the deadline becomes more distant, for both the SLM and EAM objectives, the optimal first-round allocation becomes increasingly concentrated on the harder threat (\Cref{theorem:properties_n=2}).

    \item We analyze fair allocation $\pi_{\textsf{FA}}$, a time-oblivious policy that ignores threat heterogeneity and allocates capacity fairly among all remaining threats each period. We show that $\pi_{\textsf{FA}}$ is exactly optimal under homogeneous threats or two-unit per-round capacity (\Cref{theorem:fair_is_optimal_when_qi=q}). We establish the performance ratio and the worst-case performance of $\pi_{\textsf{FA}}$ under each objective (Theorems \ref{theorem:DTM_fair_ratio}, \ref{thm:tau1_ratio_cfree} and \ref{thm:fair_alloc_min_waste}). We also characterize the asymptotic behavior of $\pi_{\textsf{FA}}$ when the number of threats and the capacity both approach infinity, showing that in this case, $\pi_{\textsf{FA}}$ matches the optimal policy in order of growth of expected defusing time (\Cref{thm:spread-upper}). We further strengthen this asymptotic comparison by showing that, whenever per-round capacity is at least proportional to the number of threats, the DTM gap between fair allocation and the optimal policy remains uniformly bounded in $n$ (\Cref{thm:dtm_fair_additive_gap}).

    \item We develop two objective-aware greedy policies, $\pi_{\textsf{SL}}$ and $\pi_{\textsf{EA}}$, that remain computationally light while exploiting heterogeneity in threat difficulty. For EAM, we prove a constant-factor approximation guarantee for $\pi_{\textsf{EA}}$ (\Cref{thm:adaptive_greedy}). In the two-threat setting, we also show that the greedy policies dominate fair allocation under their aligned objectives (Theorems \ref{thm:sl_vs_fair_n2} and \ref{thm:ea_vs_fair_n2}).

    \item In numerical experiments, we demonstrate that our proposed greedy approaches achieve near-optimal performance, while fair allocation remains surprisingly robust when resource capacity scales alongside the threat environment.
\end{itemize}

\begin{figure}[htbp]
\centering
\resizebox{0.82\textwidth}{!}{%
\begin{tikzpicture}[
    >=Latex,
    every node/.style={font=\large},
    header/.style={font=\Large\bfseries, minimum height=0.8cm},
    thm/.style={draw, rounded corners=3pt, align=center, inner sep=6pt, line width=0.8pt},
    label/.style={font=\Large, anchor=east},
    smallbox/.style={thm, text width=4.44cm, minimum height=2.2cm},
]
\def\singleW{4.44cm}
\def\doubleW{9.94cm}
\def\tripleW{15.44cm}
\def\dtmC{0}
\def\slmC{5.5}
\def\eamC{11.0}
\def\allC{5.5}
\def\slmeamC{8.25}
\def\rowH{0}
\def\rowOne{-1.6}
\def\rowTwo{-3.2}
\def\rowTF{-5.4}
\def\rowTS{-7.8}
\def\rowProp{-10.4}
\def\rowSeven{-10.4}
\def\rowEight{-12.6}
\def\htSingle{1.3cm}
\def\htDouble{2.2cm}
\node[header] at (\dtmC,\rowH) {\textsf{DTM}};
\node[header] at (\slmC,\rowH) {\textsf{SLM}$_\tau$};
\node[header] at (\eamC,\rowH) {\textsf{EAM}$_\tau$};
\node[label] at (-3.5, \rowOne) {$\pi^*$};
\draw[thick, decorate, decoration = {brace, amplitude = 5pt, mirror}]
(-3.2,-1) -- (-3.2,-2.5);
\node[label] at (-3.5, -6.0) {$\pi_{\textsf{FA}}$};
\draw[thick, decorate, decoration={brace, amplitude=5pt, mirror}]
  (-3.2,-2.6) -- (-3.2,-9.0);
\draw[thick, decorate, decoration={brace, amplitude=5pt, mirror}]
  (-3.2,-9.3) -- (-3.2,-13.4);
\node[font=\Large, anchor=east, align=right] at (-3.5, -11.3)
  {$\pi_{\textsf{SL}}$, $\pi_{\textsf{EA}}$};
\node[thm, fill=gray!10, text width=\doubleW, minimum height=\htSingle] at (\slmeamC,\rowOne)
{\textbf{\Cref{theorem:properties_n=2}}: Structural properties of optimal policy $\pi^*$ (2-threat case)};
\node[thm, fill=green!10, draw=green!50!black, text width=\tripleW, minimum height=\htSingle] at (\allC,\rowTwo)
  {\textbf{\Cref{theorem:fair_is_optimal_when_qi=q}}: $\pi_{\textsf{FA}}$ is optimal for homogeneous threats, or $C \leq 2$};
\node[thm, fill=green!10, draw=green!50!black, text width=\doubleW, minimum height=\htDouble] at (\slmeamC,\rowTF)
  {\textbf{Theorems \ref{theorem:DTM_fair_ratio} and \ref{thm:spread-upper}}: Performance of $\pi_{\textsf{FA}}$\\under heterogeneous threats};
\node[smallbox, fill=green!15, draw=green!60!black] at (\dtmC,\rowTS)
  {\textbf{Theorems \ref{thm:dtm_fair_additive_gap}, \ref{thm:tau1_ratio_cfree}, and \ref{thm:fair_alloc_min_waste}}:\\Bounds and asymptotics of $\pi_{\textsf{FA}}$};
\node[smallbox, fill=red!8, draw=red!50!black] at (\slmC,\rowProp)
  {\textbf{\Cref{example:ml_greedy_no_const_ratio}}:\\$\pi_{\textsf{SL}}$: no constant factor bound};
\node[smallbox, fill=orange!10, draw=orange!60!black] at (\eamC,\rowSeven)
  {\textbf{\Cref{thm:adaptive_greedy}}:\\$\pi_{\textsf{EA}}$: $1{-}1/e$ bound};
\node[thm, fill=yellow!10, draw=orange!40!black, text width=\tripleW, minimum height=\htSingle] at (\allC,\rowEight)
  {\textbf{Theorems \ref{thm:sl_vs_fair_n2} and \ref{thm:ea_vs_fair_n2}}: $\pi_{\textsf{SL}} \succeq \pi_{\textsf{FA}}$ under \textsf{DTM}, \textsf{SLM}$_2$; $\pi_{\textsf{EA}} \succeq \pi_{\textsf{FA}}$ under \textsf{EAM}$_2$};
\end{tikzpicture}%
}
\caption{Logical map of the paper's main results. \textsf{SLM}$_\tau$ and \textsf{EAM}$_\tau$ refer to objectives under a deadline $\tau$.}
\label{fig:theorem-map}
\end{figure}

\textbf{Decision prescriptions.}
\Cref{tab:decision_insights} summarizes the main operational prescriptions under different objectives. In particular, $\pi_{\textsf{SL}}$ is an effective choice for DTM and missions with fixed deadlines, where quickly neutralizing all threats or meeting a deadline (SLM) is important, because it directly targets the kill-all probability. When the objective rewards partial progress toward threat neutralization and emphasizes magazine efficiency over a fixed engagement horizon (EAM), $\pi_{\textsf{EA}}$ is preferred, since it maximizes the expected number of effective assignments and captures how efficiently the defense converts scarce shots into useful pressure on the attack. This is especially important when preserving resources for subsequent threats matters. Although greedy policies such as $\pi_{\textsf{SL}}$ and $\pi_{\textsf{EA}}$ tend to perform well across a range of scenarios, the fair allocation policy $\pi_{\textsf{FA}}$ can also be a practical and effective choice, particularly when threat difficulty levels are not known precisely. We show that $\pi_{\textsf{FA}}$ is most effective under the DTM objective, followed by EAM and then SLM.

\begin{table}[t]
\centering
\small
\caption{Decision prescriptions for different objectives. \textsf{SLM}$_\tau$ and \textsf{EAM}$_\tau$ refer to objectives under a deadline $\tau$.}
\label{tab:decision_insights}
\begin{tabular}{p{0.23\linewidth}p{0.42\linewidth}>{\centering\arraybackslash}p{0.23\linewidth}}
\toprule
Objective & Operational Context & Prescribed Policy \\
\midrule
DTM 
& Clear the attack as quickly as possible
& $\pi_{\textsf{SL}}$ \\
SLM$_\tau$
& Neutralize all threats by a deadline
& $\pi_{\textsf{SL}}$ \\
EAM$_\tau$ 
& Maximize magazine efficiency 
& $\pi_{\textsf{EA}}$ \\
\bottomrule
\end{tabular}
\end{table}

\section{Literature Review}\label{sec:LitReview}

Our paper is related to three streams of literature: weapon-target assignment, sequential shoot-look-shoot defense models, and tractable policies for stochastic dynamic resource allocation.

The first stream is the classical weapon-target assignment (WTA) literature. Foundational work formulates WTA as a nonlinear combinatorial optimization problem and establishes its computational difficulty \citep{manne1958target, lloyd1986weapons}. Subsequent research develops exact and large-scale methods for static instances \citep{ahuja2007exact, bertsimas2025solving}. Related defense-oriented OR models also study missile-defense planning at the theater level \citep{brown2005two}. These papers are essential predecessors, but they mostly consider one-shot or open-loop allocation. Our setting is instead a closed-loop defensive engagement in which the set of surviving threats is observed after each round and the allocation is revised accordingly.

The second stream is the DWTA and shoot-look-shoot literature. Early treatments emphasize sequential engagement logic and kill assessment in missile allocation problems \citep{eckler1972mathematical, hosein1990dynamic, glazebrook2004shoot}. More recent work considers richer engagement sequences and aerial-threat settings, including shoot-look-shoot algorithms and hard/soft defense against sequential aerial threats \citep{atkinson2025hard, merkulov2024virtual}. A closely related body of work is the theory of search and detection, in which a searcher allocates limited effort to locate one or more objects under an imperfect, often exponential, detection law \citep{koopman1957theory, stone1989theory}. The sequential, multi-object version of this problem---finding several objects that may be missed even when present and that differ in difficulty---is the natural search analogue of our defensive setting \citep{assaf1987continuous, alpern2013mining, lin2013graph}: DTM minimizes the expected time to clear every threat and SLM maximizes the probability of clearing all threats by a deadline, each the defensive counterpart of a find-all criterion. Our model belongs to this family, but differs in two ways that are central to the paper. First, we focus on a constant per-round defensive capacity with delayed within-round feedback. Second, we study how the preferred allocation rule changes across three operationally distinct objectives rather than under a single criterion.

The third stream concerns tractable control for stochastic dynamic resource allocation. Exact MDP formulations are natural but quickly become computationally difficult as the state and action spaces expand \citep{bertsekas2012dynamic}. To address this, prior work related to our setting studies index policies for shooting problems \citep{glazebrook2007index}, approximate dynamic programming for WTA \citep{ahner2015approximate}, and generalized stochastic task-resource allocation with retry opportunities \citep{gulpinar2018heuristics}. More generally, this literature often exploits weakly coupled structure through decomposable MDP relaxations and fluid (re)optimization policies \citep{bertsimas2016decomposable, brown2022dynamic,brown2025fluid}. 
A key distinction from our work is structural. Weakly coupled MDP and fluid-reoptimization approaches usually obtain tractability by relaxing shared resource constraints to achieve an additively separable structure. Outside of EAM, our objectives are \emph{not} additively separable: DTM depends on the first time at which the entire threat set has been cleared, and SLM depends on the joint event that all threats are neutralized by a deadline. 
Beyond this structural distinction, our approach is also different. For the additively separable EAM objective, we exploit the problem-specific submodularity of the allocation reward to develop much simpler policies, avoiding the need to solve a linear program in every round as in typical fluid (re)optimization frameworks \citep{brown2022dynamic,brown2025fluid}. The resulting policy comes with non-asymptotic performance guarantees, and substantially outperform state-of-the-art fluid (re)optimization baselines in our numerical experiments.

In summary, relative to the aforementioned streams of literature, our paper focuses on analytical questions that have received less attention: how multiple defensive objectives are related, when simple time-oblivious policies are optimal, and what performance guarantees can be proved for these scalable policies. In that sense, the paper complements existing algorithmic DWTA work by providing structural insight and approximation guarantees for policies that are simple enough to be implementable in real time.

\section{Model and Preliminaries} \label{sec:model}
In this work, we study three related, discrete-time stochastic dynamic resource-allocation problems with decisions made sequentially over time. All three can be formalized as a Markov Decision Process (MDP), with all three sharing the same dynamics; however, they will be distinguished by three different objectives (i.e., cost/reward). We begin by describing these dynamics before outlining the three objectives. But first we introduce important parameters for the setting and discussion to follow.

\smallskip

\textbf{Instance parameters $(n, C, q, \tau)$.}
There are $n \in \mathbb{Z}_{\geq 1}$ incoming threats indexed by $i \in [n] := \{1,\dots,n\}$. In each round, a total of $C \in \mathbb{Z}_{\geq 1}$ defensive effectors (e.g., interceptors) is available for allocation across the threats, and we refer to the ratio $C/n$ as the \emph{resource tightness}. The set $[n]$ will be accompanied by a vector $q := (q_i)_{i=1}^n \in [0,1)^n$ of \emph{difficulties} for each threat, where $q_i$ is the probability that a single effector fails to neutralize threat $i$. We refer to an instance involving homogeneous threats, i.e., $q_1 = q_2 = \ldots = q_n$ as a \emph{salvo} instance. Finally, $\tau \in \mathbbm{Z}_{\geq 1}$ will denote a \emph{deadline} that is featured in some objectives to be discussed (Sections \ref{sec:: SLM_Objective} and \ref{sec:: EAM_Objective}).  %

\subsection{States, Actions, and Policies} \label{sec:: MDP_Outline}
In the following, we regard the instance parameters $(n, C, q, \tau)$ as given. Here we formalize the discrete-time dynamics to the MDP, in which time will be indexed via $t\in \mathbb{Z}_{\geq 1}.$ 

\smallskip

\noindent\textbf{State Space $\mathcal{S}$:} We define the state space as the power set $\mathcal{S} := \{\mathcal{M} :\mathcal{M} \subseteq [n]\}.$
At $t = 1,$ the beginning of the time horizon, the initial state is $\mathcal{M}_1\coloneqq [n].$ In words, $\mathcal{M}_t \subseteq [n]$ denotes the set of \emph{active} threats at the beginning of round $t$, and we let $M_t := |\mathcal{M}_t| \le n$ denote its cardinality.

\smallskip

\noindent\textbf{Action Space $\mathcal{A}$:} The action space is $\mathcal{A} := \left\{ (x_i)_{i \in [n]} \in \mathbb{Z}_{\ge 0}^n : \sum_{i=1}^n x_i \le C \right\},$ the set of allocations of $C$ effectors to the set of threats $[n]$.

\smallskip

\noindent\textbf{State Transitions:} At any time $t \in \mathbb{Z}_{\geq 1},$ given $\mathcal{M}_t \in \mathcal{S}$ and action $x \in \mathcal{A},$ the transition to the state $\mathcal{M}_{t+1} \subseteq \mathcal{M}_t$ occurs with probability $\Pi_{i \in \mathcal{M}_{t+1}} q_i^{x_i} \cdot \Pi_{i \in \mathcal{M}_t \setminus \mathcal{M}_{t+1}}(1-q_i^{x_i}).$ 

\smallskip

\noindent\textbf{Policies:}
A (Markovian) deterministic policy is a collection of mappings $\pi:=(\pi_t)_{t\in\mathbb{Z}_{\ge1}}$, wherein $\pi_t: \mathcal{S} \to \mathcal{A}$ for all $t$\footnote{A random policy $\pi$ will by contrast have $\pi_t: \mathcal{S} \rightarrow \Delta^\mathcal{A}$, where $\Delta^\mathcal{A}\coloneqq \{\xi \in [0,1]^{|\mathcal{A}|} : \sum_{a \in \mathcal{A}} \xi_a = 1\}$ is the set of distributions over the action space. \label{footnote: random policy}} 
 and consideration of parameters $(n, C, q, \tau)$ is implied but notationally suppressed for the sake of brevity. Formally, we define the set of feasible polices as 
 \[
\Pi := \left\{ \pi : \sum_{i \in \mathcal{M}} \left[\pi_t(\mathcal{M})\right]_i = C,~ \forall\mathcal{M} \in \mathcal{S}\ \text{with}\ \mathcal{M} \neq \emptyset, ~\forall t\in\mathbb{Z}_{\geq 1} \right\},
\]
where we emphasize that exhaustion of all $C$ effectors is required in all rounds. We say a policy $\pi$ is \emph{time-oblivious} (stationary) if $\pi_t = \pi_{t+1}$ for all $t \in \mathbbm{Z}_{\geq 1}$. Given a policy $\pi$, a random nested collection is yielded, i.e., $[n] = \mathcal{M}_1 \supseteq \mathcal{M}_2 \supseteq \ldots \supseteq \emptyset$, that describes the random evolution of the set of active threats from round to round.  
Hence, in the remainder of the paper, %
we will often find the shorthand $x^{t}:= \pi_t(\mathcal{M}_t)$ convenient in denoting the (random) allocation in round $t$ when the policy $\pi$ is clear from context.\footnote{If $\pi$ is a random policy, then at time $t$, given $\mathcal{M}_t$, we instead understand the notation $x^t$ to denote a random action drawn according to $\pi_t(\mathcal{M}_t),$ i.e., $x^t \sim \pi_t(\mathcal{M}_t)$.} Further, unless otherwise specified, the shorthand $x_i := x_i^1$ will denote the allocation in the first round.  
\subsubsection{Uncertain Demands and Feedback} \label{sec:: Non_MDP Language}The formalism of the MDP aside, we will often find the following alternative, less formal, description more convenient, as we will see when discussing the optimization objectives. Each effector assigned to threat $i$ either neutralizes with probability $1-q_i$ or fails to neutralize with probability $q_i$, independently of all other effector applications. 
It follows that the number of effectors required to neutralize threat $i$ is geometrically distributed with parameter $1-q_i$, and we will denote this random quantity with $L_i \sim \operatorname{Geom}(1-q_i)$, and write $L \coloneqq (L_i)_{i \in [n]}$ for the mutually independent collection of these random integers that is drawn at the outset before time $t=1$. Then $\mathcal{M}_{t+1} = \{i \in \mathcal{M}_t: \sum_{\ell = 1}^t x^{\ell}_i < L_i\}$. In other words, although $L$ is never explicitly revealed, at the conclusion of each time period $t,$ the decision maker receives \emph{satisficing} feedback, that is, for any active threat $i$, it is revealed whether or not the sum $\sum_{\ell = 1}^t x^{\ell}_i$ of allocations up to that point in time has met the requisite uncertain demand $L_i$ for neutralization. This feedback is provided after every assignment of $C$ effectors; this feature to the model has the following physical interpretation: given a fixed firing rate of effectors towards incoming threats, battle damage assessment (e.g., live/neutralized status) of the set of active threats can be attained at the frequency at which $C$ effectors are fired, which will be a function of factors including radar/imaging technology and the firing rate. In this way, the parameter $C$ models the rate at which satisficing feedback is attained; smaller $C$ means quicker, whereas larger $C$ means slower, feedback. %

\subsection{Objectives}
To complete the description of the MDPs, we will now formally define three different objectives, and with them, three optimization problems of interest, each corresponding to a different operational priority in a defensive engagement: clearing the attack quickly \cref{eq::MinDefuse}, surviving a hard deadline \cref{eq::SurvivalObjective}, or conserving scarce shots by avoiding overkill \cref{eq::EffectiveObjective}. We will treat each of these three in order. 
At a glance, %
\cref{eq::MinDefuse} is a minimization problem, and from it, there are two related maximization problems \cref{eq::SurvivalObjective} and \cref{eq::EffectiveObjective}, each parametrized by the deadline $\tau$.
We note that although all three admit Bellman equation formulations aligned with the MDP framework of \Cref{sec:: MDP_Outline}, we will eschew such presentations here in favor of forms utilizing the shorthands and notations of \Cref{sec:: Non_MDP Language}.

\subsubsection{Defusing Time Minimization (\textsf{DTM})}
For this optimization problem, we minimize the expected time required to neutralize all threats. To this end, for a policy $\pi$, we define its \emph{defusing time} %
\begin{equation*}
    T^\pi := \min\left\{ t : \sum_{\ell=1}^{t} x_i^\ell \ge L_i,\quad \forall i = 1, \ldots, n \right\},
\end{equation*}
which is the number of rounds required for every threat $i$ to receive at least its required number $L_i$ of effectors under policy $\pi$. Although the dependence on $(n,C,q)$ is implicit here, we may occasionally make this explicit via the notational conventions such as  %
$T^{\pi}(q)$ or $T^{\pi}(q; C)$, so as to facilitate clarity.%

The \emph{defusing time minimization} problem is then to choose a policy that minimizes the expected defusing time:
\begin{equation} \label{eq::MinDefuse} \tag{\textsf{DTM}}
    \mathbb{E}[T^*] := \min_{\pi \in \Pi}~\mathbb{E}[T^\pi],
\end{equation}
where $T^* := T^{\pi^*}$ denotes the defusing time under an optimal policy $\pi^*.$
The \cref{eq::MinDefuse} objective, $\mathbb{E}[T^\pi]$, is most natural when the defender's primary goal is to stop the attack as quickly as possible. In naval or air defense, a shorter defusing time reduces the duration over which the defended asset remains exposed, frees resources for follow-on salvos, and helps restore freedom of maneuver. For example, a ship confronting a mixed missile-UAV attack may prefer a policy that clears the current inbound set rapidly so that assets (e.g., sensors and interceptors) can be reallocated to the next wave.

\smallskip

\textbf{Deadline-Parametrized Objectives.}
Towards discussing the next two optimization formulations, we note that in many military applications, the defender faces an explicit deadline $\tau \in \mathbb{Z}_{>0}$, induced for example by time-to-impact or by the last feasible intercept opportunity. The introduction of a deadline $\tau$ effectively constrains the defender's supply of total effectors to $C\tau$, which leads to two related maximization problems that examine the balancing act between satisfying and over-satisfying the uncertain demands $L$. 

\subsubsection{Survival Likelihood Maximization (\textsf{SLM$_\tau$})} \label{sec:: SLM_Objective}

 The first is \emph{survival likelihood maximization}, where the defender seeks to maximize the probability of neutralizing all threats by the deadline $\tau$, i.e., $\operatorname{Pr}(T^\pi \le \tau)$. Specifically, given $\tau$, this objective can be written as
\begin{equation} \label{eq::SurvivalObjective} \tag{\textsf{SLM$_\tau$}}
   V_{\textsf{SLM}_\tau}^* \coloneqq \max_{\pi \in \Pi} \left(V_{\textsf{SLM}_\tau}^\pi := 
   \operatorname{Pr}(T^\pi \le \tau)\right). 
\end{equation}
We note that unless the resource tightness $C/n$ is at least $1/\tau,$ we trivially find $V_{\textsf{SLM}_\tau}^* = 0.$
The objective $V_{\textsf{SLM}_\tau}^\pi$ is appropriate for point-defense missions with a hard engagement window. If even one inbound missile survives past round $\tau$ and reaches the protected ship, air base, or critical asset, the defensive mission can fail catastrophically; in that setting, partial attrition has limited value unless the entire salvo is defeated in time. Thus, \cref{eq::SurvivalObjective} captures the all-or-nothing nature of survival against uncertain threats. 

\smallskip

\textbf{On the connection between \cref{eq::SurvivalObjective} and \cref{eq::MinDefuse}.} 
By the tail-sum formula, the expected defusing time under a policy $\pi$ can be written as
\begin{align}\label{eq:dtm_slm}
\mathbb{E}[T^\pi] = \sum_{\tau=0}^{\infty} \operatorname{Pr}(T^\pi > \tau)
= \sum_{\tau=0}^{\infty} \bigl(1-\operatorname{Pr}(T^\pi \le \tau)\bigr)=1+\sum_{\tau=1}^{\infty} \bigl(1-V_{\textsf{SLM}_\tau}^\pi\bigr).
\end{align}
Thus, the expected defusing time under policy $\pi$ is determined by its survival likelihood across all deadlines. In particular, \cref{eq::MinDefuse} and \cref{eq::SurvivalObjective} are aligned but not identical: \cref{eq::MinDefuse} considers the collective performance across all deadlines by a policy, 
whereas \cref{eq::SurvivalObjective}
considers the performance for the single deadline $\tau$. Consequently,
\begin{equation} \label{eq:: DTM_SLM Connection}
\mathbb{E}[T^*] \ge 1 + \sum_{\tau = 1}^\infty \left(1 - V_{\textsf{SLM}_\tau}^*\right).
\end{equation}
This reveals that if a policy $\pi^*$ exists such that $T^{\pi^*}$ is smallest with respect to stochastic ordering ($\preceq_{st}$), i.e., $T^{\pi^*} \preceq_{st} T^\pi$ for all $\pi \in \Pi,$ then $\pi^*$ simultaneously solves \eqref{eq::SurvivalObjective} for all $\tau$ as well as \eqref{eq::MinDefuse}. Such a policy can in fact exist under a parameter regime ($q_i = q_j$ for all $i,j \in [n]$), as will be examined in \Cref{sec:fair_alloc}.

\subsubsection{Effective Assignment Maximization (\textsf{EAM$_\tau$})}  \label{sec:: EAM_Objective}
The second deadline-based criterion is \emph{effective assignment maximization}, which aims to maximize the expected number of effective assignments completed within $\tau$ rounds. For a given horizon $\tau$ and policy $\pi$, the total number of assignments allocated to threat $i$ can be decomposed as
\[
\sum_{t=1}^{\tau} x_i^t
=
\underbrace{\min\left\{\sum_{t=1}^{\tau} x_i^t,\; L_i\right\}}_{\text{effective assignments}}
+
\underbrace{\max\left\{\sum_{t=1}^{\tau} x_i^t - L_i,\; 0\right\}}_{\text{wasted assignments}}.
\]
Here, $\min\left\{\sum_{t=1}^{\tau} x_i^t,\; L_i\right\}$ is the number of \emph{effective assignments} to threat $i$, while the total number of \emph{wasted assignments} under policy $\pi$ is
\[
W_\tau^\pi \coloneqq \sum_{i=1}^n \max\left\{\sum_{t=1}^{\tau} x_i^t - L_i,\; 0\right\}.
\]
In words, for any active threat, all prior effectors assigned to it are counted as effective. On the other hand, for any neutralized threat $i$, all effectors assigned to it in excess of $L_i$ would constitute waste. In shoot-look-shoot terms, such waste is overkill generated by batched kill assessment within a round.

Under this criterion, the problem is to find a policy that maximizes the expected total number of effective assignments across all threats:
\begin{equation}
\label{eq::EffectiveObjective} \tag{\textsf{EAM$_\tau$}}
V_{\textsf{EAM}_\tau}^*
\coloneqq
\max_{\pi \in \Pi}~ \left(V_{\textsf{EAM}_\tau}^\pi
:=
\mathbb{E}\left[\sum_{i=1}^n \min\!\left\{\sum_{t=1}^{\tau} x_i^t,\; L_i\right\}\right]\right).
\end{equation}
The objective, $V_{\textsf{EAM}_\tau}^\pi$, captures magazine efficiency over a fixed engagement horizon. In layered air defense or counter-UAS operations, commanders want as many interceptors as possible to be committed to still-live threats rather than wasted on incoming missiles that were already neutralized earlier in the same salvo. Maximizing effective assignments over the next $\tau$ rounds therefore measures how efficiently the defense converts scarce shots into useful pressure on the attack, which is especially important when preserving inventory for follow-on salvos matters.

Under \cref{eq::EffectiveObjective}, our model can be interpreted as a special case of a weakly coupled dynamic program, a class of problems that has been studied extensively in the literature (e.g., \citealt{brown2022dynamic,daeth2023optimal,brown2025fluid}). Following the terminology of \cite{brown2025fluid}, each threat corresponds to a subproblem in this dynamic program, and the objective in \cref{eq::EffectiveObjective} represents the total reward obtained by summing effective assignments across threats. These subproblems are coupled through a shared resource constraint $\sum_{i=1}^n x_i^t \le C$ in each round $t$. Unlike the more general models considered in this literature, our setting has a submodular reward structure in the allocation decisions, and we will exploit such a setting in \Cref{sec:greedy_policies} to derive a simple and effective policy.

\smallskip

\textbf{On the connection between \cref{eq::EffectiveObjective} and \cref{eq::MinDefuse}.}
With regards to the number of effective assignments made by a policy $\pi$, its complement, $W^\pi_\tau$ presents a connection to \eqref{eq::MinDefuse}. Indeed, for any policy $\pi$, it clearly holds that $C \cdot T^\pi = W_{T^\pi}^\pi + \sum_{i=1}^n L_i$, with probability 1; hence, 
\begin{equation} \label{eq:: EAM_DTM Connection}
\mathbb{E}[W_{T^\pi}^\pi] = C \cdot \mathbb{E}[T^\pi] - \sum_{i=1}^n \frac{1}{1-q_i}.
\end{equation}
\noindent In other words, \cref{eq::MinDefuse} is equivalent to minimizing $\mathbb{E}[W_{T^\pi}^\pi]$, the expected total number of wasted assignments made by $\pi$ throughout the process of neutralizing all threats.
Any policy $\pi^*$ for which $W^{\pi^*}_\tau \preceq W^\pi_\tau$ for all $\pi \in \Pi$ and $\tau$ will clearly solve \eqref{eq::EffectiveObjective} and \cref{eq::MinDefuse} simultaneously. In fact, as we will see in \Cref{sec:fair_alloc}, there exists a parameter regime in which a time-oblivious policy will simultaneously solve  \eqref{eq::EffectiveObjective} and \eqref{eq::SurvivalObjective} for all $\tau$ as well as \eqref{eq::MinDefuse}.

\subsection{On the Structure of Optimal Policies}\label{sec:optimality}
As stated previously, each of the problems \eqref{eq::MinDefuse}, \eqref{eq::SurvivalObjective}, and \eqref{eq::EffectiveObjective} admits a Bellman equation formulation consistent with the MDP framework in \Cref{sec:: MDP_Outline}. These equations can be derived explicitly, and we defer their presentation to Online Appendix \ref{sec:bellman_optimality}, wherein we also show that efficient computation of all problems is possible when $n=2$. They provide a computational approach for finding optimal policies, which we use as benchmarks in the numerical experiments in \Cref{sec:numerical_experiment}. 
In the meantime, we highlight a property that can arise in optimal policies for the deadline-parametrized problems \eqref{eq::SurvivalObjective} and \eqref{eq::EffectiveObjective}, and that is, they may critically rely on $\tau.$ This \emph{time-awareness} is illustrated in the following result.  
\begin{theorem}[Structure of optimal policies when $n=2$] \label{theorem:properties_n=2}
    Let there be $n=2$ threats and $q_1> q_2>0$. When $C\geq 2$, for any positive integer $\tau$, \cref{eq::SurvivalObjective} and \cref{eq::EffectiveObjective} admit an optimal policy $\pi^*$, for which its first round allocation $(x_1^*,x_2^*)$ follows:
    \begin{enumerate}
        \item $(x_1^*,x_2^*)=(C-1,1)$ for large enough $\tau$. Moreover, $(x_1^*,x_2^*)\neq (C,0)$ for any $\tau$;
        \item $x_1^*$ is non-decreasing in $\tau$ and $x_2^*$ is non-increasing in $\tau$;
        \item $x_1^*\geq x_2^*$ for any $\tau$.
    \end{enumerate}
\end{theorem}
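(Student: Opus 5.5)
The plan is to reduce everything to a one-dimensional first-round choice. Once one threat is neutralized, the continuation is trivial: with a single active threat $j$ and $s$ rounds left, the only feasible policy fires all $C$ effectors at $j$ every round. Its success probability for \eqref{eq::SurvivalObjective} is $1-q_j^{Cs}$, and its expected effective assignments for \eqref{eq::EffectiveObjective} have the closed form $\mathbb{E}[\min\{Cs,L_j\}]=(1-q_j^{Cs})/(1-q_j)$.

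Write $x=x_1$ and $x_2=C-x$, and let $f_\tau(x)$ be the value of allocating $(x,C-x)$ in round one and acting optimally afterwards. One round splits into four outcomes: both threats killed, only threat 1 survives, only threat 2 survives, or both survive. This gives the recursion
\[
f_\tau(x)=\sum_{\text{outcomes}} \Pr(\text{outcome}\mid x)\,\bigl(r(\text{outcome})+\text{continuation}\bigr),
\]
where the both-survive continuation is $V_{\tau-1}:=\max_y f_{\tau-1}(y)$ and the single-survivor continuations are the closed forms above. For \eqref{eq::EffectiveObjective} the immediate reward $r$ is $\mathbb{E}[\min\{x,L_1\}]+\mathbb{E}[\min\{C-x,L_2\}]$. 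All three claims become statements about $\arg\max_x f_\tau(x)$, and I will prove them by induction on $\tau$ with $V_{\tau-1}$ treated as a parameter.

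\textbf{Part 3} ($x_1^*\ge x_2^*$) is an exchange argument. For $x<C-x$, compare $f_\tau(x)$ with $f_\tau(C-x)$. The both-survive probability $q_1^xq_2^{C-x}$ and its continuation $V_{\tau-1}$ change in a controlled way. The remaining terms differ only through the pairing of $(q_1,q_2)$ with the shot counts. Since $q_1>q_2$, the harder threat is the one whose survival is costly in the single-threat phase, because $1-q_1^{Cs}<1-q_2^{Cs}$. A rearrangement-type inequality should then show that pairing the larger shot count with the harder threat is weakly better. For \eqref{eq::EffectiveObjective} this is the concavity and submodularity of $\mathbb{E}[\min\{k,L_i\}]$ in $k$, whose marginal is $q_i^k$.

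\textbf{Part 1, $(C,0)$ never optimal.} Moving one unit from threat 1 to threat 2 strictly raises the kill probability of threat 2 from $0$ to $1-q_2$. It costs threat 1 only the marginal $q_1^{C-1}(1-q_1)$. I will verify the net gain term by term in the recursion, using Part 3 and the single-threat closed forms. In particular, when $C=2$, Part 3 gives that $(1,1)$ is weakly better than $(0,2)$, and a direct comparison handles $(2,0)$.

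\textbf{Part 1, $(C-1,1)$ for large $\tau$.} I will do an asymptotic expansion in $\tau$. For \eqref{eq::SurvivalObjective}, $1-f_\tau(x)$ is dominated by the event that threat 1 survives to the deadline. Its leading-order term should be of the form $c(x)\,q_1^{C(\tau-1)+x}$ plus $o$ of that term. To identify $c(x)$ I will take the both-survive branch inside the recursion, where optimal play is again $(C-1,1)$ by induction. This shows the exponent is maximized by the largest admissible $x$, namely $x=C-1$. The same expansion applies to the wasted-assignment complement of \eqref{eq::EffectiveObjective}.

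\textbf{Part 2 (monotonicity in $\tau$)} is where I expect the main difficulty. The plan is Topkis-style monotone comparative statics: show that $f_\tau(x)-f_\tau(x-1)$ is non-decreasing in $\tau$, i.e.\ increasing differences in $(x,\tau)$. Then one can choose maximizers with $x_1^*$ non-decreasing, and hence $x_2^*=C-x_1^*$ non-increasing. Writing out this difference, $\tau$ enters only through $q_1^{C(\tau-1)}$, $q_2^{C(\tau-1)}$ and $V_{\tau-1}$. The sign analysis therefore needs two facts:
\begin{itemize}
\item[(a)] $V_{\tau-1}$ is increasing and bounded, with $V_{\tau-1}-V_{\tau-2}$ controlled by the single-threat tail terms;
\item[(b)] the coefficient of $V_{\tau-1}$, which is $q_1^{x}q_2^{C-x}-q_1^{x-1}q_2^{C-x+1}$, is positive because $q_1>q_2$.
\end{itemize}
I will first try to establish (a) and (b) jointly by induction on $\tau$. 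If the direct sign argument fails, the fallback is to prove that $f_\tau$ is unimodal in $x$ and to compare the crossing point of $f_\tau(x)=f_\tau(x-1)$ across consecutive $\tau$.
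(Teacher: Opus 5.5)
Your Part~2 rests on a false claim: $f_\tau(x)-f_\tau(x-1)$ is \emph{not} non-decreasing in $\tau$. For every fixed $x$, $f_\tau(x)$ converges as $\tau\to\infty$ to the same limit: $1$ under (\textsf{SLM}$_\tau$) and $\frac{1}{1-q_1}+\frac{1}{1-q_2}$ under (\textsf{EAM}$_\tau$). Hence all first differences tend to $0$, whereas $f_1(1)-f_1(0)=(1-q_1)(1-q_2^{C-1})>0$ (resp.\ $1-q_2^{C-1}>0$). In fact, for (\textsf{SLM}$_\tau$), set $a_\tau:=q_1^{C(\tau-1)}$ and use $1-V_{\tau-1}\sim\frac{1-q_2}{q_1-q_2}a_\tau$. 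One then gets $f_\tau(x)-f_\tau(x-1)\sim a_\tau q_1^{x-1}(1-q_1)(1-q_2^{C-x})$ for $1\le x\le C-1$. This is positive but decreasing in $\tau$, so the failure occurs exactly in the regime you care about, and facts (a), (b) cannot give the sign you need. Only an ordinal single-crossing property is required. Your fallback (comparing crossing points across $\tau$) restates the claim without supplying a mechanism.

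The missing idea is an interchange argument (the paper's Lemma~\ref{lemma:switch_dominate}). For $u_1<u_2$, playing $(u_1,C-u_1)$ and then, if nothing is neutralized, $(u_2,C-u_2)$ is dominated under both objectives by playing the two allocations in the reverse order with the same continuation. The domination is strict unless $(u_1,u_2)=(0,C)$. Coupling on $(L_1,L_2)$ reduces this to $q_2^{v}(1-q_2^{d})(1-q_1^{v})\le q_1^{v}(1-q_2^{v})(1-q_1^{d})$ for integers $v\ge0$, $d\ge1$. Suppose the $(\tau+1)$-optimal first move put fewer units on threat~1 than the $\tau$-optimal one. Then the $(\tau+1)$-optimal policy would use the two allocations in the wrong order on the both-survive branch, and swapping would strictly improve it. Part~3 then follows from Part~2 and the $\tau=1$ (greedy) case, so your direct swap $x\leftrightarrow C-x$ is unnecessary. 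As written it is not closed anyway: the swap raises the both-survive probability, and whether that helps depends on quantitative bounds on $V_{\tau-1}$ that you never provide.

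Part~1 lacks exactly such a bound. In (\textsf{SLM}$_\tau$), with $b_\tau:=q_2^{C(\tau-1)}$,
\[
f_\tau(C-1)-f_\tau(C)=b_\tau\bigl[(1-q_2)-q_1^{C-1}(q_1-q_2)\bigr]+(1-V_{\tau-1})\,q_1^{C-1}(q_1-q_2)-a_\tau\,q_1^{C-1}(1-q_2).
\]
The negative term is of order $q_1^{C(\tau-1)}\gg q_2^{C(\tau-1)}$, so you need an upper bound on $V_{\tau-1}$ that is sharp at leading order. The pathwise bound $V^*_{\textsf{SLM}_{\tau-1}}\le\Pr(L_1+L_2\le C(\tau-1))$ does this and leaves exactly $b_\tau(1-q_2)(1-q_1^{C-1})>0$. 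For (\textsf{EAM}$_\tau$) the analogue is $V^*_{\textsf{EAM}_{\tau-1}}\le\mathbb{E}[\min\{L_1+L_2,C(\tau-1)\}]$. Neither Part~3 nor the single-threat closed forms bound $V_{\tau-1}$ from above. Your one-round marginal heuristic is also misleading: $(C,0)$ and $(C-1,1)$ tie at order $q_1^{C(\tau-1)}$, and the advantage of $(C-1,1)$ lives only at order $q_2^{C(\tau-1)}$.

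Likewise, in the large-$\tau$ step every first move has $1-f_\tau(x)=\Theta(q_1^{C(\tau-1)})$, so you are comparing constants, not exponents. That comparison needs $\limsup_{\tau}(1-V_{\tau-1})/q_1^{C(\tau-1)}\le(1-q_2)/(q_1-q_2)$. You get this by iterating $V_\tau\ge f_\tau(C-1)$, not by assuming $(C-1,1)$ optimal ``by induction'', which is circular since that holds only for large $\tau$. You also need $(q_1^{x}-q_1^{C-1})/(q_1^{C-1}q_2-q_1^{x}q_2^{C-x})\ge(1-q_1)/\bigl(q_2(q_1-q_2)\bigr)$ for $x\le C-2$.
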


\Cref{theorem:properties_n=2} shows that, under both \cref{eq::SurvivalObjective} and \cref{eq::EffectiveObjective}, there exists an optimal policy whose first-round allocation becomes increasingly concentrated on the harder threat as the deadline $\tau$ increases, eventually taking the form $(C-1,1)$. This result is nontrivial because its proof requires a careful comparison of the multi-round outcomes induced by different policies. The argument hinges on a key lemma, \Cref{lemma:switch_dominate} in Online Appendix~\ref{sec:proof}. Roughly speaking, this lemma establishes a monotonicity property of $\pi^*$ for any $\tau$: if the first-round allocation fails to neutralize any threat, then the number of resources assigned to the harder threat, threat~1, in the next round is no larger than the number assigned to it in the first round. %

For general $n$, one might expect optimal policies to always prioritize harder threats by assigning them more effectors than easier threats in each round. Although this intuition is often correct, it does not hold universally. Indeed, under both \cref{eq::SurvivalObjective} and \cref{eq::EffectiveObjective}, allocating effectors across several moderately hard threats can sometimes outperform concentrating effectors on the single hardest threat, as the following example illustrates.

\begin{example}
      Let $n=4$, $C=3$ and $(q_1,q_2,q_3,q_4)=(0.8,0.4,0.4,0.4)$. Then the unique optimal allocation in the first round under (\textsf{SLM}$_2$), (\textsf{EAM}$_2$), and \cref{eq::MinDefuse} is $(x_1^*,x_2^*,x_3^*,x_4^*)=(0,1,1,1)$.
\end{example}

Although $\pi^*$ can in principle be derived via dynamic programming, the computation quickly becomes intractable as $n$ and $C$ grow. Both the state space and the action space scale rapidly. The state space consists of all possible subsets of active threats, $\mathcal{M} \subseteq [n]$, and therefore has size $2^n$. The action space consists of all integer allocations satisfying $\sum_{i \in [n]} x_i = C$, whose cardinality is $\binom{C+n-1}{n-1}$. This quantity is polynomial in $C$ for fixed $n$ and polynomial in $n$ for fixed $C$, but grows exponentially when $n$ and $C$ grow jointly. For both \cref{eq::SurvivalObjective} and \cref{eq::EffectiveObjective}, solving the Bellman equation has the same growth rate in $n$ and $C$ as under \cref{eq::MinDefuse}, with an additional linear factor in $\tau$. This motivates our development of simple time-oblivious policies in the following sections.

\section{Fair Allocation Policy}\label{sec:fair_alloc}
The first policy we will study is not only simple to describe but also requires minimal state information. Put simply, it distributes the $C$ effectors of each round as evenly/fairly as possible across the active threats. This policy requires only the set of live threats, $\mathcal{M}_t$, for its decision at time $t.$ It operates without knowledge of $\tau$, nor even the collection of difficulties $\{q_i\}_{i \in \mathcal{M}_t}$. We formalize fair allocation as follows.

\begin{definition}[Fair Allocation]
Let $\emptyset \neq \mathcal{M} \subseteq [n]$ be a nonempty set of threats. An allocation $x \in \mathbbm{Z}_{\geq 0}^{n}$ is called \textit{fair} for $\mathcal{M}$ if $\sum_{i \in \mathcal{M}} x_i = C$, $x_i = 0$ for $i \notin \mathcal{M}$, and $|x_i- x_j|\in\{0,1\}$ for all $i,j \in \mathcal{M}$.
\end{definition}
A set $\mathcal{M}$ may admit multiple fair allocations, in which case tie-breaking rules can be implemented. We define the class of \emph{fair allocation policies} via
\[
\Pi_{\textsf{FA}}\coloneqq \left\{\pi \in \Pi: \pi_t(\mathcal{M}) \text{ is a fair allocation for $\mathcal{M}$,  ~$\forall \mathcal{M}\in\mathcal{S}$ with $\mathcal{M}\neq\emptyset$},~\forall t\in\mathbb{Z}_{\ge1}\right\}.
\]
In words, a fair allocation policy is one that, at all times $t$, distributes all $C$ effectors as evenly as possible across the active threats $\mathcal{M}_t.$  
Throughout, the notation $\pi_{\textsf{FA}}$ will denote an arbitrary member of $\Pi_{\textsf{FA}}$, and any statement in which $\pi_{\textsf{FA}}$ is referenced as if it were a particular policy, shall be disambiguated by understanding it as a statement holding for all members of the class $\Pi_{\textsf{FA}}$ (unless otherwise noted).\footnote {Alternatively, we can define $\pi_{\textsf{FA}}$ to be the unique random policy that, at each time $t,$ selects a fair allocation for the set $\mathcal{M}_t$ uniformly at random. See \cref{footnote: random policy} for the formalization of random policies.}

Although $\pi_{\textsf{FA}}$ utilizes minimal information (even ignoring $q$), under a particular parameter regime, it can in fact be uniformly optimal for \eqref{eq::MinDefuse}, \eqref{eq::SurvivalObjective}, and \eqref{eq::EffectiveObjective} for all $\tau$, as we will discuss next.   

\subsection{Homogeneous Threats or $C\leq 2$: Fair Allocation Optimality}
In this section, we identify and examine a class of instances for which $\pi_{\textsf{FA}}$ uniformly optimizes \cref{eq::MinDefuse}, \cref{eq::SurvivalObjective}, and \cref{eq::EffectiveObjective}. This class is comprised of all salvo instances as well as instances in which $C\leq 2$. In the former, all active threats are statistically indistinguishable, so there is no benefit to favoring one active threat over another; this is precisely the allocation principle implemented by $\pi_{\textsf{FA}}$, and it explains the policy's optimality.
In the latter, with $C\leq 2$ effectors in each round, assigning them to two different active threats (i.e., a fair allocation), for as long as $M_t \geq 2$, guarantees waste can only be incurred by $\pi_{\textsf{FA}}$ in the last round that an active threat exists (and is neutralized). These two insights are formalized in the following theorem.

\begin{theorem}[Optimality of fair allocation]\label{theorem:fair_is_optimal_when_qi=q}
If $(n, C, q, \tau)$ satisfies: (Salvo) $q_1=q_2=\cdots=q_n$; or (Two-unit capacity) $C\leq 2$,
then any $\pi_{\textsf{FA}} \in \Pi_{\textsf{FA}}$ solves \cref{eq::MinDefuse}, \cref{eq::SurvivalObjective}, and \cref{eq::EffectiveObjective}.  
\end{theorem}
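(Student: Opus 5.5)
We prove both cases with a single device: a pathwise coupling under which fair allocation has stochastically the smallest ``number of active threats'' process. By \eqref{eq:dtm_slm} and \eqref{eq:: EAM_DTM Connection}, it then suffices to prove two things. First, $\Pr(T^{\pi_{\textsf{FA}}}\le\tau)\ge\Pr(T^\pi\le\tau)$ for every $\tau$ and every $\pi\in\Pi$. This gives \eqref{eq::SurvivalObjective} for all $\tau$ and, by summing tail probabilities, \eqref{eq::MinDefuse}. Second, $\mathbb{E}[W^{\pi_{\textsf{FA}}}_\tau]\le \mathbb{E}[W^\pi_\tau]$ for every $\tau$. Since total assignments up to $\tau$ equal $C\tau$ minus unused capacity after clearance, this is equivalent to \eqref{eq::EffectiveObjective} once we account for rounds after $T^\pi$. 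We will actually track the number of effective assignments $E_\tau=\sum_i\min\{\sum_{t\le\tau}x_i^t,L_i\}$ directly.

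\textbf{Salvo case.} With $q_i\equiv q$, the state may be reduced to $m=M_t$, since threats are exchangeable. We argue by backward induction on the remaining horizon $\tau$, using the value functions $V_\tau(m)$ for SLM and EAM. The key one-round fact is as follows. For an allocation $x$ over $m$ active threats, the number of survivors is $\sum_{i}\mathrm{Bernoulli}(q^{x_i})$. Fair allocation makes this sum the smallest in the convex order among allocations with the same $\sum_i x_i=C$, and in fact yields a stochastically smallest survivor count. The reason is that $\sum_i \log q^{x_i}$ is fixed while the map $x\mapsto q^{x}$ is convex. Transferring one unit from a larger $x_i$ to a smaller $x_j$ therefore keeps the product $q^{x_i}q^{x_j}$ fixed and lowers the sum $q^{x_i}+q^{x_j}$. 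With the product fixed, this makes the pair $(\mathrm{Bern}(q^{x_i}),\mathrm{Bern}(q^{x_j}))$ more concentrated. The probability that both survive is unchanged, the probability that at least one survives is smaller, and hence the count is stochastically smaller, with neither kill being lost.

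Given that fact, induction applies. Suppose $V_{\tau-1}(m)$ is non-increasing in $m$ for SLM (fewer survivors is better, shown by a coupling that discards threats). Then the fair action maximizes the one-step expectation, using the stochastic dominance of survivor counts. For EAM, the immediate effective reward in a round is $C-\mathbb{E}[\text{waste}]$. Waste on threat $i$ equals $x_i-\mathbb{E}\min(x_i,L_i)=x_i-(1-q^{x_i})/(1-q)$, which is convex in $x_i$, so it is also minimized by spreading. The continuation value must likewise be monotone in $m$, which we verify in the same induction. Because the reward minimization and the survivor dominance point the same way, fair allocation is optimal at every stage. DTM then follows from the SLM statement for all $\tau$ via \eqref{eq:dtm_slm}.

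\textbf{$C\le 2$ case.} If $C=1$, every policy is fair, so there is nothing to prove. For $C=2$, consider any policy $\pi$ and fix the same $L$ for $\pi$ and $\pi_{\textsf{FA}}$. Under $\pi_{\textsf{FA}}$, waste can only occur in the final round, when a single threat receives $2$ shots; as noted before the theorem, this waste is at most $1$, and it equals $1$ only if the first shot kills. So $\pi_{\textsf{FA}}$ wastes nothing before the last threat remains. For the comparison we plan a sample-path interchange argument. Consider any round in which $\pi$ puts both shots on one threat $i$ while another threat $j$ is alive. Replace this with one shot each on $i$ and $j$, using the coupled $L$ reinterpreted through the shot-by-shot (``effector index'') view. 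Under this interchange, the cumulative effective count is never reduced, and the set of cleared threats by each $\tau$ weakly grows in distribution. Formally we would again run a backward induction, now over states $\mathcal{M}$. The needed one-step comparison for two shots is to show that $(1,1)$ on distinct threats dominates $(2,0)$ for both objectives, given monotonicity of the continuation value under set inclusion (removing a threat never hurts, by a coupling in which we ignore it).

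The main obstacle is this monotonicity and interchange step under heterogeneity in the $C=2$ case. Playing $(1,1)$ on $\{i,j\}$ versus $(2,0)$ on $i$ leaves different threats alive with different $q$'s, so plain stochastic dominance of survivor counts is insufficient. We would first try to compare, pathwise in $L$, the process under ``fair now, then optimal'' against ``concentrated now, then optimal.'' Since the total effective shots each round are $2$ minus waste, and fair allocation has zero waste whenever $M_t\ge2$, we get $E_\tau^{\pi_{\textsf{FA}}}=\min(2\tau,\sum_i L_i)-\mathbf 1\{\text{last-round waste}\}$, which is essentially the maximum possible. The remaining work is to show that the last-round waste term, together with the timing of clearance, cannot be beaten. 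We would handle this by showing that any policy's clearance time satisfies $T^\pi\ge\lceil \sum_i L_i/2\rceil$, which fair allocation attains up to the same parity correction.
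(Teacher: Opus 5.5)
Your salvo argument for $\textsf{SLM}_\tau$ (and hence \textsf{DTM}) is the paper's. A Robin Hood transfer keeps $q^{x_i}q^{x_j}=q^\kappa$ fixed, where $\kappa=x_i+x_j$, and raises the both-killed probability $f=(1-q^{x_i})(1-q^{x_j})$. The survivor count therefore becomes first-order stochastically smaller. It is not smaller in convex order, since the means $\sum_i q^{x_i}$ differ. This helps because $V^*_{\textsf{SLM}_{\tau-1}}(m)$ is non-increasing in $m$.

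\textbf{The gap: salvo case of $\textsf{EAM}_\tau$.} Here the continuation value $V^*_{\textsf{EAM}_{\tau-1}}(m)$ is non-\emph{decreasing} in $m$, because an extra live threat can only absorb more effective shots. For example, with $\tau-1=1$, $C=2$, $q=1/2$ one has $V^*(1)=3/2<2=V^*(2)$. So the smaller survivor count produced by spreading \emph{lowers} the expected continuation. Immediate reward and continuation pull in opposite directions, and your step ``they point the same way'' fails.

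What is needed is a quantitative trade-off, as in the paper. A single transfer leaves $\Pr(\text{both } i,j \text{ survive})=q^\kappa$ unchanged. It only moves mass $\Delta f>0$ from ``exactly one survives'' to ``both killed.'' At the same time the immediate reward $\frac{(1-q^{x_i})+(1-q^{x_j})}{1-q}=\frac{1-q^\kappa+f}{1-q}$ rises by exactly $\Delta f/(1-q)$. With $R$ the number of other survivors, the net change is $\Delta f\bigl(\frac{1}{1-q}-\mathbb{E}[V^*_{\textsf{EAM}_{\tau-1}}(R+1)-V^*_{\textsf{EAM}_{\tau-1}}(R)]\bigr)$. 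This is positive because one more live threat adds at most its truncated demand: $V^*_{\textsf{EAM}_{\tau-1}}(r+1)-V^*_{\textsf{EAM}_{\tau-1}}(r)\le\mathbb{E}[\min(L,(\tau-1)C)]=\frac{1-q^{(\tau-1)C}}{1-q}<\frac{1}{1-q}$. Without this bound your induction does not close.

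\textbf{The case $C\le 2$.} The interchange/induction route and the heterogeneity ``obstacle'' are unnecessary, and your closing formula is off. The final-round waste of $\pi_{\textsf{FA}}$ occurs only in the round that clears the last threat, when the effective count already equals $\sum_i L_i$. So pathwise $E^{\pi_{\textsf{FA}}}_\tau=\min(2\tau,\sum_i L_i)$ with no subtracted indicator, and $T^{\pi_{\textsf{FA}}}=\lceil\sum_i L_i/2\rceil$ exactly, with no parity correction. Every policy satisfies $E^\pi_\tau\le\min(C\tau,\sum_i L_i)$ and $T^\pi\ge\lceil\sum_i L_i/C\rceil$. Hence fair allocation is pathwise optimal for all three objectives regardless of $q$; this is the paper's Case~3.

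The same observation, that no shot is ever wasted, settles $C=1$. ``Every policy is fair'' alone only shows that some optimal policy is fair, not that every fair policy is optimal.
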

That fair allocations solve all three dynamic programs in the symmetric case of a salvo is not unlike how the majorization-minimal solution optimizes Schur convex functions \citep{marshall1979inequalities}; indeed, our proof similarly proceeds by examining the benefit of a Pigou--Dalton, or ``Robin Hood,'' transfer. 

\subsection{Heterogeneous Threats and $C\geq 3$ : Fair Allocation Suboptimality}

When threat difficulties are heterogeneous and $C \geq 3$, $\pi_{\textsf{FA}}$ may no longer be optimal for one or more of the three problems. In this section, we discuss how suboptimality can in fact be found even in instances with $C\leq n$. Indeed, for such instances, a fair allocation made in the first round (also greedily optimal as discussed in \Cref{sec:greedy_policies}), which albeit incurs no waste ($W^{\pi_{\textsf{FA}}}_1 = 0$), can still be a misstep for \emph{all} three objectives. We demonstrate with the following counterexamples.

\begin{example}[Counterexample for \cref{eq::SurvivalObjective} and \cref{eq::EffectiveObjective}]\label{ex:fair_poor1}
Consider $n=3$, $C=3$, $\tau=3$, and $(q_1,q_2,q_3)=(0.9,0.8,0.1)$. 
Fair allocation at $t=1$ is $(1,1,1)$. Solving the Bellman recursion for either (\textsf{SLM}$_3$) or (\textsf{EAM}$_3$) shows that the unique optimal initial allocation is $(x_1^*,x_2^*,x_3^*)=(2,1,0)$, which is not fair. %
\end{example}

It is perhaps particularly surprising in the above example that $(x_1^*,x_2^*,x_3^*)=(1,1,1)$ is not optimal for (\textsf{EAM}$_3$). After all, in the first period, this allocation guarantees three effective shots, whereas under $(x_1^*,x_2^*,x_3^*)=(2,1,0)$, the second shot assigned to threat $1$ may be wasted if the first shot already neutralizes it. Nevertheless, this sacrifice in immediate reward is outweighed by the gain in future rewards. %

\begin{example}[Counterexample for \cref{eq::MinDefuse}]\label{ex:fair_poor2}
For \cref{eq::MinDefuse}, heterogeneous threats can also make fairness suboptimal. 
For example, consider $n=4$, $C=4$, and $(q_1,q_2,q_3,q_4)=(0.99,0.9,0.3,0.1)$. 
Fair allocation at $t=1$ is $(1,1,1,1)$. The unique optimal allocation in the first round is $(x_1^*,x_2^*,x_3^*,x_4^*)=(2,1,1,0)$, again not fair. 
\end{example}
These counterexamples highlight the importance of taking into account the difficulties $q$, wherein more difficult threats can sometimes require prioritization.

\subsection{Performance Analysis of Fair Allocation %
}\label{sec:fair_policy}

Having examined instances in which fair allocation $\pi_{\textsf{FA}}$ is optimal as well as instances in which it is suboptimal, we now turn to a general characterization of its performance. 
Specifically, we characterize its performance in terms of $n$, $C$, and $q$ under each objective. 
We also identify and explain worst-case scenarios for $\pi_{\textsf{FA}}$'s performance.

\smallskip

\textbf{Fair allocation for \cref{eq::MinDefuse}.} 
For any realization $L \in \mathbb{Z}_{\geq 1}^n$, we denote by $T^{\pi}(L)$ the defusing time incurred by policy $\pi$. 
Clearly, if a decision maker were to have knowledge of $L,$ i.e., were omniscient, then the defusing time would be $\lceil (\sum_{i=1}^n L_i)/C\rceil$. 
The following theorem characterizes both $\pi_{\textsf{FA}}$'s performance against this omniscient benchmark and any optimal policy.

\begin{theorem}[Performance of $\pi_{\textsf{FA}}$ for \eqref{eq::MinDefuse}]
\label{theorem:DTM_fair_ratio}
    For any $C, n\in\mathbb{Z}_{\geq 1}$, $\pi_{\textsf{FA}}$ has a competitive ratio 
    \begin{align}\label{eq:DTM_competitive}
    \sup_{L\in \mathbb{Z}_{\geq 1}^n}\frac{T^{\pi_\textsf{FA}}(L)}{\lceil \frac{1}{C}\sum_{i=1}^n L_i
    \rceil }\leq 1+\frac{C\ln (n \wedge C) + 2C}{n}, %
    \end{align} and a performance ratio for any $q\in[0,1)^n$: 
    \begin{align}\label{eq:DTM_ratio1}
   \frac{\mathbb{E}[T^{\pi_{\textsf{FA}}}]}{\mathbb{E}[T^*]}\leq 1+ \frac{C\ln(n \wedge C)+2C}{\sum_{i=1}^n \frac{1}{1-q_i}}.
    \end{align}
    Moreover, %
    if $0 < \underline{q} < \bar{q} < 1$, then for any $C, n\in\mathbb{Z}_{\geq 1}$, $\pi_{\textsf{FA}}$ admits a performance ratio bound: %
    \begin{align}\label{eq:DTM_ratio2}
    \sup_{q \in [\underline{q}, \bar{q}]^n}\frac{\mathbb{E}[T^{\pi_{\textsf{FA}}}]}{\mathbb{E}[T^*]}\leq \left\lceil\frac{\ln \underline{q}}{\ln \bar{q}}\right\rceil.
    \end{align}
\end{theorem}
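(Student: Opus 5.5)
The plan is to prove the pathwise bound \eqref{eq:DTM_competitive} by a waste-accounting argument, to obtain \eqref{eq:DTM_ratio1} from it by taking expectations, and to prove \eqref{eq:DTM_ratio2} separately by a coupling that reduces to salvo instances, where \Cref{theorem:fair_is_optimal_when_qi=q} applies.

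\textbf{Step 1 (pathwise waste bound).} Fix $L\in\mathbb{Z}_{\ge1}^n$. As noted before \eqref{eq:: EAM_DTM Connection}, $C\,T^{\pi_{\textsf{FA}}}(L)=W^{\pi_{\textsf{FA}}}_{T}+\sum_i L_i$ holds pathwise, so it suffices to bound the total waste. Waste comes only from threats neutralized in a round: such a threat $i$ wastes at most $x_i^t-1$ shots, because at least its last shot was needed.
\begin{itemize}
\item If $M_t>C$, fair allocation gives every active threat at most one shot, so there is no waste.
\item If $M_t\le C$, then $x_i^t\le\lceil C/M_t\rceil\le C/M_t+1$, so each threat killed in round $t$ wastes at most $C/M_t$.
\end{itemize}
Order the threats by the round in which they are killed, breaking ties within a round arbitrarily. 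The threat in position $j$ from the end was killed in a round with $M_t\ge j$. Only positions $j\le n\wedge C$ can contribute. Hence
\[
W\le C\sum_{j=1}^{n\wedge C}\frac1j\le C\bigl(\ln(n\wedge C)+1\bigr).
\]
It follows that $T^{\pi_{\textsf{FA}}}(L)\le \bigl(\sum_i L_i+C\ln(n\wedge C)+C\bigr)/C$. Dividing by $\lceil\sum_i L_i/C\rceil\ge \sum_i L_i/C$ and using $\sum_i L_i\ge n$ gives \eqref{eq:DTM_competitive}; the extra $+C$ is slack that absorbs ceiling effects.

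\textbf{Step 2 (expected ratio).} Any policy, even an optimal one, satisfies $T^\pi\ge\lceil\sum_i L_i/C\rceil$ pathwise. Therefore $\mathbb{E}[T^*]\ge \frac1C\sum_i\frac{1}{1-q_i}$. Taking expectations in the Step 1 upper bound gives
\[
\mathbb{E}[T^{\pi_{\textsf{FA}}}]\le \frac1C\Bigl(\sum_i\tfrac1{1-q_i}+C\ln(n\wedge C)+2C\Bigr).
\]
Dividing the two bounds yields \eqref{eq:DTM_ratio1}.

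\textbf{Step 3 (bounded-difficulty ratio).} Set $k=\lceil \ln\underline q/\ln\bar q\rceil$, so that $\bar q^{\,k}\le \underline q$. The argument chains three comparisons.
\begin{enumerate}
\item[(a)] $\mathbb{E}[T^*(q)]\ge \mathbb{E}[T^*(\underline q\mathbf 1)]$. Couple $L_i(q)\ge L_i(\underline q)$ pathwise. Any policy for $q$ then clears the easier instance no later, since it can be run on the easier instance with extra kills simply observed early and surplus shots redirected.
\item[(b)] By \Cref{theorem:fair_is_optimal_when_qi=q}, $T^*(\underline q\mathbf1)$ is attained by fair allocation, and likewise for the salvo $\bar q\mathbf 1$. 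Consider the policy on $\bar q\mathbf1$ that repeats each round of $\pi_{\textsf{FA}}$ for $\underline q\mathbf 1$ in $k$ consecutive rounds. It updates its decisions only at block ends, and ignores kills within a block by sending the shots elsewhere or wasting them. A block delivers per-threat failure probability $\bar q^{\,kx}\le\underline q^{\,x}$, so it is stochastically at least as effective as one round on $\underline q\mathbf1$. By optimality of fair allocation on $\bar q\mathbf1$, this gives $\mathbb{E}[T^{\pi_{\textsf{FA}}}(\bar q\mathbf1)]\le k\,\mathbb{E}[T^{\pi_{\textsf{FA}}}(\underline q\mathbf1)]$.
\item[(c)] $\mathbb{E}[T^{\pi_{\textsf{FA}}}(q)]\le\mathbb{E}[T^{\pi_{\textsf{FA}}}(\bar q\mathbf1)]$.
\end{enumerate}
Chaining (a), (b), and (c) gives \eqref{eq:DTM_ratio2}.

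\textbf{Main obstacle.} Step (c) is the hardest: it requires monotonicity of fair allocation's defusing time in the difficulties. Fair allocation ignores $q$, so the plan is to couple $L(q)\le L(\bar q\mathbf1)$ pathwise and show that $T^{\pi_{\textsf{FA}}}(L)$ is nondecreasing in $L$. The concern is that tie-breaking among fair allocations could break pathwise monotonicity. I would first try an induction on rounds showing that the active set under the larger $L$ always contains the active set under the smaller $L$, up to relabeling, using the symmetric randomized version of $\pi_{\textsf{FA}}$. If that fails, I would prove (c) only in expectation, by a Robin-Hood-type exchange as in \Cref{theorem:fair_is_optimal_when_qi=q} that raises one $q_i$ to $\bar q$ at a time.
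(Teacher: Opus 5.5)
Your proposal is correct and follows the paper's proof essentially step for step: the same $k$th-from-last waste accounting, the same passage to expectations via $T^\pi\ge\lceil\sum_i L_i/C\rceil$, and for \eqref{eq:DTM_ratio2} the same chain of monotonicity of $T^{\pi_{\textsf{FA}}}$ and $T^*$ in $q$, salvo optimality from \Cref{theorem:fair_is_optimal_when_qi=q}, and $k$-round batching with $\bar q^{\,k}\le\underline q$ (in Step 1, using the exact identity $C\,T^{\pi_{\textsf{FA}}}=W+\sum_i L_i$ even gives $C\ln(n\wedge C)+C$ instead of $C\ln(n\wedge C)+2C$). The step (c) you flag is exactly the paper's monotonicity lemma, and your first idea proves it with no relabeling: if $\mathcal M_t\subseteq\bar{\mathcal M}_t$, every $i\in\mathcal M_t$ gets at least as many effectors under $\mathcal M_t$ as under $\bar{\mathcal M}_t$ (either $\lfloor C/M_t\rfloor>\lfloor C/\bar M_t\rfloor$, or both floors equal some $k$ and $C-kM_t\ge C-k\bar M_t$, with fixed-priority tie-breaking or uniform extra-unit sets $A\subseteq\mathcal M_t$, $B\subseteq\bar{\mathcal M}_t$ coupled so that $B\cap\mathcal M_t\subseteq A$), so cumulative shot counts compare and nestedness propagates---and your tie-breaking worry is justified, since under an arbitrary state-dependent rule pathwise monotonicity can fail (take $n=4$, $C=5$, give the extra unit to threat $4$ on $\{1,2,3,4\}$ and to threats $2,3$ on $\{2,3,4\}$; then $T=3$ for $L=(1,2,2,4)$ but $T=2$ for $\bar L=(2,2,2,4)$).
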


The performance-ratio bound \eqref{eq:DTM_ratio1} in Theorem~\ref{theorem:DTM_fair_ratio} follows by taking expectations over $L$ in the derivation of the competitive-ratio bound \eqref{eq:DTM_competitive}. While this is not a constant bound, it remains useful in low-capacity regimes. For any fixed $C$, the bound converges to $1$ as $n\to\infty$. This is because when $n$ is large, $\pi_{\textsf{FA}}$ can fully utilize the available capacity and incurs no waste in every round until the system is close to completion, that is, until $M_t<C$. As $n\to\infty$, the cumulative waste under rounds with $M_t<C$ is negligible relative to the total defusing rounds. Therefore, fair allocation is asymptotically optimal. The second bound is a constant independent of $C$ and $n$, so it is qualitatively stronger than the first one. Intuitively, bounding $q_i$ away from 0 and 1 prevents extreme heterogeneity, so the loss from equal splitting cannot accumulate without limit.

We now move from relative ratios to absolute scaling and ask a complementary question: as both $n$ and $C$ increase, how does the defusing time under $\pi_{\textsf{FA}}$ (and $\pi^\ast$) scale?  The following theorem characterizes this, where we note that by \eqref{eq:: EAM_DTM Connection}, the expected defusing time of any policy is lower bounded by $\frac{1}{C}\sum_{i=1}^n\mathbb E[L_i]=\frac{1}{C} \sum_{i=1}^n\frac{1}{(1-q_i)}$.

\begin{theorem}[Growth rate of \texorpdfstring{$\mathbb{E}[T^{\pi_{\textsf{FA}}}]$}{E[T^{pi_FA}]}]
\label{thm:spread-upper}
Let $C\coloneqq C(n)$ and $q\coloneqq q^{(n)} \in [0,1)^n$, and consider the asymptotic regime in which $n\to\infty$.
Suppose there exist constants $0 < \underline{q} < \bar{q} < 1,$ independent of $n,$ such that  $ q^{(n)} \in [\underline{q}, \bar{q}]^n$ for all $n.$
Then the resource tightness $C(n)/n$ determines the rate of growth for $\mathbb{E}[T^{\pi_{\textsf{FA}}}]$:
\begin{enumerate}
    \item If $C(n)= n$, then $\mathbb{E}[T^{\pi_{\textsf{FA}}}]
        =
        \Theta(\ln^* n).$
    \item If $C(n)>n$, then $\mathbb{E}[T^{\pi_{\textsf{FA}}}]
        =
        O(\ln^* n).$
    \item If $C(n)<n$, then $\mathbb{E}[T^{\pi_{\textsf{FA}}}]
        =
        \frac{1}{C(n)}\sum_{i=1}^n \frac{1}{1-q_i^{(n)}}
        +
        O(\ln^* C(n)).$
\end{enumerate}
Here, $\ln^*(m)
    \coloneqq
    \min\{k\in\mathbb{Z}_{\ge 0}: \ln^{(k)}(m)\le 1\},$ for $m\in\mathbb{Z}_{\ge 1}$
denotes the iterated logarithm.
\end{theorem}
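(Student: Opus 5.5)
The approach is to track the ratio $r_t := C/M_t$ of capacity to active threats under $\pi_{\textsf{FA}}$ and show that, once $r_t \ge 1$, it grows like an iterated exponential. Under $\pi_{\textsf{FA}}$ every active threat receives either $\lfloor C/M_t\rfloor$ or $\lceil C/M_t\rceil$ effectors. Conditional on $\mathcal{M}_t$, the survivors are independent Bernoulli variables with survival probabilities in $[\underline{q}^{\lceil r_t\rceil}, \bar{q}^{\lfloor r_t\rfloor}]$. Heuristically this gives
\[
M_{t+1} \approx M_t\,\bar{q}^{\,r_t}, \qquad\text{that is,}\qquad r_{t+1}\approx r_t\, e^{c r_t},\quad c=\ln(1/\bar q)>0 .
\]
The map $r\mapsto r e^{cr}$ reaches any level $n$ from $r=1$ in $\ln^* n + O(1)$ iterations, with constants depending only on $\underline q,\bar q$. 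The same recursion with $\underline q$ in place of $\bar q$ shows that $\ln^* n - O(1)$ iterations are necessary. The whole proof consists of making this heuristic rigorous in both directions and handling the regime $M_t > C$.

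\textbf{Upper bound when $C \ge M_1$ (items 1--2).} I would use a phase argument based on a monotone coupling. If $M_t \le m$, every active threat gets at least $\lfloor C/m\rfloor \ge 1$ shots, so
\[
\mathbb{E}[M_{t+1}\mid \mathcal{M}_t] \le m\,\bar q^{\lfloor C/m\rfloor}.
\]
Define thresholds $m_0 = n$ and $m_{k+1} = 2 m_k \bar q^{\lfloor C/m_k\rfloor}$. By Markov's inequality, each round started with $M_t \le m_k$ ends with $M_{t+1}\le m_{k+1}$ with probability at least $1/2$. Since $M_t$ is nonincreasing, a phase that fails is simply retried, so each phase lasts a stochastically geometric number of rounds with mean at most $2$.

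While $C/m_k$ is below a constant $r_0$, the factor $2\bar q^{\lfloor C/m_k\rfloor}$ might exceed $1$ because of the doubling. I would handle this with a constant number of preliminary phases that use a smaller slack factor, e.g.\ replacing $2$ by $(1+\bar q^{-1})/2$ and applying Markov with success probability bounded below by a constant. These phases push $C/m_k$ above $r_0$, after which $C/m_{k+1}\ge (C/m_k)e^{c' C/m_k}$ for some $c'>0$. Hence $O(\ln^* n)$ phases bring $m_k<1$, i.e.\ $M=0$. Summing the expected phase lengths gives $\mathbb{E}[T^{\pi_{\textsf{FA}}}] = O(\ln^* n)$ whenever $C \ge n$, which proves item 2 and the upper half of item 1.

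\textbf{Lower bound for item 1 ($C=n$).} Each surviving threat survives a round with probability at least $\underline q^{\lceil r_t\rceil}$, independently given $\mathcal{M}_t$. While the expected number of survivors is large, a Chernoff bound shows that with high probability $M_{t+1}\ge \tfrac12 M_t\underline q^{\lceil r_t\rceil}$. This forces $r_{t+1}\le 2 r_t e^{C' (r_t+1)}$, a tower recursion. Iterating while $M_t \ge \sqrt{n}$, say, the failure probabilities sum to $o(1)$. The ratio $r_t$ cannot exceed $\sqrt n$ before $\ln^*(\sqrt n) - O(1) = \ln^* n - O(1)$ rounds. Therefore $\Pr(T^{\pi_{\textsf{FA}}} \ge \ln^* n - O(1)) \ge 1-o(1)$, which gives $\Omega(\ln^* n)$.

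\textbf{Item 3 ($C<n$).} Here I would use the identity \eqref{eq:: EAM_DTM Connection}:
\[
C\,\mathbb{E}[T^{\pi_{\textsf{FA}}}] = \mathbb{E}[W^{\pi_{\textsf{FA}}}_{T}] + \sum_i \frac{1}{1-q_i}.
\]
In any round with $M_t\ge C$, fair allocation gives each threat at most one shot, so no waste occurs. Waste can only arise in rounds with $M_t < C$, and is at most $C$ per round. Let $\sigma$ be the first round with $M_\sigma<C$. From $\sigma$ on, the process is exactly the $C\ge M$ regime with at most $C$ threats. By the phase argument above (with $n$ replaced by $C$), the number of rounds from $\sigma$ onward has expectation $O(\ln^* C)$. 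Hence $\mathbb{E}[W]\le C\cdot O(\ln^* C)$, and dividing by $C$ gives item 3.

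\textbf{Expected main obstacle.} The hardest step is the upper-bound phase argument near $r_t\approx 1$. There the floor/ceiling effects and the Markov slack can make the tower recursion stall. I would need to check carefully that a constant number of extra phases, each with constant success probability, suffices, with constants depending only on $\bar q$ and not on $n$ or $C$.
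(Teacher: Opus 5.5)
Your overall strategy is the paper's. You compare $r_t=C/M_t$ with the ladders $z\mapsto\tfrac{\bar q}{2}z\bar q^{-z}$ (Markov plus retried steps) and $z\mapsto\tfrac{2}{\underline q}z\underline q^{-z}$ (Chernoff). For item 3 you peel off the waste-free stretch with $M_t\ge C$ and restart the $C\ge M$ argument by the Markov property. Your waste identity and the paper's direct bound $\tau_{init}-1\le\frac1C\sum_i L_i$ express the same observation.

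The upper bound is fine, including your repair near ratio $1$. With slack $(1+\bar q^{-1})/2$ the thresholds shrink by the factor $(1+\bar q)/2$ per phase, and each phase succeeds with probability at least $(1-\bar q)/(1+\bar q)$. For item 2 with $C\gg n$, state explicitly that $\lfloor C/m\rfloor\ge\lfloor n/m\rfloor$ lets you run the thresholds with $n$ in place of $C$ (the paper's $\lfloor\pi_{\textsf{FA}}\rfloor$ device). That makes the phase count $O(\ln^* n)$ rather than $O(\ln^* C)$.

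The step that fails is the cutoff in your lower bound. You run the Chernoff comparison while $M_t\ge\sqrt n$, i.e.\ for ratios $r_t$ up to $\sqrt n$. The quantity that must be large is the mean $M_t\underline q^{\lceil r_t\rceil}\ge (n/r_t)\,\underline q^{\,r_t+1}$, and that is large only when $r_t=O(\ln n)$. For example, at $r_t=n^{1/4}$ even the true conditional mean satisfies $\mathbb{E}[M_{t+1}\mid\mathcal M_t]\le n^{3/4}\bar q^{\lfloor n^{1/4}\rfloor}\to0$. So $M_{t+1}=0$ with probability tending to one. Your event $M_{t+1}\ge\frac12 M_t\underline q^{\lceil r_t\rceil}$ forces $M_{t+1}\ge1$, so it fails with probability tending to one. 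Hence the failure probabilities of the last ladder steps below $\sqrt n$ are not small and do not sum to $o(1)$.

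The repair is to stop at a logarithmic level, e.g.\ $\ell_n=\lfloor\tfrac12\log_{1/\underline q}n\rfloor$ (the paper uses $\lfloor\log_{1/\underline q^{2}}C\rfloor$). While $r_t\le\ell_n$, the mean is at least $\underline q\sqrt n/\ell_n\to\infty$. Each step then fails with probability at most $\exp(-\underline q\sqrt n/(8\ell_n))$, and the union over $O(\ln^* n)$ steps is $o(1)$. Since $\ln^*\ell_n=\ln^* n-O(1)$, you still obtain $\Pr(T^{\pi_{\textsf{FA}}}\ge\ln^* n-O(1))\to1$, which gives the $\Omega(\ln^* n)$ half of item 1.
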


We note that the lower bound $\frac{1}{C(n)}\sum_{i=1}^n \frac{1}{1-q_i}=O(1)$ when $C(n)\ge n$, and hence is absorbed into the stated $O(\ln^* n)$ and $\Theta(\ln^* n)$ bounds. In the balanced case $C(n)=n$, the same order also holds for the optimal policy: by \Cref{theorem:DTM_fair_ratio}, $\mathbb E[T^{\pi_{\textsf{FA}}}]\le K\mathbb E[T^*]$ for a constant $K$ depending only on $\underline q$ and $\bar q$, while optimality gives $\mathbb E[T^*]\le \mathbb E[T^{\pi_{\textsf{FA}}}]$. Thus item 1 implies $\mathbb E[T^*]=\Theta(\ln^* n)$ when $C(n)=n$. When $C(n)<n$, no waste is incurred until the number of active threats falls below $C$; consequently, the expected number of rounds needed to reach $M_t<C(n)$ is at most $\frac{1}{C(n)}\sum_{i=1}^n \frac{1}{1-q_i^{(n)}}$, at which point the expected defusing time of the remaining active threats is $\Theta(\ln^* C(n))$. Since $\ln^*(\cdot)$ grows extremely slowly, these bounds suggest that fair allocation remains highly scalable and practically effective even in large systems.

Theorem~\ref{thm:spread-upper} shows that fair allocation and the optimal policy share the same iterated-logarithmic growth rate in the balanced regime, where $C=n$. An order comparison can be, however, coarse at this scale: two quantities of order $\ln^* n$ may differ by an amount comparable to the quantities themselves, and in an engagement even a single additional round of exposure is operationally significant. Our next result asserts that no such difference arises. Under at-least-proportional capacity, the expected defusing time under fair allocation exceeds the optimal value by at most an additive constant, uniformly in the number of threats $n$.

\begin{theorem}[Bounded additive DTM gap under at-least-proportional capacity]
\label{thm:dtm_fair_additive_gap}
Let $\pi_{\textsf{FA}}$ be a fair allocation policy incorporating uniformly-at-random tie-breaking\footnote{The proof provided in the e-companion Section \ref{sec:AppendixProofAdditiveGap} shows that any fixed priority tie-breaking rule  also suffices.}.
Let $0<\underline q<\bar q<1$, $\underline{\alpha} > 0$, and let $(n, C(n), q^{(n)})$ denote a sequence of instances such that: \textsf{(1)} $q^{(n)}=(q_1^{(n)},\ldots,q_n^{(n)})\in[\underline q,\bar q]^n$; \textsf{(2)} 
$C(n)\geq \underline{\alpha} n$ for sufficiently large $n$.
Then there exists a constant  $K=K(\underline{\alpha},\underline q,\bar q)<\infty$, such that for all sufficiently large $n,$
\[
0\leq \mathbb E\left[T^{\pi_{\textsf{FA}}}\left(q^{(n)}; C(n)\right)\right] - \mathbb E\left[T^*\left(q^{(n)}; C(n)\right)\right] \leq K.
\]
\end{theorem}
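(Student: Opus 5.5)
The plan is a ``deterministic skeleton plus bounded slack'' comparison. Under either policy, the dynamics of the number of active threats $M_t$ should track a one-dimensional recursion whose number of steps to extinction is of order $\ln^* n$. I would show that the recursions for $\pi^*$ and $\pi_{\textsf{FA}}$ differ only through the constants $a:=\ln(1/\underline q)$ and $b:=\ln(1/\bar q)$, and that such a change of constant costs only $O(1)$ extra steps. The lower bound $0\le \mathbb E[T^{\pi_{\textsf{FA}}}]-\mathbb E[T^*]$ is immediate from optimality, so only the upper bound needs work.

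\textbf{Step 1 (lower recursion for any policy).} Given $\mathcal M_t$ with $M_t=m$ and any allocation summing to $C$, the number of survivors $M_{t+1}$ is a sum of independent Bernoullis. By convexity of $x\mapsto \underline q^{x}$ (Jensen), its mean is at least $m\,\underline q^{C/m}=m e^{-aC/m}$. By Chernoff, while this mean exceeds some $\Lambda=\Theta(\ln n)$ we get $M_{t+1}\ge \tfrac12 m e^{-aC/m}$ except with probability $O(n^{-2})$. In the variable $y_t:=C/M_t$, this says that for $\pi^*$, $y_{t+1}\le g_a(y_t):=2y_te^{a y_t}$ with high probability, up to the round at which the mean first drops below $\Lambda$. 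Hence $T^*\ge R_a(n)$ w.h.p., where $R_a(n)$ is the number of iterations of $g_a$ from $y_1=C/n$ needed to exceed $C/\Lambda$. This yields $\mathbb E[T^*]\ge R_a(n)-o(1)$.

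\textbf{Step 2 (upper recursion for $\pi_{\textsf{FA}}$).} Under fair allocation each active threat receives at least $\lfloor C/m\rfloor\ge C/m-1$ shots. The expected number of survivors is therefore at most $m\bar q^{\,C/m-1}=\bar q^{-1}me^{-bC/m}$. Chernoff (or Markov, once the mean is small) gives $y_{t+1}\ge h_b(y_t):=\tfrac{\bar q}{2}y_te^{b y_t}$ with high probability. A bad round cannot make $M_t$ increase, so a failure only delays progress by a round; I would absorb failures via a geometric-trials argument. For the endgame, once $M_t\le C/k_0$ for a suitable constant $k_0$, every active threat gets at least $k_0$ shots per round. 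Then $\Pr(M_{t+1}>0)\le M_t\bar q^{k_0}$ and $M_t$ keeps shrinking superexponentially, so the remaining expected time is bounded by a constant independent of $n$. This also covers the polylog-size regime that Step 1 did not track, since there $C/M_t\gtrsim \underline\alpha n/\ln n$ is huge. Thus $\mathbb E[T^{\pi_{\textsf{FA}}}]\le R'_b(n)+K_1$, where $R'_b(n)$ is the number of $h_b$-iterations from $C/n\ge\underline\alpha$ needed to pass $y_0$.

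\textbf{Step 3 (comparing iteration counts), the main obstacle.} I must show $R'_b(n)\le R_a(n)+K_2$ uniformly in $n$. The key inequality I would try to prove is that there exist $y_0$ and an integer $j$ depending only on $(a,b,\bar q)$ with
\[
h_b^{\,j}(y)\ \ge\ g_a(y)\qquad\text{for all } y\ge y_0 .
\]
Indeed $h_b(h_b(y))\approx y\exp\bigl(b y+b y e^{by}\bigr)$, which beats $2ye^{ay}$ once $y$ is large, so $j=2$ should suffice. Both maps are increasing, so by induction $h_b^{\,jk}(y)\ge g_a^{\,k}(y)$ for all $k$.

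With a constant warm-up $k_1$ (rounds for $h_b$ to push $y$ from $\underline\alpha$ past $y_0$), this gives
\[
R'_b\le k_1+jR_a .
\]
That is still multiplicative, however. To get an additive bound I would exploit the tower structure more carefully. Comparing $\ln$-iterates, one step of $g_a$ raises the ``height'' $\ln^*(y)$ by at most $1$, while one step of $h_b$ from $y\ge y_0$ also raises it by nearly $1$. The constants $a$, $b$, $2$, $\bar q/2$ affect heights only by an additive correction that is summable, because the corrections decay like $O(1/y)$ along a tower. So I would define the potential $\ln^*(y_t)$, or a smooth variant, and show per-step drift $\ge 1-\epsilon_t$ for FA and $\le 1+\epsilon_t$ for any policy, with $\sum_t\epsilon_t$ bounded. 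This yields $R'_b-R_a\le K_2$. Combining Steps 1--3 gives the constant $K=K(\underline\alpha,\underline q,\bar q)$. The sensitive points are keeping the Chernoff failure probabilities summable across $O(\ln^* n)$ rounds and the summable-drift argument of Step 3, which I expect to be the hardest part.
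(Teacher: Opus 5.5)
Your lower bound on $T^*$ is a genuinely different and valid route. Jensen gives $\mathbb{E}[M_{t+1}\mid\mathcal M_t]\ge M_t\,\underline q^{\,C/M_t}$ for every allocation, so Chernoff controls an arbitrary adaptive policy directly. The paper instead gets $\mathbb{E}[T^*(q)]$ at least the fair-allocation defusing time on the salvo instance with all difficulties $\underline q$, via the coupling in Lemma~\ref{lem:monotonicity} plus Theorem~\ref{theorem:fair_is_optimal_when_qi=q}. Likewise your bound $\mathbb{E}[M_{t+1}\mid\mathcal M_t]\le M_t\bar q^{\lfloor C/M_t\rfloor}$ needs no tie-breaking coupling. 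So your route is self-contained and tie-breaking-free, while the paper's confines all ladder estimates to homogeneous fair allocation.

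\textbf{The gap is Step 3}, which is where the additive claim actually lives. Since $R_a(n)=\Theta(\ln^* n)$, the bound $R_b'\le k_1+2R_a$ says nothing about the difference. The potential argument is not yet an argument. The integer-valued $\ln^*$ cannot carry a per-step drift of $1\pm\epsilon_t$: its increments are integers, and for $b<1$ a step of $h_b$ can leave it unchanged. For a smooth super-logarithm, the summable-correction estimate is exactly what remains unproved.

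The missing idea is to compare the ladders with a one-step lag, using squaring as the inductive invariant (the paper's Lemma~\ref{lem:logstar-additive}). Pick $A$ with $h_b(z)\ge z^2$ and $h_b(z^2)=\tfrac{\bar q}{2}z^2e^{bz^2}\ge 4z^2e^{2az}=g_a(z)^2$ for all $z\ge A$. Since $h_b$ is increasing and $g_a(z)\ge z$, induction gives $h_b^{(k+1)}(z)\ge\bigl(g_a^{(k)}(z)\bigr)^2$ for all $k\ge0$. Hence, from a common start $z\ge A$, the $h_b$-hitting time of any target $Y\ge1$ exceeds the $g_a$-hitting time by at most one. Because your FA target can sit below your $\pi^*$ target $C/\Lambda$, this yields $R_b'\le R_a+O(1)$ with no potential function.

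Two parts of Step 2 also fail as written.

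\emph{The warm-up.} It cannot be done by iterating $h_b$. We have $h_b(y)>y$ only when $y>\frac1b\ln(2/\bar q)$, and below this repelling fixed point the recursion drives $y$ down. For $y<1$ the bound $\bar q^{\,y-1}$ is vacuous, since some threats get no shot. You need a separate $O(1)$ argument, as in the paper:
\begin{itemize}
\item While $M_t>C$, fair allocation wastes nothing, so that phase lasts at most $1+\frac{1}{\underline\alpha(1-\bar q)}$ rounds in expectation.
\item Afterwards every active threat is fired on each round, so $\mathbb{E}[M_{t+s}]\le C\bar q^{\,s}$, and Markov bounds the expected time to reach $y\ge A$ by $O(A/(1-\bar q))$.
\item If $C/n<A$, monotonicity of $g_a$ shows the $\pi^*$ ladder from $C/n$ is no faster than the one from $A$.
\end{itemize}

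\emph{The endgame.} The claim is false for a constant $k_0$. From $M_t=C/k_0$, your own Step 1 shows every policy still needs $\Omega(\ln^* C)$ more rounds, and $M_t\bar q^{k_0}\gg1$ is vacuous. Follow the FA ladder up to $\ell=\Theta(\ln C)$ that is still small enough for Chernoff, e.g.\ $\ell\le(1-\delta)\ln C/a$, so the survivor mean is at least $(C/\ell)\underline q^{\,\ell+1}\to\infty$. Past $\ell$, one round leaves at most $C^{1-\beta}$ threats ($\beta>0$) with probability $1-o(1)$, so $y\ge C^{\beta}$. The next round then clears everything with probability $1-o(1)$, giving an $O(1)$ endgame; compare step (c) in the proof of Lemma~\ref{lem:homogeneous-fa-base-insensitive}.

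With these repairs and the squaring lemma, your route goes through.
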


\smallskip

\textbf{Fair allocation for (\textsf{SLM}$_1$).} We now evaluate the performance of $\pi_{\textsf{FA}}$ under \cref{eq::SurvivalObjective}. Fixing all other parameters, it is intuitive that fair allocation should perform well when $\tau$ is sufficiently large, since the survival likelihood approaches one as $\tau\to\infty$. This intuition is also supported by the constant-factor performance ratio in \eqref{eq:DTM_ratio2}, together with the relationship in \eqref{eq:dtm_slm}, which shows that the expected defusing time is determined by the survival likelihood across all possible deadlines. Our next result therefore focuses on the case $\tau=1$ and establishes a worst-case performance guarantee for fair allocation relative to the optimal policy for $(\textsf{SLM}_1)$. Again, we focus on the case $C \ge n$, since otherwise the survival likelihood is identically zero.

\begin{theorem}[Performance of $\pi_{\textsf{FA}}$ for (\textsf{SLM}$_1$)]
\label{thm:tau1_ratio_cfree}
Let $\tau=1$, $C\geq n$, and $\pi_{\textsf{FA}} \in \Pi_{\textsf{FA}}.$ Then
$$
\frac{V_{\textsf{SLM}_1}^{\pi_\textsf{FA}}}{V_{\textsf{SLM}_1}^*}
\;\ge\;\max\left(\exp \left(-\frac{C}{e\lfloor C / n\rfloor}\right),\left(\frac{\lfloor C/n\rfloor}{\,C-n+1\,}\right)^n\right),  \quad\quad\quad \forall q \in [0,1)^n.
$$
In particular, the above bound implies $V_{\textsf{SLM}_1}^{\pi_\textsf{FA}}/V_{\textsf{SLM}_1}^* = 1$\; for $n=1$ or $C=n$. More generally, when $C/n\in  \mathbb{Z}$, we have $V_{\textsf{SLM}_1}^{\pi_\textsf{FA}}/V_{\textsf{SLM}_1}^*\geq \exp(-n/e)$; further, this bound is asymptotically tight as $n\to\infty$. %

\end{theorem}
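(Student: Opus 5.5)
With $\tau=1$ the problem is static: $V_{\textsf{SLM}_1}^\pi=\prod_{i=1}^n(1-q_i^{x_i})$, maximized over integer $x$ with $\sum_i x_i=C$. Any optimal allocation has $x_i\ge 1$ for every $i$, since otherwise the product vanishes. Write $C=kn+r$ with $k=\lfloor C/n\rfloor\ge 1$ and $0\le r<n$. A fair allocation gives $k$ or $k+1$ effectors to each threat, so $V^{\pi_{\textsf{FA}}}_{\textsf{SLM}_1}\ge\prod_i(1-q_i^{k})$. The plan is to bound each factor ratio $(1-q_i^{k})/(1-q_i^{x_i^*})$ from below and multiply the bounds.

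\textbf{Second bound.} I would use the elementary inequality
\[
\frac{1-q^{a}}{1-q^{b}}=\frac{1+q+\dots+q^{a-1}}{1+q+\dots+q^{b-1}}\ge \frac{a}{b}\quad (a\le b),
\]
which holds because $q^j$ is decreasing in $j$, so the average of the first $a$ terms is at least the average of the first $b$ terms. When $x_i^*\le k$ the factor ratio is at least $1$. When $x_i^*>k$ it is at least $k/x_i^*$, and $x_i^*\le C-(n-1)$ because every other threat gets at least one effector. Multiplying over $i$ gives the ratio $\ge \bigl(k/(C-n+1)\bigr)^n$. This already yields ratio $1$ when $C=n$ ($k=1$, $C-n+1=1$) and when $n=1$ (then $k=C$).

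\textbf{First bound.} Here I would use the budget more carefully. Only threats with $x_i^*>k$ lose anything, and for those the ratio is $\ge k/x_i^*$. So the ratio is at least $\prod_{i\in S}k/x_i^*$, where $S=\{i: x_i^*>k\}$ and $\sum_{i\in S}x_i^*\le C$. It then suffices to minimize $\prod_{i\in S}(k/y_i)$ over reals $y_i\ge k$ with $\sum y_i\le C$. By AM--GM with $|S|=m$ this minimum is $(km/C)^m$. Setting $t=C/(km)$, the quantity $-m\ln(C/(km))=-(C/k)\,\ln t/t$ is smallest when $\ln t/t$ is largest, which happens at $t=e$ with value $1/e$. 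This gives the bound $\exp(-C/(ek))$. The point of the continuous relaxation is that it only loosens the minimum, so the bound remains valid.

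\textbf{Divisible case and tightness.} When $C/n\in\mathbb{Z}$ we have $C/k=n$, so the first bound becomes $e^{-n/e}$. For asymptotic tightness I would build an instance with $C=n$ (or $k=1$ more generally) in which about $n/e$ threats have $q_i\to1$ and the rest have $q_i\to0$. Each hard threat then has factor $\approx x_i(1-q_i)$, while each easy threat needs only one effector. The optimum spreads the spare effectors over the hard ones, about $e$ each, while fair allocation gives them $1$ each. I expect this to be the main obstacle. With $C=n$ there are no spare effectors, so I would instead take $C=2n$, $k=2$, and redo the balancing: put $m$ threats near $q=1$ and $n-m$ threats near $q=0$, where the optimum gives the easy threats one effector each and spreads $n+m$ effectors over the $m$ hard ones. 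The ratio is then $\bigl(2m/(n+m)\bigr)^m$, and I would choose $m$ to drive it toward $e^{-n/e}$. Matching the exponent exactly, with integer rounding and sending $q\to1,0$, is the delicate calculation.
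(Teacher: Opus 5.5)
Your two lower bounds are correct and follow the paper's proof essentially step for step: the chord inequality $1-q^{k}\ge \frac{k}{x}(1-q^{x})$ (the paper phrases it as concavity), AM--GM over $S=\{i:x_i^*>\lfloor C/n\rfloor\}$ followed by $\ln t\le t/e$, and $x_i^*\le C-n+1$ for the second bound. The one difference is that you budget $\sum_{i\in S}x_i^*\le C$ directly. The paper first uses the sharper $\sum_{i\in S}x_i^*\le C-n+|S|$ and relaxes only at the end. That sharper budget is exactly what shows your tightness plan cannot work.

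\textbf{The gap: tightness with $C=2n$.} This construction cannot reach $e^{-n/e}$, and no choice of $m$ or rounding fixes it. Keep the sharper budget, using the fact that each threat outside $S$ takes at least one effector. Your own argument then gives, for every $q$,
\[
\frac{V_{\textsf{SLM}_1}^{\pi_{\textsf{FA}}}}{V_{\textsf{SLM}_1}^*}\;\ge\;\min_{1\le m\le n}\Bigl(\frac{km}{C-n+m}\Bigr)^{m}.
\]
At $C=2n$ this is $\min_m\bigl(2m/(n+m)\bigr)^m$, which is precisely the quantity your instance achieves, so your instance is already near worst case for $k=2$ and cannot go lower. With $m=\alpha n$ the exponent is $n\min_{\alpha}\alpha\ln\frac{2\alpha}{1+\alpha}\approx -0.232\,n$, attained near $\alpha\approx 0.3$, whereas $-n/e\approx -0.368\,n$. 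So for $k=2$, and likewise for any fixed $k$, the worst-case ratio is exponentially larger than $e^{-n/e}$. The reason is that when $C/n$ is fixed, the $n-m$ effectors forced onto easy threats are a constant fraction of $C$.

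\textbf{The fix.} Send $k=C/n\to\infty$; the paper takes $C=n^2$. Then $\frac{km}{C-n+m}=\frac{m}{n}\cdot\frac{C}{C-n+m}$, so the ratio is $(m/n)^m$ up to a factor $e^{O(n/k)}=e^{o(n)}$. Minimizing at $m\approx n/e$ gives $\exp(-(1-o(1))n/e)$.

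Because each hard threat now receives about $C/m$ effectors, you must also keep the hard threats in the linear regime. Take $q_i=e^{-\epsilon}$ on hard threats and $q_i=\epsilon$ on easy ones, with $\epsilon C\to 0$ (the paper uses $\epsilon=n^{-4}$). Then $1-q_i^{x}=\epsilon x\,(1+o(1))$ uniformly for $x\le C$, and the easy factors contribute $1-o(1)$.

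Finally, you only need a lower bound on $V^*_{\textsf{SLM}_1}$. Evaluating the explicit allocation (one effector per easy threat, the rest spread evenly over the hard ones) suffices, so you do not need to identify the optimizer.
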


When $C/n\notin\mathbb{Z}$, fair allocation cannot split the effectors evenly across threats, and the remaining $C-n\lfloor C/n\rfloor$ surplus effectors must be assigned according to tie-breaking rules. To obtain a performance lower bound that is independent of the particular tie-breaking rule, our analysis conservatively ignores the contribution of these surplus assignments to the overall survival likelihood; this explains why the lower bound depends on $\lfloor C/n\rfloor$. The bound in \Cref{thm:tau1_ratio_cfree} reveals two regimes governing the single-round performance of fair allocation. The first term inside the maximum decays exponentially in $n$, namely as $\exp(-\Theta(n))$. The second term can decay faster, on the order of $\exp(-\Theta(n\ln n))$, for example, when $C(n) - n = \Omega(n)$; 
on the other hand, when $C$ is close to $n$ or $n$ is small, it can be the dominating lower bound, and even assume the value 1 when $C = n$, or $n = 1.$

To see the asymptotic tightness, %
consider we have $m$ hard threats with $q_H=e^{-\epsilon}$ and $n-m$ easy threats with $q_E=\epsilon>0$. Here, $m\in\{1,2,\dots,n-1\}$ is an arbitrary integer less than $n$. We choose $\epsilon \coloneqq \epsilon(n)$ and $C \coloneqq C(n)$ such that $\epsilon C\rightarrow0$ and $C/n\rightarrow\infty$ as $n\rightarrow\infty$ (e.g., $\epsilon=1/n^4$ and $C=n^2$). When $\epsilon\rightarrow 0$, the probability that a hard threat is neutralized under $x$ effectors follows $1-q_H^x = 1-e^{-\epsilon x}=\epsilon x+O(\epsilon^2)$. Meanwhile, with high probability, each easy threat can be neutralized by a single effector. Thus, the survival probability is asymptotically determined by the number of effectors allocated to the hard threats, 
given that each easy threat receives at least one effector. On one hand, $\pi^*$ allocates exactly one effector to every easy threat and allocates the remaining $C-(n-m)$ effectors as evenly as possible across the hard threats. Thus, $V_{\textsf{SLM}_1}^*/\epsilon^m $ approaches $((C-(n-m)) / m)^m\sim(C / m)^m$ as $n\rightarrow\infty$. On the other hand, $\pi_{\textsf{FA}}$ allocates at most $\lceil C / n \rceil$ units to each threat, so $V_{\textsf{SLM}_1}^{\pi_{\textsf{FA}}}/\epsilon ^m$ approaches $(C / n)^m$ as $n\rightarrow\infty$. Consequently, the ratio $V_{\textsf{SLM}_1}^{\pi_{\textsf{FA}}} / V_{\textsf{SLM}_1}^*$ approaches $(m / n)^m$ as $n\rightarrow\infty$. Since $m$ can be chosen arbitrarily, optimizing over $m$ gives $\inf_{m\in \mathbb{N}}(m / n)^m=\exp (-(1-o(1)) n / e)$ as $n \rightarrow \infty$. This leads to the worst-case exponential decay rate in \Cref{thm:tau1_ratio_cfree}.

\smallskip

\textbf{Fair allocation for \cref{eq::EffectiveObjective}.}
We now turn to \cref{eq::EffectiveObjective}. Unlike the all-or-nothing and coupled structure of \cref{eq::SurvivalObjective}, \cref{eq::EffectiveObjective} admits both a round-by-round and a threat-wise decomposition, since its objective can be expressed as the total number of effective assignments accumulated across all rounds and all threats. This separable structure allows us to extend the performance analysis to arbitrary $\tau$. For any realization $L$, let $V_{\textsf{EAM}_\tau}^{\pi}(L)$ denote the total number of effective assignments with deadline $\tau$ under policy $\pi$. Since at most $C\tau$ assignments can be made over the horizon and each threat $i$ can contribute at most $L_i$ effective assignments, the maximum achievable value under an omniscient benchmark with access to the realization of $L$ is $\min\left\{C\tau,\sum_{i=1}^n L_i\right\}$. Similar to \Cref{theorem:DTM_fair_ratio}, the following theorem characterizes both $\pi_{\textsf{FA}}$'s performance against this omniscient benchmark and also any optimal policy.

\begin{theorem}[Performance of Fair for \eqref{eq::EffectiveObjective}]
\label{thm:fair_alloc_min_waste}
    For any $\tau, n\in\mathbb{Z}_{\ge1}$, fair allocation $\pi_{\textsf{FA}}$ has a competitive ratio $$%
    \inf_{L\in \mathbb{Z}_{\geq 1}^n, C\in\mathbb{Z}_{\geq 1}}\frac{V_{\textsf{EAM}_\tau}^{\pi_\textsf{FA}}(L)}{\min\{C\tau ,\sum_{i=1}^n L_i\} }\geq n^{-1/\tau}.$$ %
    In particular, when $\tau=1$, %
    $V_{\textsf{EAM}_1}^{\pi_\textsf{FA}}/V_{\textsf{EAM}_1}^{*}\geq 1/n$ for any $C \in \mathbbm{Z}_{\geq 1},q \in [0,1)^n.$ 
    Moreover, for any fixed $\tau, n\in\mathbb{Z}_{\ge1}$ such that $n^{1/\tau}\in \mathbb{Z}_{\ge1}$, these bounds are asymptotically tight as $C/n \to_{\mathbb{Z}}
    \infty$.
\end{theorem}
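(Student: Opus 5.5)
The plan is a sample-path argument. We fix a realization $L$ and compare $V_{\textsf{EAM}_\tau}^{\pi_\textsf{FA}}(L)$ with $\mathrm{OPT}(L):=\min\{C\tau,\sum_i L_i\}$. The statement about $V^*_{\textsf{EAM}_1}$ then follows by taking expectations: any policy, in particular $\pi^*$, satisfies $V^{\pi}_{\textsf{EAM}_\tau}(L)\le \mathrm{OPT}(L)$ pathwise. So $V^{\pi_\textsf{FA}}_{\textsf{EAM}_\tau}\ge n^{-1/\tau}\,\mathbb{E}[\mathrm{OPT}(L)]\ge n^{-1/\tau}V^*_{\textsf{EAM}_\tau}$, which for $\tau=1$ is the $1/n$ bound.

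\textbf{Trivial case.} If fair allocation clears every threat by round $\tau$, its value is $\sum_i L_i\ge \mathrm{OPT}(L)$ and there is nothing to prove.

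\textbf{Main case: a threat survives past round $\tau$.} Then $m_t:=M_t\ge 1$ for all $t\le \tau+1$, and $m_1=n$. Let $E_t$ be the number of effective shots fired in round $t$. The target is a per-round bound
\[
E_t\;\ge\; C\,\frac{m_{t+1}}{m_t},
\]
or, if rounding makes this false, some substitute $E_t\ge C\,\rho_t$ whose factors still telescope, i.e.\ $\prod_{t\le\tau}\rho_t\ge m_{\tau+1}/n$. Given such a bound, AM--GM yields
\[
\sum_{t=1}^\tau E_t\;\ge\; C\tau\Bigl(\prod_{t=1}^{\tau}\rho_t\Bigr)^{1/\tau}\;\ge\; C\tau\,(m_{\tau+1}/n)^{1/\tau}\;\ge\; C\tau\, n^{-1/\tau},
\]
which is the claimed competitive ratio. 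The per-round bound rests on two elementary facts:
\begin{itemize}
\item every shot fired at a threat that survives the round is effective;
\item every threat killed in the round contributes at least one effective shot.
\end{itemize}
If $C\le m_t$, fair allocation fires at most one shot per threat, so $E_t=C$ and the bound holds.

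\textbf{Main obstacle: rounding when $C>m_t$.} Write $C=a m_t+b$ with $0\le b<m_t$. The naive estimate $E_t\ge a\,m_{t+1}+(m_t-m_{t+1})$ can fall below $C m_{t+1}/m_t$. This happens, for example, when exactly one threat dies and the $b$ surplus shots all went to survivors, or when $a=1$ and $b=m_t-1$. So a pure per-round bound is false in general, and I would first try to repair it by amortizing across rounds.
\begin{itemize}
\item \emph{Threat-wise accounting.} Every threat alive at time $\tau+1$ has had all its cumulative shots $\sum_{t\le\tau}x_i^t$ counted as effective. A threat that received $\lceil C/m_t\rceil$ shots in some round and survived therefore carries those surplus shots as pure gain.
\item \emph{Adjusted product.} I would charge each round's loss relative to $C m_{t+1}/m_t$ against the killed threats, and aim to show $E_t\ge C\rho_t$ with $\rho_t:=(m_{t+1}+\delta_t)/(m_t+\delta_t)$. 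Here $\delta_t\ge 0$ accounts for the rounding slack, and the terms must still telescope to at least $1/n$.
\item \emph{Fallback.} Alternatively, prove the inequality $\sum_t E_t\ge C\tau n^{-1/\tau}$ by induction on $\tau$. Condition on the first round, apply the inductive hypothesis with $n$ replaced by $m_2$, and minimize $E_1/C+(\tau-1)m_2^{-1/(\tau-1)}$ over the feasible pairs $(m_2,E_1)$. This is a one-dimensional convexity check in which the rounding appears only once.
\end{itemize}

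\textbf{Tightness.} Fix $\tau,n$ with $k:=n^{1/\tau}\in\mathbb{Z}$. Take $C$ a large multiple of $n$, and group the threats so that in each round exactly a $1/k$ fraction of the active ones survive while the rest have $L_i=1$ (easy). In that construction each round wastes a $(1-1/k)$ fraction of the fire on easy threats, so $E_t\approx C/k$ per round, whereas the omniscient benchmark achieves $C\tau$. Sending $C/n\to\infty$ removes the rounding effects and makes the ratio $n^{-1/\tau}$ exact in the limit. For the version comparing with $V^*$, the same instance is realized with $q_i\to 0$ for the easy threats and $q_i\to 1$ for the hard ones, so that $\pi^*$ can concentrate fire on the hard threats.
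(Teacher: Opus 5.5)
\textbf{Gap: the per-round inequality.} The core step is the per-round inequality $E_t\ge C\,m_{t+1}/m_t$. Once you have it, your AM--GM telescoping and the passage to $V^*$ via $V^{\pi}_{\textsf{EAM}_\tau}(L)\le\min\{C\tau,\sum_i L_i\}$ finish the proof, exactly as in the paper. You conclude the inequality is false when $C>m_t$ and replace it with repairs that are never carried out. The slack $\delta_t$ is left unspecified. The fallback induction still needs a lower bound on $E_1$ in terms of $m_2$, which is the same inequality.

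In fact the inequality is true. Your examples only show that the estimate $a\,m_{t+1}+(m_t-m_{t+1})$ is lossy, because it discards the $b$ surplus shots. In your first example those shots all land on survivors and are all effective. The missing accounting is as follows. Write $C=a m_t+b$ and let $d:=m_t-m_{t+1}$ be the number of deaths. Survivors waste nothing, a killed threat wastes at most $x_i^t-1$ shots, and at most $\min\{b,d\}$ of the killed threats received $a+1$ shots. Hence
\[
E_t\;\ge\; C-d(a-1)-\min\{b,d\}\;=\;a\,m_{t+1}+b+\bigl(d-\min\{b,d\}\bigr)\;\ge\; a\,m_{t+1}+b\;\ge\; a\,m_{t+1}+b\,\frac{m_{t+1}}{m_t}\;=\;C\,\frac{m_{t+1}}{m_t}.
\]
This is precisely the paper's per-round lemma. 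The paper phrases it as: survivors' base shots are effective, each killed threat contributes at least one effective shot, and at least $(b-d)^+$ surplus shots land on survivors. No amortization across rounds is needed.

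\textbf{Tightness.} Your sketch is right in spirit but needs two specifics to become a proof.
\begin{itemize}
\item A threat meant to die in round $t$ must have $L_i=1+\sum_{s<t}x_i^s$, not $L_i=1$, so that it gets exactly one effective shot in its death round.
\item The lone survivor must have $L\ge C\tau$ (the paper takes $L_n=\tau C$); otherwise $\min\{C\tau,\sum_i L_i\}$ is not $C\tau$.
\end{itemize}
The ratio is then $1/k+(n-1)/(\tau C)$. What vanishes as $C\to\infty$ is the $(n-1)/(\tau C)$ term, not a rounding error, since $C$ is a multiple of every $m_t$.

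For the comparison with $V^*_{\textsf{EAM}_1}$, the statement only concerns $\tau=1$. There you need the hard threat's $1-q$ to vanish faster than $1/C$, e.g.\ $q_1=1-1/C^2$ and $q_i=1/C^2$ otherwise. This gives $V^{\pi_{\textsf{FA}}}_{\textsf{EAM}_1}=C/n+O(n)$ against $V^*_{\textsf{EAM}_1}=C(1-o(1))$. Your phrase ``the same instance is realized'' should not be read as covering $\tau\ge2$. With independent geometric demands, deaths at prescribed cumulative counts cannot be forced with high probability.
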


Theorem~\ref{thm:fair_alloc_min_waste} shows that the worst-case performance ratio of $\pi_{\textsf{FA}}$ under \cref{eq::EffectiveObjective} has an exponential decay in $1/\tau$. For any fixed $n$, the ratio approaches $1$ and thus $\pi_{\textsf{FA}}$ is asymptotically optimal as $\tau$ increases. 
The worst‑case construction for the $n^{-1/\tau}$ ratio uses a layered hierarchy of $L_i$. Under such construction, $\pi_{\textsf{FA}}$ neutralizes a constant fraction of alive threats in each round, but for each neutralized threat, only one effector is effectively assigned and the rest are wasted. As a consequence, $\pi_{\textsf{FA}}$ repeatedly wastes effectors on threats that are about to be neutralized. At the same time, we construct one ``impossible'' threat so that the omniscient policy can put all effectors to it and make every assignment effective. A detailed construction can be found in Online Appendix \ref{sec:proof}.

\begin{remark}
Comparing \Cref{thm:tau1_ratio_cfree} and \Cref{thm:fair_alloc_min_waste} shows that fair allocation can be more fragile under \cref{eq::SurvivalObjective} than under \cref{eq::EffectiveObjective}. When $\tau=1$, \Cref{thm:fair_alloc_min_waste} guarantees
$V_{\textsf{EAM}_1}^{\pi_{\textsf{FA}}}/V_{\textsf{EAM}_1}^{*}\geq 1/n$. In contrast, when $C/n\in\mathbb{Z}$, \Cref{thm:tau1_ratio_cfree} gives worst-case instances with
$V_{\textsf{SLM}_1}^{\pi_{\textsf{FA}}}/V_{\textsf{SLM}_1}^{*}=\exp(-(1-o(1))n/e)$, exponentially smaller in $n$. The reason is that the all-or-nothing nature of SLM amplifies misallocation whereas EAM still credits effective assignments on threats that are served. Numerical comparisons corroborating this observation will be provided in \Cref{sec:numerical_experiment}. %
\end{remark}

\section{Greedy Policies}\label{sec:greedy_policies}
In \Cref{sec:fair_alloc}, %
we show that fair allocation can underperform when threats are highly heterogeneous in difficulty. This is partly because $\pi_{\textsf{FA}}$ %
does not take into account the difficulty $q = (q_1,\ldots,q_n)$. One could anticipate improved performance by a time-oblivious policy that actually does consider $q$. In this section, we present two such policies: $\pi_{\textsf{SL}}$ and $\pi_{\textsf{EA}}$. 

We consider \emph{greedy} policies, which at each round execute any allocation that makes the largest immediate progress for an objective, without regard for later rounds. More formally, in each round $t,$ the policy effectively re-instantiates the parameters $(n,C,q,\tau)$ by setting $n = |\mathcal{M}_t|$, $q = (q_i)_{i \in \mathcal{M}_t}$, and $\tau = 1$. The policies $\pi_{\textsf{SL}}$ and $\pi_{\textsf{EA}}$ are fashioned in this way to repeatedly solve the resulting instances of $(\textsf{SLM}_1)$ and $(\textsf{EAM}_1)$, respectively, at each round $t.$ As we will see, both problems $(\textsf{SLM}_1)$ and $(\textsf{EAM}_1)$ are efficiently solvable, making both policies' decisions at each round efficiently implementable. In fact, their respective 
objective functions $V^\pi_{\textsf{SLM}_1}$ and $V^\pi_{\textsf{EAM}_1}$ both amount to $M^\natural$- concave functions, which can be efficiently maximized via the discrete steepest ascent method \citep{MurotaDiscreteConvex}. The following lemma is an adaptation and consequence of this insight for our context. 
\begin{lemma}[\cite{MurotaDiscreteConvex}, Theorem 6.24]
\label{lem:separable_concave_greedy}
Let $f_1, \ldots, f_n$ be a collection of functions in which for every $i,$
$f_i:\mathbb{Z}_{\ge 0}\to\mathbb{R}$ is non-decreasing and submodular, i.e., $\Delta_i(x_i) \coloneqq f_i(x_i+1)-f_i(x_i)$ is non-negative and non-increasing in $x\in\mathbb{Z}_{\ge 0}$. Then a vector 
\[
\bar{x} \in \argmax_{x\in\mathbb{Z}_{\ge 0}^n:\ \sum_{i=1}^n x_i=C}\ \sum_{i=1}^n f_i(x_i)
\]
can be constructed via the following steepest ascent method: first initialize $\bar{x}_i=0$ for all $i\in[n]$; then iteratively assign the $C$ effectors, wherein for each iteration we increment $\bar{x}_{i^*}$ by 1 for some $i^*\in\argmax_{i\in[n]} \Delta_i(\bar{x}_i)$.
\end{lemma}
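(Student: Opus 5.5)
We will prove that the steepest-ascent output $\bar x$ is optimal through an exchange argument that relies only on separability and the diminishing increments $\Delta_i$. The plan has two layers. First we characterize optimality by a local condition. Then we show that the greedy vector meets that condition.

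\textbf{Step 1 (local optimality certificate).} Let $x$ be feasible, so $\sum_i x_i = C$ with $x \ge 0$. Suppose $x$ has no improving unit transfer, that is,
\[
\Delta_j(x_j) \le \Delta_i(x_i-1) \quad \text{for all } i \text{ with } x_i\ge 1 \text{ and all } j.
\]
We claim $x$ is a global maximizer. Choose a threshold $\lambda$ with
\[
\max_j \Delta_j(x_j) \le \lambda \le \min_{i: x_i \ge 1} \Delta_i(x_i-1).
\]
This is possible precisely because of the no-transfer condition. Since each $\Delta_i$ is non-increasing, for any feasible $y$ we have
\[
f_i(y_i)-f_i(x_i) \le \lambda (y_i - x_i) \quad \text{for every } i.
\]
To see this, when $y_i > x_i$ telescope the increments $\Delta_i(x_i), \dots, \Delta_i(y_i-1)$, each of which is $\le \Delta_i(x_i) \le \lambda$. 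When $y_i < x_i$, the removed increments $\Delta_i(y_i), \dots, \Delta_i(x_i-1)$ are each $\ge \Delta_i(x_i-1) \ge \lambda$. Summing over $i$ and using $\sum_i y_i = \sum_i x_i = C$ gives $\sum_i f_i(y_i) \le \sum_i f_i(x_i)$. This is the discrete Lagrangian/KKT argument behind Murota's theorem, specialized to separable functions.

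\textbf{Step 2 (greedy satisfies the certificate).} We run induction over the $C$ iterations of steepest ascent. Let $\lambda_k$ denote the increment chosen at iteration $k$.

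The invariant is that $\lambda_k$ is non-increasing in $k$. Any coordinate's current increment can only decrease after it is incremented, by monotonicity of $\Delta_i$. The coordinates not touched keep their increments. So the maximum available increment never increases.

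At termination, for each $i$ with $\bar x_i \ge 1$, the last unit given to $i$ had value $\Delta_i(\bar x_i -1)$. That value equals the maximum at its iteration, so it is at least every later maximum, in particular at least the final $\max_j \Delta_j(\bar x_j)$. This is exactly the certificate of Step 1, and optimality of $\bar x$ follows.

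Non-negativity of $\Delta_i$ is not needed for the equality-constrained problem. It is recorded in the statement only so that using all $C$ effectors is also optimal for the $\le C$ version of the problem.

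The main obstacle is the bookkeeping in Step 2: justifying that the last unit assigned to $i$ dominates the final maximum even though other coordinates change afterward. The monotone-maximum invariant handles this.

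Ties in the $\argmax$ are harmless, since the argument never uses which maximizer is chosen. Finally, we will note that this instantiates Murota's Theorem 6.24, because a separable sum of univariate discrete-concave functions is $M^\natural$-concave.
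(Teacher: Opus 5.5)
Your proof is correct, but it takes a different route from the paper's. The paper argues by induction on $C$. It picks $i^*$ maximizing $\Delta_i(0)$ and uses a single unit exchange, with gain $\Delta_{i^*}(0)-\Delta_j(x_j^*-1)\ge \Delta_{i^*}(0)-\Delta_j(0)\ge 0$, to show that some optimum gives $i^*$ at least one effector. It then recurses on the budget-$(C-1)$ problem with the shifted function $g_{i^*}(k)=f_{i^*}(k+1)$, whose greedy run coincides with the continuation of the original greedy run.

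You instead prove a global optimality certificate: any feasible $x$ with no improving unit transfer is optimal. A price $\lambda$ placed between $\max_j \Delta_j(x_j)$ and $\min_{i:x_i\ge 1}\Delta_i(x_i-1)$ witnesses this, via $f_i(y_i)-f_i(x_i)\le \lambda(y_i-x_i)$ for each $i$. You then verify the certificate for the greedy output using the invariant that the selected increment is non-increasing across iterations.

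Your route is non-recursive and proves more. It shows that transfer-optimality implies global optimality for every feasible point, not only the greedy one. It also exhibits an explicit dual price, namely the marginal value of an effector, which is the separable form of the $M^\natural$-concavity behind Murota's theorem.

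The paper's induction is more elementary, since it needs only one exchange at the first step. However, it leaves implicit the bookkeeping that greedy on the shifted residual instance reproduces the original greedy trajectory, whereas your monotone-maximum invariant makes the corresponding step explicit.

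Your remark that non-negativity of $\Delta_i$ is not needed for the equality-constrained problem is also right. The paper's argument likewise never uses it.
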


For both \cref{eq::SurvivalObjective} and \cref{eq::EffectiveObjective}, greedy policies can be suboptimal when $\tau\geq 2$. As $\tau$ increases, the optimal policy $\pi^*$ tends to allocate more and more effectors to hard threats in early rounds. This is reflected in property 2 of \Cref{theorem:properties_n=2}, where the optimal first-round allocation for the harder threat increases in $\tau$ when $n=2$. By contrast, the allocation under both greedy policies does not depend on $\tau$. %

We now proceed to apply \Cref{lem:separable_concave_greedy} towards the study of $\textsf{SLM}_1$ and $\textsf{EAM}_1$, which will reveal the explicit forms of $\pi_{\textsf{SL}}$ and $\pi_{\textsf{EA}}$.

\subsection{Greedy Policy $\pi_{\textsf{SL}}$ for Survival Likelihood Maximization}
Under the parameter regime $C \geq n$, any instance of problem (\textsf{SLM}$_1$) seeks to find an allocation from among the set
\[
\argmax_{x\in \mathbb{Z}_{\ge 0}^n:\ \sum_i x_i=C}\ \prod_{i=1}^n \bigl(1-q_i^{x_i}\bigr) = \mathbbm{1}_{[n]} + \argmax_{x\in \mathbb{Z}_{\ge 0}^n:\ \sum_i x_i= C - n}\ \sum_{i=1}^n \ln\bigl(1-q_i^{x_i+1}\bigr),
\]
where $\mathbbm{1}_{[n]}$ denotes the $n$-vector of all ones. 
Noting that for all $i,$ the summand $\ln\bigl(1-q_i^{x_i+1}\bigr)$ is non-decreasing and 
\[
\Delta_{\textsf{SL},i}(x_i) := \ln\bigl(1-q_i^{x_i+1}\bigr)-\ln\bigl(1-q_i^{x_i}\bigr)
= \ln\!\left(1+\frac{q_i^{x_i}(1-q_i)}{1-q_i^{x_i}}\right) = \ln\!\left(1+\frac{1}{\sum_{k=1}^{x_i}q_i^{-k}}\right)
\]
is non-increasing, the steepest ascent method of \Cref{lem:separable_concave_greedy} solves such instances to optimality. Combined with the fact that cases of $C < n$ are solved trivially (by any $x \in \mathcal{A}$: $\sum_i x_i = C$), the greedy policy $\pi_{\textsf{SL}}$ now follows in a straightforward manner along the lines discussed previously, and formalized in \Cref{alg::ML_Estim}.

\begin{algorithm}
\small
\caption{Greedy Survival Likelihood Maximization $\pi_{\textsf{SL}}$}
\begin{algorithmic}
\Require Set of active threats $\mathcal{M}_t$, number of effectors $C \in \mathbbm{Z}_{\geq n}$.\\
\hspace{-0.38cm}\textbf{Output:} Allocation policy $x^t\in\mathbbm{Z}_{\ge 0}^{n}$ with $\sum_{i\in\mathcal{M}_t} x_i^t = C$ in round $t$.
\State Initialize $x^t_i\leftarrow 0$ and $s_i\leftarrow 0$ for $i\in [n]$. 
\State \textbf{if} $\mathcal{M}_t = \varnothing$ \textbf{then return} $x^t$.
\For{$k = 1,\ldots, C$}
    \State $i^* \gets \argmin_{i\in \mathcal{M}_t}\ s_i$ \Comment{tie-breaking rule}.
    \State $x^t_{i^*} \gets x^t_{i^*} + 1$,\;\;\;\;\; $s_{i^*} \gets s_{i^*} + q_{i^*}^{-x^t_{i^*}}$.
\EndFor
\State \Return allocation $x^t$ in round $t$.
\end{algorithmic}
\label{alg::ML_Estim}
\end{algorithm}
As Algorithm \ref{alg::ML_Estim} admits degrees of freedom in its execution by way of different tie-breaking rules (e.g., break ties between threats in favor of the one with harder difficulty), we define the class of \emph{greedy survival likelihood policies} via
\[
\Pi_{\textsf{SL}}\coloneqq \left\{\pi \in \Pi: \pi_t(\mathcal{M}) \text{ is an output of Algorithm \ref{alg::ML_Estim},~$\forall \mathcal{M}\in\mathcal{S}$ with $\mathcal{M}\neq\emptyset$},~\forall t\in\mathbb{Z}_{\ge1}\right\}.
\]

Throughout, the notation $\pi_{\textsf{SL}}$ will denote an arbitrary member of $\Pi_{\textsf{SL}}$, and any statement in which $\pi_{\textsf{SL}}$ is referenced as if it were a particular policy, shall be disambiguated by understanding it as a statement holding for all members of the class $\Pi_{\textsf{SL}}$ (unless otherwise noted).

A natural question is whether there exists a constant worst-case performance guarantee for $\pi_{\textsf{SL}}$ under \cref{eq::SurvivalObjective} for all $\tau$. Unfortunately, the following remark shows that no such constant factor guarantee exists for $\pi_{\textsf{SL}}$ even when $\tau = 2$. 

\begin{remark}\label{example:ml_greedy_no_const_ratio}
    There exists a sequence of instances indexed by $m \in \mathbbm{Z}_{\geq 1}$, i.e., 
    $(n(m), C(m), q^{(m)})$ such that 
    $ V_{\textsf{SLM}_2}^{\pi_{\textsf{SL}}}/V_{\textsf{SLM}_2}^*
    \;\longrightarrow\; 0     
    $ as $m\to\infty$.
\end{remark}

In the worst-case sequence of instances, there is one group of easy threats and another group of extremely hard threats. Greedy \(\pi_{\textsf{SL}}\) allocates two effectors to each easy threat and splits the remaining effectors evenly among the hard threats in the first round. By contrast, the benchmark policy allocates only one effector to each easy threat and devotes more early effort to the hard threats. Although \(\pi_{\textsf{SL}}\) achieves a higher first-round survival probability, this allocation becomes inefficient in the second round: since the hard threats are extremely difficult to neutralize, meeting the deadline requires concentrating more effort on them early. As a result, \(\pi_{\textsf{SL}}\) becomes arbitrarily worse than the designed policy as the number of (both easy and hard) threats grows.

Although $\pi_{\textsf{SL}}$ has no constant-factor guarantee against the optimal policy, it still performs well for small $n$. %
The following theorem establishes that when there are two threats, greedy $\pi_\textsf{SL}$ provably outperforms fair allocation $\pi_\textsf{FA}$ in survival likelihood maximization as well as defusing time minimization. 

\begin{theorem}[Greedy policy $\pi_{\textsf{SL}}$ dominates fair allocation $\pi_\textsf{FA}$ when $n=2$]\label{thm:sl_vs_fair_n2}
{\color{red}  } When $n=2$, for all $\tau\geq 1$, we have $V_{\textsf{SLM}_\tau}^{\pi_\textsf{SL}}\;\ge\;V_{\textsf{SLM}_\tau}^{\pi_\textsf{FA}}$ and $\mathbb{E}[T^{\pi_{\textsf{SL}}}]\;\le\;\mathbb{E}[T^{\pi_{\textsf{FA}}}]$.\footnote{For this theorem, we require $\pi_{\textsf{FA}}$ and $\pi_{\textsf{SL}}$ assign the first $\min\{C,|\mathcal M_t|\}$ effectors to (distinct) active threats in each round $t$ following the same index order for threats $i\in \mathcal{M}_t$.}
\end{theorem}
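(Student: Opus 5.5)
Since $n=2$, the only state in which the two policies can differ is $\mathcal M_t=\{1,2\}$. Once a single threat $i$ remains, every policy in $\Pi$ must fire all $C$ effectors at it. Both $\pi_{\textsf{SL}}$ and $\pi_{\textsf{FA}}$ are time-oblivious, so each is described by one allocation $x=(x_1,C-x_2)$, more precisely $x=(x_1,x_2)$ with $x_1+x_2=C$, used in every round in which both threats are alive. My plan is to prove the stochastic-order statement $T^{\pi_{\textsf{SL}}}\preceq_{st}T^{\pi_{\textsf{FA}}}$, i.e.\ $\Pr(T^{\pi_{\textsf{SL}}}>\tau)\le\Pr(T^{\pi_{\textsf{FA}}}>\tau)$ for every $\tau\ge1$. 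This gives the \textsf{SLM}$_\tau$ claim directly, and the \textsf{DTM} claim then follows from the tail-sum identity \eqref{eq:dtm_slm}.

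\textbf{Trivial cases.} Take $q_1\ge q_2$ without loss of generality. If $C=1$, the footnote forces both policies to shoot the same first-indexed active threat, so they coincide. If $C=2$, both policies first place one effector on each distinct threat, so both use $(1,1)$ and again coincide; this is consistent with \Cref{theorem:fair_is_optimal_when_qi=q}. So assume $C\ge3$. Running the greedy procedure of \Cref{lem:separable_concave_greedy} with increments $\Delta_{\textsf{SL},i}$ shows that the $\pi_{\textsf{SL}}$ allocation $x^{\textsf{SL}}$ gives the harder threat at least as much as the fair allocation does, i.e.\ $x^{\textsf{SL}}_1\ge x^{\textsf{FA}}_1$. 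Indeed, for equal counts the increment is larger for larger $q$, since $\sum_{k\le x}q^{-k}$ is smaller. Hence $x^{\textsf{SL}}$ is reached from $x^{\textsf{FA}}$ by a chain of single-unit transfers from threat 2 to threat 1. Each such transfer does not decrease the one-round success probability $s(x)=(1-q_1^{x_1})(1-q_2^{x_2})$, since the greedy still prefers threat 1 along the chain by $M^\natural$-concavity. It therefore suffices to show that one such transfer cannot increase $\Pr(T>\tau)$ for any $\tau$.

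\textbf{Transition probabilities and recursion.} For an allocation $x$, define the one-round probabilities
\[
a=q_1^{x_1}q_2^{x_2},\qquad b_1=(1-q_1^{x_1})q_2^{x_2},\qquad b_2=q_1^{x_1}(1-q_2^{x_2}),
\]
where $b_1$ is the probability that only threat 2 survives and $b_2$ the probability that only threat 1 survives. Let $r_i=q_i^C$, so $r_2\le r_1$. Then $P_\tau:=\Pr(T>\tau)$ satisfies the linear recursion
\[
P_\tau = a\,P_{\tau-1}+b_1 r_2^{\tau-1}+b_2 r_1^{\tau-1},\qquad P_0=1,
\]
with closed form $P_\tau=a^\tau+\sum_{k=1}^{\tau}a^{k-1}\bigl(b_1r_2^{\tau-k}+b_2r_1^{\tau-k}\bigr)$. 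A transfer from threat 2 to threat 1 multiplies $a$ by $q_1/q_2\ge1$, so the both-alive state lasts longer, which is bad. On the other hand, it shifts mass from $b_2$ to $b_1$, so the survivor is more likely to be the easy threat with the smaller residual rate $r_2$, and by greedy preference it does not lower $s$.

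\textbf{Main obstacle.} The hard step is showing that this trade-off is favorable uniformly in $\tau$. My first attempt is induction on $\tau$ for the difference $D_\tau=P'_\tau-P_\tau$ between the transferred and original allocations. Using $1-P_\tau$ as a sum over the kill time, I would write $D_\tau$ as a combination of the increments $a'-a$, $b_1'-b_1$, $b_2'-b_2$, weighted by tail terms built from $r_1^j$, $r_2^j$, $a^j$. I would then use the greedy inequality $\Delta_{\textsf{SL},1}(x_1)\ge\Delta_{\textsf{SL},2}(x_2-1)$, which is equivalent to $s'\ge s$, together with $r_2\le r_1\le a$-type comparisons, to sign each term. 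The base case $\tau=1$ is exactly $s'\ge s$. If the direct induction fails, the fallback is a coupling of the geometric demands $L_1,L_2$ under which the transferred policy's clearance time is stochastically smaller.
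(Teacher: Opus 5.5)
Your setup is correct. For $C\ge 2$ you have $x^{\textsf{SL}}_1\ge x^{\textsf{FA}}_1$, the one-round success $s$ is non-decreasing along the unit-transfer chain, the recursion $P_\tau=aP_{\tau-1}+b_1r_2^{\tau-1}+b_2r_1^{\tau-1}$ is right, and the \textsf{DTM} claim follows from stochastic order via \eqref{eq:dtm_slm}.

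\textbf{The gap.} The step that carries the whole theorem is only announced, not proved: that a single transfer with $s'\ge s$ lowers $\Pr(T>\tau)$ for every $\tau$. The route you sketch does not close as stated. With $D_\tau=P'_\tau-P_\tau$ your recursion gives
\[
D_\tau=a'D_{\tau-1}+(a'-a)P_{\tau-1}+(b_1'-b_1)r_2^{\tau-1}+(b_2'-b_2)r_1^{\tau-1},
\]
where:
\begin{itemize}
\item $a'-a=q_1^{x_1}q_2^{x_2-1}(q_1-q_2)>0$;
\item $b_1'-b_1=q_2^{x_2-1}\bigl(1-q_2-q_1^{x_1}(q_1-q_2)\bigr)>0$;
\item $b_2'-b_2=-q_1^{x_1}(1-q_1)-(a'-a)<0$.
\end{itemize}
So only the last term helps. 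It must dominate $(a'-a)P_{\tau-1}+(b_1'-b_1)r_2^{\tau-1}$ for every $\tau$, and $P_{\tau-1}$ is a multi-round tail quantity. The greedy inequality $s'\ge s$ is exactly the $\tau=1$ instance of this requirement. It gives no control of $P_{\tau-1}$ relative to $r_1^{\tau-1}$ when $\tau\ge 2$. Signing the terms would therefore need a uniform-in-$\tau$ tail estimate that you have neither stated nor proved.

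The fallback coupling is not available in naive form either. With the same $(L_1,L_2)$ there is no pathwise domination: if $L_1\le x_1$ and $L_2=x_2$, allocation $x$ clears in round one but $x'$ does not. Any coupling must instead compare the probabilities of the events where one policy succeeds and the other fails, and that is precisely the work left undone.

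\textbf{The missing idea.} What you need is the interchange argument of \Cref{lemma:switch_dominate}. On the no-kill path, playing the allocation with more on the harder threat first and the other one second is at least as good as the reverse order. This holds for every $\tau\ge 2$ and any common continuation. It is proved by coupling $(L_1,L_2)$ and listing the disjoint events on which the two orders disagree. Everything then reduces to the elementary inequality
\[
q_2^{v}(1-q_2^{d})(1-q_1^{v})\le q_1^{v}(1-q_2^{v})(1-q_1^{d}).
\]

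\textbf{How the paper uses it.} The paper inducts on $\tau$ through two hybrid policies:
\begin{itemize}
\item $\pi_1$: fair in round one, then $\pi_{\textsf{SL}}$;
\item $\pi_2$: $\pi_{\textsf{SL}}$, then fair, then $\pi_{\textsf{SL}}$.
\end{itemize}
It shows the chain $V^{\pi_{\textsf{SL}}}_{\textsf{SLM}_\tau}\ge V^{\pi_2}_{\textsf{SLM}_\tau}\ge V^{\pi_1}_{\textsf{SLM}_\tau}\ge V^{\pi_{\textsf{FA}}}_{\textsf{SLM}_\tau}$. The middle inequality is the swap lemma. The outer two compare policies with the same first round and apply the induction hypothesis on horizon $\tau-1$. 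The base case $\tau=1$ is just the one-round optimality of $\pi_{\textsf{SL}}$.

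This needs no closed-form tail analysis and makes the unit-transfer chain unnecessary. Running the same induction with your $x$ and $x'$ in place of the fair and greedy allocations proves exactly your single-transfer claim. So your reduction is sound, but it is complete only once this interchange step is imported.
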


The dominance of $\pi_{\textsf{SL}}$ can be intuitively explained by the structural characterization of an optimal policy of \cref{eq::SurvivalObjective} in Theorem~\ref{theorem:properties_n=2}. When $n=2$, $\pi_{\textsf{SL}}$ allocates more effectors to the harder threat, which is aligned with the optimal structure. By contrast, $\pi_{\textsf{FA}}$ allocates effectors uniformly across threats. The dominance of $\pi_{\textsf{SL}}$ under \cref{eq::MinDefuse} follows directly from the fact that the expected defusing time under any policy $\pi$ is determined by its survival likelihood over all deadlines; see \cref{eq:dtm_slm}. Since $\pi_{\textsf{SL}}$ outperforms $\pi_{\textsf{FA}}$ for every $\tau$ under \cref{eq::SurvivalObjective}, it also outperforms $\pi_{\textsf{FA}}$ under \cref{eq::MinDefuse}.

\subsection{Greedy Policy $\pi_{\textsf{EA}}$ for Effective Assignment Maximization}
Any instance of problem $(\textsf{EAM}_1)$ seeks to find an allocation from the set
$$
\argmax _{x \in \mathbb{Z}_{\geq 0}^n: \sum_{i=1}^n x_i=C} ~ \sum_{i=1}^n \mathbb{E}\left[\min \left\{x_i, L_i\right\}\right]=\argmax _{x \in \mathbb{Z}_{\geq 0}^n: \sum_{i=1}^n x_i=C} \sum_{i=1}^n \frac{1-q_i^{x_i}}{1-q_i}.
$$
Noting that for all $i,$ the summand $(1-q_i^{x_i})/(1-q_i)$ is non-decreasing and 
\[
\Delta_{\textsf{EA},i}(x_i) := (1-q_i^{x_i+1})/(1-q_i)-(1-q_i^{x_i})/(1-q_i)=q_i^{x_i}
\]
is non-increasing, the steepest ascent method of \Cref{lem:separable_concave_greedy} solves to optimality. We hence formalize the greedy policy $\pi_{\textsf{EA}}$ in Algorithm \ref{alg:: WM}.

\begin{algorithm} 
\caption{Greedy Effective Assignment Maximization $\pi_{\textsf{EA}}$}\label{alg:: WM}
\small
\begin{algorithmic} 
\Require Set of active threats $\mathcal{M}_t$, number of effectors $C \in \mathbbm{Z}_{\geq 1}$. \\
\hspace{-0.38cm}\textbf{Output:} Allocation policy $x^t\in\mathbbm{Z}_{\ge 0}^{n}$ with $\sum_{i\in\mathcal{M}_t} x_i^t = C$ in round $t$.
\State Initialize $x_i^t\leftarrow 0$ for $i\in [n]$. 
\State \textbf{if} $\mathcal{M}_t = \varnothing$ \textbf{then return} $x^t$.
\For{$k = 1,\ldots, C$}
    \State $i^* \gets \argmax_{i\in \mathcal{M}_t}\ q_i^{x_i^t}$. \Comment{tie-breaking rule}
    \State $x_{i^*}^t \gets x_{i^*}^t + 1$.
\EndFor
\State \Return allocation $x^t$ in round $t$.
\end{algorithmic}
\end{algorithm}
Similar to Algorithm \ref{alg::ML_Estim}, as Algorithm \ref{alg:: WM} admits degrees of freedom in its execution by way of different tie-breaking rules, %
we define the class of \emph{greedy effective assignment policies} via
\[
\Pi_{\textsf{EA}}\coloneqq \left\{\pi \in \Pi: \pi_t(\mathcal{M}_t) \text{ is an output of Algorithm \ref{alg:: WM},~$\forall \mathcal{M}\in\mathcal{S}$ with $\mathcal{M}\neq\emptyset$},~\forall t\in\mathbb{Z}_{\ge1}\right\}.
\]

Again, throughout, the notation $\pi_{\textsf{EA}}$ will denote an arbitrary member of $\Pi_{\textsf{EA}}$, and any statement in which $\pi_{\textsf{EA}}$ is referenced as if it were a particular policy, shall be disambiguated by understanding it as a statement holding for all members of the class $\Pi_{\textsf{EA}}$ (unless otherwise noted).

Recall that $V^\pi_{\textsf{EAM}_\tau}$ is the sum of the expected effective assignments obtained over $\tau$ rounds under policy $\pi$. In fact, it also exhibits a kind of \emph{adaptive} submodularity \citep{golovin2011adaptive}, whereby any allocation's contribution to the accumulation of effective assignments is weaker as time goes on. These characteristics facilitate analysis of greedy allocations in each round. The next theorem shows that, under \cref{eq::EffectiveObjective}, the policy $\pi_{\textsf{EA}}$ has a $\tau$-dependent performance guarantee with respect to an optimal policy $\pi^*$.

\begin{theorem}[Performance guarantee for $\pi_{\textsf{EA}}$] \label{thm:adaptive_greedy} For $\tau \in \mathbbm{Z}_{\geq 1}$,  
    \[
     V_{\textsf{EAM}_\tau}^{\pi_{\textsf{EA}}} \geq \left(1 - (1 - \frac{1}{\tau})^\tau\right)V_{\textsf{EAM}_\tau}^* \geq \left(1 - \frac{1}{e} \right) V_{\textsf{EAM}_\tau}^*.
    \]
\end{theorem}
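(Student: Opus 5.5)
We would prove \Cref{thm:adaptive_greedy} with the standard adaptive-submodularity argument of \citet{golovin2011adaptive}, specialized to the per-round batch structure. The key quantity is the \emph{one-round greedy gain}. For a state $\mathcal{M}_t$ with conditional law of the residual demands, let $G(\mathcal{M}_t)$ denote the maximum one-round expected effective assignment. By memorylessness of the geometric $L_i$, each active threat $i$ has residual demand $\operatorname{Geom}(1-q_i)$ regardless of history. The one-round value of allocation $x$ is therefore $\sum_{i\in\mathcal{M}_t}(1-q_i^{x_i})/(1-q_i)$, and $\pi_{\textsf{EA}}$ attains $G(\mathcal{M}_t)$ by \Cref{lem:separable_concave_greedy}. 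Let $V_t$ be the expected cumulative effective assignments of $\pi_{\textsf{EA}}$ after $t$ rounds, so $V_t-V_{t-1}=\mathbb{E}[G(\mathcal{M}_t)]$. The goal is the recursion
\[
\mathbb{E}[G(\mathcal{M}_t)] \;\ge\; \frac{1}{\tau}\bigl(V^*_{\textsf{EAM}_\tau}-V_{t-1}\bigr),\qquad t=1,\dots,\tau .
\]
It gives $V^*-V_t\le(1-1/\tau)(V^*-V_{t-1})$, hence $V_\tau\ge(1-(1-1/\tau)^\tau)V^*\ge(1-1/e)V^*$.

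To prove the recursion, we condition on the greedy history up to round $t$, so that $\mathcal{M}_t$ is fixed. We then consider running an optimal $\tau$-round policy $\pi^*$ \emph{from scratch on top of} the greedy state, in a fresh copy of the process started at $\mathcal{M}_t$. The claim is that its expected effective assignments are at least $V^*-(\text{greedy effective assignments so far})$ in expectation. The justification is a coupling: the concatenated policy "greedy for $t-1$ rounds, then $\pi^*$ restricted to active threats" uses the same randomness $L$. Its effective count is at least what $\pi^*$ alone would get, because effective assignments $\min\{\cdot,L_i\}$ are monotone in cumulative allocation. Summing over threats, total effective is at most greedy's effective plus $\pi^*$'s effective on residual demands, and by memorylessness the residual problem is again an instance of the same MDP with threat set $\mathcal{M}_t$. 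So $V^*_{\textsf{EAM}_\tau}(\mathcal{M}_t) \ge V^*_{\textsf{EAM}_\tau}([n]) - (\text{greedy effective so far})$ in conditional expectation. We expect this monotonicity/coupling step to be the main obstacle: one must handle that $\pi^*$ may target already-neutralized threats, which we simply drop by redirecting to active threats or accepting zero gain, and verify that the decomposition per threat is valid path-by-path.

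Second, we need $V^*_{\textsf{EAM}_\tau}(\mathcal{M}_t)\le \tau\, G(\mathcal{M}_t)$, an adaptive-submodularity bound. Any $\tau$-round policy from $\mathcal{M}_t$ collects in round $s$ the gain $\mathbb{E}[G'(\mathcal{M}_s)]$, where $G'$ is its one-round value at a random state $\mathcal{M}_s\subseteq\mathcal{M}_t$. We show $G(\mathcal{M}')\le G(\mathcal{M}_t)$ for $\mathcal{M}'\subseteq\mathcal{M}_t$. Since the one-round objective is a separable sum of nonnegative terms with identical per-threat functions (memorylessness again), the optimum over a subset of threats cannot exceed the optimum over the superset: extend any allocation by zeros. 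Hence each of the $\tau$ rounds contributes at most $G(\mathcal{M}_t)$.

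Combining the two bounds, and taking expectations over the greedy history, yields the recursion and the theorem. The final inequality $1-(1-1/\tau)^\tau\ge 1-1/e$ is elementary.
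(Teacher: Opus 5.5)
Your two-step architecture is the paper's. The bound $V^*_{\textsf{EAM}_\tau}(\mathcal{M}_t)\le \tau\, G(\mathcal{M}_t)$ is exactly its step combining monotonicity of the one-round optimum in the active set with one-round optimality of $\pi_{\textsf{EA}}$. Your first bound is its claim $V^*_{\textsf{EAM}_\tau}\le V^{\pi_j}_{\textsf{EAM}_{\tau+j}}$.

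The gap is in how you justify that first bound. Running $\pi^*$ after greedy ``with the same randomness $L$'' does not make its cumulative allocations dominate those of a fresh run of $\pi^*$ threat by threat. The reason is that $\pi^*$'s actions depend on the observed state path, and greedy has changed that path. So on a fixed sample path the two runs simply allocate differently, and monotonicity of $\min\{\cdot,L_i\}$ has nothing to act on. Dropping or redirecting the shots $\pi^*$ aims at dead threats does not repair this, since $\pi^*$ still needs feedback on those threats to choose its later rounds.

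The missing device is the paper's auxiliary policy $\pi_j'$. Run $\pi^*$ on the inflated set $[n]$, drawing fresh independent $\operatorname{Geom}(1-q_i)$ demands for the threats greedy has already neutralized, and credit only real threats. By memorylessness, the demand vector $\bar L$ that $\pi^*$ then faces is i.i.d.\ geometric and independent of greedy's history. The same $\bar L$ can therefore drive a fresh run, and $\pi^*$ makes identical allocations $Y$ in both.

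Even with that coupling, the per-threat comparison you plan to check ``path by path'' fails for threats greedy already killed. On such a threat the concatenated run is credited $L_i\le X_i$, where $X_i$ is greedy's cumulative allocation. The fresh run earns $\min\{Y_i,\bar L_i\}$, which can be larger. Consistent with this, your first bound is false conditionally on greedy's history. For $n=C=1$, $\tau=2$, $q>0$, on the event $\mathcal{M}_2=\emptyset$ it would require $0=V^*_{\textsf{EAM}_2}(\emptyset)\ge (1+q)-1$.

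What rescues the argument is an identity in expectation. For any adaptive policy, each round contributes $(1-q_i^{x_i})/(1-q_i)$ expected effective assignments on a live threat $i$, which is $1/(1-q_i)$ times its kill probability that round. Hence the expected effective count on $i$ equals $\Pr(i\text{ is neutralized by the horizon})/(1-q_i)$.

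Under the coupling, the fresh run's kill event $\{\bar L_i\le Y_i\}$ is contained pathwise in the concatenated run's kill event. The two coincide for threats alive after greedy, and dead threats are already killed. So threat by threat, the concatenated run earns at least as much in expectation as the fresh run. The $\pi^*$ phase is an admissible (randomized) $\tau$-round policy from $\mathcal{M}_t$, so summing over $i$ gives $V^*_{\textsf{EAM}_\tau}\le V_{t-1}+\mathbb{E}[V^*_{\textsf{EAM}_\tau}(\mathcal{M}_t)]$, and the rest of your proof goes through. The paper's own last step (``no fewer effectors, hence no fewer effective assignments'') needs this same identity for the dead threats.
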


\Cref{thm:adaptive_greedy} shows that in each round, $\pi_{\textsf{EA}}$ captures at least a $1 / \tau$ fraction of the remaining achievable expected effective assignments, so the residual gap to optimality contracts by a factor $1-1 / \tau$ per round. Iterating it over $\tau$ rounds yields the factor $1-(1-1 / \tau)^\tau$.  As $\tau$ increases, this factor (monotonically) converges to $1-1/e$ as $\tau\to\infty$. Thus, under \cref{eq::EffectiveObjective}, $\pi_{\textsf{EA}}$ admits a constant-factor performance ratio. 

Moreover, similar to \Cref{thm:sl_vs_fair_n2}, when there are two threats, we can show that $\pi_{\textsf{EA}}$ provably outperforms $\pi_{\textsf{FA}}$ in effective assignment maximization as $\pi_{\textsf{EA}}$ prioritizes the harder threat, which aligns with the optimal structural characterization in \Cref{theorem:properties_n=2}.

\begin{theorem}[Greedy policy $\pi_\textsf{EA}$ dominates fair allocation $\pi_{\textsf{FA}}$ when $n=2$]\label{thm:ea_vs_fair_n2}
{\color{red}  } When $n=2$, for all $\tau\geq 1$, we have %
$V_{\textsf{EAM}_\tau}^{\pi_\textsf{EA}}\;\ge\;V_{\textsf{EAM}_\tau}^{\pi_\textsf{FA}}$.%
\footnote{Similar to Theorem \ref{thm:sl_vs_fair_n2}, we require $\pi_{\textsf{FA}}$ and $\pi_{\textsf{EA}}$ assign the first $\min\{C,|\mathcal M_t|\}$ effectors to (distinct) active threats in each round $t$ following the same index order for threats $i\in \mathcal{M}_t$.}
\end{theorem}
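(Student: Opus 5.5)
The approach is induction on the deadline $\tau$, through a chain of hybrid policies that move from $\pi_{\textsf{FA}}$ to $\pi_{\textsf{EA}}$ one round at a time. We relabel so that $q_1 \ge q_2$; the case $q_1=q_2$ is covered by \Cref{theorem:fair_is_optimal_when_qi=q}. The footnote fixes a common index order, which gives two consequences.

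First, when only one threat $i$ is active, both policies fire all $C$ effectors at it. By memorylessness of $L_i$, the value-to-go with $r$ rounds left is then
\[
g_i(r) = \frac{1-q_i^{Cr}}{1-q_i}.
\]
So the policies differ only in the state $\{1,2\}$, where each uses a stationary split $(a,C-a)$. Fair allocation uses $a_{\textsf{FA}}=\lceil C/2\rceil$ (or $\lfloor C/2\rfloor$, according to the index order). Greedy uses $a_{\textsf{EA}}$, obtained by filling the larger of $q_1^{x_1}$ and $q_2^{x_2}$ first.

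Second, a direct check of Algorithm~\ref{alg:: WM} under the common index order shows $a_{\textsf{EA}} \ge a_{\textsf{FA}}$. The greedy split puts at least as much on the harder threat as the fair split.

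For a given continuation policy, write the value with $s$ rounds left and first split $a$ as
\[
Q_s(a) = \frac{1-q_1^{a}}{1-q_1}+\frac{1-q_2^{C-a}}{1-q_2} + q_1^{a}q_2^{C-a}V_{s-1} + (1-q_1^{a})q_2^{C-a}g_2(s-1) + q_1^{a}(1-q_2^{C-a})g_1(s-1).
\]
Here $V_{s-1}$ is the continuation value in state $\{1,2\}$. Let $V^{\textsf{EA}}_s$ and $V^{\textsf{FA}}_s$ be the two policies' values from $\{1,2\}$. The induction hypothesis is $V^{\textsf{EA}}_{s-1} \ge V^{\textsf{FA}}_{s-1}$.

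The coefficient $q_1^{a}q_2^{C-a}$ of $V_{s-1}$ is nonnegative. Hence replacing the fair continuation by the greedy one, with the first-round split held fixed at $a_{\textsf{FA}}$, does not decrease the value. It then remains to show that, under the greedy continuation, $Q_s(a_{\textsf{EA}}) \ge Q_s(a_{\textsf{FA}})$. Together these give $V^{\textsf{EA}}_s \ge V^{\textsf{FA}}_s$.

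The key step, which I expect to be the main obstacle, is showing that $a\mapsto Q_s(a)$ is nondecreasing on $[a_{\textsf{FA}},a_{\textsf{EA}}]$ under the greedy continuation.
\begin{itemize}
\item The immediate term is concave in $a$ and maximized at $a_{\textsf{EA}}$, since $a_{\textsf{EA}}$ solves $(\textsf{EAM}_1)$ by \Cref{lem:separable_concave_greedy}. So it is nondecreasing on the interval.
\item For the continuation part, I would take first differences $Q_s(a+1)-Q_s(a)$ for $a<a_{\textsf{EA}}$. After regrouping, the continuation contribution should reduce to terms of the form $q_2^{C-a}(1-q_1)q_1^{a}\,[g_2(s-1)-\cdots]$, compared against $V_{s-1}$.
\item The sign should follow from two facts. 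One is the structural input behind \Cref{theorem:properties_n=2}: shifting mass toward the harder threat raises the probability that the harder threat is killed, leaving the easier threat, whose residual value $g_2$ is smaller but which is cleared more reliably. The other is the bound $V_{s-1}\le g_1(s-1)+g_2(s-1)$.
\end{itemize}

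If the direct sign argument is messy, I would try first an adaptation of the exchange argument in \Cref{lemma:switch_dominate}. One effector is moved from threat 2 to threat 1 in the first round. The outcomes are then coupled: in the event that the moved shot matters, the freed assignment on threat 2 is recovered later, because under greedy continuation threat 2 receives all $C$ effectors once threat 1 dies.

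Finally, summing $Q$ across rounds gives the inequality for every $\tau$. The base case $\tau=1$ holds because $\pi_{\textsf{EA}}$ is exactly optimal for $(\textsf{EAM}_1)$, while $\pi_{\textsf{FA}}$ is merely feasible.
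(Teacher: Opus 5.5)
Your first step is sound: holding the round-1 split at $a_{\textsf{FA}}$ and replacing the fair continuation by the greedy one cannot lower the value. This is exactly the paper's comparison of $\pi_{\textsf{FA}}$ with the hybrid ``fair in round 1, then $\pi_{\textsf{EA}}$''. The gap is the step you flag yourself, $Q_s(a_{\textsf{FA}})\le Q_s(a_{\textsf{EA}})$ under greedy continuation; neither route you suggest establishes it.

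Set $K:=V_{s-1}-g_1(s-1)-g_2(s-1)$. Then one computes
\[
Q_s(a+1)-Q_s(a)=q_1^{a+C(s-1)}-q_2^{sC-a-1}+K\,q_1^{a}q_2^{C-a-1}(q_1-q_2).
\]
The bound $V_{s-1}\le g_1(s-1)+g_2(s-1)$ you plan to use says only that $K\le 0$. That makes the continuation term nonpositive, so it works \emph{against} monotonicity. To sign this difference you would need a \emph{lower} bound on $V^{\textsf{EA}}_{s-1}$, which you do not have. The heuristic you attribute to \Cref{theorem:properties_n=2} is not a usable inequality, and that theorem's conclusions concern the optimal continuation, not the greedy one.

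The one-effector exchange also fails as described. Take $L_2=C-a$ with $L_1$ large. Split $a$ kills threat 2 and then fires all $C$ at threat 1. Split $a+1$ leaves threat 2 needing exactly one shot, on which the greedy continuation spends $C-a_{\textsf{EA}}$ effectors, losing $C-a_{\textsf{EA}}-1$ effective assignments. The moved shot is not ``recovered'' on this event, and a single-shot coupling gives no way to balance such events against the favorable ones.

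The missing idea is to use \Cref{lemma:switch_dominate} as stated, i.e.\ swap the splits of rounds 1 and 2 on the both-alive path. You also need a stronger induction hypothesis: $Q^{\textsf{EA}}_{s-1}(a_{\textsf{FA}})\le V^{\textsf{EA}}_{s-1}$, where $Q^{\textsf{EA}}$ is $Q$ with greedy continuation; its base case holds because $\pi_{\textsf{EA}}$ is optimal for $(\textsf{EAM}_1)$. The step then goes as follows.
\begin{itemize}
\item $Q^{\textsf{EA}}_s(a_{\textsf{FA}})$ is the value of ``split $a_{\textsf{FA}}$, then $a_{\textsf{EA}}$, then $\pi_{\textsf{EA}}$''.
\item By the lemma, this is at most the value of ``split $a_{\textsf{EA}}$, then $a_{\textsf{FA}}$, then $\pi_{\textsf{EA}}$''.
\item After round 1, the latter's value from $\{1,2\}$ is $Q^{\textsf{EA}}_{s-1}(a_{\textsf{FA}})\le V^{\textsf{EA}}_{s-1}$.
\item Your nonnegative-coefficient argument then bounds it by $Q^{\textsf{EA}}_s(a_{\textsf{EA}})=V^{\textsf{EA}}_s$, which re-establishes the hypothesis at level $s$.
\end{itemize}
This is precisely the paper's chain $V^{\pi_{\textsf{FA}}}\le V^{\pi_1}\le V^{\pi_2}\le V^{\pi_{\textsf{EA}}}$.

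Your monotonicity claim is in fact true, but it does not follow from the ingredients you list. Run the same swap argument for every $a<a_{\textsf{EA}}$ and combine it with the concavity in \Cref{lem:ctm_n2_concavity}, whose proof needs only $K\le 0$ and so holds for any continuation.

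A minor point: for $q_1=q_2$, \Cref{theorem:fair_is_optimal_when_qi=q} gives $V^{\pi_{\textsf{FA}}}\ge V^{\pi_{\textsf{EA}}}$, which is the wrong direction on its own. You also need that $\pi_{\textsf{EA}}$ then itself outputs a fair allocation, so it is optimal too.
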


However, unlike \Cref{thm:sl_vs_fair_n2}, the dominance result for defusing time minimization does not hold for $\pi_{\textsf{EA}}$. That is, the dominance of $\pi_{\textsf{EA}}$ under \cref{eq::EffectiveObjective} does not imply its dominance under \cref{eq::MinDefuse}. The following example illustrates this gap.
\begin{example}
When $n=2$, $C=4$, $(q_1,q_2)=(0.7,0.4)$, we have $\mathbb{E}[T^{\pi_{\textsf{EA}}}]\;>\;\mathbb{E}[T^{\pi_{\textsf{FA}}}]$. In fact, $\pi_{\textsf{FA}}$ is optimal for this instance.
\end{example}

\subsection{Further Remarks on $\pi_{\textsf{SL}}$ and $\pi_{\textsf{EA}}$}\label{sec:comp_pi_sl_and_pi_ea}

Under both greedy policies, priority is first given to active threats that have not yet received any effector in the current round. As a result, both policies begin by assigning one effector to each active threat and they differ only in how they allocate the remaining effectors once every active threat has received one. %
Moreover, $\pi_{\textsf{EA}}$ tends to allocate \emph{more aggressively} to harder threats compared to $\pi_{\textsf{SL}}$. To see why, consider a two-threat case $n=2$ with $q_1>q_2$. Suppose both policies are at the same intermediate allocation, with $x_1$ effectors already assigned to threat $1$ and $x_2$ effectors already assigned to threat $2$, and there are still some effectors remaining. If $\pi_{\textsf{EA}}$ allocates the next effector to threat $2$, this implies that $q_1^{x_1}\leq q_2^{x_2}$ according to \Cref{alg:: WM}, or equivalently, $q_1^{-x_1}\geq q_2^{-x_2}$. Since $q_1>q_2$, the allocation of next effector under $\pi_{\textsf{SL}}$, according to \Cref{alg::ML_Estim}, follows
$$s_1=\sum_{k=1}^{x_1} q_1^{-k}=
\frac{q_1^{-x_1}-1}{1-q_1}
>
\frac{q_1^{-x_1}-1}{1-q_2}
\geq 
\frac{q_2^{-x_2}-1}{1-q_2}=\sum_{k=1}^{x_2} q_2^{-k}=s_2.
$$
Therefore, $\pi_{\textsf{SL}}$ allocates next effector to threat $2$ as well. This also implies that whenever $\pi_{\textsf{SL}}$ allocates the next effector to threat $1$, $\pi_{\textsf{EA}}$ also allocates it to threat $1$.  %
Therefore, given both threats are active, $\pi_{\textsf{EA}}$ can never allocate fewer effectors to the harder threat than $\pi_{\textsf{SL}}$ in each round. %

\section{Numerical Experiments}\label{sec:numerical_experiment}
In this section, we conduct numerical experiments to compare and contrast the performance of the time-oblivious policies $\pi_{\textsf{FA}}$, $\pi_{\textsf{SL}}$, and $\pi_{\textsf{EA}}$. In doing so, we will confirm theoretical findings, highlight best-performers, and note performance trends. 
We begin with a high-level summary of the setup and benchmarks. 

\smallskip

\noindent\textbf{Instance Generation:} The parameter space $\mathcal{P}\coloneqq \{(n,C,q,\tau): n, C, \tau \in \mathbbm{Z}_{\geq 1}, q \in [0,1)^n\}$ 
that governs the instantiation of the three related problems $\eqref{eq::MinDefuse}, \eqref{eq::SurvivalObjective},$ and $\eqref{eq::EffectiveObjective}$ is
explored via: (1) a small-scale, factorial design; (2) restriction to 2-salvo instances; (3) larger-scale, simulation-based design.

\smallskip

\noindent\textbf{Policies:} 
\begin{itemize}
    \item \underline{Deterministic (Bellman-)optimal policy $\pi^*$}. Computed via recursive Bellman equations, $\pi^*$ will generically refer to an optimal policy for either $\eqref{eq::MinDefuse},\eqref{eq::SurvivalObjective},$ or $\eqref{eq::EffectiveObjective}$, but is (computationally) expensive, so implemented only for the smaller ($\tau = 2$) instances of Sections \ref{sec:exp_hypercube} and \ref{sec:diff_var_Li}.
    
    \item Randomized Policy Benchmarks
    \begin{itemize}
        \item \underline{Random allocation $\pi_{\textsf{RA}}$}. Policy $\pi_{\textsf{RA}}$ denotes a naive randomization: at time $t$, $x^t \sim \text{Multinomial}(C; (1/M_t)_{i \in \mathcal{M}_t})$. Although (computationally) inexpensive, its performance suffered at larger scales.
    \item \underline{Reoptimized feasible fluid policy $\pi_{\textsf{R}}$.}
    Specialized for problem \cref{eq::EffectiveObjective}, $\pi_{\textsf{R}}$ denotes the reoptimized feasible fluid policy introduced in Section~4.2 of \citet{brown2025fluid}. At each time $t$, the allocation $x^t$ is randomized according to the solution of a linear program (see Section \ref{appendix:additional_experiments} in the Online Appendix for details); hence this policy's expected performance is 
    not amenable to exact recursive computation, and we evaluate via simulation. Specifically, on each instance, we report the sample average performance over 500 independent runs.
    \end{itemize}

    \item \underline{Time-oblivious policies: $\pi_{\textsf{FA}}$, $\pi_{\textsf{SL}}$, and $\pi_{\textsf{EA}}$}. Although they require tie-breaking rules for deterministic execution, for the experiments, we had all three select uniformly at random from among all candidate allocations that satisfy their respective selection criterion.\footnote{We can potentially improve performance with more sophisticated tie-breaking rules. For example, %
we found that breaking ties by always assigning each effector to the largest $q_i$ improves performance for both the fair allocation and greedy policies.}
\end{itemize}
Note that all policies outside of $\pi^*$ are random policies. 
In Sections \ref{sec:exp_hypercube} and \ref{sec:diff_var_Li}, the performance of every policy outside of $\pi_\textsf{R}$ is computed exactly. In Section \ref{sec:simulation}, the performance of every random policy is simulated.

\subsection{Small-Scale, Factorial Design}\label{sec:exp_hypercube}
In this first set of experiments, we focus on smaller scale parameters, and examine the effect of resource tightness on policy performance. With smaller scale, the computational burden of calculating Bellman optimal policies is alleviated, and we present exact performance comparisons against optimality. 
We begin by defining a factorial design $\mathcal{D} \subseteq \mathcal{P}$ via
\[
\mathcal{D} \coloneqq 
\Bigl\{
(n,C,q,\tau): 
n \in \{2,3,\ldots, 7\},\ 
C/n \in \{1,2,3\}, 
q \in \mathcal{I}_n, \tau = 2
\Bigr\},
\]
where for any $n,$ the set $\mathcal I_n:=\left\{(q_1,\ldots,q_n): q_1\ge q_2\ge \cdots \ge q_n,\ q_i\in Q_n,\ \forall i\in[n]\right\}$ is the collection of (nonincreasing-) ordered members of $Q_n:=\left\{0.05+\frac{0.9k}{10-n}: k=0,1,\ldots,10-n\right\}$, a collection of $11-n$ equally-spaced difficulties in $[0.05,0.95]$. The number of candidate difficulties $|Q_n|$ is decreasing in $n$ to control the computational burden as $n$ increases.

\smallskip

\noindent\textbf{Metrics:} For any $n,C$ in the projection $\{
(n',C'): \exists q \;\; \text{such that }(n',C',q,2) \in \mathcal{D}\}$, we can evaluate any policy $\pi \neq \pi^*$ under objective $\diamond \in \{\textsf{SLM}_2, \textsf{EAM}_2\}$ by averaging its expected performance across $q \in \mathcal{I}_n$ via
\[
        \bar V_{\diamond}^{\pi}(n,C)
        :=
        \frac{1}{|\mathcal I_n|}
        \sum_{(q_1,\ldots,q_n)\in\mathcal I_n}
        V_{\diamond}^{\pi}(q_1,\ldots,q_n; C), 
\]
where $V_{\diamond}^{\pi}(q_1,\ldots,q_n; C)$ is the quantity $V_{\diamond}^{\pi}$, just with parameter instantiations of $n,C,$ and $q$ highlighted. Analogously, with $T^{\pi}(q_1,\ldots,q_n; C)$ denoting the (random) defusing time by $\pi$ on the parameter instantiation of $n,C,$ and $q,$ we can average this over $q \in \mathcal{I}_n$ via
\[
\mathbb{E}\left[\bar{T}^\pi(n,C) \coloneqq  \frac{1}{|\mathcal I_n|}
        \sum_{(q_1,\ldots,q_n)\in\mathcal I_n}
        T^\pi(q_1, \ldots, q_n; C) \right],
\]
where $\mathbb{E}\left[\cdot \right]$ is w.r.t. the mutually independent collection of $L_i \sim Geom(1-q_i)$ and any randomization in $\pi.$
Then we say the \emph{average optimality gap} incurred by a policy $\pi \neq \pi^*$ under objective $\diamond$ is 
\begin{equation*}
\operatorname{OptGap}^{\pi}_{\diamond}(n, C) \coloneqq
\begin{cases}
\dfrac{
\bar V_{\diamond}^{\pi^*}(n,C)-\bar V_{\diamond}^{\pi}(n,C)
}{
\bar V_{\diamond}^{\pi^*}(n,C)
},
& \diamond \in \{\textsf{SLM}_2, \textsf{EAM}_2\}, \\[1.25em]
\dfrac{
\mathbb{E}\!\left[\bar T^\pi(n,C)\right]
-
\mathbb{E}\!\left[\bar T^{\pi^*}(n,C)\right]
}{
\mathbb{E}\!\left[\bar T^{\pi^*}(n,C)\right]
},
& \diamond = \textsf{DTM},
\end{cases}
\end{equation*}
where $\pi^*$ is any arbitrary optimal policy for objective $\diamond$. 

\smallskip

\noindent \textbf{Results:} 
Figure \ref{fig:hypercube_avg_gap} displays how the various policies perform (w.r.t. optimal) as a function of $n$, across different resource tightness ratios $C/n$. 
We discuss several observations to this small-scale experiment. 
\begin{enumerate}
    \item \underline{$\pi_{\textsf{FA}}$ is the policy most adversely affected by increased $C/n$.} In Figure \ref{fig:hypercube_avg_gap}, $\pi_{\textsf{FA}}$ curves translated upward as $C/n$ increased. 
    \item \underline{Greedy policies were minimally affected by change in $C/n$.} In Figure \ref{fig:hypercube_avg_gap}, the greedy policy curves translated upward minimally as $C/n$ increased. 
    
    \item \underline{Greedy policies exhibited near-optimal performance in \cref{eq::MinDefuse}.} Both $\pi_{\textsf{SL}}$ and $\pi_{\textsf{EA}}$ are nearly optimal under \cref{eq::MinDefuse}, with $\pi_{\textsf{SL}}$ performing particularly well: its average optimality gap remains below $0.01$ for all $n$, which corroborates \Cref{thm:sl_vs_fair_n2}.

    \item \underline{$\pi_{\textsf{SL}}$ was the best policy for DTM and $\textsf{SLM}_2$; $\pi_{\textsf{EA}}$ was the best policy for $\textsf{EAM}_2$.}
    These objective-aligned performance patterns extend the theoretical findings of \Cref{thm:sl_vs_fair_n2} and \Cref{thm:ea_vs_fair_n2} beyond the case of $n=2$. Moreover, under $(\textsf{EAM}_2)$, $\pi_{\textsf{EA}}$ outperforms $\pi_{\textsf{R}}$, an asymptotically optimal (as $C, n\to\infty$) policy for generic weakly coupled MDPs, despite the latter's much higher computational cost from solving a linear program in each round; see Section~\ref{appendix:additional_experiments} of the Online Appendix for additional comparisons between $\pi_{\textsf{EA}}$ and $\pi_{\textsf{R}}$ regarding their performances and computational times.

    \item \underline{$\pi_{\textsf{FA}}$ was effective across all objectives, particularly for \cref{eq::MinDefuse}.} $\pi_{\textsf{FA}}$ had an average optimality gap no larger than 15\% across all objectives, $n$, and resource tightness ratios tested. In particular, it is most effective under \cref{eq::MinDefuse}, followed by ($\textsf{EAM}_2$) and next ($\textsf{SLM}_2$), corroborating theoretical results in \Cref{sec:fair_policy}. Finally, we note that this policy is independent of $q.$
    
    \item \underline{$\pi_{\textsf{RA}}$ was the worst performing policy}. As seen in \Cref{fig:hypercube_avg_gap}, $\pi_{\textsf{RA}}$ was consistently the worst-performing policy, with its most severe average optimality gaps found under $(\textsf{SLM}_2)$.
\end{enumerate}  

\smallskip

\noindent \textbf{Conclusions:} Operationally speaking, our empirical results suggest using $\pi_{\textsf{SL}}$ for problem \cref{eq::SurvivalObjective} and \cref{eq::MinDefuse}, and  $\pi_{\textsf{EA}}$
for problem \cref{eq::EffectiveObjective}. 
In Section \ref{appendix:additional_plots} of the Online Appendix, we also report the corresponding average objective values and worst-case ratios under each policy, which exhibit a similar pattern as the average optimality gap.

\begin{figure}[htbp]
\centering
\begin{tikzpicture}

\begin{groupplot}[
    group style={
        group size=3 by 3,
        horizontal sep=1.0cm,
        vertical sep=1.1cm
    },
    width=0.35\textwidth,
    height=0.30\textwidth,
    grid=both,
    tick align=outside,
    xtick pos=bottom,
    ytick pos=left,
    tick label style={font=\scriptsize},
    label style={font=\small},
    title style={font=\small},
    scaled y ticks=false,
    xtick=data,
    xticklabel style={/pgf/number format/fixed},
    every axis plot/.append style={
        line width=1pt,
        mark=*,
        mark size=0.8pt
    },
]

\nextgroupplot[
    title = {$C = n$},
    ylabel = {$\operatorname{OptGap}^\pi_{\textsf{DTM}}(n, C)$},
    ylabel style={align=center},
    ymax=0.14,
    ytick={0,0.03,0.06,0.09,0.12},
    yticklabel style={
        /pgf/number format/fixed,
        /pgf/number format/precision=2
    }
]
\addplot[blue]
table[x=n,y=uniform_random,col sep=comma] {tikzdata/hypercube_new_cn/cn1_avg_gap_min_diffuse.csv};
\addplot[orange]
table[x=n,y=fair,col sep=comma] {tikzdata/hypercube_new_cn/cn1_avg_gap_min_diffuse.csv};
\addplot[green!60!black]
table[x=n,y=ml_greedy,col sep=comma] {tikzdata/hypercube_new_cn/cn1_avg_gap_min_diffuse.csv};
\addplot[red]
table[x=n,y=ea_greedy,col sep=comma] {tikzdata/hypercube_new_cn/cn1_avg_gap_min_diffuse.csv};

\nextgroupplot[
    title = {$C = 2n$},
    ymax=0.14,
    ytick={0,0.03,0.06,0.09,0.12},
    yticklabel style={
        /pgf/number format/fixed,
        /pgf/number format/precision=2
    }
]
\addplot[blue]
table[x=n,y=uniform_random,col sep=comma] {tikzdata/hypercube_new_cn/cn2_avg_gap_min_diffuse.csv};
\addplot[orange]
table[x=n,y=fair,col sep=comma] {tikzdata/hypercube_new_cn/cn2_avg_gap_min_diffuse.csv};
\addplot[green!60!black]
table[x=n,y=ml_greedy,col sep=comma] {tikzdata/hypercube_new_cn/cn2_avg_gap_min_diffuse.csv};
\addplot[red]
table[x=n,y=ea_greedy,col sep=comma] {tikzdata/hypercube_new_cn/cn2_avg_gap_min_diffuse.csv};

\nextgroupplot[
    title = {$C = 3n$},
    ymax=0.14,
    ytick={0,0.03,0.06,0.09,0.12},
    yticklabel style={
        /pgf/number format/fixed,
        /pgf/number format/precision=2
    }
]
\addplot[blue]
table[x=n,y=uniform_random,col sep=comma] {tikzdata/hypercube_new_cn/cn3_avg_gap_min_diffuse.csv};
\addplot[orange]
table[x=n,y=fair,col sep=comma] {tikzdata/hypercube_new_cn/cn3_avg_gap_min_diffuse.csv};
\addplot[green!60!black]
table[x=n,y=ml_greedy,col sep=comma] {tikzdata/hypercube_new_cn/cn3_avg_gap_min_diffuse.csv};
\addplot[red]
table[x=n,y=ea_greedy,col sep=comma] {tikzdata/hypercube_new_cn/cn3_avg_gap_min_diffuse.csv};

\nextgroupplot[
    ylabel = {$\operatorname{OptGap}^\pi_{\textsf{SLM}_2}(n, C)$},
    ylabel style={align=center},
    ymax=0.6,
    ytick={0,0.15,0.3,0.45},
    yticklabel style={
        /pgf/number format/fixed,
        /pgf/number format/precision=2
    }
]
\addplot[blue]
table[x=n,y=uniform_random,col sep=comma] {tikzdata/hypercube_new_cn/cn1_avg_gap_max_ml.csv};
\addplot[orange]
table[x=n,y=fair,col sep=comma] {tikzdata/hypercube_new_cn/cn1_avg_gap_max_ml.csv};
\addplot[green!60!black]
table[x=n,y=ml_greedy,col sep=comma] {tikzdata/hypercube_new_cn/cn1_avg_gap_max_ml.csv};
\addplot[red]
table[x=n,y=ea_greedy,col sep=comma] {tikzdata/hypercube_new_cn/cn1_avg_gap_max_ml.csv};

\nextgroupplot[
    ymax=0.6,
    ytick={0,0.15,0.3,0.45},
    yticklabel style={
        /pgf/number format/fixed,
        /pgf/number format/precision=2
    }
]
\addplot[blue]
table[x=n,y=uniform_random,col sep=comma] {tikzdata/hypercube_new_cn/cn2_avg_gap_max_ml.csv};
\addplot[orange]
table[x=n,y=fair,col sep=comma] {tikzdata/hypercube_new_cn/cn2_avg_gap_max_ml.csv};
\addplot[green!60!black]
table[x=n,y=ml_greedy,col sep=comma] {tikzdata/hypercube_new_cn/cn2_avg_gap_max_ml.csv};
\addplot[red]
table[x=n,y=ea_greedy,col sep=comma] {tikzdata/hypercube_new_cn/cn2_avg_gap_max_ml.csv};

\nextgroupplot[
    ymax=0.6,
    ytick={0,0.15,0.3,0.45},
    yticklabel style={
        /pgf/number format/fixed,
        /pgf/number format/precision=2
    }
]
\addplot[blue]
table[x=n,y=uniform_random,col sep=comma] {tikzdata/hypercube_new_cn/cn3_avg_gap_max_ml.csv};
\addplot[orange]
table[x=n,y=fair,col sep=comma] {tikzdata/hypercube_new_cn/cn3_avg_gap_max_ml.csv};
\addplot[green!60!black]
table[x=n,y=ml_greedy,col sep=comma] {tikzdata/hypercube_new_cn/cn3_avg_gap_max_ml.csv};
\addplot[red]
table[x=n,y=ea_greedy,col sep=comma] {tikzdata/hypercube_new_cn/cn3_avg_gap_max_ml.csv};

\nextgroupplot[
    xlabel={$n$},
    ylabel={$\operatorname{OptGap}^\pi_{\textsf{EAM}_2}(n, C)$},
    ylabel style={align=center},
    ymax=0.18,
    ytick={0,0.04,0.08,0.12,0.16},
    yticklabel style={
        /pgf/number format/fixed,
        /pgf/number format/precision=3
    }
]
\addplot[blue]
table[x=n,y=uniform_random,col sep=comma] {tikzdata/hypercube_new_cn/cn1_avg_gap_max_effective.csv};
\addplot[orange]
table[x=n,y=fair,col sep=comma] {tikzdata/hypercube_new_cn/cn1_avg_gap_max_effective.csv};
\addplot[green!60!black]
table[x=n,y=ml_greedy,col sep=comma] {tikzdata/hypercube_new_cn/cn1_avg_gap_max_effective.csv};
\addplot[red]
table[x=n,y=ea_greedy,col sep=comma] {tikzdata/hypercube_new_cn/cn1_avg_gap_max_effective.csv};
\addplot[purple]
table[x=n,y=reoptimized_fluid,col sep=comma] {tikzdata/hypercube_new_cn/cn1_avg_gap_max_effective.csv};

\nextgroupplot[
    xlabel={$n$},
    ymax=0.18,
    ytick={0,0.04,0.08,0.12,0.16},
    yticklabel style={
        /pgf/number format/fixed,
        /pgf/number format/precision=3
    }
]
\addplot[blue]
table[x=n,y=uniform_random,col sep=comma] {tikzdata/hypercube_new_cn/cn2_avg_gap_max_effective.csv};
\addplot[orange]
table[x=n,y=fair,col sep=comma] {tikzdata/hypercube_new_cn/cn2_avg_gap_max_effective.csv};
\addplot[green!60!black]
table[x=n,y=ml_greedy,col sep=comma] {tikzdata/hypercube_new_cn/cn2_avg_gap_max_effective.csv};
\addplot[red]
table[x=n,y=ea_greedy,col sep=comma] {tikzdata/hypercube_new_cn/cn2_avg_gap_max_effective.csv};
\addplot[purple]
table[x=n,y=reoptimized_fluid,col sep=comma] {tikzdata/hypercube_new_cn/cn2_avg_gap_max_effective.csv};

\nextgroupplot[
    xlabel={$n$},
    ymax=0.18,
    ytick={0,0.04,0.08,0.12,0.16},
    yticklabel style={
        /pgf/number format/fixed,
        /pgf/number format/precision=3
    }
]
\addplot[blue]
table[x=n,y=uniform_random,col sep=comma] {tikzdata/hypercube_new_cn/cn3_avg_gap_max_effective.csv};
\addplot[orange]
table[x=n,y=fair,col sep=comma] {tikzdata/hypercube_new_cn/cn3_avg_gap_max_effective.csv};
\addplot[green!60!black]
table[x=n,y=ml_greedy,col sep=comma] {tikzdata/hypercube_new_cn/cn3_avg_gap_max_effective.csv};
\addplot[red]
table[x=n,y=ea_greedy,col sep=comma] {tikzdata/hypercube_new_cn/cn3_avg_gap_max_effective.csv};
\addplot[purple]
table[x=n,y=reoptimized_fluid,col sep=comma] {tikzdata/hypercube_new_cn/cn3_avg_gap_max_effective1.csv};

\end{groupplot}

\path (group c1r3.south west) -- (group c3r3.south east)
    coordinate[midway] (legendmid);

\matrix[
    matrix of nodes,
    anchor=north,
    row sep=0pt,
    column sep=4pt,
    nodes={anchor=west, inner sep=0pt, outer sep=0pt, font=\footnotesize}
] at ($(legendmid)+(0,-0.9cm)$) {
\tikz{\draw[blue,line width=1pt] (0,0)--(0.30,0); \fill[blue] (0.15,0) circle (0.8pt);} &
$\pi_{\textsf{RA}}$ &
\tikz{\draw[orange,line width=1pt] (0,0)--(0.30,0); \fill[orange] (0.15,0) circle (0.8pt);} &
$\pi_{\textsf{FA}}$ &
\tikz{\draw[green!60!black,line width=1pt] (0,0)--(0.30,0); \fill[green!60!black] (0.15,0) circle (0.8pt);} &
$\pi_{\textsf{SL}}$ &
\tikz{\draw[red,line width=1pt] (0,0)--(0.30,0); \fill[red] (0.15,0) circle (0.8pt);} &
$\pi_{\textsf{EA}}$ &
\tikz{\draw[purple,line width=1pt] (0,0)--(0.30,0); \fill[purple] (0.15,0) circle (0.8pt);} &
$\pi_{\textsf{R}}$ \\
};

\end{tikzpicture}
\caption{
For every combination of resource tightness $\kappa=C/n \in \{1,2,3\}$ and objective $\diamond \in \{\textsf{DTM}, (\textsf{SLM}_2), (\textsf{EAM}_2)\}$, OptGap$^\pi_\diamond(n, \kappa n)$-versus-$n$ is plotted for $\pi \in \{\pi_{\textsf{FA}}$, $\pi_{\textsf{SL}}$, $\pi_{\textsf{EA}}$, $\pi_{\textsf{R}}\}$. Note: the specialized $\pi_{\textsf{R}}$ was only tested for the $(\textsf{EAM}_2)$ objective. 
}
\label{fig:hypercube_avg_gap}
\end{figure}

\subsection{2-Salvo Instances}\label{sec:diff_var_Li}
In this second set of experiments, we remain focused on smaller scale parameters, but we now examine the effect of difficulty heterogeneity on policy performance. 
We restrict attention to the subset $\mathcal{D}_2 \subseteq \mathcal{D}$ defined via
\[
\mathcal{D}_2 \coloneqq 
\Bigl\{
(n,C,q,\tau) \in \mathcal{D}: |\{q_i: i = 1, \ldots, n\}| = 2
\Bigr\},
\]
that is, we consider only those instances from the small-scale, factorial design $\mathcal{D}$ that express precisely 2 salvos, and we proceed to vary the difficulty discrepancy between these two salvos.

Noting that for any instance $(n, C, q, \tau)$, the expected demand for threat $i$ can be written $\mathbb{E}[L_i]=1/(1-q_i)$, we let $H(n,C,q,\tau) \coloneqq n^{-1}\sum_{i=1}^n\left(\mathbb{E}[L_i]-n^{-1}\sum_{j=1}^n\mathbb{E}[L_j]\right)^2$ denote the \emph{level of heterogeneity} for any instance. For each $n \in \{2,3,\ldots, 7\}$, this metric then yields the (roughly equal) partitioning 
\[
\{(n', C, q, \tau) \in \mathcal{D}_2: n' = n\} = \textsf{SmallVar}(n)\; \cup \textsf{MediumVar}(n) \cup \textsf{LargeVar}(n),
\]
wherein $\textsf{SmallVar}(n), \textsf{MediumVar}(n),$ and $\textsf{LargeVar}(n)$ are comprised, respectively, of the instances forming the smallest-third, middle-third, and largest-third of the set $H\left(\{(n',C,q,\tau): (n,C,q,\tau) \in \mathcal{D}_2, n' = n\}\right)$. These three $n$-parametrized sets will henceforth be referred to as the small, medium, and large \emph{variance regimes}. 

\noindent\textbf{Metrics:} For any $n \in \{2,\ldots, 7\}$ and $r \in \{\textsf{SmallVar}(\cdot), \textsf{MediumVar}(\cdot),$ and $\textsf{LargeVar}(\cdot)\},$ we evaluate any policy $\pi \neq \pi^*$ under objective $\diamond \in \{(\textsf{SLM}_2), (\textsf{EAM}_2)\}$ by averaging its expected performance across instances in $r(n)$ via 
\[
        \bar V_{\diamond}^{\pi}(n,r)
        :=
        \frac{1}{|r(n)|}
        \sum_{(n, C, q, \tau)\in r(n)}
        V_{\diamond}^{\pi}(q_1,\ldots,q_n; C).
\]
Analogously, we can average the expected defusing times via
\[
\mathbb{E}\left[\bar{T}^\pi(n,r) \coloneqq  \frac{1}{|r(n)|}
        \sum_{(n, C, q, \tau) \in r(n)}
        T^\pi(q_1, \ldots, q_n; C)\right],
\]
where $\mathbb{E}\left[\cdot \right]$ is w.r.t. the mutually independent collection of $L_i \sim Geom(1-q_i)$ and any randomization in $\pi.$
Then we say the \emph{optimality gap} exhibited by a policy $\pi \neq \pi^*$ for objective $\diamond$ is 

\begin{equation*}
\operatorname{OptGap}_\diamond^\pi(n, r) \coloneqq
\begin{cases}
\dfrac{
\bar V_{\diamond}^{\pi^*}(n,r)-\bar V_{\diamond}^{\pi}(n,r)
}{
\bar V_{\diamond}^{\pi^*}(n,r)
},
& \diamond \in \{(\textsf{SLM}_2),(\textsf{EAM}_2)\}, \\[1.25em]
\dfrac{
\mathbb{E}\!\left[\bar T^\pi(n,r)\right]
-
\mathbb{E}\!\left[\bar T^{\pi^*}(n,r)\right]
}{
\mathbb{E}\!\left[\bar T^{\pi^*}(n,r)\right]
},
& \diamond = \textsf{DTM}.
\end{cases}
\end{equation*}
where $\pi^*$ is any arbitrary optimal policy for objective $\diamond$. 

\noindent \textbf{Results:}
Figure \ref{fig:two_slices_avg_gap} displays how the various policies perform (w.r.t. optimal) as a function of $n$, across the different variance regimes \textsf{SmallVar}, \textsf{MediumVar}, and \textsf{LargeVar}. The performance of $\pi_{\textsf{RA}}$ was particularly poor, hence omitted. We discuss several observations. 
\begin{enumerate}
    \item \underline{$\pi_{\textsf{FA}}$ is the policy most adversely affected by increased level of heterogeneity.} 
    \item  \underline{Greedy policy advantage over $\pi_{\textsf{FA}}$ grows steadily with level of heterogeneity, as well as $n$.}
    \item \underline{$\pi_{\textsf{FA}}$ is near optimal under \textsf{SmallVar}.} When the 2 salvos are nearly identical in difficulty,  $\pi_{\textsf{FA}}$ performs well in spite of ignoring difficulty differences across threats -- Theorems \ref{theorem:properties_n=2}, \ref{theorem:DTM_fair_ratio}.
\end{enumerate}  

\noindent \textbf{Conclusions:} Just as in the small-scale experiments, our empirical results on 2-salvos continue to suggest using $\pi_{\textsf{SL}}$ for problem \cref{eq::SurvivalObjective} annd \cref{eq::MinDefuse}, and  $\pi_{\textsf{EA}}$
for problem \cref{eq::EffectiveObjective}. Increasing the level of heterogeneity between two salvos had a similar effect as that of increasing the resource tightness ratio-- the advantage of the greedy policies grows.

\begin{figure}[htbp]
\centering
\begin{tikzpicture}

\begin{groupplot}[
    group style={
        group size=3 by 3,
        horizontal sep=1.0cm,
        vertical sep=1.1cm
    },
    width=0.35\textwidth,
    height=0.30\textwidth,
    xlabel={$n$},
    grid=both,
    tick align=outside,
    xtick pos=bottom,
    ytick pos=left,
    tick label style={font=\scriptsize},
    label style={font=\small},
    title style={font=\small},
    scaled y ticks=false,
    scaled x ticks=false,
    xtick=data,
    every axis plot/.append style={
        line width=1pt,
        mark=*,
        mark size=0.8pt
    },
]

\nextgroupplot[
    title = {$r = \operatorname{SmallVar}(n)$},
    ylabel = {$\operatorname{OptGap}_{\textsf{DTM}}^\pi(n,r)$},
    ylabel style={align=center},
    ymax=0.035,
    ytick={0,0.01,0.02,0.03},
    yticklabels={0,0.01,0.02,0.03}
]
\addplot[orange]
table[x=n,y=piFA] {tikzdata/salvo_variance/dtm_small_gap.dat};
\addplot[green!60!black]
table[x=n,y=piSL] {tikzdata/salvo_variance/dtm_small_gap.dat};
\addplot[red]
table[x=n,y=piEA] {tikzdata/salvo_variance/dtm_small_gap.dat};

\nextgroupplot[
     title = {$r = \operatorname{MediumVar}(n)$},
    ymax=0.035,
    ytick={0,0.01,0.02,0.03},
    yticklabels={0,0.01,0.02,0.03}
]
\addplot[orange]
table[x=n,y=piFA] {tikzdata/salvo_variance/dtm_medium_gap.dat};
\addplot[green!60!black]
table[x=n,y=piSL] {tikzdata/salvo_variance/dtm_medium_gap.dat};
\addplot[red]
table[x=n,y=piEA] {tikzdata/salvo_variance/dtm_medium_gap.dat};

\nextgroupplot[
     title = {$r = \operatorname{LargeVar}(n)$},
    ymax=0.035,
    ytick={0,0.01,0.02,0.03},
    yticklabels={0,0.01,0.02,0.03}
]
\addplot[orange]
table[x=n,y=piFA] {tikzdata/salvo_variance/dtm_large_gap.dat};
\addplot[green!60!black]
table[x=n,y=piSL] {tikzdata/salvo_variance/dtm_large_gap.dat};
\addplot[red]
table[x=n,y=piEA] {tikzdata/salvo_variance/dtm_large_gap.dat};

\nextgroupplot[
     ylabel = {$\operatorname{OptGap}^\pi_{\textsf{SLM}_2}(n,r)$},
    ylabel style={align=center},
    ymax=0.35,
    ytick={0,0.1,0.2,0.3},
    ytick={0,0.1,0.2,0.3}
]
\addplot[orange]
table[x=n,y=piFA] {tikzdata/salvo_variance/slm_small_gap.dat};
\addplot[green!60!black]
table[x=n,y=piSL] {tikzdata/salvo_variance/slm_small_gap.dat};
\addplot[red]
table[x=n,y=piEA] {tikzdata/salvo_variance/slm_small_gap.dat};

\nextgroupplot[
    ymax=0.35,
    ytick={0,0.1,0.2,0.3},
    yticklabels={0,0.1,0.2,0.3}
]
\addplot[orange]
table[x=n,y=piFA] {tikzdata/salvo_variance/slm_medium_gap.dat};
\addplot[green!60!black]
table[x=n,y=piSL] {tikzdata/salvo_variance/slm_medium_gap.dat};
\addplot[red]
table[x=n,y=piEA] {tikzdata/salvo_variance/slm_medium_gap.dat};

\nextgroupplot[
    ymax=0.35,
    ytick={0,0.1,0.2,0.3},
    ytick={0,0.1,0.2,0.3}
]
\addplot[orange]
table[x=n,y=piFA] {tikzdata/salvo_variance/slm_large_gap.dat};
\addplot[green!60!black]
table[x=n,y=piSL] {tikzdata/salvo_variance/slm_large_gap.dat};
\addplot[red]
table[x=n,y=piEA] {tikzdata/salvo_variance/slm_large_gap.dat};

\nextgroupplot[
     ylabel = {$\operatorname{OptGap}^\pi_{\textsf{EAM}_2}(n,r)$},
    ylabel style={align=center},
    ymax=0.09,
    ytick={0,0.02,0.04,0.06,0.08},
    yticklabels={0,0.02,0.04,0.06,0.08}
]
\addplot[orange]
table[x=n,y=piFA] {tikzdata/salvo_variance/eam_small_gap.dat};
\addplot[green!60!black]
table[x=n,y=piSL] {tikzdata/salvo_variance/eam_small_gap.dat};
\addplot[red]
table[x=n,y=piEA] {tikzdata/salvo_variance/eam_small_gap.dat};
\addplot[purple]
table[x=n,y=piR] {tikzdata/salvo_variance/eam_small_gap.dat};

\nextgroupplot[
    ymax=0.09,
    ytick={0,0.02,0.04,0.06,0.08},
    yticklabels={0,0.02,0.04,0.06,0.08}
]
\addplot[orange]
table[x=n,y=piFA] {tikzdata/salvo_variance/eam_medium_gap1.dat};
\addplot[green!60!black]
table[x=n,y=piSL] {tikzdata/salvo_variance/eam_medium_gap1.dat};
\addplot[red]
table[x=n,y=piEA] {tikzdata/salvo_variance/eam_medium_gap1.dat};
\addplot[purple]
table[x=n,y=piR] {tikzdata/salvo_variance/eam_medium_gap1.dat};

\nextgroupplot[
    ymax=0.09,
    ytick={0,0.02,0.04,0.06,0.08},
    yticklabels={0,0.02,0.04,0.06,0.08}
]
\addplot[orange]
table[x=n,y=piFA] {tikzdata/salvo_variance/eam_large_gap.dat};
\addplot[green!60!black]
table[x=n,y=piSL] {tikzdata/salvo_variance/eam_large_gap.dat};
\addplot[red]
table[x=n,y=piEA] {tikzdata/salvo_variance/eam_large_gap.dat};
\addplot[purple]
table[x=n,y=piR] {tikzdata/salvo_variance/eam_large_gap.dat};

\end{groupplot}

\path (group c1r3.south west) -- (group c3r3.south east)
    coordinate[midway] (legendmid);

\matrix[
    matrix of nodes,
    anchor=north,
    row sep=0pt,
    column sep=5pt,
    nodes={
        anchor=west,
        inner sep=0pt,
        outer sep=0pt,
        font=\footnotesize
    }
] at ($(legendmid)+(0,-0.9cm)$) {
\tikz{\draw[orange,line width=1pt] (0,0)--(0.35,0); \fill[orange] (0.175,0) circle (0.8pt);} &
$\pi_{\textsf{FA}}$ &
\tikz{\draw[green!60!black,line width=1pt] (0,0)--(0.35,0); \fill[green!60!black] (0.175,0) circle (0.8pt);} &
$\pi_{\textsf{SL}}$ &
\tikz{\draw[red,line width=1pt] (0,0)--(0.35,0); \fill[red] (0.175,0) circle (0.8pt);} &
$\pi_{\textsf{EA}}$ &
\tikz{\draw[purple,line width=1pt] (0,0)--(0.35,0); \fill[purple] (0.175,0) circle (0.8pt);} &
$\pi_{\textsf{R}}$ \\
};

\end{tikzpicture}
\caption{
For every combination of variance regime $r \in \{\textsf{SmallVar}(\cdot), \textsf{MediumVar}(\cdot),$ and $\textsf{LargeVar}(\cdot)\}$ and objective $\diamond \in \{\textsf{DTM}, (\textsf{SLM}_2), (\textsf{EAM}_2)\}$, OptGap$_{\diamond}^\pi(n,r)$-versus-$n$ is plotted for $\pi \in \{\pi_{\textsf{FA}}$, $\pi_{\textsf{SL}}$, $\pi_{\textsf{EA}}$, $\pi_{\textsf{R}}\}$. Note: the specialized $\pi_{R}$ was only tested for the $(\textsf{EAM}_2)$ objective.
}
\label{fig:two_slices_avg_gap}
\end{figure}

\subsection{Large-Scale Settings}\label{sec:simulation}
We next evaluate the performance of our policies on larger parameter instances. As the scale now makes exact dynamic programming prohibitively (computationally) expensive, we turn to simulation, i.e., reporting average (and not exact) performances on any instance.
For every combination of $n$ and $C,$ let $\tilde{Q}_{n,C}$ be a collection of $10^4$ vectors $\{q^{(j)}\}_{j=1}^{10^4} \subseteq [0,1]^n$, wherein for each $j$, $q^{(j)}_1, \ldots, q^{(j)}_n \overset{iid}{\sim} \operatorname{Unif}(0.05,0.95)$. With these collections of difficulties drawn, we construct the subset of instances $\tilde{\mathcal{P}}_{\mathcal{L}} \subseteq \mathcal{P}$ via
\[
\tilde{\mathcal{P}}_{\mathcal{L}} \coloneqq 
\Bigl\{
(n,C,q,\tau): n \in \{10,14,18,22,26,30\}, C/n \in \{1,2,3\}, q \in \tilde{Q}_{n,C}, \tau = 3
\Bigr\}.
\]

\noindent
\textbf{Metrics:} 
For any $n,C$ in the projection $\{
(n',C'): \exists q \;\; \text{such that }(n',C',q,3) \in \mathcal{P}_{\mathcal{L}}\}$, we can evaluate any policy $\pi$ under objective $\diamond \in \{\textsf{SLM}_3, \textsf{EAM}_3\}$ by averaging its sample average performances across $q \in \tilde{Q}_{n,C}$ via
\[
        \bar V_{\diamond}^{\pi}(n,C)
        :=
        \frac{1}{|\tilde{Q}_{n,C}|}
        \sum_{(q_1,\ldots,q_n)\in \tilde{Q}_{n,C}}
        \tilde{V}_{\diamond}^{\pi}(q_1,\ldots,q_n; C), 
\]
where $\tilde{V}_{\diamond}^{\pi}(q_1,\ldots,q_n; C)$ is the reward (under objective $\diamond$) that $\pi$ attains under a single draw of $L_1, \ldots, L_n$ via $(L_i \sim \operatorname{Geom}(1 - q_i))_{i=1}^n$
Analogously,
\[
V^\pi_{\textsf{DTM}}(n,C) \coloneqq  \frac{1}{|\tilde{Q}_{n,C}|}
        \sum_{(q_1,\ldots,q_n)\in \tilde{Q}_{n,C}}
        \tilde{T}^\pi(q_1, \ldots, q_n; C), 
\]
where $\tilde{T}^\pi(q_1, \ldots, q_n; C)$ is the time to defuse by $\pi$ under a single draw of $L_1, \ldots, L_n$ via $(L_i \sim \operatorname{Geom}(1 - q_i))_{i=1}^n$.

\smallskip

\noindent
\textbf{Results:} 
In Figure \ref{fig:sim_obj_by_cn_tau3}, we simulate the performance of the various policies as a function of $n$ across different resource tightness ratios $C/n$. The error bars were negligible, so for the sake of visualization they were omitted. 

In \Cref{tab:uniform_q_tau3_winfreq_summary}, Appendix \ref{appendix:additional_plots}, we report the average objective value and the percentage of simulation replications (out of $10^4$) in which each policy attains the best objective value under $n\in\{10,20,30\}$. The results are broadly consistent with the findings in \Cref{sec:exp_hypercube}, with two additional observations in this large-scale setting. 
\begin{itemize}
    \item \underline{Under $\textsf{SLM}_3$, $\pi_{\textsf{EA}}$ slightly outperforms $\pi_{\textsf{SL}}$ when $C/n \in \{2,3\}$.} As discussed in Section \ref{sec:comp_pi_sl_and_pi_ea}, $\pi_{\textsf{EA}}$ allocates more aggressively to harder threats than $\pi_{\textsf{SL}}$. This suggests that, with sufficient capacity, the optimal policy $\pi^*$ similarly tends to allocate very aggressively to the most challenging threats. 
    \item \underline{Under $\textsf{EAM}_3$, $\pi_{\textsf{R}}$ slightly outperforms $\pi_{\textsf{EA}}$ when $C/n=1.$} This indicates that the reoptimized relaxation can provide a modest advantage in scarce-resource regimes.
\end{itemize}

\pgfplotsset{
    onlyselectedn/.style={
        restrict expr to domain={mod(\thisrow{n}-10,4)}{-0.001:0.001}
    }
}

\begin{figure}[htbp]
\centering
\begin{tikzpicture}

\def\DTMymax{5}
\def\SLMymax{1.1}
\def\EAMymax{110}

\begin{groupplot}[
    group style={
        group size=3 by 3,
        horizontal sep=1.0cm,
        vertical sep=1.1cm
    },
    width=0.35\textwidth,
    height=0.30\textwidth,
    xlabel={$n$},
    grid=both,
    tick align=outside,
    xtick pos=bottom,
    ytick pos=left,
    tick label style={font=\scriptsize},
    label style={font=\small},
    title style={font=\small},
    ylabel style={
        align=center,
        at={(axis description cs:-0.20,0.5)},
        anchor=south
    },
    scaled y ticks=false,
    scaled x ticks=false,
    xtick={10,14,18,22,26,30},
    xticklabel style={/pgf/number format/fixed},
    ybar,
    /pgf/bar width=1.8pt,
    enlarge x limits=0.18,
    unbounded coords=discard,
]

\nextgroupplot[
    title={$C = n$},
    ylabel={  $\bar{V}_{\textsf{DTM}}^\pi(n,C)$},
    ymin=0,
    ymax=\DTMymax,
    ytick={0,1,2,3,4},
    yticklabel style={
        text width=2.2em,
        align=right,
        /pgf/number format/fixed,
        /pgf/number format/precision=0
    }
]
\addplot+[onlyselectedn, bar shift=-2pt, fill=orange, draw=orange]
table[x=n,y=piFA] {tikzdata/simulation_uniform_q_005_095_tau3/sim_uniform_q005_095_tau3_dtm_cn1_obj.dat};
\addplot+[onlyselectedn, bar shift=0pt, fill=green!60!black, draw=green!60!black]
table[x=n,y=piSL] {tikzdata/simulation_uniform_q_005_095_tau3/sim_uniform_q005_095_tau3_dtm_cn1_obj.dat};
\addplot+[onlyselectedn, bar shift=2pt, fill=red, draw=red]
table[x=n,y=piEA] {tikzdata/simulation_uniform_q_005_095_tau3/sim_uniform_q005_095_tau3_dtm_cn1_obj.dat};

\nextgroupplot[
    title={$C = 2n$},
    ymin=0,
    ymax=\DTMymax,
    ytick={0,1,2,3,4},
    yticklabel style={
        text width=2.2em,
        align=right,
        /pgf/number format/fixed,
        /pgf/number format/precision=0
    }
]
\addplot+[onlyselectedn, bar shift=-2pt, fill=orange, draw=orange]
table[x=n,y=piFA] {tikzdata/simulation_uniform_q_005_095_tau3/sim_uniform_q005_095_tau3_dtm_cn2_obj.dat};
\addplot+[onlyselectedn, bar shift=0pt, fill=green!60!black, draw=green!60!black]
table[x=n,y=piSL] {tikzdata/simulation_uniform_q_005_095_tau3/sim_uniform_q005_095_tau3_dtm_cn2_obj.dat};
\addplot+[onlyselectedn, bar shift=2pt, fill=red, draw=red]
table[x=n,y=piEA] {tikzdata/simulation_uniform_q_005_095_tau3/sim_uniform_q005_095_tau3_dtm_cn2_obj.dat};

\nextgroupplot[
    title={$C = 3n$},
    ymin=0,
    ymax=\DTMymax,
    ytick={0,1,2,3,4},
    yticklabel style={
        text width=2.2em,
        align=right,
        /pgf/number format/fixed,
        /pgf/number format/precision=0
    }
]
\addplot+[onlyselectedn, bar shift=-2pt, fill=orange, draw=orange]
table[x=n,y=piFA] {tikzdata/simulation_uniform_q_005_095_tau3/sim_uniform_q005_095_tau3_dtm_cn3_obj.dat};
\addplot+[onlyselectedn, bar shift=0pt, fill=green!60!black, draw=green!60!black]
table[x=n,y=piSL] {tikzdata/simulation_uniform_q_005_095_tau3/sim_uniform_q005_095_tau3_dtm_cn3_obj.dat};
\addplot+[onlyselectedn, bar shift=2pt, fill=red, draw=red]
table[x=n,y=piEA] {tikzdata/simulation_uniform_q_005_095_tau3/sim_uniform_q005_095_tau3_dtm_cn3_obj.dat};

\nextgroupplot[
    ylabel={ $\bar{V}^\pi_{\textsf{SLM}_3}(n, C)$},
    ymin=0,
    ymax=\SLMymax,
    ytick={0,0.2,0.4,0.6,0.8,1.0},
    yticklabel style={
        text width=2.2em,
        align=right,
        /pgf/number format/fixed,
        /pgf/number format/precision=1
    }
]
\addplot+[onlyselectedn, bar shift=-2pt, fill=orange, draw=orange]
table[x=n,y=piFA] {tikzdata/simulation_uniform_q_005_095_tau3/sim_uniform_q005_095_tau3_slm_cn1_obj.dat};
\addplot+[onlyselectedn, bar shift=0pt, fill=green!60!black, draw=green!60!black]
table[x=n,y=piSL] {tikzdata/simulation_uniform_q_005_095_tau3/sim_uniform_q005_095_tau3_slm_cn1_obj.dat};
\addplot+[onlyselectedn, bar shift=2pt, fill=red, draw=red]
table[x=n,y=piEA] {tikzdata/simulation_uniform_q_005_095_tau3/sim_uniform_q005_095_tau3_slm_cn1_obj.dat};

\nextgroupplot[
    ymin=0,
    ymax=\SLMymax,
    ytick={0,0.2,0.4,0.6,0.8,1.0},
    yticklabel style={
        text width=2.2em,
        align=right,
        /pgf/number format/fixed,
        /pgf/number format/precision=1
    }
]
\addplot+[onlyselectedn, bar shift=-2pt, fill=orange, draw=orange]
table[x=n,y=piFA] {tikzdata/simulation_uniform_q_005_095_tau3/sim_uniform_q005_095_tau3_slm_cn2_obj.dat};
\addplot+[onlyselectedn, bar shift=0pt, fill=green!60!black, draw=green!60!black]
table[x=n,y=piSL] {tikzdata/simulation_uniform_q_005_095_tau3/sim_uniform_q005_095_tau3_slm_cn2_obj.dat};
\addplot+[onlyselectedn, bar shift=2pt, fill=red, draw=red]
table[x=n,y=piEA] {tikzdata/simulation_uniform_q_005_095_tau3/sim_uniform_q005_095_tau3_slm_cn2_obj.dat};

\nextgroupplot[
    ymin=0,
    ymax=\SLMymax,
    ytick={0,0.2,0.4,0.6,0.8,1.0},
    yticklabel style={
        text width=2.2em,
        align=right,
        /pgf/number format/fixed,
        /pgf/number format/precision=1
    }
]
\addplot+[onlyselectedn, bar shift=-2pt, fill=orange, draw=orange]
table[x=n,y=piFA] {tikzdata/simulation_uniform_q_005_095_tau3/sim_uniform_q005_095_tau3_slm_cn3_obj.dat};
\addplot+[onlyselectedn, bar shift=0pt, fill=green!60!black, draw=green!60!black]
table[x=n,y=piSL] {tikzdata/simulation_uniform_q_005_095_tau3/sim_uniform_q005_095_tau3_slm_cn3_obj.dat};
\addplot+[onlyselectedn, bar shift=2pt, fill=red, draw=red]
table[x=n,y=piEA] {tikzdata/simulation_uniform_q_005_095_tau3/sim_uniform_q005_095_tau3_slm_cn3_obj.dat};

\nextgroupplot[
    ylabel={$V^\pi_{\textsf{EAM}_3}(n,C)$},
    ymin=0,
    ymax=\EAMymax,
    ytick={0,20,40,60,80,100},
    yticklabel style={
        text width=2.2em,
        align=right,
        /pgf/number format/fixed,
        /pgf/number format/precision=0
    }
]
\addplot+[onlyselectedn, bar shift=-3pt, fill=orange, draw=orange]
table[x=n,y=piFA] {tikzdata/simulation_uniform_q_005_095_tau3/sim_uniform_q005_095_tau3_eam_cn1_obj.dat};
\addplot+[onlyselectedn, bar shift=-1pt, fill=green!60!black, draw=green!60!black]
table[x=n,y=piSL] {tikzdata/simulation_uniform_q_005_095_tau3/sim_uniform_q005_095_tau3_eam_cn1_obj.dat};
\addplot+[onlyselectedn, bar shift=1pt, fill=red, draw=red]
table[x=n,y=piEA] {tikzdata/simulation_uniform_q_005_095_tau3/sim_uniform_q005_095_tau3_eam_cn1_obj.dat};
\addplot+[onlyselectedn, bar shift=3pt, fill=purple, draw=purple]
table[x=n,y=piR] {tikzdata/simulation_uniform_q_005_095_tau3/sim_uniform_q005_095_tau3_eam_cn1_obj.dat};

\nextgroupplot[
    ymin=0,
    ymax=\EAMymax,
    ytick={0,20,40,60,80,100},
    yticklabel style={
        text width=2.2em,
        align=right,
        /pgf/number format/fixed,
        /pgf/number format/precision=0
    }
]
\addplot+[onlyselectedn, bar shift=-3pt, fill=orange, draw=orange]
table[x=n,y=piFA] {tikzdata/simulation_uniform_q_005_095_tau3/sim_uniform_q005_095_tau3_eam_cn2_obj.dat};
\addplot+[onlyselectedn, bar shift=-1pt, fill=green!60!black, draw=green!60!black]
table[x=n,y=piSL] {tikzdata/simulation_uniform_q_005_095_tau3/sim_uniform_q005_095_tau3_eam_cn2_obj.dat};
\addplot+[onlyselectedn, bar shift=1pt, fill=red, draw=red]
table[x=n,y=piEA] {tikzdata/simulation_uniform_q_005_095_tau3/sim_uniform_q005_095_tau3_eam_cn2_obj.dat};
\addplot+[onlyselectedn, bar shift=3pt, fill=purple, draw=purple]
table[x=n,y=piR] {tikzdata/simulation_uniform_q_005_095_tau3/sim_uniform_q005_095_tau3_eam_cn2_obj.dat};

\nextgroupplot[
    ymin=0,
    ymax=\EAMymax,
    ytick={0,20,40,60,80,100},
    yticklabel style={
        text width=2.2em,
        align=right,
        /pgf/number format/fixed,
        /pgf/number format/precision=0
    }
]
\addplot+[onlyselectedn, bar shift=-3pt, fill=orange, draw=orange]
table[x=n,y=piFA] {tikzdata/simulation_uniform_q_005_095_tau3/sim_uniform_q005_095_tau3_eam_cn3_obj.dat};
\addplot+[onlyselectedn, bar shift=-1pt, fill=green!60!black, draw=green!60!black]
table[x=n,y=piSL] {tikzdata/simulation_uniform_q_005_095_tau3/sim_uniform_q005_095_tau3_eam_cn3_obj.dat};
\addplot+[onlyselectedn, bar shift=1pt, fill=red, draw=red]
table[x=n,y=piEA] {tikzdata/simulation_uniform_q_005_095_tau3/sim_uniform_q005_095_tau3_eam_cn3_obj.dat};
\addplot+[onlyselectedn, bar shift=3pt, fill=purple, draw=purple]
table[x=n,y=piR] {tikzdata/simulation_uniform_q_005_095_tau3/sim_uniform_q005_095_tau3_eam_cn3_obj.dat};

\end{groupplot}

\path (group c1r3.south west) -- (group c3r3.south east)
    coordinate[midway] (legendmid);

\matrix[
    matrix of nodes,
    anchor=north,
    row sep=0pt,
    column sep=5pt,
    nodes={
        anchor=west,
        inner sep=0pt,
        outer sep=0pt,
        font=\footnotesize
    }
] at ($(legendmid)+(0,-0.9cm)$) {
\tikz{\draw[fill=orange,draw=orange] (0,0) rectangle (0.16,0.10);} &
$\pi_{\textsf{FA}}$ &
\tikz{\draw[fill=green!60!black,draw=green!60!black] (0,0) rectangle (0.16,0.10);} &
$\pi_{\textsf{SL}}$ &
\tikz{\draw[fill=red,draw=red] (0,0) rectangle (0.16,0.10);} &
$\pi_{\textsf{EA}}$ &
\tikz{\draw[fill=purple,draw=purple] (0,0) rectangle (0.16,0.10);} &
$\pi_{\textsf{R}}$ \\
};

\end{tikzpicture}
\caption{
For every combination of resource tightness $\kappa=C/n \in \{1,2,3\}$ and objective $\diamond \in \{\textsf{DTM}, (\textsf{SLM}_3), (\textsf{EAM}_3)\}$, V$^\pi_\diamond(n, \kappa n)$-versus-$n$ is plotted for $\pi \in \{\pi_{\textsf{FA}}$, $\pi_{\textsf{SL}}$, $\pi_{\textsf{EA}}$, $\pi_{\textsf{R}}\}$. Note: the specialized $\pi_{\textsf{R}}$ was only tested for the $(\textsf{EAM}_3)$ objective. 
}
\label{fig:sim_obj_by_cn_tau3}
\end{figure}

\section{Conclusion}
In this paper, we study a dynamic allocation problem in which the decision maker has a limited number of effectors in each round. We formulate the problem as an MDP and analyze the \cref{eq::MinDefuse}, \cref{eq::SurvivalObjective}, and \cref{eq::EffectiveObjective} objectives. We focus on time-oblivious policies $\pi_{\textsf{FA}}$, $\pi_{\textsf{SL}}$, and $\pi_{\textsf{EA}}$ that are simple, scalable, and easy to implement. We show that $\pi_{\textsf{FA}}$ is optimal under homogeneous threats. At the same time, our analysis also identifies that it can perform poorly when threat difficulties are highly heterogeneous. For greedy policies $\pi_{\textsf{SL}}$ and $\pi_{\textsf{EA}}$, we establish a worst-case constant-factor guarantee for $\pi_{\textsf{EA}}$ under \cref{eq::EffectiveObjective}, and we show that no analogous constant-factor bound holds for $\pi_{\textsf{SL}}$ under \cref{eq::SurvivalObjective}. We also show that each greedy policy outperforms $\pi_{\textsf{FA}}$ under its corresponding objective. Our numerical experiments indicate that fair allocation performs generally well and that greedy policies achieve near-optimal performance. Across a broad range of heterogeneous instances, each greedy policy achieves performance within a 1\% relative gap of the optimal policy under its corresponding objective. %

Our model can be extended to the case in which threats have distinct deadlines under the SLM objective. This extension broadens the applicability of the model by allowing threats to differ not only in difficulty, but also in urgency. While the common-deadline setting is the main focus of this paper, the distinct-deadline formulation preserves the same all-or-nothing nature of mission success and continues to yield meaningful structural insights. We provide the formulation and some additional discussion of this extension in Appendix \ref{sec:distinct_deadlines}.

\ACKNOWLEDGMENT{Louis L. Chen and Ang Xu contributed equally to this manuscript.}

\bibliographystyle{informs2014} %
\bibliography{sample} %

\ECSwitch
\ECHead{\centering{Online Appendix}}

\section{\Cref{sec:model} Proofs }\label{sec:proof}

\begin{proof}{Proof of \Cref{theorem:properties_n=2}.}

We first prove property 1, that is, $(x_1^*,x_2^*)=(C-1,1)$ for large enough $\tau$ and $(x_1^*,x_2^*)\neq (C,0)$ for any $\tau$ under both \cref{eq::SurvivalObjective} and \cref{eq::EffectiveObjective}.

\smallskip

\noindent\underline{Property 1 under \cref{eq::SurvivalObjective}.}
We first show that in the first round, allocation $(C-1,1)$ is better than $(C,0)$ under any $\tau$. We consider the following two policies:
    \begin{enumerate}
        \item $\pi_1$: at the first round $t=1$, set $(x_1,x_2)=(C,0)$. At round $t\geq 2$, $\pi_1$ uses the optimal policy.
        \item $\pi_2$: at the first round $t=1$, set $(x_1,x_2)=(C-1,1)$. At round $t\geq 2$, $\pi_2$ uses the optimal policy.
    \end{enumerate}
    Instead of using a single draw $(L_1,L_2)$ to determine the demand of each threat, we sequentially draw ($L_1^{(t)}, L_2^{(t)}$) in each round $t=1,2,\cdots,\tau$, where $L_1^{(t)}\sim \text{Geom}(1-q_1)$ and $L_2^{(t)}\sim \text{Geom}(1-q_2)$ are independent geometric random variables. Then by the memoryless property, for both $\pi_1$ and $\pi_2$, threat $i$ is neutralized in round $t$ if $x_i^{(t)}\geq L_i^{(t)}$. Let $\Delta_\tau:=V_{\textsf{SLM}_\tau}^{\pi_2}-V_{\textsf{SLM}_\tau}^{\pi_1}$ be the difference of survival probability between $\pi_2$ and $\pi_1$ under deadline $\tau$. We want to show $\Delta_\tau \geq 0$. There are six possible cases based on the realization of $(L_1^{(1)},L_2^{(1)})$ at $t=1$. For each case, we identify its contribution to $\Delta_\tau$:
\begin{enumerate}
    \item $L_1^{(1)} \le C-1$ and $L_2^{(1)}=1$: $\pi_2$ neutralizes both threats and $\pi_1$ only neutralizes threat 1. Thus, the probability that $\pi_2$ survives and $\pi_1$ fails to survive is $q_2^{(\tau-1) C}$. Thus, its contribution to $\Delta_\tau$ is $\operatorname{Pr}(L_1^{(1)} \le C-1, L_2^{(1)}=1) q_2^{(\tau-1) C}$. 
    \item $L_1^{(1)} \le C-1$ and $L_2^{(1)}>1$: both neutralize threat 1 only. We know both policies have the same probability to survive in this case. Thus, there is no contribution to $\Delta_\tau$ in this case.
    \item $L_1^{(1)}=C$ and $L_2^{(1)}=1$: $\pi_2$ neutralizes threat 2 only and $\pi_1$ neutralizes threat 1 only. The probability that $\pi_2$ survives and $\pi_1$ fails to survive in this case is $(1-q_1^{(\tau-1) C})q_2^{(\tau-1) C}$. The probability that $\pi_1$ survives and $\pi_2$ fails to survive in this case is $(1-q_2^{(\tau-1) C}) q_1^{(\tau-1) C}$. Thus, the contribution is $\operatorname{Pr}(L_1^{(1)}=C, L_2^{(1)}=1)\left(q_2^{(\tau-1) C}-q_1^{(\tau-1) C}\right)$.
    \item $L_1^{(1)}=C$ and $L_2^{(1)}>1$: $\pi_2$ fails to neutralize any threat and $\pi_1$ neutralizes threat 1 only. The probability that $\pi_1$ survives and $\pi_2$ fails to survive in this case is $V_{\textsf{SLM}_{\tau-1}}^*-(1-q_2^{(\tau-1)C})$. Thus, the contribution is $\operatorname{Pr}(L_1^{(1)}=C, L_2^{(1)}>1)\left(V^*_{\textsf{SLM}_{\tau-1}}-(1-q_2^{(\tau-1)C})\right)$.
    \item $L_1^{(1)} \ge C+1$ and $L_2^{(1)}=1$: $\pi_2$ only neutralizes threat 2 and $\pi_1$ fails to neutralize any threat. The contribution is $\operatorname{Pr}(L_1^{(1)}\geq C+1, L_2^{(1)}=1)\left(1-q_1^{(\tau-1) C}-V_{\textsf{SLM}_{\tau-1}}^*\right)$.
    \item $L_1^{(1)} \ge C+1$ and $L_2^{(1)}>1$: both fail to neutralize any threat. Thus, there is no contribution.
\end{enumerate}
Then we can compute $\Delta_\tau$ by summing over all contributions under each case. We can write the probability for each $(L_1^{(1)},L_2^{(1)})$ case above using the fact $\operatorname{Pr}(L_1^{(1)} \le C-1)=1-q_1^{C-1}$, $\operatorname{Pr}(L_1^{(1)}=C)=\left(1-q_1\right) q_1^{C-1}$, $\operatorname{Pr}(L_1^{(1)} \ge C+1)=q_1^C$, $\operatorname{Pr}(L_2^{(1)}=1)=1-q_2$, and $\operatorname{Pr}(L_2^{(1)}>1)=q_2$. Therefore, $\Delta_\tau$ can be computed as \begin{equation}\label{eq:thm2_slm_delta_tau}
    \begin{aligned}
    \Delta_\tau= & \left(1-q_1^{C-1}\right)\left(1-q_2\right)q_2^{(\tau-1) C} +q_1^{C-1}\left(1-q_1\right)\left(1-q_2\right)(q_2^{(\tau-1) C}-q_1^{(\tau-1) C}) \\
    &+q_1^{C-1}\left(1-q_1\right) q_2\left(V_{\textsf{SLM}_{\tau-1}}^*-(1-q_2^{(\tau-1) C})\right)+q_1^C\left(1-q_2\right)\left((1-q_1^{(\tau-1) C})-V_{\textsf{SLM}_{\tau-1}}^*\right) \\
    \geq & \ q_1^{C-1}\left(1-q_1\right)\left(1-q_2\right)(q_2^{(\tau-1) C}-q_1^{(\tau-1) C}) +q_1^{C-1}\left(1-q_1\right) q_2\left(V_{\textsf{SLM}_{\tau-1}}^*-(1-q_2^{(\tau-1) C})\right)\\
    &+q_1^C\left(1-q_2\right)\left((1-q_1^{(\tau-1) C})-V_{\textsf{SLM}_{\tau-1}}^*\right), \\
    \end{aligned}
    \end{equation}
    where the inequality in equation \eqref{eq:thm2_slm_delta_tau} is derived by removing the first term in the equation, which is non-negative. By moving the terms containing $V_{\textsf{SLM}_{\tau-1}}^*$ to one side, we know that showing the lower bound in equation \eqref{eq:thm2_slm_delta_tau} greater than or equal to 0 is equivalent to proving the following inequality:

    \begin{align}
        V_{\textsf{SLM}_{\tau-1}}^*&\leq 
        \frac{1}{q_1-q_2}\bigg( (1-q_1)(1-q_2) (q_2^{(\tau-1) C}-q_1^{(\tau-1) C})  -q_2\left(1-q_1\right)(1-q_2^{(\tau-1) C}) +q_1\left(1-q_2\right)(1-q_1^{(\tau-1) C}) \bigg) \notag
        \\
        &=\frac{(1-q_1)(1-q_2)}{q_1-q_2}\sum_{i=1}^{C(\tau-1)-1}(q_1^i-q_2^i)=(1-q_1)(1-q_2)\sum_{i=1}^{C(\tau-1)-1}\sum_{j=0}^{i-1}q_1^jq_2^{i-1-j} \notag \\
        &=(1-q_1)(1-q_2)\sum_{i=1}^{C(\tau-1)-1}\sum_{j=1}^{C(\tau-1)-i}q_1^{i-1}q_2^{j-1}, \label{eq:thm2_slm_v_star_ineq}
    \end{align}
where the inequality is derived by moving terms containing $V_{\textsf{SLM}_{\tau-1}}^*$ in equation \eqref{eq:thm2_slm_delta_tau} to one side. The final expression on the right-hand side of equation \eqref{eq:thm2_slm_v_star_ineq}is exactly the survival probability that we receive feedback each time we allocate an effector (i.e., capacity 1 with $C(\tau-1)$ rounds), which is an upper bound of $V_{\textsf{SLM}_{\tau-1}}^*$. To see why, notice that for $i,j=1,\cdots,C(\tau-1)$ and $i+j\leq (\tau-1) C$, the probability $L_1=i$ and $L_2=j$ is exactly $\text{Pr}(L_1=i,L_2=j)= (1-q_1)q_1^{i-1}\ (1-q_2)q_2^{j-1}$. Thus, the overall survival probability can be derived by summing over $i=1,\cdots,C(\tau-1)-1$ and $j=1,\cdots, C(\tau-1)-i$, which is exactly the last term in equation \eqref{eq:thm2_slm_v_star_ineq}. This implies that $\Delta_\tau> 0$ and $\pi_2$ is strictly better than $\pi_1$. 

\medskip

We have proved that $(C-1,1)$ is better than $(C,0)$ under any $\tau$. Now we show that for large enough $\tau$, the first allocation $(x_1,x_2)=(C-1,1)$ is better than $(u,C-u)$ for all $u\leq C-2$. For any split $(u,C-u)$ at $t=1$, we use the optimal continuation policy for $t\ge2$. First, for $u\in\{0,1,\cdots,C\}$, the survival probability with the first allocation $(x_1,x_2)=(u,C-u)$ and the optimal policy for $t\ge 2$ can be written as the following recursive formula:
\begin{align}
V_{\textsf{SLM}_\tau}^*(u):&= (1-q_1^u)(1-q_2^{C-u})+(1-q_1^u)q_2^{C-u}(1-q_2^{(\tau-1)C})+(1-q_2^{C-u})q_1^u(1-q_1^{(\tau-1)C})+q_1^uq_2^{C-u}V_{\textsf{SLM}_{\tau-1}}^* \notag
\\
&=1-q_1^{(\tau-1)C} q_1^u-q_2^{(\tau-1)C} q_2^{C-u}
+\left(q_1^{(\tau-1)C}+q_2^{(\tau-1)C}-1+V_{\textsf{SLM}_{\tau-1}}^*\right) q_1^u q_2^{C-u}. \label{eq:succ_prob_n=2}
\end{align}

We similarly define the $\Delta_\tau(u):=V_{\textsf{SLM}_\tau}^*(C-1)-V_{\textsf{SLM}_\tau}^*(u)$ as the difference between the survival probability under allocation $(C-1,1)$ and $(u,C-u)$. Using the expression of $V_{\textsf{SLM}_\tau}^*(u)$ in \ref{eq:succ_prob_n=2}, we can write $\Delta_\tau(u)$ as 
$$\Delta_\tau(u)=q_1^{(\tau-1)C} \left(q_1^u-q_1^{C-1}+T(u)\right)+q_2^{(\tau-1)C} \left(-q_2^u+q_2^{C-u}+T(u)\right)-(1-V_{\textsf{SLM}_{\tau-1}})T(u),$$
where $T(u):=q_1^{C-1} q_2-q_1^u q_2^{C-u}$. Let $k:=C-u-1\in\{1,2,\ldots,C-1\}$. A direct algebraic simplification gives
\begin{equation}\label{eq:thm2_slm_property2_1}
\begin{aligned}
    \frac{q_1^u-q_1^{C-1}}{T(u)}
&=\frac{1-q_1^{C-u-1}}{q_2\,(q_1^{C-u-1}-q_2^{C-u-1})}=\frac{(1-q_1)\sum_{j=0}^{C-u-2}q_1^j}{q_2(q_1-q_2)\sum_{j=0}^{C-u-1}q_1^{C-u-2-j}q_2^j}\\
&\;\ge\;
\frac{(1-q_1)\cdot (C-u-1) q_1^{C-u-2}}{q_2(q_1-q_2)\cdot (C-u-1) q_1^{C-u-2}}
=\frac{1-q_1}{q_2(q_1-q_2)}.
\end{aligned}
\end{equation}
where the inequality is from $\sum_{j=0}^{k-1}q_1^j\ge k q_1^{k-1}$ and
$\sum_{j=0}^{k-1}q_1^{k-1-j}q_2^j \le \sum_{j=0}^{k-1}q_1^{k-1}=k q_1^{k-1}$.
Then from the definition of $\Delta_\tau(u)$ and equation \eqref{eq:thm2_slm_property2_1}, we can lower bound $\Delta_\tau(u)$ using the following inequality:
\begin{equation}\label{eq:d_tau_u}
    \begin{aligned}
\Delta_\tau(u)
&= q_1^{(\tau-1)C} T(u)\frac{q_1^u-q_1^{C-1}+T(u)}{T(u)}+q_2^{(\tau-1)C} \left(-q_2^u+q_2^{C-u}+T(u)\right)-(1-V_{\textsf{SLM}_{\tau-1}})T(u) \\
&\ge T(u)\Big(q_1^{(\tau-1)C}(1+\frac{1-q_1}{q_2\left(q_1-q_2\right)})-(1-V_{\textsf{SLM}_{\tau-1}})\Big)+q_2^{(\tau-1)C}\left(-q_2^u+q_2^{C-u}+T(u)\right)\\
&\ge T(u)q_1^{(\tau-1)C}\Big(1+\frac{1-q_1}{q_2\left(q_1-q_2\right)}-\frac{1-V_{\textsf{SLM}_{\tau-1}}}{q_1^{(\tau-1)C}}\Big)
 -q_2^{(\tau-1)C}\,\left(1-q_1^{C-1}q_2\right),      
\end{aligned}
\end{equation}
where the last inequality is due to the fact that $-q_2^u+q_2^{C-u}+T(u)=q_1^{C-1} q_2-q_1^u q_2^{C-u}-q_2^u+q_2^{C-u}$ is increasing in $u$, so we can get a lower bound by setting $u=0$. It remains to bound $V_{\textsf{SLM}_{\tau-1}}^*$. We evaluate equation \eqref{eq:succ_prob_n=2} at $u=C-1$ and use the fact $V_{\textsf{SLM}_\tau}^* \ge V_{\textsf{SLM}_\tau}^*(C-1)$ to get the following inequality
\begin{equation}\label{eq:thm2_slm_property_2_1-V}
1-V_{\textsf{SLM}_\tau}^* \le q_1^{(\tau-1)C} q_1^{C-1}\left(1-q_2\right)
+q_2^{(\tau-1)C} \left(1-q_1^{C-1}q_2\right)
+(1-V_{\textsf{SLM}_{\tau-1}}^*) q_1^{C-1} q_2 .
\end{equation}
Now we define $r_\tau:=(1-V_{\textsf{SLM}_{\tau-1}}^*)/q_1^{(\tau-1)C}$ for $\tau=1,2,\cdots$. Then for $\tau\ge1$, divided boths sides of equation \eqref{eq:thm2_slm_property_2_1-V} by $q_1^{\tau C}$, we have
\begin{equation}\label{eq:recursive_comp}
    r_{\tau+1}\le \frac{1-q_2}{q_1}
+\frac{q_2^{(\tau-1)C}}{q_1^{\tau C}}\,\left(1-q_1^{C-1}q_2\right)
+\frac{q_2}{q_1}\, r_\tau.
\end{equation}
On the other hand, note that $q_2^{(\tau-1)C}/q_1^{\tau C}=\left(q_2/q_1\right)^{(\tau-1)C}/q_1^C \to 0$ as $\tau \rightarrow\infty$. Then by applying equation \eqref{eq:recursive_comp} recursively, for any $\epsilon>0$, there exists $\tau_1$ such that for all $\tau\ge\tau_1$,
\begin{equation}\label{eq:delta_bound}
r_\tau=\frac{1-V_{\textsf{SLM}_{\tau-1}}^*}{q_1^{(\tau-1)C}}
\le \frac{1-q_2}{q_1}\frac{1}{1-q_2/q_1}+\epsilon=
\frac{1-q_2}{q_1-q_2}+\epsilon.
\end{equation}
We now evaluate the first term in the lower bound of $\Delta_\tau(u)$ in equation \eqref{eq:d_tau_u}. Note that 
\[
\left(1+\frac{1-q_1}{q_2(q_1-q_2)}\right)-r_\tau+\epsilon\geq
\left(1+\frac{1-q_1}{q_2(q_1-q_2)}\right)-\frac{1-q_2}{q_1-q_2}
=
\frac{(1-q_1)(1-q_2)}{q_2(q_1-q_2)}
>0.
\]
Thus, we can choose $\epsilon\in(0,\frac{(1-q_1)(1-q_2)}{q_2(q_1-q_2)})$ and there exists $\tau_1$ such that for all $\tau\ge\tau_1$, we have
\[
q_1^{(\tau-1)C}\left(1+\frac{1-q_1}{q_2(q_1-q_2)}\right)-(1-V_{\textsf{SLM}_{\tau-1}}^*)
\ge
\eta\,q_1^{(\tau-1)C},
\quad \text{where }
\eta:=\frac{(1-q_1)(1-q_2)}{q_2(q_1-q_2)}-\epsilon>0.
\]
Finally, since $T(u)=q_1^{C-1} q_2-q_1^u q_2^{C-u}$ is minimized over $u\in\{0,\ldots,C-2\}$ at $u=C-2$,
we have $T(u)\ge T(C-2)=q_1^{C-2}q_2(q_1-q_2)>0$, hence for all $\tau\ge\tau_1$, by equation \eqref{eq:d_tau_u}, $\Delta_\tau(u)$ satisfies
\[
\Delta_\tau(u)\ge
\eta\,q_1^{(\tau-1)C}\,q_1^{C-2}q_2(q_1-q_2)
-q_2^{(\tau-1)C}\,q_2\left(1-q_1^{C-1}\right).
\]
For the second term in the lower bound of $\Delta_\tau(u)$ in equation \eqref{eq:d_tau_u}, because $\left(q_2/q_1\right)^{(\tau-1)C}\to 0$ as $\tau\rightarrow\infty$, there exists $\tau_2$ such that for all $\tau\ge\tau_2$, we have
\[
q_2^{(\tau-1)C}\,q_2\left(1-q_1^{C-1}\right)
\le
\frac{1}{2}\eta\,q_1^{(\tau-1)C}\,q_1^{C-2}q_2(q_1-q_2).
\]
Therefore, for all $\tau\ge \max\{\tau_1,\tau_2\}$ and all $u\in\{0,1,\ldots,C-2\}$, we have
$
\Delta_\tau(u)\ge
\frac{1}{2}\eta\,q_1^{(\tau-1)C}\,q_1^{C-2}q_2(q_1-q_2)>0$. Hence, for all $\tau\ge \max\{\tau_1,\tau_2\}$, we have $V_{\textsf{SLM}_\tau}^*(C-1)>V_{\textsf{SLM}_\tau}^*(u)$ for every $u\le C-2$. This proves property 1 under \cref{eq::SurvivalObjective}.

\smallskip

\noindent\underline{Property 1 under \cref{eq::EffectiveObjective}.} 
The proof for \cref{eq::EffectiveObjective} is similar to one for \cref{eq::SurvivalObjective}, but is slightly more complicated in computation. For notation simplicity, define %
$D_\tau^*:=1/(1-q_1)+1/(1-q_2)-V_{\textsf{EAM}_\tau}^*\geq 0$ as the difference between the total expected demand and expected number of effective assignments under $\pi^*$. We also define $W_\tau^*(u):=\tau C-V_{\textsf{EAM}_\tau}^*(u)$, $u\in\{0,1,\dots,C\}$ as the expected total waste incurred by the policy that uses $(u,C-u)$ in round $1$ and then follows an optimal policy from round $2$ onward. The following lemma shows that $W_\tau^*(u)$ has a recursive form based on $D_{\tau-1}^*$.

\begin{lemma}[Recursion form for $W_\tau^*(u)$]
\label{lem:F_closed_form}
For every integer $\tau\ge1$ and every $u\in\{0,\dots,C\}$, $W_\tau^*(u)$ has the following recursive expression:
\[
W_\tau^*(u)
=\tau C-\frac{1}{1-q_1}-\frac{1}{1-q_2}
+q_1^{u}q_2^{C-u}\,D_{\tau-1}^*
+\frac{1}{1-q_1} q_1^{u}(1-q_2^{C-u})\,q_1^{C(\tau-1)}
+\frac{1}{1-q_2} (1-q_1^{u})q_2^{C-u}\,q_2^{C(\tau-1)}.
\]
\end{lemma}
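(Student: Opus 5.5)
The plan is to compute $V_{\textsf{EAM}_\tau}^*(u)$, the expected number of effective assignments of the policy that plays $(u,C-u)$ in round $1$ and then follows an optimal policy. I will do this by conditioning on the set of threats that survive round $1$, and then take $W_\tau^*(u)=\tau C-V_{\textsf{EAM}_\tau}^*(u)$. I use the convention $V_{\textsf{EAM}_0}^*=0$, so that $D_0^*=\frac{1}{1-q_1}+\frac{1}{1-q_2}$ and the case $\tau=1$ is covered by the same formula. As in the proof of property 1 under \cref{eq::SurvivalObjective}, I will use the memoryless property of the geometric demands. Conditional on a threat surviving $x$ effectors, its residual demand is again $\operatorname{Geom}(1-q_i)$ and is independent of everything else. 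Hence the continuation problem from round $2$ is a fresh instance with deadline $\tau-1$ on the surviving set.

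First, the round-$1$ contribution is
\[
\mathbb{E}[\min\{u,L_1\}]+\mathbb{E}[\min\{C-u,L_2\}]=\frac{1-q_1^u}{1-q_1}+\frac{1-q_2^{C-u}}{1-q_2},
\]
using $\mathbb{E}[\min\{x,L_i\}]=\sum_{k=0}^{x-1}q_i^k$. Second, I split into the four survival events after round $1$.
\begin{itemize}
\item Both threats survive, with probability $q_1^uq_2^{C-u}$. The optimal continuation yields $V_{\textsf{EAM}_{\tau-1}}^*$ by memorylessness.
\item Only threat $1$ survives, with probability $q_1^u(1-q_2^{C-u})$. Feasibility in $\Pi$ forces all $C$ effectors onto threat $1$ in each of the remaining $\tau-1$ rounds. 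The continuation value is therefore $\mathbb{E}[\min\{C(\tau-1),L_1\}]=(1-q_1^{C(\tau-1)})/(1-q_1)$.
\item Only threat $2$ survives. This case is symmetric to the previous one, giving value $(1-q_2^{C(\tau-1)})/(1-q_2)$ with probability $(1-q_1^u)q_2^{C-u}$.
\item Neither threat survives. Then all later shots are waste and contribute $0$.
\end{itemize}

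Third, I substitute $V_{\textsf{EAM}_{\tau-1}}^*=\frac{1}{1-q_1}+\frac{1}{1-q_2}-D_{\tau-1}^*$ and collect terms. The coefficient of $\frac{1}{1-q_1}$, ignoring the $q_1^{C(\tau-1)}$ parts, is
\[
(1-q_1^u)+q_1^uq_2^{C-u}+q_1^u(1-q_2^{C-u})=1,
\]
and symmetrically the coefficient of $\frac{1}{1-q_2}$ is $1$. This gives
\[
V_{\textsf{EAM}_\tau}^*(u)=\frac{1}{1-q_1}+\frac{1}{1-q_2}-q_1^uq_2^{C-u}D_{\tau-1}^*-\frac{q_1^u(1-q_2^{C-u})q_1^{C(\tau-1)}}{1-q_1}-\frac{(1-q_1^u)q_2^{C-u}q_2^{C(\tau-1)}}{1-q_2}.
\]
Subtracting this from $\tau C$ yields exactly the claimed expression for $W_\tau^*(u)$.

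The only step needing care is the justification that the continuation from round $2$ has value exactly $V_{\textsf{EAM}_{\tau-1}}^*$, or the forced single-threat value. This requires two facts: the effective count of a surviving threat is additive across rounds (effective shots before survival plus effective shots on the residual demand), and the residual demands are i.i.d. geometric. I will verify the additivity by writing $L_i=x_i^1+L_i'$ on the survival event, which gives $\min\{\sum_t x_i^t,L_i\}=x_i^1+\min\{\sum_{t\ge2}x_i^t,L_i'\}$. The rest is routine algebra.
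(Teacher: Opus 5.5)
Your proposal is correct and follows essentially the same route as the paper: you condition on which threats survive round $1$, use memorylessness to identify the continuation values ($V^*_{\textsf{EAM}_{\tau-1}}$ when both are alive, $\mathbb{E}[\min\{L_i,C(\tau-1)\}]$ for a lone survivor), and then substitute $D^*_{\tau-1}$. The only differences are cosmetic: the paper tracks expected waste directly via $W^{(i)}_{\tau-1}$ and $W^*_{\tau-1}$ rather than effective assignments, and your convention $V^*_{\textsf{EAM}_0}=0$ makes the $\tau=1$ case explicit where the paper leaves it implicit.
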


\begin{proof}{Proof of Lemma \ref{lem:F_closed_form}.}
Let $W^{(i)}_\tau:=\tau C-\mathbb E[\min(L_i,\tau C)]$ be the expected number of wastes within $\tau$ rounds given that only threat $i$ is alive at the beginning. Since the expectation of $\min(L_i,\tau C)$ can be computed as $\mathbb E[\min(L_i,n)]=\sum_{k=1}^{n}\Pr(L_i\ge k)=\sum_{k=0}^{n-1}q_i^k=(1-q_i^n)/(1-q_i)$, we can further simplify this quantity as
$W^{(i)}_\tau=\tau C-(1-q_i^{\tau C})/(1-q_i)=\tau C-1/(1-q_i)+q_i^{\tau C}/(1-q_i).$ %
After round $1$, we know that both threats survive with probability $q_1^uq_2^{C-u}$, only threat $1$ survives with probability $q_1^u(1-q_2^{C-u})$, only threat $2$ survives with probability $(1-q_1^u)q_2^{C-u}$, and both threats are neutralized with probability $(1-q_1^u)(1-q_2^{C-u})$.
By summing up the expected waste conditional on each case above, we can write $W_\tau^*(u)$ as 
\[
\begin{aligned}
W_\tau^*(u)
=\,&\mathbb{E}[\text{wastes in round 1}]
+q_1^uq_2^{C-u}\,W_{\tau-1}^*
+q_1^u(1-q_2^{C-u})\,W^{(1)}_{\tau-1}\\
&+(1-q_1^u)q_2^{C-u}\,W^{(2)}_{\tau-1}
+(1-q_1^u)(1-q_2^{C-u})\,C(\tau-1)\\
=\,&C-\frac{1}{1-q_1}(1-q_1^u)-\frac{1}{1-q_2}(1-q_2^{C-u})
+q_1^uq_2^{C-u}\,W_{\tau-1}^*
+q_1^u(1-q_2^{C-u})\,W^{(1)}_{\tau-1}\\
&+(1-q_1^u)q_2^{C-u}\,W^{(2)}_{\tau-1}
+(1-q_1^u)(1-q_2^{C-u})\,C(\tau-1)\\
=\,&C-\frac{1}{1-q_1}(1-q_1^u)-\frac{1}{1-q_2}(1-q_2^{C-u})
+q_1^uq_2^{C-u}\,\left((\tau-1)C-\frac{1}{1-q_1}-\frac{1}{1-q_2}+D_{\tau-1}^*\right)
\\
&+q_1^u(1-q_2^{C-u})\left((\tau-1) C-\frac{1}{1-q_1}+\frac{1}{1-q_1} q_1^{(\tau-1) C}\right)\\
&+(1-q_1^u)q_2^{C-u}\,\left((\tau-1) C-\frac{1}{1-q_2}+\frac{1}{1-q_2} q_2^{(\tau-1) C}\right)
+(1-q_1^u)(1-q_2^{C-u})\,C(\tau-1)\\
=\,&\tau C-\frac{1}{1-q_1}-\frac{1}{1-q_2}+q_1^uq_2^{C-u} D_{\tau-1}^*+\frac{1}{1-q_1}q_1^u(1-q_2^{C-u})q_1^{C(\tau-1)}\\
&+\frac{1}{1-q_2}(1-q_1^u)q_2^{C-u} q_2^{C(\tau-1)},
\end{aligned}
\]
which proves the result. \halmos
\end{proof}

\medskip

Now we prove that in the first round, allocation $(C-1,1)$ is better than $(C,0)$ under \cref{eq::EffectiveObjective} as well. Clearly, for every sample path, w.p.1, the number of effective assignments can not exceed the offline optimal, that is,
$\min \left(\sum_{t=1}^\tau x_1^t, L_1\right)+\min \left(\sum_{t=1}^\tau x_2^t, L_2\right)\;\le\;\min(L_1+L_2,C\tau)$.
Taking expectations yields $V_{\textsf{EAM}_\tau}^*\le \mathbb E[\min(L_1+L_2,C\tau)]$. Thus, we have 
$$        V_{\textsf{EAM}_\tau}^*\le\mathbb E[\min(L_1+L_2,C\tau)]=\mathbb E[L_1+L_2]-\mathbb E[(L_1+L_2-C\tau)^+]=\frac{1}{1-q_1}+\frac{1}{1-q_2}-\mathbb E[(L_1+L_2-C\tau)^+].
$$
Then using the above inequality and
and by the definition of $D_\tau^*$, we can lower bound $D_\tau^*$ as follows:
\begin{align}
D_\tau^*&=\frac{1}{1-q_1}+\frac{1}{1-q_2}-V_{\textsf{EAM}_\tau}^* \;\ge\;\mathbb E[(L_1+L_2-C\tau)^+] \notag \geq \sum_{m=C\tau}^{\infty}\Pr(L_1+L_2>m) \notag\\
&=\sum_{m=C\tau}^{\infty}\left(\Pr(L_2>m)+\sum_{\ell=1}^{m}\Pr(L_2=\ell)\Pr(L_1>m-\ell)\right) \notag\\
&=\sum_{m=C\tau}^{\infty}\left(q_2^{m}+(1-q_2)\sum_{\ell=1}^{m}q_2^{\ell-1}q_1^{m-\ell}\right)=\sum_{m=C\tau}^{\infty}\left(q_2^{m}+\frac{(1-q_2)(q_1^{m}-q_2^{m})}{q_1-q_2}\right) \notag\\
&=\sum_{m=C\tau}^{\infty}\frac{(1-q_2)q_1^{m}-(1-q_1)q_2^{m}}{q_1-q_2}=\frac{\frac{1}{1-q_1}(1-q_2)\,q_1^{C\tau}-\frac{1}{1-q_2}(1-q_1)\,q_2^{C\tau}}{q_1-q_2}. \label{eq:lower_bound_on_D_tau}
\end{align}

To show $(C,0)$ is never a unique optimal allocation in the first round, we apply Lemma~\ref{lem:F_closed_form} with $u=C$ and at $u=C-1$, and then subtract $W_\tau^*(C-1)$ from $W_\tau^*(C)$. After some simplification, we can get the following equation:
\begin{equation}\label{eq:thm2_eam_property1_1}
    \begin{aligned}
W_\tau^*(C)-W_\tau^*(C-1)
= \ & q_1^{C-1}(q_1-q_2)D_{\tau-1}^*
-\frac{1}{1-q_1} q_1^{C-1}(1-q_2)\,q_1^{C(\tau-1)}\\
&+\frac{1}{1-q_2}\Big((1-q_2)-q_1^{C-1}(q_1-q_2)\Big)\,q_2^{C(\tau-1)}.
\end{aligned}
\end{equation}
In equation \eqref{eq:thm2_eam_property1_1}, the only unknown quantity is $D_{\tau-1}^*$. By equation \eqref{eq:lower_bound_on_D_tau}, we multiply $D_{\tau-1}^*$ by $q_1^{C-1}(q_1-q_2)$ and this yields
\begin{equation}\label{eq:thm2_eam_property1_2}
  q_1^{C-1}(q_1-q_2)D_{\tau-1}^*
\;\ge\;
\frac{1}{1-q_1} q_1^{C-1}(1-q_2)\,q_1^{C(\tau-1)}
-\;q_1^{C-1}\frac{1-q_1}{1-q_2}\,q_2^{C(\tau-1)}.  
\end{equation}
Apply equation \eqref{eq:thm2_eam_property1_2} to equation \eqref{eq:thm2_eam_property1_1} and we have
\begin{equation}
\begin{aligned}
W_\tau^*(C)-W_\tau^*(C-1) & \ge
\left[
-\;q_1^{C-1}\frac{1-q_1}{1-q_2}
+\frac{1}{1-q_2}\Big((1-q_2)-q_1^{C-1}(q_1-q_2)\Big)
\right]q_2^{C(\tau-1)}=(1-q_1^{C-1})q_2^{C(\tau-1)}>0.   
\end{aligned}
\end{equation}
Therefore, $(C,0)$ yields a larger waste compared to $(C-1,1)$ for every $\tau$, so there must exist an optimal solution such that $(x_1^*,x_2^*)\neq (C,0)$.
\medskip

Next we prove that $(C-1,1)$ is optimal for large enough $\tau$. We fix $u\in\{0,\dots,C-2\}$. Under \cref{eq::EffectiveObjective}, we similarly define the gap
$\Delta_\tau(u):=W_\tau^*(u)-W_\tau^*(C-1)$. Then by Lemma~\ref{lem:F_closed_form}, $\Delta_\tau(u)$ can be written as

\begin{equation}\label{eq:thm2_eam_property1_delta}
    \begin{aligned}
        \Delta_\tau(u)&=-T(u) D_{\tau-1}^*+\frac{1}{1-q_1}\left(q_1^u(1-q_2^{C-u})-q_1^{C-1}(1-q_2)\right)q_1^{C(\tau-1)}\\
        &\quad +\frac{1}{1-q_2}\left((1-q_1^u)q_2^{C-u} -(1-q_1^{C-1})q_2\right)q_2^{C(\tau-1)},
    \end{aligned}
\end{equation}
where $T(u)=q_1^{C-1}q_2-q_1^uq_2^{C-u}$ has the same expression as previous. By \eqref{eq:thm2_slm_property2_1}, we know the second term in RHS of equation \eqref{eq:thm2_eam_property1_delta} follows
\begin{equation}\label{eq:thm2_eam_property1_delta_2}
   \frac{q_1^{u}(1-q_2^{C-u})-q_1^{C-1}(1-q_2)}{1-q_1}=\frac{1}{1-q_1}T(u)+\frac{q_1^u-q_1^{C-1}}{1-q_1} \ge \left( \frac{1}{1-q_1}+\frac{1}{q_2(q_1-q_2)}\right)T(u). 
\end{equation}
With abuse of notation, we now write $r_\tau:=D_{\tau}^*/q_1^{C\tau}$. Then plug equation \eqref{eq:thm2_eam_property1_delta_2} into equation \eqref{eq:thm2_eam_property1_delta} and we obtain
\begin{equation}\label{eq:thm2_eam_delta_tau}
  \frac{\Delta_\tau(u)}{q_1^{C(\tau-1)}}
\ge T(u)(\frac{1}{1-q_1}+\frac{1}{q_2(q_1-q_2)}-r_{\tau-1})
+\frac{(1-q_1^{u})q_2^{C-u}-(1-q_1^{C-1})q_2}{1-q_2}\left(\frac{q_2}{q_1}\right)^{C(\tau-1)}.  
\end{equation}

Thus, to prove $\Delta_\tau(u)\geq 0$, it remains to show $r_{\tau-1}\leq 1/(1-q_1)+1/(q_2(q_1-q_2))$ for large enough $\tau$. Here, we similarly derive a recursive bound for $r_\tau$. Using the fact $\tau C-1/(1-q_1)+1/(1-q_2)+D_\tau^*=W_\tau^*\le W_\tau^*(C-1)$ and Lemma~\ref{lem:F_closed_form} at $u=C-1$, we have
\begin{equation}\label{eq:thm2_eam_d_tau}
    \begin{aligned}
        r_\tau=\frac{D_\tau^*}{q_1^{C\tau}} &\le \frac{1}{q_1^{C\tau}}\left(W_\tau^*(C-1)-\tau C+(\frac{1}{1-q_1}+\frac{1}{1-q_2})\right)
        \\
        &= q_1^{C-1}q_2\,D_{\tau-1}^*
+\frac{1}{1-q_1} q_1^{C-1}(1-q_2)\,q_1^{C(\tau-1)}
+\frac{1}{1-q_2}(1-q_1^{C-1})q_2\,q_2^{C(\tau-1)}\\
&=\frac{1-q_2}{q_1(1-q_1)} + \frac{q_2}{q_1} r_{\tau-1} + \frac{\frac{1}{1-q_2}(1-q_1^{C-1})q_2}{q_1^{C}}\left(\frac{q_2}{q_1}\right)^{C(\tau-1)}.
    \end{aligned}
\end{equation}
Note that the last term in equation \eqref{eq:thm2_eam_d_tau} geometrically approaches to 0 as $\tau\rightarrow\infty$ because $q_2/q_1<1$. By iterating the inequality \eqref{eq:thm2_eam_d_tau}, we know there exists $\tau_1$ such that for all $\tau\geq \tau_1$,
\begin{equation}\label{eq:eam_rtau_bound}
   r_\tau \le \frac{1-q_2}{q_1(1-q_1)}\frac{1}{1-\frac{q_2}{q_1}}+\epsilon=\frac{1-q_2}{(1-q_1)(q_1-q_2)}+\epsilon. 
\end{equation}

For the first term on right-hand side of equation \eqref{eq:thm2_eam_delta_tau},  it holds that
\[
\frac{1}{1-q_1}+\frac{1}{q_2(q_1-q_2)}-\frac{1-q_2}{(q_1-q_2)(1-q_1)}
=\frac{1-q_2}{q_2(q_1-q_2)}\;>\;0.
\]
Thus, we can similarly choose $\epsilon \in (0,(1-q_2)/\left(q_2(q_1-q_2)\right)$ in equation \eqref{eq:eam_rtau_bound} so that the first term in the right-hand side of equation \eqref{eq:thm2_eam_delta_tau} is non-negative. For the last term in equation \eqref{eq:thm2_eam_d_tau}, because $(q_2/q_1)^{C(\tau-1)}\to 0$ as $\tau\rightarrow\infty$, there exists some deadline $\tau_2$ such that for all $\tau\ge\tau_2$, the following inequality holds for any $u< C-1$:
\begin{equation}\label{eq:eam_rsec_bound}
    \frac{(1-q_1^{C-1})q_2}{1-q_2}\left(\frac{q_2}{q_1}\right)^{C(\tau-1)}
\le \frac12\,\left(\frac{1-q_2}{(1-q_1)(q_1-q_2)}-\epsilon\right)\,T(u).
\end{equation}

Then for all $\tau\ge\max\{\tau_1,\tau_2\}$ and all $u\in\{0,\dots,C-2\}$, by plugging equation \eqref{eq:eam_rsec_bound} into equation \eqref{eq:thm2_eam_delta_tau}, we have $\Delta_\tau(u)\geq0$, i.e., $
W_\tau^*(C-1)\leq W_\tau^*(u)$. Thus, we conclude that for all $\tau\ge\max\{\tau_1,\tau_2\}$,
$W_\tau^*(C-1)=\min_{u\in\{0,1,\dots,C\}}W_\tau^*(u)$. Hence, there exists an optimal policy with first-round allocation $(C-1,1)$ for all $\tau\ge\max\{\tau_1,\tau_2\}$.

\smallskip

\noindent\underline{Proof of properties 2 and 3: $x_1^*$ is increasing in $\tau$ and $x_1^*\geq x_2^*$ for any $\tau$.}
We first prove two lemmas.

\begin{lemma}\label{lemma:b(1-r_2)(1-a)<=a(1-b)(1-r_1)}
    Suppose $1>q_1> q_2>0$ and $C\geq v_1> v_2\geq 0$ for integers $v_1,v_2$. Let $d=v_1-v_2>0$. Then we have
    $q_2^{v_2}\left(1-q_2^d\right)\left(1-q_1^{v_2}\right) \leq q_1^{v_2}\left(1-q_2^{v_2}\right)\left(1-q_1^d\right)$. Moreover, the inequality becomes an equality if and only if $v_2=0$.
\end{lemma}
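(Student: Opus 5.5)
The plan is to reduce the inequality to a monotonicity statement about a single-variable function of $q$, so that the two sides separate in $q_1$ and $q_2$. Write $v:=v_2$.

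The case $v=0$ is immediate. Both factors $1-q_1^{v}$ and $1-q_2^{v}$ vanish, so both sides equal $0$ and equality holds.

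Now assume $v\ge 1$. All the factors $q_i^{v}$, $1-q_i^{v}$ and $1-q_i^{d}$ are then strictly positive, because $q_i\in(0,1)$ and $d\ge1$. Dividing both sides by the positive quantity $q_1^{v}q_2^{v}(1-q_1^{d})(1-q_2^{d})$ turns the claim into $h(q_1)\le h(q_2)$, where
\[
h(q):=\frac{1-q^{v}}{q^{v}\,(1-q^{d})}.
\]
Since $q_1>q_2$, it therefore suffices to show that $h$ is strictly decreasing on $(0,1)$. That monotonicity also gives strict inequality whenever $v\ge1$, which settles the ``only if'' direction of the equality statement.

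For the monotonicity, I would avoid differentiating and factor geometric sums instead. Using $1-q^{v}=(1-q)\sum_{k=0}^{v-1}q^k$ and $1-q^{d}=(1-q)\sum_{j=0}^{d-1}q^j$, the factors $1-q$ cancel, and dividing by $q^{v}$ gives
\[
h(q)=\frac{\sum_{m=1}^{v} q^{-m}}{\sum_{j=0}^{d-1} q^{j}}.
\]
The numerator is positive and strictly decreasing in $q$, since each $q^{-m}$ with $m\ge1$ is strictly decreasing. The denominator is positive and non-decreasing in $q$; it is constant when $d=1$. Hence $h$ is strictly decreasing on $(0,1)$, which completes the argument.

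I expect no real obstacle here. The only step requiring care is choosing the right separation of variables: the naive ratio $(q^{-v}-1)/(1-q^d)$ has both numerator and denominator decreasing, which obscures the monotonicity until the geometric-sum factorization is applied. The hypothesis $v_1\le C$ plays no role in the inequality itself. It presumably matters only for where the lemma is applied later, in the proof of properties 2 and 3 of \Cref{theorem:properties_n=2}.
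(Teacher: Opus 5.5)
Your argument is correct and is essentially the paper's proof: both factor $1-q^{v_2}$ and $1-q^{d}$ as geometric sums, and both rely on two facts, that $\sum_{m=1}^{v_2} q^{-m}$ is strictly decreasing on $(0,1)$ and that $\sum_{j=0}^{d-1} q_2^{j}\le \sum_{j=0}^{d-1} q_1^{j}$. The paper states the second fact as a termwise comparison over $k$ via $(q_2/q_1)^k\le 1$, while you combine both into the single strictly decreasing function $h$, which also makes strictness for $v_2\ge 1$, and hence the equality case, immediate.
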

\begin{proof}{Proof of \Cref{lemma:b(1-r_2)(1-a)<=a(1-b)(1-r_1)}.}
    If $v_2=0$, then both sides are zero, so the claim holds trivially. Thus, we can assume $v_2>0$ without loss of generality. Remember $0<q_2<q_1<1$ implies $0<q_2^{v_2} \leq q_1^{v_2}<1$ and $0<q_2^d \leq q_1^d<1$. Use $1-q_i^d=\left(1-q_i\right) \sum_{k=0}^{d-1} q_i^k$, so
$$
    q_2^{v_2}(1-q_1^{v_2})\left(1-q_2\right) \sum_{k=0}^{d-1} q_2^k < q_1^{v_2}(1-q_2^{v_2})\left(1-q_1\right) \sum_{k=0}^{d-1} q_1^k.
$$
It suffices to prove the above inequality termwise for each $k=0, \ldots, d-1$, which is equivalent to showing
\begin{equation}\label{eq:lemma1_core1}
\frac{q_2^{v_2}}{q_1^{v_2}} \cdot \frac{1-q_1^{v_2}}{1-q_2^{v_2}} \cdot\left(\frac{q_2}{q_1}\right)^k < \frac{1-q_1}{1-q_2}.
\end{equation}
On the other hand, we note that
\begin{equation}\label{eq:q2_q1_ratio}
    \frac{q_2^{v_2}}{q_1^{v_2}} \cdot \frac{1-q_1^{v_2}}{1-q_2^{v_2}} \frac{1-q_2}{1-q_1}=\left(\frac{q_2}{q_1}\right)^{v_2} \cdot \frac{\sum_{i=0}^{v_2-1} q_1^i}{\sum_{i=0}^{v_2-1} q_2^i}=\frac{\sum_{i=0}^{v_2-1} q_1^{i-v_2}}{\sum_{i=0}^{v_2-1} q_2^{i-v_2}}.
\end{equation}
Since we know $q_1>q_2$ and the function $x \mapsto \sum_{j=1}^m x^{-j}$ is strictly decreasing on $(0,1)$, we get
$\sum_{i=0}^{m-1} q_1^{i-m}<\sum_{i=0}^{m-1} q_2^{i-m}$. This implies that the last ratio in equation \eqref{eq:q2_q1_ratio} is strictly less than $1$. Since $(q_2/q_1)^k\leq 1$, we know inequality \eqref{eq:lemma1_core1} holds. 
\halmos
\end{proof}

\begin{lemma}\label{lemma:switch_dominate}
    Given $n=2$, $C\geq 2$,  $1>q_1> q_2>0$ and $0\leq u_1< u_2\leq C$, consider the following two policies:
    \begin{enumerate}
        \item $\pi_1$: at $t=1$, set $(x_1^1,x_2^1)=(u_1,v_1)$ where $v_1=C-u_1$. If no threats are neutralized at $t=1$, then at $t=2$, set $(x_1^2,x_2^2)=(u_2,v_2)$ where $v_2=C-u_2$. If no threats are neutralized at $t\geq 2$, it uses an arbitrary policy (can be any policies, including the optimal one) at $t\geq 3$.
        \item $\pi_2$: at $t=1$, set $(x_1^1,x_2^1)=(u_2,v_2)$. If no threats are neutralized at $t=1$, then at $t=2$, set $(x_1^2,x_2^2)=(u_1,v_1)$. If no threats are neutralized at $t\geq 2$, it uses the same policy as $\pi_1$ at $t\geq 3$.
    \end{enumerate}
    Then we have $V_{\textsf{SLM}_\tau}^{\pi_1}\leq V_{\textsf{SLM}_\tau}^{\pi_2}$ and $V_{\textsf{EAM}_\tau}^{\pi_1}\leq V_{\textsf{EAM}_\tau}^{\pi_2}$ for any $\tau \geq 2$. Moreover, both $V_{\textsf{SLM}_\tau}^{\pi_1}= V_{\textsf{SLM}_\tau}^{\pi_2}$ and $V_{\textsf{EAM}_\tau}^{\pi_1}= V_{\textsf{EAM}_\tau}^{\pi_2}$ hold if and only if $u_1=0$ and $u_2=C$.%
\end{lemma}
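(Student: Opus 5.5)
The plan is a coupling over the first two rounds. Rounds $t\ge 3$ are reached in the same state under both policies and then run the same continuation, so only the first two rounds need comparison. I will condition on the event that the process survives both rounds with nothing neutralized, and on the exact set of threats neutralized by the end of round 2. The key observation is that if no threat is neutralized in either round, both policies have fired $u_1+u_2$ effectors at threat 1 and $v_1+v_2$ at threat 2. Both policies then sit in state $\{1,2\}$ at $t=3$ with identical conditional laws. Hence those paths contribute identically to both objectives (for EAM, the effective assignments accrued on them are also equal). The difference can therefore only come from paths on which some threat is neutralized in round 1 or round 2.

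I will enumerate the outcomes of rounds 1--2 by which threat dies first and in which round. The events to compare are: threat $i$ killed in round 1 (the other survives), both killed in round 1, only threat $i$ killed in round 2 after a null round 1, and both killed in round 2 after a null round 1. After any kill, the subsequent play is a single-threat problem in which all $C$ effectors go to the survivor each round. Its value depends only on the survivor's index and the remaining horizon, i.e., on $q_i^{C(\tau-t)}$ for SLM and on $(1-q_i^{C(\tau-t)})/(1-q_i)$ plus the deterministic accrued effective shots for EAM. So each policy's value is an explicit polynomial in $q_1^{u_1},q_1^{u_2},q_2^{v_1},q_2^{v_2}$ together with these single-threat continuation terms. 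A convenient grouping is this: the probability of "threat 1 survives rounds 1--2 while threat 2 dies by round 2" is $q_1^{u_1+u_2}(1-q_2^{v_1+v_2})$ under both policies, and symmetrically for the other order. So the marginal survival probabilities of each threat through two rounds coincide. The only thing that differs is the \emph{timing} of the kill of the eliminated threat (round 1 versus round 2), and with it the time left for the lone survivor.

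For SLM this reduces the difference $V^{\pi_2}-V^{\pi_1}$ to a timing comparison. Killing a threat earlier leaves the survivor one extra round of $C$ shots. The extra value is $(1-q_j^{C})q_j^{C(\tau-2)}$-type gain for survivor $j$, and this gain is larger when the survivor is the harder threat 1, since $q_1>q_2$. I expect $\pi_2$ (which fires $u_2>u_1$ at threat 1 first, hence more at threat 2 in round 2) to make the easy threat 2 die relatively earlier conditional on threat 1 surviving. After cancellation, the difference should factor as a nonnegative combination of terms of the form $q_1^{u_1}q_2^{v_2}(1-q_2^{d})(1-q_1^{\cdot})$ versus $q_2^{\cdot}(1-q_1^d)(1-q_2^{\cdot})$ with $d=u_2-u_1$. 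These are precisely the quantities compared in Lemma~\ref{lemma:b(1-r_2)(1-a)<=a(1-b)(1-r_1)}, applied with the roles $v_2\to$ the smaller allocation and $d\to u_2-u_1=v_1-v_2$. That lemma gives the sign, and its equality case ($v_2=0$, i.e.\ $u_2=C$, together with the analogous term vanishing iff $u_1=0$) yields the characterization of equality. For EAM the same decomposition applies, using the single-threat continuation value $C(\tau-t)-W^{(i)}_{\tau-t}$ from the proof of Lemma~\ref{lem:F_closed_form}. Effective assignments within rounds 1--2 are additionally bounded by $\min$ terms, whose expectations are symmetric in the order of the two allocations except again through kill timing, so the difference reduces to the same Lemma~\ref{lemma:b(1-r_2)(1-a)<=a(1-b)(1-r_1)} inequality multiplied by a positive factor $\bigl(q_2^{C(\tau-2)}$-vs-$q_1^{C(\tau-2)}$ gain$\bigr)$.

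The main obstacle is the bookkeeping: writing both values so the cancellation is visible, and verifying that the residual is exactly a positive multiple of the Lemma~\ref{lemma:b(1-r_2)(1-a)<=a(1-b)(1-r_1)} gap. For EAM there is the extra difficulty that the round-1/round-2 effective-shot counts also differ, and I must show this within-horizon waste difference has the same sign rather than opposing the continuation gain. I would first carry out the $\tau=2$ case by hand to identify the factorization, then note that for $\tau>2$ only the continuation weights change, by monotone factors.
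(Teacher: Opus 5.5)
\textbf{Verdict: genuine gap.} Your architecture is the paper's. You couple both policies on one draw $(L_1,L_2)$ and note that paths with no kill in rounds 1--2 coincide. You then reduce to Lemma~\ref{lemma:b(1-r_2)(1-a)<=a(1-b)(1-r_1)}, once with $v_2$ and once with $u_1$, so that equality holds iff $v_2=u_1=0$. However, the step that is supposed to produce this reduction rests on a false claim and is never carried out.

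The two-round survival probabilities of a threat do \emph{not} coincide under $\pi_1$ and $\pi_2$. Once threat 2 dies in round 1, threat 1 receives all $C$ effectors in round 2, not $u_2$ (resp.\ $u_1$). For instance, take $C=2$, $u_1=0$, $u_2=1$, $q_1=0.5$, $q_2=0.2$: threat 1 is alive at $t=3$ with probability $0.26$ under $\pi_1$ but $0.2$ under $\pi_2$.

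Several consequences of this error run through the proposal:
\begin{itemize}
\item The survivor's bonus from an early kill is not ``one extra round of $C$ shots''. It is $v_2$, $u_1$ or $d=u_2-u_1$ shots, depending on the case.
\item Your directional heuristic is reversed. $\pi_2$ fires \emph{fewer} shots ($v_2<v_1$) at threat 2 in round 1, so it kills the easy threat early \emph{less} often. Its advantage comes from killing threat 1 in round 1, or from the $d$ extra shots already placed on threat 1.
\item The sign does not come from comparing gains across survivors (``larger when the survivor is threat 1''). Cross-survivor terms carry the unequal weights $q_1^{(\tau-2)C}$ and $q_2^{(\tau-2)C}$. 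So a single combined inequality proved at $\tau=2$ does not extend to larger $\tau$ ``by monotone factors''.
\end{itemize}

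What is missing is the explicit list of coupled events on which the outcomes differ, and the right pairing of them. With $c_0=(\tau-2)C$, exactly four cells of the $(L_1,L_2)$ grid differ:
\begin{itemize}
\item[(i)] $v_2<L_2\le v_1$, $L_1>u_2$: threat 1 is allotted $v_2$ more shots under $\pi_1$.
\item[(ii)] $L_2\le v_2$, $L_1>u_2$: threat 1 gets $d$ more under $\pi_2$.
\item[(iii)] $L_1\le u_1$, $L_2>v_1$: threat 2 gets $d$ more under $\pi_1$.
\item[(iv)] $u_1<L_1\le u_2$, $L_2>v_1$: threat 2 gets $u_1$ more under $\pi_2$.
\end{itemize}

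In each cell, the threat eliminated by round 2 is killed under both policies and contributes identically. So the pathwise difference is $\mathbbm{1}\{a<L_i\le a+k\}$ for SLM and $\min(L_i,a+k)-\min(L_i,a)$ for EAM. Here $i$ is the survivor, $a$ its smaller total and $k$ the surplus. This makes EAM identical to SLM up to the common factor $1/(1-q_i)$, and it removes the ``within-horizon waste'' obstacle you flagged.

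Pair (i) with (ii), which share the factor $q_1^{c_0}$. Using $u_1+C=u_1+u_2+v_2$ gives
\[
q_2^{v_2}(1-q_2^{d})(1-q_1^{v_2})\;\le\;q_1^{v_2}(1-q_2^{v_2})(1-q_1^{d}).
\]
Pair (iii) with (iv), which share the factor $q_2^{c_0}$. Using $v_2+C=v_1+v_2+u_1$ gives the same inequality with $v_2$ replaced by $u_1$.

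Each inequality holds separately by Lemma~\ref{lemma:b(1-r_2)(1-a)<=a(1-b)(1-r_1)}. This is why the result is uniform in $\tau$, and why equality forces $v_2=u_1=0$. Without this enumeration, your proposal only asserts the factorization it needs.
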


\begin{proof}{Proof of \Cref{lemma:switch_dominate}.}
    We show $\pi_2$ is no worse than $\pi_1$ under each objective.
    
\noindent\underline{\cref{eq::SurvivalObjective}.} We compare $\pi_1$ and $\pi_2$ by coupling them on a single draw ($L_1, L_2$) and enumerating the disjoint cases where the outcome can differ. In any scenario where after $t=2$ they leave the same set of survived threats, they either both survive or both fail, because after two rounds the allocation policy is the same for $\pi_1$ and $\pi_2$. Thus the only ways they can differ are exactly the two cases below.
\begin{enumerate}
    \item  $\pi_1$ survives but $\pi_2$ fails to survive. There are two disjoint ways this can happen:
    \begin{enumerate}
        \item $\pi_1$ neutralizes threat 2 in $t=1$ but $\pi_2$ doesn't (since $v_1=C-u_1> C-u_2=v_2$). This is the event $v_2<L_2 \leq v_1$. Since $\pi_1$ survives but $\pi_2$ fails, threat 1 must remain for both $\pi_1$ and $\pi_2$ after $t=1$. Otherwise, both policies can neutralize all threats in two rounds.
        \begin{itemize}
            \item Under $\pi_1$ : after $t=1$, only threat 1 remains; the total additional resources it can get are $C+c_0$ where $c_0:=(\tau-2)C$, so $\pi_1$ survives iff $L_1 \leq u_1+C+c_0$.
            \item Under $\pi_2$ : after $t=1$, both threats remain; at $t=2$ it adds $\left(u_1, v_1\right)$ and then there is $c_0$ units of resource remaining for threat 2. $\pi_2$ fails iff $L_1>u_1+u_2+c_0$.
        \end{itemize}
        So the contribution to the probability that $\pi_1$ survives and $\pi_2$ fails to survive in this case is $\operatorname{Pr}\left(v_2<L_2 \leq v_1, u_1+u_2+c_0<L_1 \leq u_1+C+c_0\right)$.
        \item Both $\pi_1$ and $\pi_2$ neutralize threat 1 at $t=1$.
        This happens under the event $L_1 \leq u_1$.
        \begin{itemize}
            \item Under $\pi_1$ : only threat 2 remains. Thus, $\pi_1$ survives iff $L_2 \leq v_1+C+c_0$.
            \item Under $\pi_2$ : only threat 2 remains. Thus, $\pi_2$ survives iff $L_2 \leq v_2+C+c_0$.
        \end{itemize}
        Thus, to have $\pi_1$ survive and $\pi_2$ fail, we need $v_2+C+c_0<L_2 \leq v_1+C+c_0$. The contribution is $\operatorname{Pr}\left(L_1 \leq u_1, v_2+C+c_0<L_2 \leq v_1+C+c_0\right)$.
    \end{enumerate} 
    Putting the two cases together, we have
    \begin{align*}
       \operatorname{Pr}\left(\pi_1 \text { surv, } \pi_2 \text { fail}\right)=&\operatorname{Pr}\left(v_2<L_2 \leq v_1, u_1+u_2+c_0<L_1 \leq u_1+C+c_0\right)\\&+\operatorname{Pr}\left(L_1 \leq u_1, v_2+C+c_0<L_2 \leq v_1+C+c_0\right).
    \end{align*}
    \item $\pi_2$ survives but $\pi_1$ fails. Again, there are two disjoint ways this can happen:
    \begin{enumerate}
        \item  $\pi_2$ neutralizes threat 1 in round 1 but $\pi_1$ doesn't. This happens under the event $u_1<L_1 \leq u_2$.
        \begin{itemize}
            \item Under $\pi_2$  after $t=1$, only threat 2 remains. Thus, $\pi_2$ survives iff $L_2 \leq v_2+C+c_0$.
            \item Under $\pi_1$: after $t=1$, both threats remain. Thus, $\pi_1$ survives iff $L_2\leq v_1+v_2+c_0$.
        \end{itemize}
        So the contribution is $\operatorname{Pr}\left(u_1<L_1 \leq u_2, v_1+v_2+c_0<L_2 \leq v_2+C+c_0\right)$.
        \item Both only neutralize threat 2. This happens under the event $L_2 \leq v_2$. 
        \begin{itemize}
            \item Under $\pi_2$ : only threat 1 remains. Thus, $\pi_2$ survives iff $L_1 \leq u_2+C+c_0$.
            \item Under $\pi_1$ : only threat 1 remains. Thus, $\pi_1$ survives iff $L_1 \leq u_1+C+c_0$.
        \end{itemize}
        To have $\pi_2$ survive and $\pi_1$ fail we need $u_1+C+c_0<L_1 \leq u_2+C+c_0$. So the contribution is $\operatorname{Pr}\left(L_2 \leq v_2, u_1+C+c_0<L_1 \leq u_2+C+c_0\right)$.

    \end{enumerate}
    Putting  the two cases together, we have
    \begin{align*}
    \operatorname{Pr}\left(\pi_2 \text { surv., } \pi_1 \text{ fail}\right)=&\operatorname{Pr}\left(u_1<L_1 \leq u_2, v_1+v_2+c_0<L_2 \leq v_2+C+c_0\right)\\
    &+\operatorname{Pr}\left(L_2 \leq v_2, u_1+C+c_0<L_1 \leq u_2+C+c_0\right) .    
    \end{align*}
\end{enumerate}

Next, we show that $\operatorname{Pr}\left(\pi_2 \text { surv., } \pi_1 \text{ fail}\right)-\operatorname{Pr}\left(\pi_1 \text { surv., } \pi_2 \text { fail}\right)\geq 0$. That is, we want to show 
\begin{equation}\label{eq:lemma1_switch}
\begin{aligned}
& \operatorname{Pr}\left(v_2<L_2 \leq v_1, u_1+u_2+c_0<L_1 \leq u_1+C+c_0\right)+\operatorname{Pr}\left(L_1 \leq u_1, v_2+C+c_0<L_2 \leq v_1+C+c_0\right) \\
\leq & \operatorname{Pr}\left(u_1<L_1 \leq u_2, v_1+v_2+c_0<L_2 \leq v_2+C+c_0\right)+\operatorname{Pr}\left(L_2 \leq v_2, u_1+C+c_0<L_1 \leq u_2+C+c_0\right) .
\end{aligned}
\end{equation}

For simplicity of notation, let $d:=u_2-u_1=v_1-v_2 \geq 0$. Clearly, for any $a<b$ with $a,b\in\mathbb{Z}_{+}$, we know $\operatorname{Pr}(a<L_i\leq b)=q_i^a-q_i^b$. Then using the fact $v_1=C-u_1$ and $v_2=C-u_2$, the two sides of equation \eqref{eq:lemma1_switch} can be rewritten as
$$
\begin{aligned}
 \mathrm{LHS}&=\left(q_2^{v_2}-q_2^{v_1}\right)\left(q_1^{u_1+u_2+c_0}-q_1^{u_1+C+c_0}\right)+(1-q_1^{u_1})\left(q_2^{v_2+C+c_0}-q_2^{v_1+C+c_0}\right)
 \\
&=q_1^{c_0}\left(q_2^{v_2}-q_2^{v_1}\right)\left(q_1^{u_1+u_2}-q_1^{u_1+C}\right)+q_2^{c_0}\left(1-q_1^{u_1}\right)\left(q_2^{v_2+C}-q_2^{v_1+C}\right),\\
\mathrm{RHS}&=\left(q_1^{u_1}-q_1^{u_2}\right)\left(q_2^{v_1+v_2+c_0}-q_2^{v_2+C+c_0}\right)+(1-q_2^{v_2})\left(q_1^{u_1+C+c_0}-q_1^{u_2+C+c_0}\right)
\\
 &= q_2^{c_0}\left(q_1^{u_1}-q_1^{u_2}\right)\left(q_2^{v_1+v_2}-q_2^{v_2+C}\right)+q_1^{c_0}\left(1-q_2^{v_2}\right)\left(q_1^{u_1+C}-q_1^{u_2+C}\right).
\end{aligned}
$$
Since $q_1^{c_0}, q_2^{c_0}>0$, to show the left-hand side is smaller than or equal to the right-hand side in equation \eqref{eq:lemma1_switch}, it suffices to prove the following two component inequalities 
\begin{equation}\label{eq:lemma1_switch_ineq1}
\begin{aligned}
    \left(q_2^{v_2}-q_2^{v_1}\right)\left(q_1^{u_1+u_2}-q_1^{u_1+C}\right)\leq \left(1-q_2^{v_2}\right)\left(q_1^{u_1+C}-q_1^{u_2+C}\right).
\end{aligned}
\end{equation}
\begin{equation}\label{eq:lemma1_switch_ineq2}
    \left(1-q_1^{u_1}\right)\left(q_2^{v_2+C}-q_2^{v_1+C}\right)\leq \left(q_1^{u_1}-q_1^{u_2}\right)\left(q_2^{v_1+v_2}-q_2^{v_2+C}\right) .
\end{equation}
We first show inequality \eqref{eq:lemma1_switch_ineq1} holds. Using $v_1=v_2+d, u_2=u_1+d, C=u_1+v_2+d$, we can rewrite inequality \eqref{eq:lemma1_switch_ineq1} as $q_1^{2u_1+d}q_2^{v_2}\left(1-q_2^d\right)\left(1-q_1^{v_2}\right) \leq q_1^{2u_1+d}q_1^{v_2}\left(1-q_2^{v_2}\right)\left(1-q_1^d\right)$, which holds directly by lemma \ref{lemma:b(1-r_2)(1-a)<=a(1-b)(1-r_1)}. Similarly, we can rewrite inequality \eqref{eq:lemma1_switch_ineq2} as
$
q_2^{v_1+v_2}q_2^{u_1}\left(1-q_1^{u_1}\right)\left(1-q_2^{d}\right)\leq q_1^{u_1}q_2^{v_1+v_2}\left(1-q_1^{d}\right)\left(1-q_2^{u_1}\right)$. Since $C\geq u_2>u_1\geq 0$, we know inequality \eqref{eq:lemma1_switch_ineq2} also holds by replacing $v_1$ with $u_2$ and $v_2$ with $u_1$ in lemma \ref{lemma:b(1-r_2)(1-a)<=a(1-b)(1-r_1)}. Moreover, if both inequality hold as an equality, we must have $v_2=0$, $u_1=0$. This contradicts to property 1 that requires $(x_1^*,x_2^*)\neq (C,0)$. Therefore, $\pi_2$ is strictly better than $\pi_1$ under \cref{eq::SurvivalObjective}.

\smallskip

\noindent\underline{\cref{eq::EffectiveObjective}.} 
We similarly couple $\pi_1$ and $\pi_2$ on a single draw $(L_1,L_2)$.
Define $A_1$ as the difference of the total expected effective-assignment between $\pi_1$ and $\pi_2$ conditional on $\pi_1$ having strictly more effective assignments than $\pi_2$,
and define $A_2$ analogously for $\pi_2$ over $\pi_1$. In specific, let $x_i^{1,t}$ and $x_i^{2,t}$ be the number of effectors allocated to threat $i$ in round $t$ under $\pi_1$ and $\pi_2$, respectively. Then mathematically, $A_1$ and $A_2$ can be written as
$$A_1:=\mathbb{E} \left[\left(\sum_{i=1}^n\min(\sum_{t=1}^\tau x_i^{1,t},L_i)-\min(\sum_{t=1}^\tau x_i^{2,t},L_i)\right)^+\right], A_2:=\mathbb{E} \left[\left(\sum_{i=1}^n\min(\sum_{t=1}^\tau x_i^{2,t},L_i)-\min(\sum_{t=1}^\tau x_i^{1,t},L_i)\right)^+\right].
$$
Then it is clear that to prove $V_{\textsf{EAM}_\tau}^{\pi_2}-V_{\textsf{EAM}_\tau}^{\pi_1}=A_2-A_1\geq 0$, it is equivalent to prove $A_2-A_1\ge 0$. In the following, we show how we compute $A_1$ and $A_2$.

\begin{enumerate}
  \item Derivation of $A_1$. In this case, $\pi_1$ has more effective assignments than $\pi_2$. We split it into two cases.
  
  \noindent \underline{Case 1:} $\pi_1$ neutralizes threat $2$ at $t=1$ but $\pi_2$ does not. This corresponds to the event $\{v_2<L_2\le v_1,
L_1>u_1+u_2+c_0\}$.
On event $\{v_2<L_2\le v_1\}$, $\pi_1$ neutralizes threat $2$ in round $1$ while $\pi_2$ does not. Since $L_2\le v_1$, threat $2$ is neutralized by $\pi_2$ by the end of $t=2$. Under $\pi_1$, after $t=1$ only threat $1$ remains, so total number of effectors allocated to threat $1$ over all $\tau$ rounds is
$u_1+C(\tau-1)=u_1+C+c_0$. Under $\pi_2$, threat $2$ is neutralized by end of $t=2$, so total number of effectors allocated to threat $1$ is $u_2+u_1+c_0$. Hence $\pi_1$ allocates an extra number of $(u_1+C+c_0)-(u_1+u_2+c_0)=C-u_2=v_2$ effectors to threat $1$. Conditioned on $L_1>u_1+u_2+c_0$, the expected extra effective assignments is
$\mathbb{E}[\min(L_1,v_1+C+c_0)|L_1>u_1+u_2+c_0]=\mathbb{E}[\min(L_1^\prime,v_2)]=(1-q_1^{v_2})/(1-q_1)$ where $L_1^\prime\sim \text{Geom}(1-q_1)$. Therefore, the contribution towards $A_1$ is
\[
\operatorname{Pr}(v_2<L_2\le v_1,
L_1>u_1+u_2+c_0)\mathbb{E}[\min(L_1,v_2)]=(q_2^{v_2}-q_2^{v_1})\,q_1^{u_1+u_2+c_0}\cdot\frac{1-q_1^{v_2}}{1-q_1}.
\]

\smallskip
\noindent\underline{Case 2:} both policies neutralize threat $1$ at $t=1$. This corresponds to the event
$L_1\le u_1, L_2>v_2+C(\tau-1)$. For event $\{L_1\le u_1\}$, both policies neutralize threat $1$ at $t=1$.
Thus, only threat $2$ remains. Under $\pi_1$ the total number of effectors allocated to threat $2$ is $v_1+C(\tau-1)$,
and under $\pi_2$ it is $v_2+C(\tau-1)$, so $\pi_1$ has extra capacity $v_1-v_2=d$.
Conditioned on $L_2>v_2+C(\tau-1)$, the expected extra effective assignments is
$\mathbb{E}[\min(L_2^\prime,d)]=(1-q_2^d)/(1-q_2)$ where $L_2^\prime\sim \text{Geom}(1-q_2)$.
Hence the contribution is
\[
\operatorname{Pr}(L_1\le u_1,
L_2>v_2+C(\tau-1))\mathbb{E}[\min(L_2,d)]=(1-q_1^{u_1})\,q_2^{v_2+C(\tau-1)}\cdot\frac{1-q_2^d}{1-q_2}.
\]
Combine these two cases and we have 
$$
A_1=(q_2^{v_2}-q_2^{v_1})\,q_1^{u_1+u_2+c_0}\cdot\frac{1-q_1^{v_2}}{1-q_1}
+
(1-q_1^{u_1})\,q_2^{v_2+C(\tau-1)}\cdot\frac{1-q_2^d}{1-q_2}.
$$

\smallskip

\item Derivation of $A_2$. In this case, $\pi_2$ has more effective assignments than $\pi_1$. Simialrly, there are exactly two disjoint cases.

\noindent\underline{Case 1:} $\pi_2$ neutralizes threat $1$ at $t=1$ but $\pi_1$ does not. This corresponds to the event
$\{u_1<L_1\le u_2, L_2>v_1+v_2+c_0\}$. Here, the condition $L_2>v_1+v_2+c_0$ implies $L_2>v_1$, so threat $2$ is not neutralized at $t=1$ under both policies.
On the other hand, $u_1<L_1\le u_2$ means $\pi_2$ neutralizes threat $1$ at $t=1$ while $\pi_1$ does not. Under $\pi_1$, no threat is neutralized at $t=1$, so it uses $(u_2,v_2)$ at $t=2$, neutralizing threat $1$ by end of $t=2$. Thus, under $\pi_2$, only threat $2$ remains after $t=1$, so total number of effectors allocated to threat $2$ is $v_2+C(\tau-1)=v_2+C+c_0$. Under $\pi_1$, %
this quantity becomes $v_1+v_2+c_0$. Hence, $\pi_2$ has extra $(v_2+C+c_0)-(v_1+v_2+c_0)=C-v_1=u_1$ allocations on threat $2$. Then conditioned on $L_2>v_1+v_2+c_0$, the expected extra effective assignments equals
$\mathbb{E}[\min(L_2^\prime,u_1)]=(1-q_2^{u_1})/(1-q_2)$.
So the contribution towards $A_2$ is
\[
\operatorname{Pr}(u_1<L_1\le u_2,
L_2>v_1+v_2+c_0)\mathbb{E}[\min(L_2,u_1)]=(q_1^{u_1}-q_1^{u_2})\,q_2^{v_1+v_2+c_0}\cdot\frac{1-q_2^{u_1}}{1-q_2}.
\]

\smallskip
\noindent\underline{Case 2:} both policies neutralize threat $2$ at $t=1$. This corresponds to $\{L_2\le v_2,
L_1>u_1+C(\tau-1)\}$. For event $\{L_2\le v_2\}$, both policies neutralize threat $2$ at $t=1$, and only threat $1$ remains. Under $\pi_2$ total capacity for threat $1$ is $u_2+C(\tau-1)$ and under $\pi_1$ it is $u_1+C(\tau-1)$,
so $\pi_2$ has extra allocation $d=u_2-u_1>0$ towards threat $1$.
Conditioned on $L_1>u_1+C(\tau-1)$, expected extra effective assignments is
$\mathbb{E}[\min(L_1^\prime,d)]=(1-q_1^d)/(1-q_1)$.
Thus the contribution is
\[
\operatorname{Pr}(L_2\le v_2, L_1>u_1+C(\tau-1))\mathbb{E}[\min(L_1,d)]=(1-q_2^{v_2})\,q_1^{u_1+C(\tau-1)}\cdot\frac{1-q_1^d}{1-q_1}.
\]
Combine these two cases and we have %

$$A_2=
(q_1^{u_1}-q_1^{u_2})\,q_2^{v_1+v_2+c_0}\cdot\frac{1-q_2^{u_1}}{1-q_2}
+
(1-q_2^{v_2})\,q_1^{u_1+C(\tau-1)}\cdot\frac{1-q_1^d}{1-q_1}.$$

\end{enumerate}

\smallskip
So far, we have derived the expressions for $A_1$ and $A_2$. It remains to prove $A_2-A_1\ge 0$. For ease of expression, we write $A_2-A_1=q_1^{c_0}(A_{2,b}-A_{1,a})+q_2^{c_0}(A_{2,a}-A_{1,b})$, where
\[
A_{1,a}:=(q_2^{v_2}-q_2^{v_1})q_1^{u_1+u_2}\frac{1-q_1^{v_2}}{1-q_1},
\quad
A_{2,b}:=(1-q_2^{v_2})q_1^{u_1+C}\frac{1-q_1^{d}}{1-q_1},
\]
\[
A_{1,b}:=(1-q_1^{u_1})q_2^{v_2+C}\frac{1-q_2^{d}}{1-q_2},
\quad
A_{2,a}:=(q_1^{u_1}-q_1^{u_2})q_2^{v_1+v_2}\frac{1-q_2^{u_1}}{1-q_2}.
\]

Since $q_1^{c_0},q_2^{c_0}>0$, it suffices to prove $A_{2,b}\ge A_{1,a}$ and $A_{2,a}\ge A_{1,b}$. First, using $q_2^{v_2}-q_2^{v_1}=q_2^{v_2}(1-q_2^d)$ and $u_1+C=u_1+u_2+v_2$, we have
\begin{align*}
A_{2,b}\ge A_{1,a}
&\iff
(1-q_2^{v_2})q_1^{u_1+C}(1-q_1^d)
\ge
(q_2^{v_2}-q_2^{v_1})q_1^{u_1+u_2}(1-q_1^{v_2})\\
& \iff q_1^{v_2}(1-q_2^{v_2})(1-q_1^d)
\ge
q_2^{v_2}(1-q_2^d)(1-q_1^{v_2}),
\end{align*}
which holds from lemma \ref{lemma:b(1-r_2)(1-a)<=a(1-b)(1-r_1)}. Similarly, using $q_1^{u_1}-q_1^{u_2}=q_1^{u_1}(1-q_1^d)$ and $v_2+C=v_1+v_2+u_1$, we have
\begin{align*}
A_{2,a}\ge A_{1,b}
&\iff
(q_1^{u_1}-q_1^{u_2})q_2^{v_1+v_2}(1-q_2^{u_1})
\ge
(1-q_1^{u_1})q_2^{v_2+C}(1-q_2^d)\\
&\iff
q_1^{u_1}(1-q_1^d)(1-q_2^{u_1})
\ge
q_2^{u_1}(1-q_2^d)(1-q_1^{u_1}),
\end{align*}
which holds again from lemma \ref{lemma:b(1-r_2)(1-a)<=a(1-b)(1-r_1)} with $v_2$ replaced by $u_1$. Therefore, we know $A_{2,b}\ge A_{1,a}$ and $A_{2,a}\ge A_{1,b}$. Moreover, if both $A_{2,b}=A_{1,a}$ and $A_{2,a}=A_{1,b}$ hold, since $d=u_2-u_1>0$, we must have $v_2=0$ and $u_1=0$. Then we know $u_2=C-v_2=C$. This completes the proof.
\halmos
\end{proof}

\medskip

Now we go back to prove properties 2 and 3. We prove \cref{eq::SurvivalObjective} and \cref{eq::EffectiveObjective} individually in the following.
We first show the second property under \cref{eq::SurvivalObjective} and \cref{eq::EffectiveObjective}, that is, $x_1^*$ is increasing in $\tau$ and $x_2^*$ is decreasing in $\tau$. Let $x_1^*,x_2^*$ be the optimal allocation at $t=1$ under $\tau$, and $x_1^{\prime},x_2^{\prime}$ be the optimal allocation at $t=1$ under $\tau+1$. If $x_1^*>x_1^{\prime}$, then under (\textsf{SLM}$_{\tau+1}$), let $\pi_1$ be the optimal policy that allocates $(x_1^\prime,x_2^\prime)$ in the first round and $(x_1^*,x_2^*)$ in round 2 if no threats are neutralized in the first round. Let $\pi_2$ be the policy that allocates $x_1^*,x_2^*$ in the first round, and $x_1^{\prime},x_2^{\prime}$ at $t=2$ if no threats are neutralized after $t=1$. Moreover, let $\pi_2$ use the same continuation rule as $\pi_1$ from round 3 onward. Then by lemma \ref{lemma:switch_dominate}, we know $\pi_2$ is strictly better than $\pi_1$, which leads to contradiction. Thus, we know the second property holds.

Finally, we need to show the third property, $x_1^*\geq x_2^*$. By the second property, we know the optimal allocation $x_1^*$ is increasing in $\tau$, so we only need to show $x_1^*\geq x_2^*$ for $\tau=1$. This can be easily shown by Algorithm~\ref{alg::ML_Estim} and Algorithm~\ref{alg:: WM}.
\end{proof}

\medskip

\section{\Cref{sec:fair_alloc} Proofs}

\subsection{Theorem \ref{theorem:fair_is_optimal_when_qi=q}}

\begin{proof}{Proof of Theorem \ref{theorem:fair_is_optimal_when_qi=q}.}
    To prove the theorem, let us recall the relationship between \cref{eq::MinDefuse} and \cref{eq::SurvivalObjective}. Since the \cref{eq::MinDefuse} objective satisfies $\mathbb{E}[T^\pi]= \sum_{t=0}^{\infty} \bigl(1-\operatorname{Pr}(T^\pi \le t)\bigr)$, to show the optimality under \cref{eq::MinDefuse} objective, it is sufficient to show that $\pi_{\textsf{FA}}$ is optimal under \cref{eq::SurvivalObjective} for any $\tau$. 
    Now we split the proof into the following four cases.
\smallskip

\noindent\underline{Case 1: \cref{eq::SurvivalObjective}, $q_i=q$.}
    Under this homogeneous case, we note that the set of live threats $\mathcal{M}_t$ at any time $t$ matters only inasmuch as its cardinality, $|\mathcal{M}_t|$; hence, the following recursion will be useful. Given $m \in \mathbbm{Z}_{>0}$ and $x \in \mathbb{Z}_{\geq 0}^{[m]}$, define
    \[
    V_{\textsf{SLM}_\tau}(m;x) \coloneqq \mathbb{E}\left[V_{\textsf{SLM}_{\tau-1}}^*\left(\sum_{i = 1}^m \xi_i\right) \coloneqq \max_{y \in \mathbb{Z}_{\geq 0}^{[m]}: \sum_i y_i \leq C} V_{\textsf{SLM}_{\tau-1}}\left(\sum_{i = 1}^m \xi_i; y\right)\right],
    \]
    where all $\xi_i \sim Bern(q^{x_i})$ are independently drawn (indicators denoting threats that survive into the next round), and $V_{\textsf{SLM}_{0}}(m;x)$ is defined to be $0$ for all $x\in \mathcal{A}$ when $m \geq 1,$ otherwise it is $1.$ 
    
    We argue by contradiction. Suppose that at the start of the time horizon of length $\tau$ an optimal policy instructs an allocation $x^*$ to be taken that is not fair. This means there exist two threats $i,j \in [n]$ such that $x_j^*=x_i^*+\ell$, for some integer $\ell\geq 2$. We will show that in fact shifting one effector from $j\to i$ (i.e., $x_j=x_j^*-1$ and $x_i=x_i^*+1$) strictly improves the survival probability, i.e., that $ V_{\textsf{SLM}_{\tau}}(m;x) > V_{\textsf{SLM}_{\tau}}(m;x^*),$ presenting a contradiction. In the following, we will use the shorthand $\kappa \coloneqq x_i^* + x_j^* = x_i + x_j,$ and we note that 
    \[
     V_{\textsf{SLM}_\tau}(m;x) = \mathbb{E}\left[f(x_i) \cdot V^*_{\textsf{SLM}_{\tau-1}}(\sum_{k \neq i,j}^m \xi_i) + (1-f(x_i) - q^\kappa) \cdot V^*_{\textsf{SLM}_{\tau-1}}(\sum_{k \neq i,j}^m \xi_i + 1) + q^\kappa \cdot V^*_{\textsf{SLM}_{\tau-1}}(\sum_{k \neq i,j}^m \xi_i + 2)\right],
    \]
    where 
    $f(r)\coloneqq (1-q^{r})(1-q^{\kappa - r})$ defined over integer $r.$ It holds that $f(r) < f(r+1)$ whenever $r \in [0, \lfloor \frac{\kappa}{2}\rfloor - 1]_{\mathbbm{Z}}$. Noting $x_i^* < \lfloor \frac{\kappa}{2} \rfloor$
since by assumption, $x_i^* + 2 \leq x_j^*$, we conclude $f(x_i^*) < f(x_i)$. Since $ V^*_{\textsf{SLM}_{\tau-1}}(\sum_{k \neq i,j}^m \xi_i) > V^*_{\textsf{SLM}_{\tau-1}}(\sum_{k \neq i,j}^m \xi_i + 1)$ with probability 1, we conclude $V_{\textsf{SLM}_{\tau}}(m;x) > V_{\textsf{SLM}_{\tau}}(m;x^*),$ as desired. In summary, at any time, the allocation made should be fair, or else it is suboptimal.
    \smallskip

\noindent\underline{Case 2: \cref{eq::EffectiveObjective}, $q_i=q$.} It is clear that for $\tau = 1,$ any fair allocation is optimal. So we proceed under the assumption that $\tau  - 1 \geq 1,$ without loss of generality.  
The following recursion will be useful:
\[
V_{\textsf{EAM}_\tau}(m;x):= \mathbb{E}\left[\sum_{k=1}^m \min(x_k, L_k)\right]+\mathbb{E}\left[V^*_{\textsf{EAM}_{\tau-1}}(\sum_{k=1}^m \xi_{k})\coloneqq \max_{y \in \mathbb{Z}_{\geq 0}^{[m]}: \sum_{i} y_i \leq C}\!V_{\textsf{EAM}_{\tau-1}}(\sum_{k=1}^m \xi_{k}, y)\right],
\]
so then 
\begin{align*}
     V_{\textsf{EAM}_{\tau}}(m;x) &=  \sum_{k=1}^m\frac{1-q^{x_k}}{1-q} + \mathbb{E} 
     \begin{bmatrix}
         f(x_i) \cdot V^*_{\textsf{EAM}_{\tau-1}}(\sum_{k \neq i,j}^m \xi_i) + (1-f(x_i) - q^\kappa) \cdot V^*_{\textsf{EAM}_{\tau-1}}(\sum_{k \neq i,j}^m \xi_i + 1)\\
         + q^\kappa \cdot V^*_{\textsf{EAM}_{\tau-1}}(\sum_{k \neq i,j}^m \xi_i + 2)
     \end{bmatrix}
\end{align*}
Since 
$\frac{1 - q^{x_i}}{1-q} +  \frac{1 - q^{x_j}}{1-q} = \left(\frac{1-q^\kappa}{1-q} + \frac{f(x_i)}{1-q}\right)$, the desired inequality $V_{\textsf{EAM}_{\tau}}(m;x) > V_{\textsf{EAM}_{\tau}}(m;x^*)$ is equivalent to 
\[
\Big(f(x_i) - f(x_i^*)\Big)\left(\frac{1}{1-q} - \mathbb{E}\left[V^*_{\textsf{EAM}_{\tau-1}}(\sum_{k \neq i,j}^m \xi_i + 1) - V^*_{\textsf{EAM}_{\tau-1}}(\sum_{k \neq i,j}^m \xi_i)\right]\right) > 0. 
\]
As we have already established that $f(x_i) - f(x_i^*) > 0$, and because
\begin{align*}
    V^*_{\textsf{EAM}_{\tau-1}}\left(\sum_{k \neq i,j}^m \xi_i + 1\right) - V^*_{\textsf{EAM}_{\tau-1}}\left(\sum_{k \neq i,j}^m \xi_i\right) \leq \underset{{L \sim Geom(1-q)}}{\mathbb{E}}\left[\min\left(L, (\tau - 1)C\right)\right] = \frac{1-q^{(\tau - 1)C}}{1-q} < \frac{1}{1-q}, \;\;\; 
\end{align*}
we can conclude $V_{\textsf{EAM}_{\tau}}(m;x) > V_{\textsf{EAM}_{\tau}}(m;x^*).$ In summary, at any time, the allocation made should be fair, or else it is suboptimal.

\noindent\underline{Case 3: \cref{eq::SurvivalObjective} and \cref{eq::EffectiveObjective}, $C=2$.}
Let $\tau$ be given.
For $\pi_{\textsf{FA}}$, when $C=2$, no waste occurs in any round prior to the round in which the final threat is neutralized, if all threats are neutralized within $\tau$ periods; in other words, $W^{\pi_\textsf{FA}}_\tau \leq \max(C\tau - \sum_{i=1}^n L_i,0)$. This inequality means the event $\left[\sum_{i=1}^n L_i \leq C\tau\right]$ is equivalent to the event that the number of effective assignments made within $\tau$ periods under $\pi_{\textsf{FA}}$ is $\sum_{i=1}^n L_i$, yielding optimality for \eqref{eq::SurvivalObjective}. The inequality also clearly reveals optimality for \eqref{eq::EffectiveObjective}.

\end{proof}

\medskip

\subsection{\Cref{theorem:DTM_fair_ratio}}

\begin{definition}[Fair Allocation with Fixed Priority Tie-Breaking]
Let $\pi_{\textsf{FA}} \in \Pi_{\textsf{FA}}$. Then we say $\pi$ incorporates a \textit{fixed priority tie-breaking rule} if there exists a strict total order $\prec_{break}$ on $[n]$ such that at any time $t$: whenever $C/M_t \notin \mathbb{Z},$ the remaining $k \coloneqq C - \lfloor C/M_t \rfloor \cdot M_t$ effectors are distributed fairly among the subset $S \subseteq \mathcal{M}_t$ comprised of the $k$- highest priority members of $\mathcal{M}_t$ under $\prec_{break}$. 
\end{definition}

\begin{lemma}[$T^{\pi_{\textsf{FA}}} \text{ monotonicity}$] \label{lem:monotonicity}
    Let $n$ and $C$ be given. Suppose $q, \bar{q} \in [0,1]^n$ such that $q_i \leq \bar{q}_i$ for all $i.$ Let $\pi_{\textsf{FA}} \in \Pi_{\textsf{FA}}$ incorporate either a fixed priority or a uniformly-at-random tie-breaking rule. Then 
    \begin{equation} \label{eq: monotonicity}
        \mathbb{E}\left[ T^{\pi_\textsf{FA}}(q)\right]\;\le\;\mathbb{E}\left[ T^{\pi_\textsf{FA}}(\bar{q})\right] \;\; \text{ and } \;\; \mathbb{E}\left[T^{\pi_{\textsf{FA}}}\left((\min_i q_i) \cdot \mathbbm{1}_n \right)\right] \;\le\; \mathbb{E}\left[T^*(q)\right]
    \end{equation}
\end{lemma}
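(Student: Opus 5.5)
The second inequality in \eqref{eq: monotonicity} should follow from a coupling argument combined with \Cref{theorem:fair_is_optimal_when_qi=q}; the first inequality requires a monotonicity property specific to fair allocation.

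\textbf{Second inequality.} Write $q_{\min}:=\min_i q_i$ and fix any (deterministic) policy $\pi$ for the instance with difficulties $q$.
\begin{itemize}
\item I will construct a randomized policy $\tilde\pi$ for the salvo instance $q_{\min}\mathbbm{1}_n$ that internally simulates a run of $\pi$ on $q$.
\item $\tilde\pi$ maintains a \emph{virtual} active set $\tilde{\mathcal M}_t$. At each round it fires $x^t=\pi_t(\tilde{\mathcal M}_t)$.
\item Suppose a threat $i\in\tilde{\mathcal M}_t$ receiving $x_i^t$ effectors is really neutralized, which happens with probability $1-q_{\min}^{x_i^t}$. Then $\tilde\pi$ declares it virtually neutralized with probability $(1-q_i^{x_i^t})/(1-q_{\min}^{x_i^t})\le 1$, using an independent coin.
\item If the threat is really alive, it stays virtually alive.
\end{itemize}
Under this construction, each virtual threat is neutralized with probability exactly $1-q_i^{x_i^t}$, independently across threats. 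Hence the virtual process has the law of $\pi$ run on $q$. Moreover, the really alive set is always contained in $\tilde{\mathcal M}_t$.

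Two bookkeeping points need care. Effectors that $\pi$ directs at virtually alive but really dead threats must be reassigned to really active threats to respect the feasibility constraint in $\Pi$; this can only speed up real neutralization. Also, the real process is cleared no later than the virtual one, so $T^{\tilde\pi}(q_{\min}\mathbbm{1}_n)\le T^\pi(q)$ pathwise.

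Randomized policies do no better than deterministic ones in this finite-action MDP. Therefore $\mathbb E[T^*(q_{\min}\mathbbm{1}_n)]\le \mathbb E[T^*(q)]$. Finally, \Cref{theorem:fair_is_optimal_when_qi=q} (salvo case) gives $\mathbb E[T^{\pi_{\textsf{FA}}}(q_{\min}\mathbbm{1}_n)]=\mathbb E[T^*(q_{\min}\mathbbm{1}_n)]$.

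\textbf{First inequality.} I will argue by induction on $|\mathcal M|$ for the fair-allocation value functions
\[
V_q(\mathcal M):=\mathbb E\bigl[T^{\pi_{\textsf{FA}}}\text{ started from }\mathcal M\text{ under }q\bigr].
\]
The plan has three steps.
\begin{itemize}
\item From a common state $\mathcal M$, FA fires the same allocation $x$ under $q$ and $\bar q$ (tie-breaking is averaged or fixed identically).
\item Couple the survivals by $\mathbbm 1[U_i\le q_i^{x_i}]\le \mathbbm 1[U_i\le \bar q_i^{x_i}]$. Then the survivor set $S$ under $q$ is contained in the survivor set $S'$ under $\bar q$.
\item The conclusion $V_q(\mathcal M)\le V_{\bar q}(\mathcal M)$ then follows from $V_q(S)\le V_q(S')\le V_{\bar q}(S')$. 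The second inequality is the induction hypothesis, with the self-loop case $S'=\mathcal M$ handled by solving the one-step fixed-point equation.
\end{itemize}
The first inequality in that chain, set-monotonicity $V_q(S)\le V_q(S')$ for $S\subseteq S'$, is the main obstacle.

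\textbf{Main obstacle: set-monotonicity.} A pathwise proof fails. Removing threats need not weakly increase every remaining threat's per-round allocation, because $\lfloor C/m\rfloor$ can be smaller than $\lceil C/m'\rceil$ for $m<m'$ (e.g.\ $C=5$, $m=3$, $m'=4$). This is presumably why the lemma is restricted to fixed-priority or uniformly random tie-breaking. My plan is to prove set-monotonicity by its own induction, removing one threat $j$ at a time (so $S'=S\cup\{j\}$), in three steps.
\begin{itemize}
\item Compare the FA allocations on $S$ and $S'$. On $S$, the $C$ effectors are spread over one fewer threat, so the vector of allocations to $S$ under $S$ majorization-dominates, in the Robin-Hood sense, the allocation restricted to $S$ under $S'$, plus the extra effectors sent to $j$.
\item Under uniformly random tie-breaking, every threat in $S$ receives weakly more effectors in expectation; under fixed priority this can be checked directly.
\item The resulting coupled survivor sets again satisfy an inclusion, up to the single threat $j$, so the induction can close.
\end{itemize}
If the exact pathwise inclusion cannot be arranged, my fallback is to prove the joint statement ``$V_q(S)\le V_{\bar q}(S')$ for all $S\subseteq S'$ and $q\le\bar q$'' by simultaneous induction on $|S'|$. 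In that formulation, the extra effectors sent to $j$ under $S'$ are treated as a coupling that only removes $j$ earlier.
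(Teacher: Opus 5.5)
Your treatment of the second inequality is correct and follows the paper's route: monotonicity of $\mathbb{E}[T^*]$ in the difficulty vector, then the salvo case of \Cref{theorem:fair_is_optimal_when_qi=q}. Your thinning simulation in fact proves the monotonicity of $\mathbb{E}[T^*]$ that the paper only asserts. One bookkeeping repair is needed. The thinning coin for a virtually active threat $i$ should have probability $1-q_i^{x_i^t}$ divided by the \emph{actual} conditional probability that $i$ is really dead at the end of the round. That denominator must account for effectors reassigned to $i$ and for threats that are already really dead. The ratio is still at most $1$, so the virtual process has exactly the law of $\pi$ on $q$.

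The genuine gap is in the first inequality. Your argument rests on set-monotonicity, which you leave as an unexecuted plan. Your reason for abandoning a pathwise argument is also wrong under the tie-breaking rules the lemma allows. The missing fact is a per-round \emph{componentwise} domination: if $\mathcal M\subseteq\bar{\mathcal M}$, then every $i\in\mathcal M$ receives at least as many effectors from $\pi_{\textsf{FA}}(\mathcal M)$ as from $\pi_{\textsf{FA}}(\bar{\mathcal M})$. Under fixed priority this holds deterministically. Let $k=\lfloor C/M\rfloor\ge\bar k=\lfloor C/\bar M\rfloor$.
\begin{itemize}
\item If $k>\bar k$, every threat in $\mathcal M$ gets at least $\bar k+1\ge\lceil C/\bar M\rceil$.
\item If $k=\bar k$, the $r=C-kM$ extra units in $\mathcal M$ and the $\bar r=C-k\bar M\le r$ extra units in $\bar{\mathcal M}$ go to the highest-priority members of each set. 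Any $i\in\mathcal M$ among the top $\bar r$ of $\bar{\mathcal M}$ is among the top $r$ of $\mathcal M$.
\end{itemize}
Your example $C=5$, $m=3$, $m'=4$ does not violate this. The allocations are $(2,2,1)$ and $(2,1,1,1)$, and the one threat receiving $2$ in the larger set, if it lies in the smaller set, is still highest priority there and still receives $2$. The inequality $\lfloor C/m\rfloor<\lceil C/m'\rceil$ only causes trouble when tie-breaking is inconsistent across nested sets, which is exactly what the hypothesis rules out. Majorization is also the wrong notion: what you need is domination coordinate by coordinate on the common threats, not a statement about how spread out the vector is.

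For uniform tie-breaking, a comparison ``in expectation'' does not close your induction. To get coupled survivor sets with $S\subseteq S'$, you need a \emph{joint} coupling of the random sets of threats receiving the extra unit, with domination for all threats simultaneously. The paper builds it as follows.
\begin{itemize}
\item Draw the size-$\bar r$ set $B\subseteq\bar{\mathcal M}$ uniformly.
\item Put $B\cap\mathcal M$ into $A$, then fill $A$ uniformly from $\mathcal M\setminus B$.
\item Permutation symmetry makes $A$ uniform over size-$r$ subsets of $\mathcal M$, and $B\cap\mathcal M\subseteq A$ by construction.
\end{itemize}
With this per-round comparison, the first inequality follows in one step, with no value-function induction or self-loop fixed point. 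Couple $L_i\le\bar L_i$ and run $\pi_{\textsf{FA}}$ on both instances, with allocations $x^\ell$ and $\bar x^\ell$. If $\mathcal M_\ell\subseteq\bar{\mathcal M}_\ell$ for all $\ell\le t$, then each $i\in\mathcal M_{t+1}$ satisfies $\sum_{\ell\le t}\bar x_i^\ell\le\sum_{\ell\le t}x_i^\ell<L_i\le\bar L_i$, hence $i\in\bar{\mathcal M}_{t+1}$. So inclusion propagates, and $T^{\pi_{\textsf{FA}}}(q)\le T^{\pi_{\textsf{FA}}}(\bar q)$ almost surely.
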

\begin{proof}{Proof:}
Let us use the shorthands $T^{\pi_\textsf{FA}} = T^{\pi_\textsf{FA}}(q)$, $\bar{T}^{\pi_\textsf{FA}} = T^{\pi_\textsf{FA}}(\bar{q})$, 
and $T^* = T^*(q)$. 

For the first inequality, because $q_i \leq \bar{q}_i$ for all $i$, 
we can couple the collection $\left(L_i \sim Geom(1-q_i)\right)_{i=1}^n$ with the collection $\left(\bar{L}_i \sim Geom(1-\bar{q}_i)\right)_{i=1}^n$ such that $L_i \leq \bar{L}_i$ almost surely, and $\{L_i\}_{i = 1}^n$ are mutually independent, as are $\{\bar{L}_i\}_{i = 1}^n$. Then $\pi_{\textsf{FA}}$ executed on the first collection will yield a nested sequence $\mathcal{M}_1 \supseteq \ldots \supseteq \mathcal{M}_{T^{\pi_{\textsf{FA}}}+1} = \emptyset$  of active threats, and execution on the second collection will yield a nested sequence $\bar{\mathcal{M}}_1 \supseteq \ldots \supseteq \bar{\mathcal{M}}_{\bar{T}^{\pi_{\textsf{FA}}}+1} = \emptyset$. For all $t < \bar{t} \coloneqq \min\{t: \mathcal{M}_t \neq \bar{\mathcal{M}}_t\}$, allocations made in both executions are the exact same. Hence, because $L_i \leq \bar{L}_i$ for all $i,$ it will hold that $\mathcal{M}_{\bar{t}} \subsetneq \bar{\mathcal{M}}_{\bar{t}}.$ We now proceed to argue that
for any $t \geq \bar{t}$, 
\[
i \in \mathcal{M}_t \implies [\pi_{\textsf{FA}}(\mathcal{M}_t)]_i \geq [\pi_{\textsf{FA}}(\bar{\mathcal{M}}_t)]_i.
\]
Let $t$ be such that $M_t \leq \bar{M}_t.$ Then define $k \coloneqq \lfloor C/M_t \rfloor$ and $\bar{k} \coloneqq \lfloor C/\bar{M}_t \rfloor$. We provide an argument for each of the two hypothesized tie-breaking variants. \\
\noindent
{\bf Case 1: $\pi_{\textsf{FA}}$ is assumed to have a fixed priority tie-breaking rule.}
If $k > \bar{k}$, then all threats in $\mathcal{M}_t$ receive at least $k \geq \bar{k} + 1 = \lceil C/\bar M_t \rceil$, where $\lceil C/\bar M_t \rceil$ is the most any threat in $\bar{\mathcal{M}}_t$ can receive, satisfying the claim. If $k = \bar{k}$, then the number of targets receiving $k+1$ effectors in $\mathcal{M}_t$ is the remainder $r = C - kM_t$, and in $\bar{\mathcal{M}}_t$ it is $\bar{r} = C - k\bar{M}_t$, where we note that $r \geq \bar{r}.$ Next let us consider any $i \in \mathcal{M}_t$. If $i$ receives $k+1$ effectors as part of $\bar{\mathcal{M}}_t$, it is because $i$ is among the $\bar{r}$- highest priority members of $\bar{\mathcal{M}}_t$, and hence $i$ must be among the $r$- highest priority members of $\mathcal{M}_t,$ guaranteeing $i$ also receives $k+1$ effectors as part of $\mathcal{M}_t$. \\
\noindent 
{\bf Case 2: $\pi_{\textsf{FA}}$ breaks ties uniformly at random.} We will couple the uniform tie-breaking such that $[\pi_{\textsf{FA}}(\mathcal{M}_t)]_i \geq [\pi_{\textsf{FA}}(\bar{\mathcal{M}}_t)]_i$ for all $i \in \mathcal{M}_t$. If $k > \bar{k}$, then all threats in $\mathcal{M}_t$ receive at least $k \geq \bar{k} + 1 = \lceil C/\bar M_t \rceil$, where $\lceil C/\bar M_t \rceil$ is the most any threat in $\bar{\mathcal{M}}_t$ can receive, satisfying the claim deterministically. 
If $k = \bar{k}$, the policy must select a subset of targets $A \subseteq \mathcal{M}_t$ of size $r = C - kM_t$ to receive $k+1$ effectors, and a subset $B \subseteq \bar{\mathcal{M}}_t$ of size $\bar{r} = C - k\bar{M}_t$ to receive $k+1$ effectors, where we note that $r \geq \bar r.$  We couple the uniform random selections of $A$ and $B$ as follows:
First, draw $B$ uniformly at random from all size-$\bar{r}$ subsets of $\bar{\mathcal{M}}_t$. 
Next, observe the intersection $B \cap \mathcal{M}_t$. Because $|B \cap \mathcal{M}_t| \leq |B| = \bar{r} \leq r$, we can deterministically place all elements of $B \cap \mathcal{M}_t$ into $A$. Finally, we complete $A$ by selecting the remaining $r - |B \cap \mathcal{M}_t|$ elements uniformly at random from $\mathcal{M}_t \setminus B$. 
By the symmetry of permutations on $\mathcal{M}_t$, the resulting marginal distribution of $A$ is uniform over all size-$r$ subsets of $\mathcal{M}_t$, matching the definition of the random policy. Crucially, under this joint distribution, $B \cap \mathcal{M}_t \subseteq A$ with probability 1. Therefore, if any target $i \in \mathcal{M}_t$ receives the extra effector in $\bar{\mathcal{M}}_t$ ($i \in B$), it is guaranteed to receive it in $\mathcal{M}_t$ ($i \in A$). 

In summary, since $\mathcal{M}_{\bar{t}} \subsetneq \bar{\mathcal{M}}_{\bar{t}}$, and both tie-breaking variants guarantee that for all $t \geq \bar{t}$, the policy $\pi_{\textsf{FA}}$ will assign no fewer effectors to any threat $i \in \mathcal{M}_t$ than it does to the exact same threat $i \in \bar{\mathcal{M}}_t$ (either deterministically in case 1, or along the coupled sample path in case 2). Consequently, the $\mathcal{M}_t \subseteq \bar{\mathcal{M}}_t$ is preserved for all $t,$ revealing $T^{\pi_{\textsf{FA}}} \leq \bar{T}^{\pi_{\textsf{FA}}}$ almost surely.

For the second inequality, we note that $\mathbb{E}[T^*(r)]$ is non-decreasing in $r \in [0,1)^n$, i.e., for $r, r' \in [0,1)^n,$
\[
r \leq r' \implies \mathbb{E}[T^*(r)] \leq \mathbb{E}[T^*(r')].
\]
Furthermore, since $(\min_i q_i) \cdot \mathbf{1}_n$ presents a homogeneous (Salvo) instance, \Cref{theorem:fair_is_optimal_when_qi=q}  establishes that the fair allocation policy is optimal for this instance; hence, 
\[
\mathbb{E}\left[T^{\pi_{\textsf{FA}}}\left((\min_i q_i) \cdot \mathbbm{1}_n \right)\right] = \mathbb{E}\left[T^*\left((\min_i q_i) \cdot \mathbbm{1}_n \right)\right] \leq \mathbb{E}\left[T^*(q)\right].
\]

\end{proof}

\subsubsection{Proof of \Cref{theorem:DTM_fair_ratio}.}
\begin{proof}{Proof of \Cref{theorem:DTM_fair_ratio}.}

We first prove the competitive ratio, which directly yields the first performance ratio bound. Then we prove the constant ratio bound when $q_i\in [\underline{q},\bar{q}]$.

\noindent\underline{Competitive ratio.}
We consider the order in which threats are neutralized. Let us rank threats by their death times: the \emph{$k$th-from-last} threat is the one that dies when exactly $k$ threats (including itself) remain alive just before the round begins (ties broken arbitrarily). Under $\pi_{\textsf{FA}}$, when $m$ threats are alive at the beginning of a round, each alive threat receives either $\lfloor C/m\rfloor$ or $\lceil C/m\rceil $ units in that round. Therefore, in the round that the $k$th-from-last threat is neutralized, the number of alive threats at the beginning of that round is at least $k$, and the number of effectors allocated to the threat in the round is at most $\lceil C/k \rceil$. 
Let $W_{i}^{\pi}:=\max(\sum_{t=1}^\tau x_i^t-L_{i},0)$ be the number of wasted effectors for threat $i$. Since $W_{\tau,i}^{\pi}$ is at most the amount of effectors allocated to threat $i$ minus 1 in its neutralized round, we obtain the following bound
\begin{align*}
  \sum_{i=1}^n W_i^{\pi_{\textsf{FA}}}
\;&\le\;\sum_{k=1}^n (\lceil \frac{C}{k}\rceil  -1)=\sum_{k=1}^n\left\lfloor\frac{C-1}{k}\right\rfloor=\sum_{k=1}^{n \wedge (C-1)}\left\lfloor\frac{C-1}{k}\right\rfloor \leq (C-1) \left(1 + \ln( n \wedge C)\right)
\end{align*}
Hence, 
\begin{align*}
    T^{\pi_{\textsf{FA}}}&\le \left\lceil\frac{1}{C}\left(\sum_{i=1}^n L_i \;+\; \sum_{i=1}^n W_i^{\pi_{\textsf{FA}}} \right)\right\rceil
    \leq \left\lceil\frac{1}{C}\sum_{i=1}^n L_i\right\rceil \;+\; \left\lceil\frac{1}{C}\sum_{i=1}^n W_i^{\pi_{\textsf{FA}}}\right\rceil\\
        &\leq \left\lceil\frac{1}{C}\sum_{i=1}^n L_i\right\rceil \; + \; \frac{(C-1) \left(1 + \ln( n \wedge C)\right)}{C} + \frac{C-1}{C},
\end{align*}
which, combined with $T^*\geq \left\lceil\frac{1}{C}\sum_{i=1}^n L_i\right\rceil$, yields both
\[
\frac{T^{\pi_{\textsf{FA}}}}{T^*} \leq 1 + \frac{C(\ln(n\wedge C) +2 )}{\sum_{i=1}^n L_i} \leq 1 + \frac{C(\ln(n\wedge C) +2 )}{n} \text{ and } \frac{\mathbb{E}\left[T^{\pi_{\textsf{FA}}} \right]}{\mathbb{E}\left[T^*\right]} \leq 1 + \frac{C(\ln(n\wedge C) +2 )}{\sum_{i=1}^n \frac{1}{1-q_i}}
\]

\noindent\underline{Constant ratio bound.}
Let $n, C$ and $q \in [\underline{q}, \bar{q}]^n$ be given, with $T^{\pi_{\textsf{FA}}} \coloneqq T^{\pi_{\textsf{FA}}}(q)$ denoting the (random) time to defuse under $\pi_{\textsf{FA}},$ and $T^* \coloneqq T^*(q)$ the optimal (random) time to defuse. Next, we define the shorthands $\bar{T}^{\pi_\textsf{FA}} \coloneqq T^{\pi_{\textsf{FA}}}(\bar{q}\cdot \mathbbm{1}^n)$, respectively $\underline{T}^{\pi_\textsf{FA}} \coloneqq T^{\pi_{\textsf{FA}}}(\underline{q}\cdot \mathbbm{1}^n)$.

By \Cref{lem:monotonicity} we have $\mathbb{E}[T^{\pi_\textsf{FA}}]/\mathbb{E}[T^*]\leq \mathbb{E}[\bar{T}^{\pi_\textsf{FA}}]/\mathbb{E}[\underline{T}^{\pi_\textsf{FA}}]$, and it remains to show $\mathbb{E}[\bar{T}^{\pi_\textsf{FA}}]/\mathbb{E}[\underline{T}^{\pi_\textsf{FA}}]\leq \lceil\ln \underline{q}/\ln \bar{q}\rceil$. Let $\pi^*_{\underline{q}}$ denote the optimal (fair) policy for the problem in which all $n$ threats have difficulty $\underline{q}$ and we get feedback every $C$ assignments. Then let $\pi$ denote a new policy in which feedback is considered at the slower rate of every $\hat C$ assignments and batches of $\hat C:= C\cdot \lceil \log_{\bar{q}}(\underline{q})\rceil$ assignments are fired, where given a set $\mathcal{M}$ of live threats, $\pi(\mathcal{M}) = \lceil \log_{\bar{q}}(\underline{q})\rceil \cdot \pi_{\textsf{FA}}(\mathcal{M})$. 

Let $\hat q:= \bar{q}^{\lceil \log_{\bar{q}}(\underline{q})\rceil} \leq \underline{q}$. Then there exists a coupling such that $T_{\bar{q}}^\pi$ and $T_{\hat q}^{\pi_{\textsf{FA}}}$ satisfy
\[
T_{\bar{q}}^\pi = \lceil \log_{\bar{q}}(\underline{q})\rceil \cdot T_{\hat q}^{\pi_{\textsf{FA}}}.
\]

Therefore,
\[
\mathbb{E}\left[T^{\pi_{\textsf{FA}}}_{\bar{q}}\right] \leq \mathbb{E}\left[T^\pi_{\bar{q}}\right] = \lceil \log_{\bar{q}}(\underline{q})\rceil \cdot \mathbb{E}\left[T_{\hat q}^{\pi_{\textsf{FA}}}\right] \leq \mathbb{E}\left[T^{\pi_{\textsf{FA}}}_{\underline{q}}\right] \cdot \lceil \log_{\bar{q}}(\underline{q})\rceil.
\]
\halmos

\end{proof}

\subsection{\Cref{thm:spread-upper}}
\begin{definition}[Defuse Recognition Time] \label{def::RecognitionTime}
    For any policy $\pi \in \Pi,$ and instance $(n,C,q)$ we define the (random) \textit{defuse recognition time} of policy $\pi$, along with accompanying filtration $\{\mathcal{F}_t\}_{t=1}^\infty$ via 
    \begin{equation}
        \mathbb{S}^{\pi} \coloneqq \min\{t \geq 1: \mathcal{M}_t = \emptyset\} 
        = T^{\pi} + 1, ~~~~~~~ \mathcal{F}_t \coloneqq \sigma\big(\mathcal{M}_1, \ldots, \mathcal{M}_t\big).
    \end{equation}
   $\mathbb{S}^{\pi} $ is the first period $t$ in which the decision maker recognizes that there are no longer any active threats. We note that $\{\mathbb{S}^\pi \leq t\} \in \mathcal{F}_t$ for every $t$, i.e., $\mathbb{S}^\pi$ is a stopping time.
\end{definition}
 
To prove \Cref{thm:spread-upper}, it suffices to prove all claims for $\mathbb{S}^{\pi_{\textsf{FA}}},$ as it is a fixed unit away from $T^{\pi_{\textsf{FA}}}.$ More precisely, throughout, the strategy is to compare this stochastic stopping time against a deterministic \textit{hitting time}, defined as follows. 
\begin{definition}[Hitting Time]
    Let $F: \mathbb{R} \rightarrow \mathbb{R}$ be an increasing function. Given any $a_1 \in\mathbb{R},$ there is an associated recursive sequence $\{a_k\}_{k=1}^\infty$ in which $a_{k+1} = F(a_k)$ for $k \geq 1.$ We define the $\textit{hitting time}$ of target $y$ by $F$ starting at $a_1$ via
    \begin{equation} \label{eq: DefHittingTime}
    s_F(a_1; y):=\min\{k \geq 1: a_k\ge y\}
\end{equation}
\end{definition}
The following lemma establishes the growth of this hitting time with respect to the target $y$ by a particularly fast-growing function.

\begin{lemma}[Hitting time under iterated sequences]\label{lem:det-correct}
Let $b>1$, $K>0$, and define $F(z):=Kzb^z$. Let $z_1>\max(0,-\log_b K)$, and define $z_{k+1}=F(z_k)$.
Then, as $y\to\infty$, $s_F(z_1; y)= \Theta(\ln^* y)$
\end{lemma}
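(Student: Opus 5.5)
We prove matching upper and lower bounds on $s_F(z_1;y)$ by comparing the iteration $z_{k+1}=Kz_kb^{z_k}$ with a tower of exponentials. The condition $z_1>\max(0,-\log_b K)$ guarantees $Kb^{z_1}>1$. Hence $z_2>z_1$, and by induction ($F$ is increasing on $(0,\infty)$ and $z\mapsto Kb^z$ is increasing) the sequence is strictly increasing with $z_{k+1}/z_k=Kb^{z_k}\ge Kb^{z_1}>1$. It therefore diverges, so $s_F(z_1;y)$ is finite for every $y$.

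\textbf{Upper bound.} We show $\ln z_{k+1}\ge c\,z_k$ eventually, for some constant $c>0$. Indeed,
\[
\ln z_{k+1}=\ln K+\ln z_k+z_k\ln b,
\]
and since $z_k\to\infty$ geometrically fast, there is a $k_0$ (depending only on $K,b,z_1$) such that $\ln z_{k+1}\ge \tfrac12 z_k\ln b$ for $k\ge k_0$. Writing $w_k:=z_k\cdot\tfrac12\ln b$ (or simply working with $\ln z_k$), this gives the bound $z_{k+1}\ge \exp(c z_k)$ with $c=\tfrac12\ln b$. After a further constant number of steps, $z_k$ is so large that $\exp(cz)\ge \exp(\sqrt z)$ and then $\ge e^{z}$-type growth of a tower: concretely, pick $k_1\ge k_0$ with $cz_{k_1}\ge 1$ and $z_{k_1}\ge$ a threshold beyond which $\exp(cz)\ge \exp(z^{1/2})\ge$ the next level of the tower. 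A cleaner route is to set $u_k:=c\,z_k$. Then $u_{k+1}\ge c\,e^{u_k}$, and once $u_k\ge 2\ln(1/c)$ we get $u_{k+1}\ge e^{u_k/2}$. Iterating, after $j$ steps $u_{k_1+2j}$ dominates a standard tower $\exp^{(j)}(1)$, since two steps of $e^{u/2}$ dominate one step of $e^{u}$ for large $u$. Consequently $z_k\ge y$ once $k\ge k_1+2\ln^* y+O(1)$, i.e., $s_F(z_1;y)=O(\ln^* y)$.

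\textbf{Lower bound.} We show $z_{k+1}\le \exp(Az_k)$ for all $k$ with $z_k\ge 1$, where $A:=\ln b+|\ln K|+1$; this follows because $\ln z\le z$. Set $v_k:=\max(A z_k,\,e)$. Then $v_{k+1}\le A e^{v_k}\le e^{2v_k}$ for large $v_k$, and one more absorption gives $v_{k+1}\le \exp(\exp(v_k))$ as soon as $v_k\ge 1$, say. Since $\ln^{(2)}$ applied twice per step at most reduces $\ln^*$ by $2$,
\[
\ln^* v_{k+1}\le \ln^* v_k+2,
\]
so $\ln^* z_k\le 2k+O(1)$. Hence $z_k\ge y$ forces $k\ge \tfrac12\ln^* y-O(1)$, which gives $s_F=\Omega(\ln^* y)$. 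The bookkeeping uses $\ln^*(e^x)=\ln^*(x)+1$ for $x>1$ and monotonicity of $\ln^*$.

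\textbf{Main obstacle.} The only delicate step is absorbing the constants $K$ and the factor $z$ in $F(z)=Kzb^z$ into a clean tower comparison. The remedy is to allow a constant slowdown (two iterations of $F$ versus one tower level, or vice versa) and a constant number of initial burn-in steps, both of which are harmless inside $\Theta(\ln^* y)$.
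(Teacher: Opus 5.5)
Your proposal is correct and follows essentially the same route as the paper. You show that $(z_k)$ increases monotonically and diverges. Your upper bound uses the fact that two iterations of $F$ eventually dominate one exponential: your $u_{k+2}\ge e^{u_k}$ with $u_k=cz_k$ is the paper's $z_{k+2}\ge \exp(z_k)$ after rescaling. Your lower bound uses $F(z)\le \exp(Az)$ to show that $\ln^*$ grows by at most a constant per step.

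The only difference is cosmetic. You absorb the constant $A$ through $v_k=\max(Az_k,e)$ and $v_{k+1}\le\exp(\exp(v_k))$, paying $2$ per step, instead of invoking $\ln^*(Az)\le \ln^* z+B$; this makes your version a bit more self-contained. You may also drop the abandoned first attempt in the upper bound and take $c=\min(\ln b/2,1)$, so that $u_k\ge y$ directly gives $z_k\ge y$ without any $\ln^*(cy)$ adjustment.
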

\begin{proof}{Proof of Lemma \ref{lem:det-correct}.}
Because $K b^{z_1}>1$, we have $z_2 = z_1 K b^{z_1} > z_1$. Inductively, because the sequence is increasing ($z_k > z_1$), we have $b^{z_k} \geq b^{z_1}$, which yields $z_{k+1} = z_k K b^{z_k} \geq z_k K b^{z_1} \geq z_1 (K b^{z_1})^k$. Thus the sequence $z_k$ is strictly increasing and grows without bound.
Since $\ln F(z) = z \cdot\ln b + \ln(Kz),$ there exists an $A > 0$ such that for all $z$ sufficiently large,
\begin{equation}\label{eq:natural-exp-sandwich}
b^z \le F(z)\le \exp(Az)~~  \text{ and }  ~~ z \leq (\ln b) b^z;
\end{equation}
hence, for all sufficiently large $k$, by \eqref{eq:natural-exp-sandwich}, 
\[
\exp(z_k) \leq b^{b^{z_k}} \leq b^{F(z_k)} \leq F( F(z_k) ) = z_{k+2},
\]
so that for all sufficiently large $k,$ the sequence $z_k$ keeps pace with the $j$-fold tower of exponentials; more precisely, $\exp^{(j)}(z_k) \leq z_{k+2j}$, which means $s_F(z_1, y) = O(\ln^*y)$. On the other hand, to see that $s_F(z_1, y) = \Omega(\ln^*y)$, note that there exists a constant $B$ such that $\ln^*(Az) \leq \ln^*z + B$ for all sufficiently large $z$ so that by \eqref{eq:natural-exp-sandwich}, for all sufficiently large $k,$
\[
\ln^*z_{k+1}  = \ln^* F(z_k) \leq \ln^*(\exp(Az_k)) \leq 1 + \ln^*(Az_k) \leq 1 + \ln^*z_k + B, 
\]
revealing $\ln^* z_k = O(k).$ Then $\ln^*y \leq \ln^*z_{s_F(z_1, y)} = O(s_F(z_1, y))$, which implies $s_F(z_1, y) = \Omega(\ln^*y).$ \halmos

\end{proof}

\subsubsection{Proof of \Cref{thm:spread-upper}}
\begin{proof}{Proof of \Cref{thm:spread-upper}}
Statements 1 and 2 will follow from the two steps (a) and (b), proven below. Both will leverage \Cref{lem:det-correct}.

For step (a), we will use $G(z) \coloneqq (\bar{q}/2)z(1/\bar{q})^z$ and $z_1 \coloneqq -\log_{1/\bar{q}}(\bar{q}/2) + 1.$
\underline{{\bf Step (a): $C \geq n \implies \mathbb{E}\left[\mathbb{S}^{\pi_{\textsf{FA}}}\right] \leq 1 + z_1 / (1-\bar{q}) + 2s_G(z_1, n) + 1/(1-\bar{q})$:}} Let us define policy $\lfloor \pi_{\textsf{FA}}\rfloor$ as follows: at any period $t$, if $\mathcal{M}_t$ is the live-threat set, then commit $\lfloor n/M_t \rfloor$ to every member of $\mathcal{M}_t$. Since $C \geq n$, it holds that, given any live-threat set, $\pi_{\textsf{FA}}$ commits no fewer than $\lfloor \pi_{\textsf{FA}}\rfloor$ would to every live threat, meaning $\mathbb{S}^{\pi_{\textsf{FA}}} \preceq \mathbb{S}^{\lfloor \pi_{\textsf{FA}}\rfloor},$ so it suffices to upper bound $\mathbb{E}\left[\mathbb{S}^{\lfloor \pi_{\textsf{FA}}\rfloor}\right]$. In what follows, let $\{\mathcal{M}_t\}_{t=1}^{\mathbb{S}^{\lfloor \pi_{\textsf{FA}}\rfloor}}$ be the nested family of live-threat sets throughout the execution of $\lfloor \pi_{\textsf{FA}}\rfloor$, and let $Z_t \coloneqq (n/M_t)\mathbbm{1}_{\{M_t\ge1\}}+\infty\cdot\mathbbm{1}_{\{M_t=0\}}$.
Then let $\tau_n \coloneqq \inf\{t \geq 1: Z_t \geq n\}$ so that $\mathbb{S}^{\lfloor\pi_{\textsf{FA}}\rfloor} \preceq \tau_n + Geom(1 - \bar{q})$, and we proceed to upper bound the expected time $\mathbb{E}\left[\tau_n\right]$ that it takes for $Z_t$ to cross $n.$ 

$Z_t$ increases at each $t$ in which $M_t$ decreases; intuitively, the rate of this decrease must grow, as remaining threats get higher per-round allocations. Towards characterizing recursive growth, we can use the (recursive) upper bound on the expected number of live threats in the next round,
\[
\mathbb{E}\left[M_{t+1} \;| \; \mathcal{M}_t\right] \leq M_t \bar{q}^{\lfloor Z_t \rfloor} \leq M_t \bar{q}^{Z_t - 1},
\]
to fashion an increasing function $G$ such that the growth of $Z_t$ will, with high probability, match, recursively; more precisely, the Markov inequality translates this inequality to the probability bound
\begin{align} \label{eq: GrowthOfZ}
    \mathbb{P}\left(Z_{t+1} \geq G(Z_t)\; | \; \mathcal{M}_t\right) = 1 - \mathbb{P}\left(M_{t+1} > \frac{n}{G(Z_t)} \; | \; \mathcal{M}_t\right) \geq 1 - \frac{\mathbb{E}\left[M_{t+1} \; |\; \mathcal{M}_t\right]}{n} G(Z_t)\geq 1 - \frac{M_t \bar{q}^{Z_t - 1}}{n} G(Z_t) \geq 1/2,
\end{align}
when $G(z) \coloneqq (\bar{q}/2)z(1/\bar{q})^z$. The function $G$ belongs to the class of functions examined in \Cref{lem:det-correct} so that the recursion $G^{(k)}(z_1)$, at some suitably large $z_1$, grows rapidly (a $k$- tower of exponentials) with $k$; intuitively, if $Z_t$ were to keep pace with this (at some frequency), it would share its growth. 

Following \Cref{lem:det-correct}, let us define the recursive sequence $\{z_k\}_{k=1}^\infty$ via
\[
z_1 \coloneqq -\log_{1/\bar{q}}(\bar{q}/2) + 1 > 1, \text{ and } z_{k+1}\coloneqq G(z_k) ~~~~~ \forall k \geq 1
\]
which will be used as a reference for the growth of $Z_t,$ and abbreviate $s_G(z_1, n)$ with $s_G(n).$ Indeed, if $\tau_1 \coloneqq \inf\{t \geq 1: Z_t \geq z_1\}$ is the time it takes for $Z_t$ to get suitably large, then 
\[
\tau_n \leq \tau_1 + \inf\{k\geq 1: Z_{\tau_1 + k} \geq n\}.
\] 
The first term expectation $
\mathbb{E}[\tau_1]\leq 1+\frac{z_1}{1-\bar q};$ indeed, since every live threat receives at least
$\left\lfloor \frac{n}{M_t}\right\rfloor\geq 1$
effectors under $\lfloor\pi_{\textsf{FA}}\rfloor$, we have $\mathbb{E}[M_t]\leq n\bar q^{t-1}$ for $ t\geq 1.$ Therefore,
\[
\mathbb{E}[\tau_1 - 1] = \sum_{t=1}^{\infty}\mathbb{P}(\tau_1>t) \leq \sum_{t=1}^{\infty}\mathbb{P}(Z_t<z_1) = \sum_{t=1}^{\infty}\mathbb{P}\left(M_t>\frac{n}{z_1}\right) \leq \sum_{t=1}^{\infty}\frac{z_1}{n}\mathbb{E}[M_t] \leq \sum_{t=1}^{\infty}z_1\bar q^{t-1} = \frac{z_1}{1-\bar q}.
\]
The second term $\inf\{k\geq 1: Z_{\tau_1 + k} \geq n\} \preceq \operatorname{NegBinom}(s_G(n), 1/2)$, meaning its expectation is upper-bounded by $2s_G(n)$. To see this, let $1 \leq k_1 < k_2 < \dots < k_{s_G(n)}$ be the (random) subsequence that marks the first $s_G(n)$-many time stamps after $\tau_1$ in which $Z_t \geq G(Z_{t-1})$. 
Then 
\[
Z_{\tau_1 + k_1} \geq G(Z_{\tau_1 + k_1 - 1}) \geq G(z_1) \implies
Z_{\tau_1 + k_2} \geq G(Z_{\tau_1 + k_2 - 1}) \geq G(Z_{\tau_1 + k_1}) \geq G^{(2)}(z_1),
\]
which proceeds until we find $Z_{\tau_1 + k_{s_G(n)}} \geq G^{s_G(n)}(z_1) =  z_{s_G(n) + 1}$. Consequently, $\inf\{k\geq 1: Z_{\tau_1 + k} \geq n\}$ is upper-bounded by $k_{s_G(n)}$, which, by the construction of $G$ in \eqref{eq: GrowthOfZ}, is dominated by $ \operatorname{NegBinom}(s_G(n), 1/2)$, whose expectation is $2s_G(n) = \Theta\left(\ln^*n\right)$, as claimed. \\

For step (b) in the following, we will use $H(z) \coloneqq (2/\underline{q}) z (1/\underline{q})^{z}$ and $\nu_1 \coloneqq 1.$

\underline{{\bf Step (b): $\lim_{n\rightarrow \infty} C(n) = \infty, \;C(n) \leq n \implies \mathbb{E}\left[\mathbb{S}^{\pi_{\textsf{FA}}}\right] = \Omega(\ln^*C(n))$: }} Let us define a policy $\lceil \pi_{\textsf{FA}}\rceil$ as follows: at any period $t$, if $\mathcal{M}_t$ is the live-threat set, then commit $\lceil C(n)/M_t \rceil$ to every member of $\mathcal{M}_t$. Technically this is not a feasible policy, but because $\mathbb{S}^{\lceil \pi_{\textsf{FA}}\rceil} \preceq \mathbb{S}^{\pi_{\textsf{FA}}},$ it will suffice for lower-bounding $\mathbb{E}\left[\mathbb{S}^{\lceil \pi_{\textsf{FA}}\rceil}\right].$ In what follows, now let  $\{\mathcal{M}_t\}_{t=1}^{\mathbb{S}^{\lceil \pi_{\textsf{FA}}\rceil}}$ be the nested family of live-threat sets throughout the execution of $\lceil\pi_{\textsf{FA}}\rceil$. Our strategy here will be to show that there exists some (later defined) $\ell_n = \Theta(\ln C(n))$ for which the recursive sequence of a function $H$ from the class in \Cref{lem:det-correct} will, with high probability, outpace and reach before the sequence $Z_t \coloneqq C(n)/M_t$ does. By \Cref{lem:det-correct}, this then means it takes the $Z_t$ sequence at least $\Theta(\ln^* \ell_n)$ -time just to reach $\ell_n$ which establishes the $\Omega(\ln^*C(n))$ claim.

In analogous fashion to the design of the upper bound, we can leverage the (recursive) lower bound on the expected number of live threats in the next round,
\[
\mathbb{E}\left[M_{t+1} \;| \; \mathcal{M}_t\right] \geq M_t \underline{q}^{\lceil Z_t \rceil} \geq M_t \underline{q}^{Z_t + 1},
\]
to fashion an increasing function $H$ such that the growth of $Z_t$ will not exceed, for all periods up until $\ell_n$, with high probability; more precisely, the multiplicative Chernoff bound for sums of independent Bernoulli variables translates this inequality to the probability lower bound
\begin{equation} \label{eq: DecreasingZProbBound}
\mathbb{P}\left(Z_{t+1} \leq H(Z_t)\; |\; \mathcal{M}_t\right) \geq \mathbb{P}\left(M_{t+1} \geq \frac{1}{2} \mathbb{E}\left[M_{t+1}\;|\; \mathcal{M}_t\right]\; | \; \mathcal{M}_t\right) \geq 1 - \exp(-\frac{M_t \underline{q}^{Z_t + 1}}{8}), 
\end{equation}
upon defining $H(z)\coloneqq (2/\underline{q}) z (1/\underline{q})^{z}$, since 
\[
M_{t+1} \geq \frac{1}{2} \mathbb{E}\left[M_{t+1}\;|\; \mathcal{M}_t\right] \implies Z_{t+1} \leq \frac{2C(n)}{\mathbb{E}\left[M_{t+1}\;|\; \mathcal{M}_t\right]} \leq 
\frac{2C(n)}{M_t\underline{q}^{Z_t + 1}} 
= (2/\underline{q}) Z_t (1/\underline{q})^{Z_t} = H(Z_t).
\]

Following \Cref{lem:det-correct}, we define the recursive sequence $\{\nu_k\}_{k=1}^\infty$ via
\[
\nu_1 \coloneqq 1 > -\log_{1/\underline{q}}(2/\underline{q}),
\text{ and } \nu_{k+1} \coloneqq H(\nu_k) ~~~~\forall k \geq 1
\]
which will be used as a reference for the growth of $Z_t$, and abbreviate $s_H(\nu_1, \ell_n)$ with $s_H(\ell_n)$. Since $\mathbb{E}\left[\mathbb{S}^{\lceil \pi_{\textsf{FA}} \rceil}\right] \geq (s_H(\ell_n) - 1) \cdot \mathbb{P}\left(\mathbb{S}^{\lceil \pi_{\textsf{FA}} \rceil} \geq s_H(\ell_n) - 1\right)$,
it suffices to show $\mathbb{P}\left(\mathbb{S}^{\lceil \pi_{\textsf{FA}} \rceil} \geq s_H(\ell_n) - 1\right) \rightarrow 1$ for some choice of $\ell_n = \Theta(\ln C(n))$, which we proceed to do so using \eqref{eq: DecreasingZProbBound}.
Since $\ell_n = \Theta(\ln C(n))$ it follows that, for sufficiently large $n,$ the inequality $\ell_n < C(n)$ holds so that $\nu_{s_{H}(\ell_n) - 1} < \ell_n < C(n)$, providing the lower bound
\[
\mathbb{P}\left(\mathbb{S}^{\lceil \pi_{\textsf{FA}} \rceil} \geq s_H(\ell_n) - 1\right) \geq \mathbb{P}\left(Z_{s_{H}(\ell_n) - 1} \leq \nu_{s_{H}(\ell_n) - 1}\right).
\]
Note that, with the shorthand $A_t \coloneqq [Z_{t+1} \leq \nu_{t+1}]$ for $t\in \mathbb{Z}_{\geq 0},$ and the fact $Z_1 = C(n) / M_1 = C(n) / n \leq 1 = \nu_1,$ we can in turn lower bound the likelihood of this event via 
\begin{align*}
    &\mathbb{P}\left(Z_{s_{H}(\ell_n) - 1} \leq \nu_{s_{H}(\ell_n) - 1}\right) \geq \mathbb{P}\left(\bigcap_{t=1}^{s_{H}(\ell_n) - 2} A_t \right) = 1 - \mathbb{P}\left(\bigcup_{t=1}^{s_{H}(\ell_n) - 2} [A_t^c \cap \bigcap_{i=1}^{t-1} A_i]
    \right) \geq 1 - \underbrace{\mathbb{P}\left(\bigcup_{t=1}^{s_{H}(\ell_n) - 2} [A_t^c \cap A_{t-1}]
    \right)}_{(*)(n)},
\end{align*}
where $(*)(n)$ itself is upper bounded (via union bound, \eqref{eq: DecreasingZProbBound}, and $\nu_{s_{H}(\ell_n) - 1} < \ell_n$) via
\begin{align*}
    \sum_{t=1}^{s_{H}(\ell_n) - 2} &\mathbb{E}\left[\mathbb{P}\left(Z_{t+1} >  \nu_{t+1}, Z_t \leq \nu_t \; |\; \mathcal{M}_t \right)\right] \leq \sum_{t=1}^{s_{H}(\ell_n) - 2} \mathbb{E}\left[\mathbb{P}\left(Z_{t+1} > H(Z_t)\right); [Z_t \leq \nu_t]\right]\\
    &\leq \sum_{t=1}^{s_{H}(\ell_n) - 2} \mathbb{E}\left[\exp(-\frac{M_t \underline{q}^{Z_t + 1}}{8}); [Z_t \leq \nu_t]\right]\leq s_H(\ell_n) \cdot \exp(-\frac{C(n) \underline{q}^{\ell_n + 1}}{8 \ell_n}).
\end{align*}
Towards making $(*)(n) \rightarrow 0,$ and with it, $\mathbb{P}\left(\mathbb{S}^{\lceil \pi_{\textsf{FA}} \rceil} \geq s_H(\ell_n) - 1\right) \rightarrow 1,$ we can now explicitly define $\ell_n \coloneqq \lfloor \log_{1/\underline{q}^2}(C(n)) \rfloor$, for an example member of $\Theta(\ln C(n))$ that achieves this.
In summary, 
\[\mathbb{E}\left[\mathbb{S}^{ \pi_{\textsf{FA}}}\right] \geq \mathbb{E}\left[\mathbb{S}^{\lceil \pi_{\textsf{FA}} \rceil}\right] \geq (s_H(\ell_n) - 1) \cdot \mathbb{P}\left(\mathbb{S}^{\lceil \pi_{\textsf{FA}} \rceil} \geq s_H(\ell_n) - 1\right) = \Omega(s_H(\ell_n)) =  \Omega(\ln^*\ell_n) = \Omega(\ln^*C(n)).
\]  \\
\underline{{\bf Proving $C(n)<n \implies \mathbb{E}[\mathbb{S}^{\pi_{\textsf{FA}}}]
        =
        1/C(n) \cdot \sum_{i=1}^n 1/(1-q_i^{(n)})
        +
        O(\ln^* C(n))$:}}\\
Let $\tau_{init} \coloneqq \inf \{t \geq 1: M_t \leq C(n)\} = \inf\{t\geq 1: C(n)/M_t \geq 1\}$ Then 
$\mathbb{S}^{\pi_{\textsf{FA}}} \preceq \tau_{init} + \inf\{k \geq 1: M_{\tau_{init} + k} \leq 0\}$, and we need only bound the expectation of two summands.

Regarding the first summand, note that $\tau_{init} - 1 = (1/C) \sum_{i \in [n] \setminus \mathcal{M}_{\tau}} L_i \leq (1/C) \sum_{i =1}^n L_i $, a.s., since $\pi_{\textsf{FA}}$ incurs no waste from allocations made in periods 1 through $\tau_{init}-1$; hence, $\mathbb{E}\left[\tau_{init}\right] \leq (1/C) \sum_{i=1}^n 1/(1-q_i^{(n)}) + 1$. As for the second summand, since $C(n) \geq M_{\tau_{init}},$ then by memorylessness of the geometric distribution, we can appeal to step (a) above to find
\[
\mathbb{E}\left[\inf\{k \geq 1: M_{\tau_{init} + k} = 0\} \; | M_{\tau_{init}}\; \right] \leq 1 + z_1 / (1-\bar{q}) + 2s_G(M_{\tau_{init}}) + 1/(1-\bar{q}).
\]
With $M_{\tau_{init}} \leq C(n)$, it holds that $s_G(M_{\tau_{init}})\leq s_G(C(n)) = \Theta(\ln^*(C(n)),$ as desired.

\end{proof}

\medskip

\subsection{Theorem \ref{thm:dtm_fair_additive_gap}}
In the preceding proof of \Cref{thm:spread-upper}, the two functions $G$ and $H$ belonged to the family:
\begin{equation} \label{eq: DefExponentialFamily}
    \mathcal{L}\coloneqq \{ z \mapsto \kappa zb^z:  \frac{\bar{q}}{2} \leq \kappa \leq \frac{2}{\underline{q}}, \frac{1}{\bar{q}} \leq b \leq \frac{1}{\underline{q}}\}.
\end{equation}
According to \Cref{lem:det-correct}, the recursive sequences, or \textit{ladders}, $\{a_k\}_{k=1}^\infty$ yielded by members of $\mathcal{L}$ behave like an iterated-exponential. The following preliminary lemma establishes that the difference in hitting times \eqref{eq: DefHittingTime} between any two members of $\mathcal{L}$ is bounded.  %

\begin{lemma}[Uniform additive base change for iterated-exponential ladders]
\label{lem:logstar-additive}
Fix intervals $\mathcal B=[b_-,b_+]\subset(1,\infty)$ and $\mathcal K= [\kappa_-,\kappa_+]\subset(0,\infty)$.
Then there exists $1 \leq A<\infty$ such that:
if $F_1, F_2$ are functions written $F_i(z) \coloneqq \kappa_i z b_i^z$, for $\kappa_i \in \mathcal{K}, b_i \in \mathcal{B}$,
then 
\[
    |s_{F_1}(a;y)-s_{F_2}(a;y)|\le 1. ~~~~~~\forall a \geq A, ~~~~\forall y
\]
\end{lemma}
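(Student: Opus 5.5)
The plan is to compare the two ladders directly. Let $\{x_k\}$ be the $F_1$-ladder and $\{u_k\}$ the $F_2$-ladder, both started at $a$. The key intermediate claim is an \emph{index-shifted domination}: there is a uniform $A$ such that, for every $a\ge A$,
\[
x_k \;\le\; u_{k+1}\qquad\text{for all } k\ge 1 .
\]
This suffices for $s_{F_2}(a;y)\le s_{F_1}(a;y)+1$: if $x_k\ge y$, then $u_{k+1}\ge y$. The reverse inequality follows by exchanging the roles of $F_1$ and $F_2$, since the hypotheses are symmetric.

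A naive induction would only use monotonicity and need $F_1\le F_2$ pointwise, which fails when $b_1>b_2$. So I will run the induction with a multiplicative slack factor. Set
\[
\alpha:=\ln b_->0,\qquad \beta:=2\ln b_+ ,\qquad \lambda:=\frac{\alpha}{2\beta}\in(0,1),
\]
all depending only on $\mathcal B$. The strengthened invariant is $x_k\le \lambda\,u_{k+1}$; since $\lambda<1$, it implies the claim above.

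The first step is to pick $A$ uniformly over $\kappa_i\in\mathcal K$ and $b_i\in\mathcal B$. I want $A$ large enough that three things hold for all $z\ge A$.
\begin{itemize}
\item[(i)] Both ladders increase strictly: $\kappa_- b_-^{A}>1$. By the argument of \Cref{lem:det-correct}, this gives $x_k,u_k\ge a\ge A$ for all $k$.
\item[(ii)] The base case $a\le \lambda F_2(a)$ holds, i.e. $\lambda\kappa_- b_-^{z}\ge 1$.
\item[(iii)] The key one-step comparison holds:
\[
F_1(\lambda z)\le \kappa_+\lambda z\, b_+^{\lambda z}\le e^{\beta\lambda z}=e^{\alpha z/2}\;\le\;\lambda\kappa_- z\, e^{\alpha z}\le \lambda F_2(z).
\]
The middle inequality amounts to $e^{\alpha z/2}\ge 1/(\lambda\kappa_-z)$, and the first uses $\kappa_+\lambda z\le b_+^{\lambda z}$ for large $z$.
\end{itemize}
All of these are finitely many explicit threshold conditions whose constants depend only on the endpoints of $\mathcal B$ and $\mathcal K$. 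We can therefore take $A\ge1$ to be the maximum of the thresholds, and $A$ is independent of which $F_1,F_2$ are chosen.

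The induction then goes as follows. The base case $x_1=a\le\lambda F_2(a)=\lambda u_2$ is (ii). For the step, assume $x_k\le\lambda u_{k+1}$. Because $F_1$ is increasing and $u_{k+1}\ge A$,
\[
x_{k+1}=F_1(x_k)\le F_1(\lambda u_{k+1})\le \lambda F_2(u_{k+1})=\lambda u_{k+2},
\]
where the last inequality is (iii) applied at $z=u_{k+1}$. This closes the induction and yields $|s_{F_1}(a;y)-s_{F_2}(a;y)|\le1$ for all $y$. For $y\le a$, both hitting times equal $1$, so that case is trivial.

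The main obstacle is step (iii): choosing the slack factor $\lambda$ so that a \emph{single} application of $F_2$ absorbs a possibly faster base $b_1>b_2$. The point is that shrinking the argument by $\lambda$ shrinks the exponent of $F_1$ by the same factor, while $F_2$'s exponent is untouched; the polynomial prefactors $\kappa z$ are then dominated once $z$ is large. I would verify this step carefully, using only the compactness of $\mathcal B$ and $\mathcal K$, to make sure every threshold is uniform.
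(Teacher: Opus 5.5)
Your argument is correct and is essentially the paper's proof. Both prove an index-shifted domination of one ladder by the other, by induction on an invariant with built-in slack, and then symmetrize. The only difference is the form of the slack. The paper uses a quadratic one, $z^{(2)}_{k+1}\ge (z^{(1)}_k)^2$, closed by $F_-(z^2)\ge [F_+(z)]^2$ for large $z$; this needs no tuning, since $b_-^{z^2}$ eventually beats $b_+^{2z}$. Your linear slack $x_k\le\lambda u_{k+1}$ instead requires choosing $\lambda$ small relative to $\ln b_-/\ln b_+$, which you do correctly.
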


\begin{proof}{Proof of Lemma \ref{lem:logstar-additive}.}
Let $F_-, F_+$ denote, respectively, the smallest and largest functions of the class $\mathcal{L}$, i.e., 
\[
F_-(z)\coloneqq \kappa_{-}zb_{-}^z \leq \kappa_+ z b_+^z \eqqcolon F_+(z).
\]
Let $A \geq 1$ such that for all $z \geq A$: (1) $F_-(z) \geq z^2$; (2) $F_-(z^2) = \kappa_-z^2b_-^{z^2} \geq \kappa_+^2z^2b_+^{2z} = [F_+(z)]^2$. It follows that: for $z \geq A$, both
\begin{equation} 
F_i(z) \geq F_-(z) \geq z^2  \;\; \forall i \in \{1,2\} ~\text{ and } ~ F_2(z^2) \geq F_-(z^2) \geq [F_+(z)]^2 \geq [F_1(z)]^2.  \tag{*}
\end{equation}
Letting $a\geq A$ be arbitrary, we define $z^{(i)}_1 \coloneqq a$ and $z^{(i)}_{k+1} = F_i(z^{(i)}_k)$ for $i = 1,2,$ we can show by induction that, after one extra iteration, ladder 2 is at least the square of ladder 1, i.e., 
\begin{equation}
\label{eq:additive-base-induction-claim}
    z^{(2)}_{k+1}\ge \bigl(z^{(1)}_k\bigr)^2,\qquad k\ge 1.
\end{equation}
Starting with the base case that $z_2^{(2)} = F_2(a) \geq a^2 = \left(z_1^{(1)}\right)^2,$ by the induction hypothesis and $(*)$,
\[
    z^{(2)}_{k+2}=F_2\bigl(z^{(2)}_{k+1}\bigr) 
    \geq F_2\bigl(z^{(1)}_k)^2\bigr) \geq \bigl(F_1(z^{(1)}_k)\bigr)^2 =\bigl(z^{(1)}_{k+1}\bigr)^2.
\]
The comparison in \eqref{eq:additive-base-induction-claim} can be used at $k=s_1(y;a)$ to conclude. Indeed, for any $y \geq a \geq A \geq 1,$
\[
    z^{(2)}_{s_{F_1}(a;y)+1}\ge \bigl(z^{(1)}_{s_{F_1}(a;y)}\bigr)^2\ge y^2\ge y.
\]
Hence $s_{F_2}(a;y)\le s_{F_1}(a;y)+1$, and by symmetry, we conclude $|s_{F_1}(a;y)-s_{F_2}(a;y)|\le 1$. \halmos
\end{proof}

\medskip

\begin{lemma}[Homogeneous fair allocation is base-insensitive]
\label{lem:homogeneous-fa-base-insensitive}
Let $0<\underline q<\bar q<1$ and $\Gamma\in(0,\infty)$.
Then there exists a constant $B=B(\underline q,\bar q,\Gamma)<\infty$ such that, for all integers $n,C$ satisfying $1\le n\le \Gamma C$ and all $\underline q\le r_1\le r_2\le \bar q$,
\begin{equation} \label{eq:FairAllocationDefusingTimesAreClose}
    0\le \mathbb{E}\left[T^{\pi_{\textsf{FA}}}(r_2 \cdot \mathbbm 1_n; C)\right] - \mathbb{E}\left[T^{\pi_{\textsf{FA}}}(r_1 \cdot \mathbbm 1_n; C)\right]\le B.
\end{equation}
\end{lemma}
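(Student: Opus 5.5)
The lower bound $0\le\cdots$ comes straight from the first inequality of \Cref{lem:monotonicity}, since $r_1\le r_2$ coordinatewise. The real work is the upper bound. My plan is to sandwich both expected defusing times between deterministic hitting times of ladders from the family $\mathcal L$, in the sense of \eqref{eq: DefExponentialFamily}, and then use \Cref{lem:logstar-additive} to show these hitting times differ by a bounded amount.

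\textbf{Step 1: bounds via the statistic $Z_t=C/M_t$.} I would reuse the machinery from the proof of \Cref{thm:spread-upper}, with the common difficulty $r\in[\underline q,\bar q]$ in place of $\bar q$ and $\underline q$.
\begin{itemize}
\item \emph{Upper bound, as in step (a).} For each $r$, \eqref{eq: GrowthOfZ} with $\bar q$ replaced by $r$ gives $\mathbb P(Z_{t+1}\ge G_r(Z_t)\mid\mathcal M_t)\ge 1/2$, where $G_r(z)=(r/2)z(1/r)^z\in\mathcal L$. This yields
\[
\mathbb E[T^{\pi_{\textsf{FA}}}(r\mathbbm 1_n)]\le c_1+2\,s_{G_r}(a;\Gamma C).
\]
Here the warm-up cost to reach a fixed large starting level $a\ge A$ (with $A$ from \Cref{lem:logstar-additive}) is a constant $c_1(\underline q,\bar q,\Gamma)$. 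The target $\Gamma C\ge n$ suffices because $Z_t\ge n$ forces $M_t\le C/n$. Since $n\le\Gamma C$, we have $Z_1=C/n\ge 1/\Gamma$, so this warm-up stays bounded even when $C>n$.
\item \emph{Lower bound, as in step (b).} The Chernoff argument \eqref{eq: DecreasingZProbBound} with $H_r(z)=(2/r)z(1/r)^z$ gives
\[
\mathbb E[T^{\pi_{\textsf{FA}}}(r\mathbbm 1_n)]\ge s_{H_r}(a';\ell_C)-c_2,
\]
with $\ell_C=\Theta(\ln C)$, and with the failure probability controlled uniformly in $r$.
\end{itemize}

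\textbf{Step 2: compare the ladders.} I would apply \Cref{lem:logstar-additive} to the intervals $\mathcal K=[\underline q/2,2/\underline q]$ and $\mathcal B=[1/\bar q,1/\underline q]$. This makes all hitting times $s_{G_{r_2}},s_{H_{r_1}}$ agree within $1$ at a common target. The two targets are $\ln C$ and $\Gamma C$; their iterated logs differ by $O(1)$, and going from one to the other costs only $O(1)$ extra ladder steps, since one application of any $F\in\mathcal L$ exponentiates. So
\[
s_{G_{r_2}}(a;\Gamma C)\le s_{H_{r_1}}(a;\ell_C)+O(1).
\]
Combining this with Step 1 gives a bound of the form $B$, provided the additive losses $c_1,c_2$ and the probability factor are uniform in $r_1,r_2,n,C$.

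\textbf{Main obstacle: the lower bound for small $C$ and its multiplicative factor.} Step (b) yields $\mathbb E\ge (s-1)\mathbb P(\cdot)$, which is only asymptotic in $C$. For an additive bound I need $\mathbb P\ge 1-\epsilon_C$ with $\epsilon_C\, s=O(1)$. This holds because $s_H(\ell_C)=O(\ln^*C)$ while $\epsilon_C$ decays polynomially in $C$. For $C$ below a fixed threshold $C_0$, both expectations are bounded by constants: $n\le\Gamma C_0$ and $r\le\bar q$ give $\mathbb E\le$ const, so the difference is trivially bounded. If the lower-bound route proves delicate, I would fall back on a direct coupling. Run $\pi_{\textsf{FA}}$ on $r_2$ and on $r_1$; use the batching trick from the proof of \Cref{theorem:DTM_fair_ratio} to view $r_2$ as $\hat q\le r_1$ under $k=\lceil\ln\underline q/\ln\bar q\rceil$-fold capacity; and show that $k$-fold capacity buys at most $O(1)$ rounds of the $Z_t$ ladder.
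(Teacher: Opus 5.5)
\textbf{There is a genuine gap. It is in your upper bound, not in the lower-bound factor you single out as the main obstacle; your treatment of that factor is fine.}

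You climb the $G_r$-ladder using the Markov estimate \eqref{eq: GrowthOfZ}, so each rung succeeds only with probability at least $1/2$. The negative-binomial domination then gives $\mathbb E[T^{\pi_{\textsf{FA}}}(r\mathbbm 1_n)]\le c_1+2\,s_{G_r}(a;\Gamma C)$, which puts a factor $2$ in front of a quantity of order $\ln^*C$. Your bound on the difference is therefore
\[
c_1+c_2+2\,s_{G_{r_2}}(a;\Gamma C)-s_{H_{r_1}}(a';\ell_C)\;=\;s_{H_{r_1}}(a';\ell_C)+O(1)\;=\;\Theta(\ln^*C),
\]
because, by your own Step 2, all of these hitting times agree up to $O(1)$. \Cref{lem:logstar-additive} controls \emph{differences} of hitting times and cannot absorb a multiplicative loss. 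For an additive bound, the expected number of failed rung attempts over all $\Theta(\ln^*C)$ rungs must be $O(1)$. A per-rung success probability of only $1/2$ can never deliver that.

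The missing idea, which is how the paper argues, is concentration in the bulk of the ladder.
\begin{itemize}
\item Truncate the ladder at $\ell_C=\Theta(\ln C)$, chosen so that while $Z_t<\ell_C$ one has $M_t\underline q^{Z_t+1}\ge (C/\ell_C)\underline q^{\ell_C+1}$, which is polynomially large in $C$. The paper takes $\ell_C=\lfloor\log_{1/\underline q^{8}}C\rfloor$, giving order $C^{7/8}/\ln C$.
\item The upper-tail Chernoff bound \eqref{eq: ChernoffG} then makes each rung fail with probability $\epsilon_C=e^{-\Omega(C^{7/8}/\ln C)}$, uniformly in $r$. Hence $\mathbb E[\tau_C-\tau_1]\le s_{G_r}(z_1;\ell_C)/(1-\epsilon_C)=s_{G_r}(z_1;\ell_C)+o(1)$, with coefficient $1$.
\item Markov's inequality is needed only beyond $\ell_C$, where concentration may fail. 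There you must show that only $O(1)$ expected rounds remain; this is the paper's part (c), \eqref{eq: TerminalRegion}.
\end{itemize}
Your observation that $O(1)$ rungs of any $F\in\mathcal L$ carry $\ell_C$ past $C$ gives a simpler proof of that terminal estimate than the paper's. Each such rung costs at most $2$ expected rounds under \eqref{eq: GrowthOfZ}, and the last threat adds a $\operatorname{Geom}(1-r^C)$ number of rounds. So the repair is to split your ladder at $\ell_C$: Chernoff below, Markov above.

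The batching fallback does not sidestep this. Batching yields only the multiplicative factor of \eqref{eq:DTM_ratio2}. Proving that $k$-fold capacity costs only $O(1)$ extra rounds needs the same additive, concentration-based tracking of the ladder.

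A smaller point: a lower bound $s_{H_r}(a';\ell_C)-c_2$ with a fixed starting level cannot hold uniformly over $1\le n\le\Gamma C$. For $n=1$, $\mathbb E[T^{\pi_{\textsf{FA}}}]=1/(1-r^C)\to1$. Start both ladders at the common deterministic level $\max\{a,C/n\}$, which \Cref{lem:logstar-additive} allows. Control the one-rung overshoot on entering $[a,\infty)$ with the lower-tail Chernoff bound.
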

\begin{proof}{Proof of Lemma \ref{lem:homogeneous-fa-base-insensitive}.} 
The claimed lower bound on the difference holds via the relation \eqref{eq: monotonicity} established in the proof of \Cref{theorem:DTM_fair_ratio}.  
We proceed to prove the upper bound on the difference between the defusing times, and it suffices to instead do so for the difference between their corresponding recognition times; towards this, we define the shorthand (recall \Cref{def::RecognitionTime}):
\[
\mathbb{S}_{r,n, C}^{\pi_{\textsf{FA}}}\coloneqq T^{\pi_{\textsf{FA}}}(r \cdot \mathbbm 1_n; C) + 1 ~~~~~~~~\forall r \in [\underline{q}, \bar{q}]
\]
All constants in this proof may depend on $\underline q$, $\bar q$, and $\Gamma$, but not on $n$ or $C$, or on any choice of $r\in[\underline q,\bar q]$. For example, let $A\geq 1$ be as prescribed in \Cref{lem:logstar-additive}, when $\mathcal{B} = [1/\bar{q}, 1/ \underline{q}], \mathcal{K} = [\bar{q}/2, 2/\underline{q}]$. As well, all asymptotic order notation (e.g. $O, \Omega, \Theta, o$) will be with respect to $C\rightarrow \infty$. 

The plan is to  show that for some constant $z_1 > \max(A, -\log_{1/\bar{q}}(\bar{q}/2))$ and $\ell_C \in \Theta(\ln C)$, it holds that
\begin{equation} \label{eq:RecognitionTimeApproximation}
      s_{H_r}(z_1,\ell_C)-O(1)  \le \mathbb{E}\left[\mathbb{S}_{r,n,C}^{\pi_{\textsf{FA}}}\right]
    \le s_{G_r}(z_1,\ell_C)+O(1), ~~~~\forall r \in [\underline{q}, \bar{q}], \; \forall n \in [1, \Gamma C]_{\mathbb{Z}}
\end{equation}
where $G_r(z) \coloneqq (r/2)z(1/r)^z$ and $H_r(z) \coloneqq (2/r)z(1/r)^z$. Once this is shown, the desired inequality \eqref{eq:FairAllocationDefusingTimesAreClose} will follow by \Cref{lem:logstar-additive}, completing the proof.  

Let $r \in [\underline{q}, \bar{q}],$ and let $n,C$ satisfy $1\leq n \leq \Gamma C.$ Then let $\{\mathcal{M}_t\}_{t=1}^{\mathbb{S}^{\pi_\textsf{FA}}}$ be the (nonincreasing) nested family of live-threat sets throughout the execution of $\pi_\textsf{FA}$ on the instance $(n, C, r\cdot \mathbf 1_n)$. It is without loss of generality that we may proceed under the assumption that $M_1 \equiv n \leq C,$ equiv., $Z_1 \geq 1$. Indeed, if $n > C,$ then the time $\tau_{init} \coloneqq \min\{t\geq 1: M_t \leq C\}$ in expectation can be bounded by a constant as we did in \Cref{thm:spread-upper}'s proof,
\[
0 \leq \mathbb{E}\left[\tau_{init} - 1\right] \leq (1/C) \sum_{i=1}^n 1/(1-r_i)  \leq n/C \cdot 1/(1-r) \leq \Gamma/(1-\bar{q});
\]
so because $\mathbb{E}\left[\mathbb{S}_{r,n,C}^{\pi_{\textsf{FA}}} - \tau_{init}\right] = \mathbb{E}\left[\mathbb{E}\left[\mathbb{S}_{r,n,C}^{\pi_{\textsf{FA}}}(\mathcal{M}_{\tau_{init}}) \; |\; \mathcal{M}_{\tau_{init}}\right]\right] - 1$, where $\mathbb{S}_{r,n,C}^{\pi_{\textsf{FA}}}(\mathcal{M}_{\tau_{init}})$ denotes the recognition time starting when the initial set of threats is  $\mathcal{M}_{\tau_{init}}$ (in which $|\mathcal{M}_{\tau_{init}}| \leq C$), it suffices to establish \eqref{eq:RecognitionTimeApproximation} under $n \leq C.$ 

Let $Z_t \coloneqq (C/M_t)\mathbbm{1}_{\{M_t\ge1\}}+\infty\cdot\mathbbm{1}_{\{M_t=0\}}$, and note that 
for all $t$, $[\pi_{\textsf{FA}}(\mathcal{M}_t)]_i \in \{\lfloor Z_t\rfloor,
\lceil Z_t\rceil\}$; further, $\{Z_t\}_t$ is a nondecreasing sequence that starts at $Z_1 \geq 1$ (since $n \leq C$) and traverses upward through the interval $[1, \infty]$, which admits the partitioning
\[
[1, \infty] = [1, z_1) \cup [z_1, \ell_C) \cup [\ell_C, +\infty],
\]
where we leave constant $z_1$ and $\ell_C = \Theta(\ln C)$ undefined for the moment, but assume (without loss of generality) that $z_1 < \ell_C.$
As such, we will approximate $\mathbb{S}_{r,n,C}^{\pi_{\textsf{FA}}}$ by approximating the time that the process spends in each of these 3 intervals; more precisely, we define 
\[
\tau_1 \coloneqq \min\{t: Z_t \geq z_1\} ~~~\tau_C \coloneqq \min\{t: Z_t \geq \ell_C\},
\]
and derive separate approximations of: (a) $\mathbb{E}\left[\tau_1\right]$; (b) $\mathbb{E}\left[\tau_C - \tau_1\right]$; and (c) $\mathbb{E}\left[\mathbb{S}_{r,n,C}^{\pi_{\textsf{FA}}} - \tau_C\right]$. Upon adding them, we will attain \eqref{eq:RecognitionTimeApproximation}, as desired. \\
\noindent
\underline{\bf (a) Approximating $\mathbb{E}\left[\tau_1\right] $:} In similar fashion to step (a) of \Cref{thm:spread-upper}'s proof, we note that
\begin{equation} \label{eq:EnteringLadder}
\mathbb{E}[\tau_1 - 1] = \sum_{t=1}^{\infty}\mathbb{P}(\tau_1>t) \leq \sum_{t=1}^{\infty}\mathbb{P}(Z_t<z_1) = \sum_{t=1}^{\infty}\mathbb{P}\left(M_t>\frac{C}{z_1}\right) \leq \sum_{t=1}^{\infty}\frac{z_1}{C}\mathbb{E}[M_t] \leq \sum_{t=1}^{\infty}z_1\bar q^{t-1} = \frac{z_1}{1-\bar q} = O(1), 
\end{equation}
where $z_1$ is a constant to be defined shortly. \\
\noindent
\underline{\bf (b) Approximating $\mathbb{E}\left[\tau_C - \tau_1\right]$:} 
In similar fashion to the activity in \Cref{thm:spread-upper}'s proof, we take the following three inequalities that hold for all $t \in [\tau_1,\tau_C)_{\mathbb{Z}}$,
\[
M_t r^{Z_t + 1} \leq M_t r^{\lceil Z_t \rceil} \leq \mathbb{E}\left[M_{t+1} \;| \; \mathcal{M}_t\right] \leq M_t r^{\lfloor Z_t \rfloor} \leq M_t r^{Z_t - 1} ~ \text{ and } ~ Z_t \leq \ell_C ~ \text{ and } ~ M_t > C/\ell_C,
\]
and translate them into the two Chernoff bounds:
\begin{equation} \label{eq: ChernoffG}
\mathbb{P}\left(Z_{t+1} < G_r(Z_t)\; |\; \mathcal{M}_t\right) \leq \mathbb{P}\left(M_{t+1} > 2 \mathbbm{E}\left[M_{t+1}\;|\; \mathcal{M}_t\right]\; | \; \mathcal{M}_t\right) \leq \exp(-\frac{M_t \underline{q}^{Z_t + 1}}{3}) \leq \exp(-\frac{C\underline{q}^{\ell_C + 1}}{3\ell_C}), 
\end{equation}

\begin{equation} \label{eq: ChernoffH}
\mathbb{P}\left(Z_{t+1} > H_r(Z_t)\; |\; \mathcal{M}_t\right) \leq \mathbb{P}\left(M_{t+1} < \frac{1}{2} \mathbbm{E}\left[M_{t+1}\;|\; \mathcal{M}_t\right]\; | \; \mathcal{M}_t\right) \leq \exp(-\frac{M_t \underline{q}^{Z_t + 1}}{8}) \leq \exp(-\frac{C \underline{q}^{\ell_C + 1}}{8\ell_C}).
\end{equation}
As far as upper-bounding $\mathbb{E}\left[\tau_C - \tau_1\right]$, we can follow the analysis of step (a) in \Cref{thm:spread-upper}'s proof, and use \eqref{eq: ChernoffG} 
to find that 
\begin{equation}
\mathbb{E}\left[\tau_C - \tau_1\right] =
\mathbb{E}\left[\min\{k \geq 1: Z_{\tau_1 + k} \geq \ell_C\}\right] \leq \frac{s_{G_r}(z_1; \ell_C)}{1 - \exp(-C\underline{q}^{\ell_C + 1}/(3\ell_C))}, 
\tag{Upper}
\end{equation}
As far as lower-bounding $\mathbb{E}\left[\tau_C - \tau_1\right],$ we may proceed in a similar way to step (b) of \Cref{thm:spread-upper}'s proof, by noting 
\begin{equation}
\mathbb{E}\left[\tau_C - \tau_1\right] \geq (s_{H_r}(z_1; \ell_C) - 1) \cdot \underbrace{ \mathbb{P}\left(\tau_C - \tau_1 \geq s_{H_r}(z_1; \ell_C) - 1\right)}_{\eqqcolon (***)}, \tag{Lower}
\end{equation}
where,  
using the shorthand $A_t \coloneqq [Z_{t+1} \leq z_{t+1}]$ for $t\in \mathbb{Z}_{\geq 0},$ and \eqref{eq: ChernoffH}, 
\begin{align*}
    (***) &\geq \mathbb{P}\left(Z_{s_{H_r}(z_1;\ell_C) - 1} \leq z_{s_{H_r}(z_1;\ell_C) - 1}\right) \geq \mathbb{P}\left(\bigcap_{t=1}^{s_{H_r}(z_1;\ell_C) - 2} A_t \right) = 1 - \mathbb{P}\left(\bigcup_{t=1}^{s_{H_r}(z_1;\ell_C) - 2} A_t^c
    \right) \\
    &\geq 1 - s_{H_r}(z_1;\ell_C) \cdot \exp(-\frac{C \underline{q}^{\ell_C + 1}}{8\ell_C}).
\end{align*} 
Upon defining $\ell_C \coloneqq \lfloor \log_{1/\underline{q}^8}(C) \rfloor = \Theta(\ln C),$ the resulting forms of (Upper) and (Lower), along with the fact that $s_{H_r}(z_1; \ell_C) = \Theta(\ln^*C)$, provide 
\begin{equation} \label{eq: LadderRegion}
s_{H_r}(z_1; \ell_C) - O(1) \leq \mathbb{E}\left[\tau_C - \tau_1\right] \leq s_{G_r}(z_1; \ell_C) + O(1). 
\end{equation}
\\
\noindent
\underline{\bf (c) Approximating $\mathbb{E}\left[\mathbb{S}_{r,n,C}^{\pi_{\textsf{FA}}} - \tau_C\right]$:} Without loss of generality, suppose that $M_{\tau_{C}} > 0$; otherwise, $\mathbb{S}_{r,n,C}^{\pi_{\textsf{FA}}} - \tau_C = 0$. Hence, $Z_{\tau_C} \geq \ell_C$, equiv., $M_{\tau_C} \leq C/\ell_C$, and we will assume $C$ is sufficiently large enough that $C/\ell_C < C.$ Next, observe that for a function $f(C),$ undefined for now, we have
\begin{align*}
\mathbb{E}\left[\mathbb{S}_{r,n,C}^{\pi_{\textsf{FA}}} - \tau_C\; | \; \mathcal{M}_{\tau_C}\right] 
&\leq 2 + 
\underbrace{
\mathbb{E}\left[(\mathbb{S}_{r,n,C}^{\pi_{\textsf{FA}}} - (\tau_C+2))  \mathbf{1}_{M_{\tau_C +1} \leq f(C)}\; | \; \mathcal{M}_{\tau_C}\right]
}_{(*)}
+ 
\underbrace{
\mathbb{E}\left[(\mathbb{S}_{r,n,C}^{\pi_{\textsf{FA}}} - (\tau_C+2))  \mathbf{1}_{M_{\tau_C +1} > f(C)}\; | \; \mathcal{M}_{\tau_C}\right].
}_{(**)}
\end{align*}
With regards to (**), as $M_{\tau_C}<C,$ we may leverage step (a) from \Cref{thm:spread-upper}'s proof to find
\[
(**)
    \le O(\ln^*C) \cdot \mathbb{P}\left(M_{\tau_C + 1} > f(C) \;| \; \mathcal{M}_{\tau_C}\right) \leq O(\ln^*C) \frac{\mathbb{E}\left[M_{\tau_C + 1}\; |\; \mathcal{M}_{\tau_C}\right]}{f(C)} \leq  O(\ln^*C)  \frac{(C/\ell_C) \bar{q}^{\ell_C}}{f(C)}
\]
With regards to (*), since it is plain to see that
\[
\mathbb E\!\left[M_{\tau_C+2}
        {\bf 1}_{\{M_{\tau_C+1}\le f(C)\}}\mid\mathcal M_{\tau_C}\right]
    \le f(C) \bar{q}^{\lfloor C/f(C) \rfloor}, \;\; \text{ and } \;\; \mathbb{E}\left[\mathbb{S}_{r,n,C}^{\pi_{\textsf{FA}}} - (\tau_C+2) \; |\; M_{\tau_C + 2}\right] \leq \frac{M_{\tau_C + 2}}{1-\bar{q}},
\]
it follows that 
\[
(*) \leq \frac{1}{1-\bar{q}}f(C) \bar{q}^{\lfloor C/f(C) \rfloor}.
\]
Upon letting $f(C) \coloneqq C^{1-\log_{\underline{q}}(\bar{q})/32},$ both $(*)$ and $(**)$ are $O(1),$ so that we obtain, as desired,  
\begin{equation} \label{eq: TerminalRegion}
\mathbb{E}\left[\mathbb{S}_{r,n,C}^{\pi_{\textsf{FA}}} - \tau_C\right]= O(1). 
\end{equation} \\
\noindent
\underline{\bf Combining (a), (b), (c):} Altogether, \eqref{eq:EnteringLadder},\eqref{eq: LadderRegion}, and \eqref{eq: TerminalRegion} provide \eqref{eq:RecognitionTimeApproximation}. 
Consequently, for any $r_1, r_2 \in [\underline{q}, \bar{q}]$ in which $r_1 < r_2$,
\[
s_{H_{r_2}}(z_1; \ell_C) - s_{G_{r_1}}(z_1; \ell_C) - O(1) \leq \mathbb{E}\left[\mathbb{S}_{r_2,n,C}^{\pi_{\textsf{FA}}}\right] - \mathbb{E}\left[\mathbb{S}_{r_1, n, C}^{\pi_{\textsf{FA}}}\right] \leq s_{G_{r_2}}(z_1; \ell_C) - s_{H_{r_1}}(z_1; \ell_C) + O(1),
\]
so that by \Cref{lem:logstar-additive}, we conclude \eqref{eq:FairAllocationDefusingTimesAreClose}, completing the proof. \halmos
\end{proof}

\medskip

\subsubsection{Proof of \Cref{thm:dtm_fair_additive_gap}.}

\begin{proof}{Proof of \Cref{thm:dtm_fair_additive_gap}.} \label{sec:AppendixProofAdditiveGap}
Since \(C(n) \geq \underline \alpha n\), we have \(n\le C(n)/\underline \alpha\). 
Invoking \Cref{lem:homogeneous-fa-base-insensitive} with
\(\Gamma=1/\underline \alpha\), \(r_1=\underline q\), and \(r_2=\bar q\), there exists a $B = B(\underline{q}, \bar{q}, 1/\underline{\alpha})$ so that 
\[
0\leq \mathbb E\left[T^{\pi_{\textsf{FA}}}( \bar{q} \cdot \mathbbm{1}_{n}; C(n))\right] - \mathbb E\left[T^{\pi_{\textsf{FA}}}(\underline{q} \cdot \mathbbm{1}_{n}; C(n))\right] \leq B.  ~~~~~~\forall n \in \mathbb{Z}_{\geq 1}
\]
For sufficiently large $n,$ it holds that $ \underline{q} \cdot \mathbbm{1}_{n} \leq q^{(n)} \leq \bar{q} \cdot \mathbbm{1}_{n}$, so by \Cref{lem:monotonicity}'s \eqref{eq: monotonicity}, 
\[
\mathbb E\left[T^{\pi_{\textsf{FA}}}(\underline{q} \cdot \mathbbm{1}_{n} ; C(n))\right] \leq \mathbb E\left[T^{\pi_{\textsf{FA}}}(q^{(n)}  ; C(n))\right] \leq \mathbb E\left[T^{\pi_{\textsf{FA}}}( \bar{q} \cdot \mathbbm{1}_{n}  ; C(n))\right] 
\]
and 
\[
\mathbb E\left[T^{\pi_{\textsf{FA}}}(\underline{q} \cdot \mathbbm{1}_{n} ; C(n))\right] \leq \mathbb E\left[T^*(q^{(n)}; C(n))\right].
\]
Therefore, for sufficiently large $n,$
\[
0\leq \mathbb E\left[T^{\pi_{\textsf{FA}}}( q^{(n)}; C(n))\right] - \mathbb E\left[T^*(q^{(n)}; C(n))\right] \leq \mathbb E\left[T^{\pi_{\textsf{FA}}}( \bar{q} \cdot \mathbbm{1}_{n}; C(n))\right] - \mathbb E\left[T^{\pi_{\textsf{FA}}}(\underline{q} \cdot \mathbbm{1}_{n}; C(n))\right] \leq B .
\]
\halmos

\end{proof}

\medskip

\subsection{\Cref{thm:tau1_ratio_cfree}}

\begin{proof}{Proof of \Cref{thm:tau1_ratio_cfree}.}

\smallskip

\noindent\underline{Bound 1: $\exp \left(-\frac{C}{e\lfloor C / n\rfloor}\right)$.}
Let $S:=\{i\in[n]: x_i^*>\lfloor C/n \rfloor\}$ be the set of threats where the optimal policy allocates more than $\lfloor C/n \rfloor$ effectors. We denote $m:=|S|$.
For $i\notin S$ we have $x_i^*\le \lfloor C/n \rfloor$. Since $1-q_i^x$ is increasing in $x$, we know $1-q_i^{\lfloor C/n \rfloor}\ \ge\ 1-q_i^{x_i^*}$ holds for all $i\notin S$. For $i\in S$, since $x_i^*\geq 1$, and the mapping $x \mapsto 1-q_i^x$ is concave, we
obtain
\[
1-q_i^{\lfloor C/n \rfloor}\ \ge\ \frac{\lfloor C/n \rfloor}{x_i^*}\,(1-q_i^{x_i^*})  + \left(1 - \frac{\lfloor C/n \rfloor}{x_i^*} \right) (1-q_i^0) = \frac{\lfloor C/n \rfloor}{x_i^*}\,(1-q_i^{x_i^*})\qquad\text{for all }i\in S.
\]
Since $\pi_{\textsf{FA}}$ allocates at least $\lfloor C/n \rfloor$ effectors for each threat, multiplying these inequalities over $i=1,\dots,n$ yields
\begin{equation}
\label{eq:key_ratio_step}
V_{\textsf{SLM}_1}^{\pi_\textsf{FA}}
\geq \prod_{i=1}^n (1-q_i^{\lfloor C/n \rfloor})
\ \ge\ \Bigl(\prod_{i\in S}\frac{\lfloor C/n \rfloor}{x_i^*}\Bigr)\prod_{i=1}^n (1-q_i^{x_i^*})
\ =\ \frac{\lfloor C/n \rfloor^{|S|}}{\prod_{i\in S}x_i^*}\,V_{\textsf{SLM}_1}^*.
\end{equation}
It remains to upper bound $\prod_{i\in S}x_i^*$.
Since $x_i^*\ge 1$ for $i\notin S$, we know
\[
\sum_{i\in S}x_i^*
 = C-\sum_{i\notin S}x_i^*
\ \le\ C-(n-|S|)
= C-n+|S|.
\]
Then by AM-GM inequality, we can upper bound it as follows:
\begin{equation}\label{eq:thm1_fair_slm_prod_xi}
   \prod_{i\in S}x_i^*\ \le \left(\frac{\sum_{i\in S}x_i^*}{|S|}\right)^{|S|} \leq  \Bigl(\frac{C-n+|S|}{|S|}\Bigr)^{|S|}. 
\end{equation}
Finally, minimizing the ratio over $|S|\in\{1,\dots,n\}$ yields
\[
\frac{V_{\textsf{SLM}_1}^{\pi_{\textsf{FA}}}}{V_{\textsf{SLM}_1}^*}\ \ge\ \min_{1\le |S|\le n}  \frac{\lfloor C/n \rfloor^{|S|}}{\prod_{i\in S}x_i^*} \geq \min_{1\le |S|\le n}\Bigl(\frac{\lfloor C/n \rfloor |S|}{C-n+|S|}\Bigr)^{|S|},
\]
where the first inequality is from equation \eqref{eq:key_ratio_step} and the second inequality is from equation \eqref{eq:thm1_fair_slm_prod_xi}. For any $|S| \in\{1, \ldots, n\}$, we have
\begin{equation}\label{eq:thm_fair_slm_1}
\left(\frac{\lfloor C/n \rfloor |S|}{C-n+|S|}\right)^{|S|}=\exp \left(|S| \ln \left(\frac{\lfloor C/n \rfloor |S|}{C-n+|S|}\right)\right)=\exp \left(-|S| \ln \left(\frac{C-n+|S|}{\lfloor C/n \rfloor |S|}\right)\right) .
\end{equation}
Using the fact that $\ln x \leq x/e
$ holds for all $x>0$ and applying it to $x=\frac{C-n+|S|}{\lfloor C/n \rfloor |S|}>0$, we get
\begin{equation}\label{eq:thm_fair_slm_2}
    |S| \ln \left(\frac{C-n+|S|}{\lfloor C/n \rfloor |S|}\right) \leq |S| \cdot \frac{1}{e} \cdot \frac{C-n+|S|}{\lfloor C/n \rfloor |S|}=\frac{C-n+|S|}{e \lfloor C/n \rfloor}\leq \frac{C}{e \lfloor C/n \rfloor}.
\end{equation}
Then combining equations \eqref{eq:thm_fair_slm_1} and \eqref{eq:thm_fair_slm_2}, we have
\begin{equation}\label{eq:thm_fair_slm_3}
\left(\frac{\lfloor C/n \rfloor |S|}{C-n+|S|}\right)^{|S|}=\exp \left(-|S| \ln \left(\frac{C-n+|S|}{\lfloor C/n \rfloor |S|}\right)\right) \geq \exp \left(-\frac{C}{e \lfloor C/n \rfloor}\right),
\end{equation}
which holds for every $|S|=1, \ldots, n$. Finally, taking the minimum over $|S|$ towards equation \eqref{eq:thm_fair_slm_3} yields
$$
\frac{V_{\textsf{SLM}_1}^{\pi_{\textsf{FA}}}}{V_{\textsf{SLM}_1}^*}\ \ge\ \min _{1 \leq |S| \leq n}\left(\frac{\lfloor C/n \rfloor |S|}{C-n+|S|}\right)^{|S|} \geq \exp \left(-\frac{C}{e \lfloor C/n \rfloor}\right).
$$

\smallskip

\noindent\underline{Bound 2: $\left(\frac{\lfloor C/n\rfloor}{\,C-n+1\,}\right)^n$.} This bound is much looser and thus easier to derive. Since the optimal policy $\pi^*$ has to allocate at least one effector to each threat, each threat can receive at most $C-n+1$ effectors. That is, the policy allocates $C-n+1$ effectors to that threat and one effector to any other threats. On the other hand, under $\pi_{\textsf{FA}}$, each threat receives at least $\lfloor C/n \rfloor$ effectors. Therefore, we can bound $V_{\textsf{SLM},1}^*/V_{\textsf{SLM},1}^{\pi_{\textsf{FA}}}$ using the following inequality:
$$
\frac{V_{\textsf{SLM}_1}^{\pi_{\textsf{FA}}}}{V_{\textsf{SLM}_1}^*}\geq \prod_{i=1}^n \frac{1-q_i^{\lfloor C/n\rfloor}}{1-q_i^{C-n+1}}\geq \prod_{i=1}^n \frac{\lfloor C/n\rfloor}{C-n+1} \left(1-q_i^{C-n+1}\right) \frac{1}{1-q_i^{C-n+1}}=\left(\frac{\lfloor C/n\rfloor}{\,C-n+1\,}\right)^n,
$$
where the second inequality follows from the concavity of $x \mapsto 1-q_i^x$. 

\smallskip

\noindent\underline{Tightness Construction.}
To show the worst-case ratio indeed has an exponential decay, for each $n$, we construct $m\in\{1,\cdots,n-1\}$ hard threats and $n-m$ easy threats. Let $H\subseteq[n]$ be the set of hard threats with $|H|=m\in\{1,\ldots,n\}$. %
We set $q_i=\epsilon$ for $i\notin H$ and $q_i=e^{-\epsilon}$ for $i\in H$, where $\epsilon=1/n^4\rightarrow 0$ as $n\rightarrow\infty$. We set $C=1/\sqrt{\epsilon}=n^{2}$. Without loss of generality, we can restrict attention to policy $\pi$ such that $\pi$ allocates $x_i\ge 1$ for all $i\notin H$.
Under this restriction, we have $\sum_{i\in H} x_i = C-\sum_{i\notin H}x_i \le C-(n-m)$. As $\epsilon\rightarrow 0$, for each easy threat $i\notin H$ with $x_i\ge 1$ we have
$1-q_i^{x_i}=1-\epsilon^{x_i}=1-o(1)$. For each hard threat $i\in H$, $q_i^{x_i}=e^{-\epsilon x_i}$ and hence $1-q_i^{x_i}=1-e^{-\epsilon x_i}=\epsilon x_i + O(\epsilon^2 x_i^2)$.

We multiply the $m$ hard-threat factors and use the fact that the easy-threat
product equals $1+o(1)$. Then for any policy $\pi$ with $x_i\ge 1$ for all $i$, we have
$V_{\textsf{SLM}_1}^\pi=\epsilon^m\Big(\prod_{i\in H}x_i\Big)\,(1-o(1))$ when $\epsilon\rightarrow 0$. For $\pi^*$, one can show that it allocates exactly one effector to threats $i\notin H$ and allocates the remaining effectors as evenly as possible to threats $i\in H$. When $n\rightarrow\infty$, the integer-rounding errors are $o(1)$, so we can assume $(C-n)/m\in\mathbb{Z}$ without loss of generality. Thus, we have
\[
V_{\textsf{SLM}_1}^*
=\epsilon^m\Big(\frac{C-(n-m)}{m}\Big)^m\,(1-o(1)).
\]
On the other hand, under fair allocation, $x_i^{\textsf{FA}}=C/n$ for all $i$, so the performance of $\pi_{\textsf{FA}}$ follows
$
V_{\textsf{SLM}_1}^{\pi_{\textsf{FA}}}
=\epsilon^m(C/n)^m\,(1-o(1))$.
Consequently, the ratio can be written as
\[
\frac{V_{\textsf{SLM}_1}^{\pi_{\textsf{FA}}}}{V_{\textsf{SLM}_1}^*}
=\left(\frac{m}{n}\cdot \frac{C}{C-(n-m)}\right)^m(1-o(1)).
\]
When $n\to\infty$, $C/n\rightarrow\infty$, and $\epsilon C\rightarrow 0$, this becomes
\[
\frac{V_{\textsf{SLM}_1}^{\pi_{\textsf{FA}}}}{V_{\textsf{SLM}_1}^*}
=\Big(\frac{m}{n}\Big)^m(1-o(1)).
\]
Finally, minimizing $(m/n)^m$ over $m\in\{1,\ldots,n\}$ is achieved at $m/n=1/e$ (i.e., $m\approx n/e$), which yields
\[
\inf_{1\le m\le n}\Big(\frac{m}{n}\Big)^m
=\exp\bigl(-(1-o(1))\,n/e\bigr),
\qquad n\to\infty.
\]
This shows the exponential decay $\exp(-(1+o(1))n/e)$ is asymptotically tight under (\textsf{SLM}$_1$).

\end{proof}

\medskip

\subsection{\Cref{thm:fair_alloc_min_waste}}

\begin{proof}{Proof of \Cref{thm:fair_alloc_min_waste}.}

Let $V_{\textsf{EAM}_\tau}^{\pi}(t,L;M_t,M_{t+1})$ be the number of effective assignments in round $t$ under policy $\pi$ and realization $L$ given the number of alive threats at the beginning of $t$, $M_t$, and at the beginning of $t+1$, $M_{t+1}$, respectively. Then we have the following lemma that gives a lower bound for $V_{\textsf{EAM}_\tau}^{\pi_\textsf{FA}}(t,L;M_t,M_{t+1})$.

\begin{lemma}\label{lemma:num_effective_assignments_in_round_r}
For any realization $L$, the number of effective assignments in round $t$ satisfies
$$
V_{\textsf{EAM}_\tau}^{\pi_\textsf{FA}}(t,L;M_t,M_{t+1}) \geq M_{t+1}\frac{C}{M_t}.
$$
\end{lemma}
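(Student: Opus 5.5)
We argue threat by threat within round $t$, using only the fact that under $\pi_{\textsf{FA}}$ every threat in $\mathcal{M}_t$ receives $\lfloor C/M_t\rfloor$ or $\lceil C/M_t\rceil$ effectors. We partition $\mathcal{M}_t$ into the set $\mathcal{M}_{t+1}$ of threats that survive round $t$ and the set $\mathcal{M}_t\setminus\mathcal{M}_{t+1}$ of threats neutralized in round $t$. Effective assignments are nonnegative, so we may discard the neutralized threats' contributions. It then suffices to show that the survivors together account for at least $M_{t+1}C/M_t$ effective assignments.

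For a survivor $i\in\mathcal{M}_{t+1}$, we have $\sum_{\ell\le t}x_i^\ell<L_i$. Hence every effector assigned to $i$ in round $t$ is effective: the $\min$ in the definition of effective assignments is not binding, and the round-$t$ increment of $\min\{\sum_{\ell\le t}x_i^\ell,L_i\}$ is exactly $x_i^t$. So the survivors contribute $\sum_{i\in\mathcal{M}_{t+1}}x_i^t$. If each survivor received at least $C/M_t$ effectors, we would be done immediately.

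The main obstacle is rounding. When $C/M_t\notin\mathbb{Z}$, a survivor may receive only $\lfloor C/M_t\rfloor<C/M_t$, and this happens exactly when the tie-breaking gave the extra units to threats that were then neutralized. A naive per-threat bound therefore fails. To close the gap we must also credit the neutralized threats. Each neutralized threat $j$ contributes $\min\{x_j^t,\,L_j-\sum_{\ell<t}x_j^\ell\}\ge 1$ effective assignments, since it was still alive at the start of round $t$ and received at least one effector (when $C\ge M_t$).

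We plan to write $C=kM_t+\rho$ with $0\le\rho<M_t$, let $s$ be the number of survivors receiving $k+1$ effectors, and show that
\[
sk+s+(M_{t+1}-s)k+(M_t-M_{t+1})\ \ge\ M_{t+1}\,\frac{kM_t+\rho}{M_t}.
\]
Since $s\ge 0$, this reduces to $M_t-M_{t+1}\ge M_{t+1}\rho/M_t$. We do not expect this to hold in all cases: it fails when $M_{t+1}$ is close to $M_t$ and $\rho$ is large. That regime forces $s>0$, however, because at most $M_t-M_{t+1}$ of the $\rho$ extra units can go to neutralized threats. So $s\ge\rho-(M_t-M_{t+1})$.

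Substituting this lower bound on $s$, the left side is at least
\[
M_{t+1}k+\max\{\rho-(M_t-M_{t+1}),0\}+(M_t-M_{t+1}),
\]
which is at least $M_{t+1}k+\rho\ge M_{t+1}k+M_{t+1}\rho/M_t$, as required.

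The case $C<M_t$ (so $k=0$) needs separate treatment. Here survivors that received no effector contribute nothing. The same accounting still applies: every threat receiving an effector contributes at least one effective assignment, and there are $C$ of them, which gives a total of at least $C\ge M_{t+1}C/M_t$.
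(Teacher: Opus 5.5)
Your proof is correct and matches the paper's argument. The paper uses the same lower bound $M_{t+1}\lfloor C/M_t\rfloor+(M_t-M_{t+1})+[\rho-(M_t-M_{t+1})]^+$, built from the same three facts (all survivor shots are effective, each neutralized threat contributes at least one, and at least $[\rho-(M_t-M_{t+1})]^+$ extra units land on survivors), and it reduces to $\max\{\rho,M_t-M_{t+1}\}\ge (M_{t+1}/M_t)\rho$ via a two-case split that your observation $\max\{\rho,M_t-M_{t+1}\}\ge\rho$ makes unnecessary; likewise your separate $C<M_t$ case is subsumed, since a threat neutralized in round $t$ must have received at least one effector whatever $C$ is.
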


\begin{proof}{Proof of \Cref{lemma:num_effective_assignments_in_round_r}.}
    Recall that under $\pi_{\textsf{FA}}$, at round $t$, we first allocate $\lfloor \frac{C}{M_t}\rfloor$ units to each threat, then randomly select $C-M_t\lfloor \frac{C}{M_t}\rfloor$ threats to allocate 1 additional unit. Then we have
    \begin{equation}\label{eq:thm_fair_eam_lemma}
        V_{\textsf{EAM}_\tau}^{\pi_\textsf{FA}}(t;M_t,M_{t+1}) \geq M_{t+1}\lfloor \frac{C}{M_t}\rfloor+(M_t-M_{t+1})+\left[ C-M_t\lfloor \frac{C}{M_t}\rfloor-(M_t-M_{t+1}) \right]^+. 
    \end{equation}
    In RHS of equation \eqref{eq:thm_fair_eam_lemma}, the first term is from the fact that all effectors (excluding $C-M_t\lfloor \frac{C}{M_t}\rfloor$ surplus units under random allocation) allocated to threats not neutralized in round $t$ must be effective assignments. The second term is from the fact that each neutralized threat in round $t$ receives at least one effective assignment. The last term is from the fact that the $C-M_t\lfloor \frac{C}{M_t}\rfloor$ additional units allocated to threats not neutralized must be effective as well. Thus, by equation \eqref{eq:thm_fair_eam_lemma}, it is sufficient to show
\begin{equation}\label{eq:thm_fair_eam_lemma2}
M_{t+1}\left\lfloor\frac{C}{M_t}\right\rfloor+\max \left(C-M_t\left\lfloor\frac{C}{M_t}\right\rfloor, M_t-M_{t+1}\right) \geq \frac{M_{t+1} C}{M_t} .
\end{equation}
By moving the first term in the LHS of equation \eqref{eq:thm_fair_eam_lemma2} to RHS, it suffices to prove
\begin{equation}\label{eq:thm_fair_eam_lemma3}
   \max \left(C-M_t \lfloor\frac{C}{M_t}\rfloor, M_t-M_{t+1}\right) \geq \frac{M_{t+1}}{M_t}(C-M_t \lfloor\frac{C}{M_t}\rfloor). 
\end{equation}
Now we split our proof into following two cases.

\smallskip

\noindent\underline{Case 1: $C-M_t \lfloor\frac{C}{M_t}\rfloor \geq M_t-M_{t+1}$.}
Then $\max \left(C-M_t \lfloor\frac{C}{M_t}\rfloor, M_t-M_{t+1}\right)=C-M_t \lfloor\frac{C}{M_t}\rfloor$, and since $M_{t+1} \leq M_t$, the LHS in equation \eqref{eq:thm_fair_eam_lemma3} can be lower bounded by
$$
\max \left(C-M_t \lfloor\frac{C}{M_t}\rfloor, M_t-M_{t+1}\right)=C-M_t \lfloor\frac{C}{M_t}\rfloor \geq \frac{M_{t+1}}{M_t} (C-M_t \lfloor\frac{C}{M_t}\rfloor).
$$

\smallskip

\noindent\underline{Case 2: $C-M_t \lfloor\frac{C}{M_t}\rfloor<M_t-M_{t+1}$.}
Then $\max \left(C-M_t \lfloor\frac{C}{M_t}\rfloor, M_t-M_{t+1}\right)=M_t-M_{t+1}$. We have
$$
\frac{M_{t+1}}{M_t}(C-M_t \lfloor\frac{C}{M_t}\rfloor) <\frac{M_{t+1}\left(M_t-M_{t+1}\right)}{M_t} \leq M_t-M_{t+1},
$$
where the last inequality uses $M_{t+1} \leq M_t$.
Thus, in both cases, we have
$$
\max \left(C-M_t \lfloor\frac{C}{M_t}\rfloor, M_t-M_{t+1}\right) \geq \frac{M_{t+1}}{M_t}(C-M_t \lfloor\frac{C}{M_t}\rfloor),
$$
which completes the proof. \halmos
\end{proof}

\medskip

Now we move back to the proof of \Cref{thm:fair_alloc_min_waste}. Under $\pi_{\textsf{FA}}$, if $M_{\tau+1}=0$, then all threats are neutralized at or before $\tau$, so $V_{\textsf{EAM}_\tau}^{\pi_\textsf{FA}}/V_{\textsf{EAM}_\tau}^*=1$ in this case. Thus, we only need to consider the case when $M_{\tau+1}\geq 1$. By \Cref{lemma:num_effective_assignments_in_round_r}, summing over rounds and taking the expectation over $M_2,M_3,\cdots,M_{\tau+1}$ (in the offline setting, $L_1,\cdots,L_n$ are given), $V_{\textsf{EAM}_\tau}^{\pi_\textsf{FA}}$ can be lower bounded by
\begin{equation}\label{eq:thm_fair_eaM_1}
    V_{\textsf{EAM}_\tau}^{\pi_\textsf{FA}}(L)=\sum_{t=1}^\tau V_{\textsf{EAM}_\tau}^{\pi_\textsf{FA}}(t,L;M_t,M_{t+1}) \geq C\ \sum_{t=1}^\tau \frac{M_{t+1}}{M_t}.
\end{equation}
Finally, by AM-GM inequality and using the facts $M_1=n$ and $M_{\tau+1}\geq 1$ with probability 1, we have
\begin{equation}\label{eq:thm_fair_eam_2}
\frac{1}{\tau} \sum_{t=1}^\tau \frac{M_{t+1}}{M_t} \geq\left(\prod_{r=1}^\tau \frac{M_{t+1}}{M_t}\right)^{1 / \tau}=\left(\frac{M_{\tau+1}}{M_1}\right)^{1/\tau}\geq \left(\frac{1}{M_1}\right)^{1/\tau} = n^{-1 / \tau}
\end{equation}
holds for any $M_2,\cdots,M_{\tau+1}$. Combine equations \eqref{eq:thm_fair_eaM_1} and \eqref{eq:thm_fair_eam_2} and we get $V_{\textsf{EAM}_\tau}^{\pi_\textsf{FA}}(L) \geq C \tau n^{-1 / \tau}$. Since $\min\{C\tau ,\sum_{i=1}^n L_i\} \leq \tau C$ holds trivially, we finally get the worst-case ratio bound
$V_{\textsf{EAM}_\tau}^{\pi_\textsf{FA}}(L)/\min\{C\tau ,\sum_{i=1}^n L_i\} \geq n^{-1 / \tau}$, which proves the theorem.

\smallskip

\noindent\underline{Tightness construction under the offline setting.}
To show the $n^{-1/\tau}$ worst-case rate is tight when $C\rightarrow\infty$, let us consider the following example. Assume $n=a^\tau$ for some integer $a\ge2$ and pick $C=kn$ for some integer $k\ge1$. 
We separate the last threat $n$ and partition the first $n-1$ threats into $\tau$ layers. Let $S_t, t=1,2,\cdots,\tau$ be the set of threats for salvo $t$. Under the following construction, all threats in salvo $t$ are neutralized exactly in round $t$. That is, $|S_t|=M_t-M_{t+1}$ is the number of threats neutralized in round $t$. In specific, we construct $|S_t|=a^{\tau-t+1}-a^{\tau-t}$, $L_i= 1+\sum_{j=1}^{t-1} x_j $ for each $i\in S_t$, where we set $x_t=C/M_t=ka^{t-1}$ for $t=1,2,\cdots,\tau$. We set the last threat $L_n := \tau C$. We also set $M_1=n$, $M_{\tau+1}=1$, and $M_t=a^{\tau-t+1}$ for $t=2,\dots,\tau+1$.

Then under $\pi_{\textsf{FA}}$, one can show that every threat in $S_t$ survives rounds $1,\dots,t-1$ and has \emph{exactly one} remaining unit at the start of round $r$, hence it dies in round $t$ under $\pi_{\textsf{FA}}$. Therefore, in round $t$, the $M_{t+1}$ survivors contribute $M_{t+1}x_t$ effective assignments, while the $|S_t|=M_t-M_{t+1}$ dying threats contribute exactly $1$ effective assignment each. Hence
\[
V_{\textsf{EAM}_\tau}^{\pi_\textsf{FA}}(t;M_t,M_{t+1}) 
= M_{t+1}x_t + (M_t-M_{t+1})
= \frac{M_t}{a}\cdot\frac{C}{M_t} + (M_t-M_{t+1})
= \frac{C}{a} + (M_t-M_{t+1}),
\]
and summing over $t=1,\dots,\tau$ gives
$ V_{\textsf{EAM}_\tau}^{\pi_\textsf{FA}}
= \tau C/a + (M_1-M_{\tau+1})
= \tau C/a + (n-1)$. Since the offline optimum can allocate all $\tau C$ assignments to threat $n$, the number of effective assignments under the offline optimal policy is
$\tau C$. Thus the competitive ratio on this instance is
\[
\frac{V_{\textsf{EAM}_\tau}^{\pi_\textsf{FA}}(L)}{\min\{C\tau ,\sum_{i=1}^n L_i\}} 
=
\frac{\tau(C/a)+(n-1)}{\tau C}
=
\frac{1}{a}+\frac{n-1}{\tau C}
=
n^{-1/\tau}+\frac{n-1}{\tau C},
\]
which approaches $n^{-1/\tau}$ as $C\to\infty$.

\smallskip

\noindent\underline{Tightness construction under the online setting when $\tau=1$.}
Now we show that when $\tau=1$, the worst case ratio is also asymptotically tight under the online setting where the decision maker does not know $L$ in prior. Let $C=nk$ for some $k\in\mathbb{Z}$ and consider the instance $q_1=1-\epsilon$ and $q_2=\cdots=q_n=\epsilon$ where $\epsilon=1/C^2$. Under fair allocation, each threat receives $k=C/n$ units, the expected number of effective assignments follows
\begin{equation}\label{eq:v_eam1_fair} V_{\textsf{EAM}_1}^{\pi_{\textsf{FA}}}
=
\frac{1-(1-\epsilon)^k}{\epsilon}
+
(n-1)\frac{1-\epsilon^k}{1-\epsilon}.
\end{equation}
Since $k\epsilon=1/(nC)\to 0$ as $C\to\infty$, a Taylor expansion gives
$(1-\epsilon)^k = 1-k\epsilon+O(k^2\epsilon^2)$. Therefore, the first term in equation \eqref{eq:v_eam1_fair} can be written as
\[
\frac{1-(1-\epsilon)^k}{\epsilon}
=
k+O(k^2\epsilon)
=
k+O(1).
\]
Also, when $\epsilon\to 0$, we have $(1-\epsilon^k)/(1-\epsilon)\to 1$. Therefore, from \eqref{eq:v_eam1_fair}, it follows that
$
V_{\textsf{EAM}_1}^{\pi_{\textsf{FA}}}
=
C/n+O(n)
$.
On the other hand, from $\pi_{\textsf{EA}}$, the optimal allocation follows $x^*=(C-n+1,1,\dots,1)$ for large enough $C$,
which yields
\[
V_{\textsf{EAM}_1}^*=\frac{1-(1-\epsilon)^{C-n+1}}{\epsilon}+(n-1).
\]
Similarly, we obtain $V_{\textsf{EAM}_1}^*=C+o(C)$. Therefore, when $C\rightarrow \infty$, we know that
\[
\frac{V_{\textsf{EAM}_1}^{\pi_{\textsf{FA}}}}{V_{\textsf{EAM}_1}^*}
=
\frac{\frac{C}{n}+O(n)}{C+o(C)}
\to \frac{1}{n}
\]
holds for any fixed $n$. This proves asymptotic tightness.

\end{proof}

\medskip

\begin{proof}{Proof of Lemma \ref{lem:separable_concave_greedy}.}
We prove the result by induction on $C$. The claim is immediate when $C=0$. Fix $C\ge 1$ and suppose it holds for budget $C-1$. Let $i^*\in \argmax_{i\in[n]} \Delta_{i}(0)$ where $\Delta_{i}(x):=f_i(x+1)-f_i(x)$ is the marginal gain for threat $i$. We first show that there exists an optimal solution $x^*$ to the budget-$C$ problem with $x_{i^*}^*\ge 1$. Let $x^*$ be any optimal solution. If $x_{i^*}^*\ge 1$, there is nothing to prove. Otherwise, since $\sum_{i=1}^n x_i^*=C$, there exists some $j\neq i^*$ with $x_j^*\ge 1$. Define $\tilde x$ by moving one unit from $j$ to $i^*$:
\[
\tilde x_{i^*}=1,\qquad \tilde x_j=x_j^*-1,\qquad \tilde x_\ell=x_\ell^* \ \text{for }\ell\notin\{i^*,j\}.
\]
Then
\[
\sum_{i=1}^n f_i(\tilde x_i)-\sum_{i=1}^n f_i(x_i^*)
=\Delta_{i^*}(0)-\Delta_j(x_j^*-1)\ge \Delta_{i^*}(0)-\Delta_j(0)\ge 0,
\]
where the first inequality uses the discrete concavity of $f_j$, and the second follows from the choice of $i^*$. Hence $\tilde x$ is also optimal, proving that some optimal solution allocates one unit to $i^*$. Then conditional on assigning one unit to $i^*$, the remaining problem is
\[
\max_{y\in\mathbb Z_{\ge 0}^n:\ \sum_{i=1}^n y_i=C-1}
\Bigl(f_{i^*}(y_{i^*}+1)+\sum_{i\neq i^*} f_i(y_i)\Bigr).
\]
Now we define $
g_{i^*}(k):=f_{i^*}(k+1)$ and $g_i(k):=f_i(k)$ for $i\neq i^*$. Each $g_i$ remains non-decreasing and discretely concave. Therefore, by the induction hypothesis, the above problem is optimally solved by sequentially assigning the remaining $C-1$ units to the coordinate with the largest current marginal gain. Since the first unit is assigned to $i^*$ by maximizing $\Delta_i(0)$, the entire budget-$C$ solution is obtained by the stated greedy rule. This proves the optimality of the greedy construction for every $C$.
\halmos
\end{proof}

\section{\Cref{sec:greedy_policies} Proofs}

\subsection{\Cref{example:ml_greedy_no_const_ratio}}

\begin{proof}{Proof of \Cref{example:ml_greedy_no_const_ratio}.}
We divide our threats into two groups, and each threat within each group has the same difficulty level. We construct an instance with deadline $\tau=2$. There are $m$ hard threats and $2m^3-m$ easy threats, so that $n(m)=2m^3$. The per-round capacity is $C(m)=5m^3-m$. Each hard threat has difficulty $q_H:=1-\epsilon$, where $\epsilon:=1/(4m^4)$, and each easy threat has difficulty $q_E:=1/(m^2+1)$. Let $\Delta_E(i)$ be the exponentiated marginal improvement from allocating one more effector to an easy threat given that we have already assigned $i$ effectors to that threat. Let $\Delta_H(i)$ be the corresponding quantity for a hard threat. Then we have
\begin{equation}
\label{eq:easy_marginal}
\Delta_E(1)
=
\frac{1-q_E^2}{1-q_E}
=
1+q_E
=
\frac{m^2+2}{m^2+1}.
\end{equation}
Now we compare $\Delta_H(\cdot)$ with $\Delta_E(1)$. For any integer $x\ge 1$,
\begin{equation}\label{eq:prop1_qh}
\begin{aligned}
\frac{1-q_H^{x+1}}{1-q_H^{x}}= &1+\frac{q_H^{x}(1-q_H)}{1-q_H^x}=1+\frac{q_H^{x}}{1+q_H+\cdots+q_H^{x-1}}\geq 1+\frac{q_H^{x}}{x},
\end{aligned}
\end{equation}
where the inequality is due to $1+q_H+\cdots+q_H^{x-1}\le x$. By Bernoulli inequality, we have $q_H^{m^2}=(1-\epsilon)^{m^2}\ge 1-m^2\epsilon$, and with $\epsilon=1/(4m^4)$, we know
$$
1-m^2\epsilon=1-\frac{1}{4m^2}>\frac{m^2}{m^2+1}=1-\frac{1}{m^2+1}.
$$
Thus, by equation \eqref{eq:prop1_qh}, the marginal comparison between $\Delta_H(m^2)$ and $\Delta_E(1)$ follows
\begin{equation}
\label{eq:hard_beats_easy_m2}
\Delta_H(m^2)=\frac{1-q_H^{m^2+1}}{1-q_H^{m^2}}
\ \ge\ 1+\frac{q_H^{m^2}}{m^2}
\ >\ 1+\frac{1}{m^2+1}
=\frac{m^2+2}{m^2+1}=\Delta_E(1).
\end{equation}
On the other hand, since $1+q_H+\cdots+q_H^{m^2}\ge (m^2+1)q_H^{m^2}$, we have
$$
\begin{aligned}
\frac{1-q_H^{m^2+2}}{1-q_H^{m^2+1}}
&=1+\frac{q_H^{m^2+1}}{1+q_H+\cdots+q_H^{m^2}}\le 1+\frac{q_H^{m^2+1}}{(m^2+1)q_H^{m^2}}=1+\frac{q_H}{m^2+1}
<1+\frac{1}{m^2+1}
=\frac{m^2+2}{m^2+1},
\end{aligned}
$$
which implies that the marginal comparison $\Delta_H(m^2+1)<\Delta_E(1)$%

Since unassigned threats have the largest first marginal value under $\pi_{\textsf{SL}}$, the first $n(m)$ effectors give one effector to every threat. Since $\Delta_H(m^2+1)<\Delta_E(1)$, equation \eqref{eq:hard_beats_easy_m2}  implies that in round $1$, $\pi_{\textsf{SL}}$ allocates $m^2+1$ effectors to each hard threat and $2$ effectors to each easy threat. Indeed, this exhausts the capacity because $
m(m^2+1)+2(2m^3-m)=5m^3-m=C(m)$.

Let $A_H$ be the event that all \emph{hard threats} are neutralized by round $2$ under $\pi_{\textsf{SL}}$. Since neutralizing all threats implies neutralizing all hard threats, we have $V_{\textsf{SLM}_2}^{\pi_{\textsf{SL}}}\le \Pr(A_H)$. After round $1$, each hard threat has received $a$ effectors. Let $S\subseteq\{1,\dots,m\}$ be the random set of \emph{hard threats} that survive round $1$. Conditional on the round-$1$ feedback, $\pi_{\textsf{SL}}$ chooses its round-$2$ allocation. Recall that $x_i^2$ denote the number of round-$2$ effectors assigned to threat $i$. By the memoryless property of the geometric demand distribution, conditional on the round-$1$ feedback $\mathcal{M}_2$,
$$
\Pr(A_H\mid \mathcal{M}_2)
=
\prod_{i\in S}\left(1-q_H^{x_i^2}\right).
$$
Taking expectations gives $\Pr(A_H)
=
\mathbb{E}\left[
\prod_{i\in S}\left(1-q_H^{x_i^2}\right)
\right]$. Now we condition only on the set $S$ of hard survivors and upper bound over all possible second-round allocations. When $|S|=s$ for some $s$, the probability of this hard-survivor set is $q_H^{(m^2+1)s}(1-q_H^{m^2+1})^{m-s}$. This implies that
\begin{equation}
\label{eq:prop1_all_hard_neutralized}
\begin{aligned}
\Pr(A_H)
&\le
\sum_{s=0}^m
\binom{m}{s}
q_H^{(m^2+1)s}(1-q_H^{m^2+1})^{m-s}
\max_{\substack{x_i\ge 0\\ \sum_{i=1}^s x_i\le C(m)}}
\prod_{i=1}^s \left(1-q_H^{x_i}\right).
\end{aligned}
\end{equation}
Using $q_H^{(m^2+1)s}\le 1$, $1-q_H^{m^2+1}\le \epsilon (m^2+1)$, and $1-q_H^x=1-(1-\epsilon)^x\le \epsilon x$ for all $x\ge 0$, we obtain
$$
\Pr(A_H)
\le
\sum_{s=0}^m
\binom{m}{s}
(\epsilon (m^2+1))^{m-s}
\max_{\substack{x_i\ge 0\\ \sum_{i=1}^s x_i\le C(m)}}
\prod_{i=1}^s \epsilon x_i .
$$
For $s\ge 1$, by AM-GM inequality, we can further bound the product of $x_i$ as $\prod_{i=1}^s x_i
\le
\left(C(m)/s\right)^s$. Thus, we have
\begin{equation}
\label{eq:greedy_upper_sum_m2}
\Pr(A_H)
\le
\sum_{s=0}^m
\binom{m}{s}
(\epsilon (m^2+1))^{m-s}
\left(\frac{\epsilon C(m)}{s}\right)^s,
\end{equation}
where the $s=0$ term is interpreted as $(\epsilon a)^m$. Let $\theta:= s/m\in[0,1]$ and
$H(\theta):=-\theta\ln\theta-(1-\theta)\ln(1-\theta)$. Using the entropy bound $\binom{m}{s} \leq \exp \{m H(\theta)\}$ and $C(m)=m^3(5+o(1))$,  the $s$-th term in equation \eqref{eq:greedy_upper_sum_m2} is at most
$$
\begin{aligned}
&\binom{m}{s}(\epsilon (m^2+1))^{m-s}\left(\frac{\epsilon C(m)}{s}\right)^s
\leq  \exp \{m H(\theta)\}\left(\epsilon m^2(1+o(1))\right)^{m-s}\left(\epsilon m^2\left(\frac{5}{\theta}+o(1)\right)\right)^s \\
= &\ (\epsilon m^2)^m \exp \{m H(\theta)\}\left(\frac{5}{\theta}\right)^s \exp \{o(m)\} 
= \left[\epsilon m^2 \exp \left\{H(\theta)+\theta \ln \frac{5}{\theta}+o(1)\right\}\right]^m
.
\end{aligned}
$$

Since there are only $m+1$ terms in the sum, this polynomial prefactor is absorbed into the $o(1)$ term. Therefore, we have
\begin{equation}
\label{eq:greedy_upper_rho_m2}
\Pr(A_H)
\le
\left(\left(\max_{\theta\in[0,1]}
\exp\left\{
H(\theta)+\theta\ln (5/\theta)
\right\}+o(1)\right)\epsilon m^2\right)^m,
\end{equation}
The maximizer $\theta^*$ satisfies $5(1-\theta^*)=e(\theta^*)^2$. One can show that $\theta^*<0.72$, and therefore we have
$$\max_{\theta\in[0,1]}
\exp\left\{
H(\theta)+\theta\ln (5/\theta)
\right\}=e^{\theta^*}/(1-\theta^*)
<e^{0.72}/0.28<7.34.$$ 
Then by equations \eqref{eq:greedy_upper_rho_m2}, we have
\begin{equation}
\label{eq:greedy_upper_m2}
V_{\textsf{SLM}_2}^{\pi_{\textsf{SL}}}
\le
\Pr(A_H)
\le
\left((7.34+o(1))\epsilon m^2\right)^m.
\end{equation}

Now we compare $\pi_{\textsf{SL}}$ to another policy $\pi_1$, which is constructed as follows. In round $1$, it allocates $3m^2$ effectors to each hard threat and $1$ effector to each easy threat. This is feasible because $3m^2\cdot m+(2m^3-m)=5m^3-m=C(m)$. In round $2$, it allocates one unit to each easy threat that survived round $1$ and allocates the remaining capacity among the hard threats evenly.

Let $S_E:=\#\{i:q_i=q_E,\ L_i>1\}$ be the number of easy threats that require more than one effector to neutralize. Then $S_E\sim\mathrm{Bin}(2m^3-m,q_E)$ and $\mathbb E[S_E]=(2m^3-m)/(m^2+1)\le 2m$. By Chernoff's bound,
\begin{equation}
\label{eq:chernoff_m2}
\Pr(S_E\le 4m)\ge 1-e^{-\mathbb E[S_E]/3}\to 1.
\end{equation}
When $S_E\le 4m$ occurs, round $2$ allocates at most $4m$ effectors to easy threats, so at least $C(m)-4m$ effectors are allocated to hard threats. Allocating the remaining capacity evenly implies each surviving hard threat receives at least $\lfloor(C(m)-4 m) / m\rfloor=5 m^2-5$. Thus, if every hard threat has total demand at most $3m^2+(5m^2-5)=8m^2-5$, then all hard threats are neutralized by round $2$. Conditional on $S_E\le 4m$, after assigning one additional effector to each surviving easy threat, all surviving easy threats are neutralized with probability at least $(1-q_E)^{4m}$. Therefore, $V_{\textsf{SLM}_2}^{\pi_1}$ can be lower bounded by
\begin{equation}
\begin{aligned}
\label{eq:alt_lower_m2}
V_{\textsf{SLM}_2}^{\pi_1}
&\ge
\Pr(S_E\le 4m)\,
(1-q_E)^{4m}\,
\left(1-q_H^{8m^2-5}\right)^m .
\end{aligned}
\end{equation}
Next, with $\epsilon=1/(4m^4)$, we can bound the last term in equation \eqref{eq:alt_lower_m2} as
\begin{equation}\label{eq:alt_lower_m3}
\begin{aligned}
1-q_H^{8m^2-5}
&=1-(1-\epsilon)^{8m^2-5} \ge 1-e^{-\epsilon(8m^2-5)} \ge \epsilon(8m^2-5)\left(1-\frac{\epsilon(8m^2-5)}{2}\right) =(8-o(1))\epsilon m^2.
\end{aligned}
\end{equation}
In equation \eqref{eq:alt_lower_m3}, the first inequality uses the standard bound $(1-\epsilon)^x \leq e^{-\epsilon x}$ for $\epsilon \in(0,1)$, applied with $x=8m^2-5$. The second inequality uses the Taylor lower bound $1-e^{-x} \geq x-\frac{x^2}{2}=x(1-x/2)$ for $x \geq 0$, applied with $x=\epsilon(8m^2-5)$. Since $(1-q_E)^{4m}\to 1$ and $\Pr(S_E\le 4m)\to 1$, equations \eqref{eq:alt_lower_m2} and \eqref{eq:alt_lower_m3} imply
\begin{equation}
\label{eq:alt_lower_final_m2}
V_{\textsf{SLM}_2}^{\pi_1}
\ge
\left((8-o(1))\epsilon m^2\right)^m.
\end{equation}
Finally, for large enough $m$, combining \eqref{eq:greedy_upper_m2} and \eqref{eq:alt_lower_final_m2} gives
$$
\frac{V_{\textsf{SLM}_2}^{\pi_{\textsf{SL}}}}{V_{\textsf{SLM}_2}^*}
\le
\frac{V_{\textsf{SLM}_2}^{\pi_{\textsf{SL}}}}{V_{\textsf{SLM}_2}^{\pi_1}}
\le
\left(
\frac{7.34+o(1)}{8-o(1)}
\right)^m
\to 0,
$$
which completes the proof.
\end{proof}

\subsection{\Cref{thm:sl_vs_fair_n2} and \Cref{thm:ea_vs_fair_n2}}

\begin{proof}{Proofs of \Cref{thm:sl_vs_fair_n2} and \Cref{thm:ea_vs_fair_n2}.}
    To show the optimality under \cref{eq::MinDefuse} objective, it is sufficient to show that $\pi_{\textsf{SL}}$ is better than $\pi_\textsf{FA}$ under \cref{eq::SurvivalObjective} for any $\tau$. Now we split the proof into the following two cases.

\smallskip

\noindent\underline{Case 1: $V_{\textsf{SLM}_\tau}^{\pi_\textsf{SL}}\;\ge\;V_{\textsf{SLM}_\tau}^{\pi_\textsf{FA}}$.}
The core idea of the proof is based on mathematical induction and the use of lemma \ref{lemma:switch_dominate}. We use an induction on $\tau$. Before moving forward, we assume $q_1>q_2$ without loss of generality and consider the following two policies:
\begin{itemize}
    \item $\pi_1$: use fair allocation $\pi_{\textsf{FA}}$ in the first round and greedy policy $\pi_{\textsf{SL}}$ starting from the second round.
    \item $\pi_2$: use greedy policy $\pi_{\textsf{SL}}$ in the first round, fair allocation $\pi_{\textsf{FA}}$ in the second round, and again greedy policy $\pi_{\textsf{SL}}$ starting from the third round.
\end{itemize}
For induction hypothesis, we assume that $V_{\textsf{SLM}_\tau}^{\pi_\textsf{SL}}\geq V_{\textsf{SLM}_\tau}^{\pi_2}\ge V_{\textsf{SLM}_\tau}^{\pi_1} \ge V_{\textsf{SLM}_\tau}^{\pi_\textsf{FA}}$ holds for $\tau=1,\cdots,\tau_0$. Note that it holds trivially when $\tau=1$. We want to show it holds for $\tau=\tau_0+1$ as well. 
Let $(x_1^{\textsf{SL}},x_2^{\textsf{SL}})$ be the first round allocation under $\pi_{\textsf{SL}}$ and $(x_1^{\textsf{FA}},x_2^{\textsf{FA}})$ be the first round allocation under $\pi_{\textsf{FA}}$. It is clear that $x_1^{\textsf{SL}}\geq x_1^{\textsf{FA}}$. Without loss of generality, we can assume $x_1^{\textsf{SL}}>x_1^{\textsf{FA}}$.
By Lemma \ref{lemma:switch_dominate}, using $(u_1,v_1)=(x_1^{\textsf{FA}},x_2^{\textsf{FA}})$ and $(u_2,v_2)=(x_1^{\textsf{SL}},x_2^{\textsf{SL}})$, we have $V_{\textsf{SLM}_\tau}^{\pi_1}\leq V_{\textsf{SLM}_\tau}^{\pi_2}$. On the other hand, by induction, we know $\pi_{\textsf{SL}}$ is better than $\pi_{\textsf{FA}}$ for $\tau=1,\cdots,\tau_0$. Moreover, notice that $\pi_1$ and $\pi_{\textsf{FA}}$ has the same allocation in the first round, so we can couple the two policies together and assume they have the same outcome in the first round. If any threats are neutralized in the first round, then we will put all effectors in the active threat, and this will not yield any difference of outcomes towards the two policies. Thus, we only need to consider the case when no threats are neutralized. As a result, $\pi_1$ is better than $\pi_{\textsf{FA}}$ under deadline $\tau_0+1$ as well, which yields $V_{\textsf{SLM}_{\tau_0+1}}^{\pi_1}\geq V_{\textsf{SLM}_{\tau_0+1}}^{\pi_\textsf{FA}}$. Similarly, notice that $\pi_2$ and $\pi_{\textsf{SL}}$ has the same allocation in the first round, by induction, we have $V_{\textsf{SLM}_{\tau_0+1}}^{\pi_\textsf{SL}}\geq V_{\textsf{SLM}_{\tau_0+1}}^{\pi_2}$. Therefore, for $\tau=\tau_0+1$, we also have $V_{\textsf{SLM}_\tau}^{\pi_\textsf{SL}}\geq V_{\textsf{SLM}_\tau}^{\pi_2}\ge V_{\textsf{SLM}_\tau}^{\pi_1} \ge V_{\textsf{SLM}_\tau}^{\pi_\textsf{FA}}$. The induction holds and we know $V_{\textsf{SLM}_\tau}^{\pi_\textsf{SL}} \ge V_{\textsf{SLM}_\tau}^{\pi_\textsf{FA}}$ for any $\tau$ when $n=2$.

\smallskip

\noindent\underline{Case 2: $V_{\textsf{EAM}_\tau}^{\pi_\textsf{EA}}\;\ge\;V_{\textsf{EAM}_\tau}^{\pi_\textsf{FA}}$.} Since lemma \ref{lemma:switch_dominate} also holds for \cref{eq::EffectiveObjective}, the proof under \cref{eq::EffectiveObjective} is exactly the same as one under \cref{eq::SurvivalObjective}. We omit the details here.

\end{proof}

\subsection{\Cref{thm:adaptive_greedy}}
\begin{proof}{Proof of \Cref{thm:adaptive_greedy}.}
Let $V_{\textsf{EAM}_\tau}^{\pi}(t)$ be the expected number of effective assignments in round $t\leq \tau$ under policy $\pi$. Let $\pi_{j}$ be the policy that uses $\pi_{\textsf{EA}}$ for first $j$ rounds, and uses optimal policy $\pi^*$ starting from round $j+1$. Let $V_{\textsf{EAM}_\tau}^{\pi}(\mathcal{M})$ be the corresponding expected number of effective assignments with a set of threats $\mathcal{M}$ at the beginning of the process. Let $\mathcal{M}_t$ be the set of threats alive under policy $\pi_j$ at the beginning of round $t$.

When $\pi_j$ is used, the first $j$ rounds coincide with $\pi_{\textsf{EA}}$. Hence, at round $j+1$, the current threat set is $\mathcal{M}_{j+1}$, and the contribution of $\pi_{\textsf{EA }}$ in that round can be written as $V_{\textsf{EAM}_{\tau+j}}^{\pi_{\textsf{EA}}}(j+1)=
\mathbb{E}[V_{\textsf{EAM}_1}^{\pi_{\textsf{EA}}}\left(\mathcal{M}_{j+1}\right)]$, where the expectation is taken over the randomness of $\mathcal{M}_{j+1}$. Now fix any later round $t \geq j+1$. Since threats can only be removed over time, we have $\mathcal{M}_t \subseteq \mathcal{M}_{j+1}$. Therefore every action feasible for $\mathcal{M}_t$ can be viewed as a feasible action for    $\mathcal{M}_{j+1}$ by assigning zero units to threats in $\mathcal{M}_{j+1} \backslash \mathcal{M}_t$. Then it follows that the optimal one-round effective-assignment value is monotone, that is, $\mathbb{E}[V_{\textsf{EAM}_1}^{\pi_ \textsf{EA}}\left(\mathcal{M}_{j+1}\right)] \geq \mathbb{E}[V_{\textsf{EAM}_1}^{\pi_\textsf{EA}}\left(\mathcal{M}_t\right)]$. Moreover, $\pi_{\textsf{EA}}$ is optimal for the one-round $(\tau=1)$ effective-assignment problem. Hence, on the state $\mathcal{M}_t$, its one-round value is at least that obtained by any other policy, in particular $\pi_j$: $\mathbb{E}[V_{\textsf{EAM}_1}^{\pi_{\textsf{EA}}}\left(\mathcal{M}_t\right)] \geq V_{\textsf{EAM}_{\tau+j}}^{\pi_j}(t)$. Combining the above equations, we have   \begin{equation}\label{eq:AdaptiveMonotonicity}
    \begin{aligned}
     V_{\textsf{EAM}_{j+1}}^{\pi_{\textsf{EA}}}-V_{\textsf{EAM}_j}^{\pi_{\textsf{EA}}}&=V_{\textsf{EAM}_{j+1}}^{\pi_{\textsf{EA}}}(j+1)
            = \frac{\tau \mathbb{E}[V_{\textsf{EAM}_1}^{\pi_{\textsf{EA}}}(\mathcal{M}_{j+1})]}{\tau} \geq \frac{\sum_{t=j+1}^{\tau+j} \mathbb{E}[V_{\textsf{EAM}_1}^{\pi_{\textsf{EA}}}(\mathcal{M}_t)]}{\tau}\\
            &\geq \frac{\sum_{t=j+1}^{\tau+j} V_{\textsf{EAM}_{\tau+j}}^{\pi_j}(t)}{\tau} =\frac{V_{\textsf{EAM}_{\tau+j}}^{\pi_j}-V_{\textsf{EAM}_j}^{\pi_j}}{\tau}.
        \end{aligned}
    \end{equation}

Now we claim that $V_{\textsf{EAM}_\tau}^* \leq V_{\textsf{EAM}_{\tau+j}}^{\pi_j}$. To prove the claim, let $\pi^*$ be an optimal policy for deadline $\tau$, so that $V_{\textsf{EAM}_\tau}^*=V_{\textsf{EAM}_\tau}^{\pi^*}$. Towards comparing $\pi^*$ and $\pi_j$, we introduce an auxiliary policy $\pi_j^\prime$ operating over $\tau+j$ rounds. Like $\pi_j$, the auxiliary policy $\pi_j'$ also executes $\pi_{\textsf{EA}}$ for its first $j$ rounds, and then starting from round $j+1$, it also executes $\pi^*$. However, $\pi_j'$ will critically depart from $\pi_j$ starting at time step $j+1$, whereupon instead of taking $\mathcal{M}_{j+1}$ as the set of live threats (as $\pi_j$ would), $\pi_j'$ will view $[n]$ as the set of live threats; more precisely, we append ``pseudo threats" to turn $\mathcal{M}_{j+1}$ into $[n]$ by simply drawing from $\text{Geom}(1-q_i)$ for each $i \in [n] \setminus \mathcal{M}_{j+1}$ (independently). During the time steps $j+1$ through $j + \tau$, the auxiliary policy $\pi_j'$ then executes $\pi^*$ on this inflated set $[n]$ but is of course only credited with effective assignments toward the threats among $\mathcal{M}_{j+1}$. We can show via a coupling argument that, for any threat, the expected number of assignments made by $\pi_j'$ during the $\tau$ many steps from $j+1$ to $j+\tau$ is equal to the expected number of assignments under the execution of $\pi^*$ from $1$ to $\tau,$ which suffices to establish the claim, since then $\pi_j^\prime$ allocates no fewer effectors than does $\pi^*$ to any threat during their $\tau + j$ and $\tau$ many periods respectively, and hence the number of effective assignments obtained by $\pi_j^\prime$ over $\tau+j$ rounds is at least that obtained by $\pi^*$ over $\tau$ rounds. Let $L=(L_i)_{i \in [n]}$ denote a vector of independently drawn demands, and let $L'$ denote the random vector of demands that $\pi'_j$ faces at time $j+1$ after contending with $L$ for $j$ periods (which includes newly drawn demands for the ``pseudo" threats). If $L$ and $L'$ are stochastically equivalent, then upon letting a single, random vector $\bar{L}$ be simultaneously the vector of demands that $\pi^*$ contends with during periods $1$ to $\tau$ as well as the vector of demands that $\pi_j'$ contends with during periods $j$ to $j + \tau,$ the claim follows immediately. The stochastic equivalence holds because $L_i' \sim \text{Geom}(1-q_i)$ for all $i \in [n]$, independently; indeed, in the event that $i \notin \mathcal{M}_{j+1}$, by construction, $L_i' \sim \text{Geom}(1-q_i)$, and in the event that $i \in \mathcal{M}_{j+1},$ again $L_i' \sim \text{Geom}(1-q_i)$  by the memoryless property.

This yields $V_{\textsf{EAM}_\tau}^*\leq V_{\textsf{EAM}_{\tau+j}}^{\pi_j^\prime} \leq V_{\textsf{EAM}_{\tau+j}}^{\pi_j}$.
Consequently, by equation \eqref{eq:AdaptiveMonotonicity}, we have
\begin{equation}\label{eq:theorem7_V_ea_tau_ineq}
        V_{\textsf{EAM}_\tau}^* \leq V_{\textsf{EAM}_{\tau+j}}^{\pi_j} \leq V_{\textsf{EAM}_j}^{\pi_{\textsf{EA}}} + \tau\cdot \left(V_{\textsf{EAM}_{j+1}}^{\pi_{\textsf{EA}}}-V_{\textsf{EAM}_j}^{\pi_{\textsf{EA}}}\right).
\end{equation}
    Equation \eqref{eq:theorem7_V_ea_tau_ineq} is equivalent to
    \[
    V_{\textsf{EAM}_\tau}^* - V_{\textsf{EAM}_{j+1}}^{\pi_{\textsf{EA}}} \leq (1 - \frac{1}{\tau}) \left(V_{\textsf{EAM}_\tau}^* - V_{\textsf{EAM}_j}^{\pi_{\textsf{EA}}}\right).
    \]
    Hence, upon applying this inductively on $j=\tau-1,\tau-2,\cdots,1$, we get 
     \[
    V_{\textsf{EAM}_\tau}^* - V_{\textsf{EAM}_\tau}^{\pi_{\textsf{EA}}} \leq (1 - \frac{1}{\tau})^\tau \cdot V_{\textsf{EAM}_\tau}^*
    \]
    as desired. \halmos
\end{proof}

\medskip

\section{Bellman equations and the two-threat case}\label{sec:bellman_optimality}
In this section, we discuss how to derive an optimal policy using the Bellman equations and show that the problem can be solved efficiently when $n = 2$. Recall that each threat $i\in\mathcal M_t$ survives independently with probability $q_i^{x_i}$ under allocation $x$, so the next state is a random subset $\mathcal S\subseteq\mathcal M_t$ with transition probability
\[
\operatorname{Pr}(\mathcal M_{t+1}=\mathcal S\mid \mathcal M_t, x)
=\prod_{i\in\mathcal S} q_i^{x_i}\prod_{i\in\mathcal M_t\setminus\mathcal S}(1-q_i^{x_i}).
\]
We start with \cref{eq::MinDefuse} objective. Under this objective, the expected number of rounds needed to neutralize a given set of threats does not depend on the round index $t$. Hence, the state can be simplified to the current set of active threats alone, namely $\mathcal M\subseteq [n]$. Let $V_{\textsf{DTM}}(\mathcal M)$ denote the optimal expected number of remaining rounds needed to neutralize all threats starting from state $\mathcal M$. Then $V_{\textsf{DTM}}(\emptyset)=0$, and for any nonempty $\mathcal M\neq\emptyset$ the Bellman optimality equation under \cref{eq::MinDefuse} is
\begin{equation}\label{eq:Bellman}
V_{\textsf{DTM}}(\mathcal M)
=
1+\min_{x\in\mathcal X(\mathcal M)}
\sum_{\mathcal S\subseteq\mathcal M}
V_{\textsf{DTM}}(\mathcal S)\,
\prod_{i\in\mathcal S} q_i^{x_i}\,
\prod_{i\in\mathcal M\setminus\mathcal S}(1-q_i^{x_i}),
\end{equation}
where $\mathcal{X}(\mathcal M_t):=\{x\in\mathbb Z_{\ge0}^{M_t}:\sum_{i\in\mathcal M_t}x_i= C\}$ is the action space under state $(t,\mathcal{M}_t)$. Equation \eqref{eq:Bellman} can be derived from a one-step analysis. After one round, the system transitions from some active threat set $\mathcal{M}$ to $\mathcal{S} \subseteq \mathcal{M}$, where $\mathcal{S}$ is the set of threats that survive the round. Then the total expected number of remaining rounds can be computed as the one round used plus the minimum expected number of additional rounds needed to neutralize all active threats.

\paragraph{Solving the Bellman equations.}
A direct method to solve equation \eqref{eq:Bellman} is via dynamic programming. In our setting, starting from any set of threats $\mathcal M$, the process can only move to subsets of $\mathcal M$ in the next round. Separating the term $\mathcal S=\mathcal M$ in equation \eqref{eq:Bellman} yields the equivalent form:
\begin{equation} \label{eq:Bellman_min}
V_{\textsf{DTM}}(\mathcal M)
=
\min_{x\in\mathcal X(\mathcal M)}
\frac{
1 + \sum_{\mathcal S\subset \mathcal M} V_{\textsf{DTM}}(\mathcal S)\,
\prod_{i\in\mathcal S} q_i^{x_i}\,
\prod_{i\in\mathcal M\setminus\mathcal S}(1-q_i^{x_i})
}{
1-\prod_{i\in\mathcal M} q_i^{x_i}
}.
\end{equation}
For any allocation $x$, the right-hand side of equation \eqref{eq:Bellman_min} depends only on values $V_{\textsf{DTM}}(\mathcal S)$ for strict subsets $\mathcal S\subset\mathcal M$. This implies that $V_{\textsf{DTM}}(\mathcal M)$ can be computed recursively in increasing order of $|\mathcal M|$: starting from $V_{\textsf{DTM}}(\emptyset)=0$, we compute $V_{\textsf{DTM}}(\mathcal M)$ for all singleton sets, then all pairs, and so on up to $| \mathcal M |=n$. Once $V_{\textsf{DTM}}([n])$ is computed, an optimal policy is obtained by choosing an allocation $x$ attaining the minimum in equation \eqref{eq:Bellman_min} for each $\mathcal{M}_t$ in round $t$. The recursion provides an efficient exact solution for small $n$.

We can formulate the Bellman equations under \cref{eq::SurvivalObjective} and \cref{eq::EffectiveObjective} in a similar manner. For each $\tau$, the value function can be written as the immediate reward plus the expected value-to-go with deadline $\tau-1$. Under \cref{eq::SurvivalObjective}, the reward is $1$ if all threats have been neutralized before the next round (and $0$ otherwise), whereas under \cref{eq::EffectiveObjective} the reward is the number of effective assignments in the round. These dynamic programs can be formulated in a similar way and we omit the full recursions for brevity.

For the structural results that follow, it is useful to isolate the special case $n=2$. In this case, we substitute $x_2 = C - x_1$ into equation \eqref{eq:Bellman_min}, which reduces the allocation decision in each round to a one-dimensional problem. On the other hand, when only a single threat $i$ remains, the \cref{eq::MinDefuse} value function has the simple closed-form expression $V_{\textsf{DTM}}(\{i\}) = 1/(1 - q_i^C)$. Thus, under \cref{eq::MinDefuse}, an optimal policy can be obtained by performing a line search towards equation \eqref{eq:Bellman} over $x_1 \in \{0,\ldots,C\}$.  Moreover, as illustrated by the following lemma, the resulting one-dimensional optimization problem is concave under both \cref{eq::SurvivalObjective} and \cref{eq::EffectiveObjective}, given the optimal objective values under $\tau-1$. Therefore, under both \cref{eq::SurvivalObjective} and \cref{eq::EffectiveObjective}, an optimal policy can be computed by dynamically solving the concave program associated with the \cref{eq::SurvivalObjective} or \cref{eq::EffectiveObjective} at each stage $\tau=1,\ldots,\tau$.

\begin{proposition}[Two-threat concavity]
\label{lem:ctm_n2_concavity}
Let there be $n=2$ threats. Given $V_{\textsf{SLM}_{\tau-1}}^*$ and $V_{\textsf{EAM}_{\tau-1}}^*$, the maximization for \cref{eq::SurvivalObjective} and \cref{eq::EffectiveObjective} can be written as a one-dimensional concave optimization over $x\in\{0,\dots,C\}$ with $(x_1,x_2)=(x,C-x)$.
\end{proposition}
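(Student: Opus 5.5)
Our approach is to write each first-round objective explicitly as a function of the split $u:=x\in\{0,\dots,C\}$, using the value $V^*_{\textsf{SLM}_{\tau-1}}$ (resp. $D^*_{\tau-1}$) as a given constant. We then verify that the second difference in $u$ has the right sign, using the a priori bounds on these constants already derived in the proof of \Cref{theorem:properties_n=2}. Throughout, set $m:=C(\tau-1)$, $a:=q_1^{m}$, $b:=q_2^{m}$, and $\rho:=q_1/q_2$; we assume w.l.o.g.\ $q_1>q_2$, the equal case being symmetric.

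For \eqref{eq::SurvivalObjective}, \eqref{eq:succ_prob_n=2} gives
\[
V(u)=1-a\,q_1^u-b\,q_2^{C-u}+K\,q_2^{C}\rho^u, \qquad K:=a+b-1+V^*_{\textsf{SLM}_{\tau-1}}.
\]
The first two nonconstant terms are concave in $u$, since $q_1^u$ and $q_2^{C-u}$ are convex. The cross term is convex, so the only danger arises when $K>0$. To control $K$, we bound $1-V^*_{\textsf{SLM}_{\tau-1}}$ from below by the probability that even one-at-a-time feedback fails, namely $\Pr(L_1+L_2>m)$. This is the same offline argument used around \eqref{eq:thm2_slm_v_star_ineq}, and the tail sum computed in \eqref{eq:lower_bound_on_D_tau} gives
\[
\Pr(L_1+L_2>m)=\frac{(1-q_2)a-(1-q_1)b}{q_1-q_2},
\qquad\text{hence}\qquad
K\le \frac{(1-q_2)\,b-(1-q_1)\,a}{q_1-q_2}.
\]
Next we compute the second difference $\delta^2V(u)=V(u+1)-2V(u)+V(u-1)$:
\[
\delta^2V(u)=-a\,q_1^{u-1}(1-q_1)^2-b\,q_2^{C-u-1}(1-q_2)^2+K\,q_2^C\rho^{u-1}(\rho-1)^2 .
\]
We substitute the upper bound on $K$; only the positive part $(1-q_2)b$ matters. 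Using $q_2^C\rho^{u-1}(\rho-1)^2=q_1^{u-1}q_2^{C-u-1}(q_1-q_2)^2$, the positive contribution is at most
\[
b(1-q_2)(q_1-q_2)\,q_1^{u-1}q_2^{C-u-1},
\]
while the negative contribution includes $b(1-q_2)^2q_2^{C-u-1}$. Since $q_1^{u-1}(q_1-q_2)\le 1-q_2$ whenever $u\ge1$, the positive part is dominated, which yields $\delta^2V\le 0$ on $\{1,\dots,C-1\}$, i.e., discrete concavity. The boundary values $u\in\{0,C\}$ need no separate treatment, since concavity on $\{0,\dots,C\}$ is exactly the sign of these interior second differences.

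For \eqref{eq::EffectiveObjective}, we apply Lemma \ref{lem:F_closed_form}. Up to constants, $W^*_\tau(u)$ equals
\[
\tfrac{a}{1-q_1}\,q_1^u+\tfrac{b}{1-q_2}\,q_2^{C-u}+\Bigl(D^*_{\tau-1}-\tfrac{a}{1-q_1}-\tfrac{b}{1-q_2}\Bigr)q_1^uq_2^{C-u}.
\]
We must show that this is convex, i.e., that $V_{\textsf{EAM}_\tau}(u)=\tau C-W^*_\tau(u)$ is concave. The first two terms are convex. For the cross term, the lower bound \eqref{eq:lower_bound_on_D_tau} on $D^*_{\tau-1}$ bounds the possibly negative coefficient from below by a combination of $a$ and $b$ of the same shape as the bound on $K$ above. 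We then repeat the termwise second-difference comparison, with the negative cross contribution absorbed by the $a\,q_1^u$ term this time.

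We expect the main obstacle to be exactly this termwise domination inequality, in both cases. It requires using the specific lower bound $\Pr(L_1+L_2>m)$ (resp.\ \eqref{eq:lower_bound_on_D_tau}) rather than a trivial one, and then checking elementary inequalities such as $q_1^{u-1}(q_1-q_2)\le 1-q_2$ uniformly in $u$. If the direct comparison is too lossy at some $u$, we would try instead to write the cross term as an exact convex combination of the two pure exponential terms plus a nonpositive remainder.
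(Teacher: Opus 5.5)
Your \textsf{SLM} half is correct. It differs from the paper only in how you control the cross coefficient $K=a+b-1+V^*_{\textsf{SLM}_{\tau-1}}$.

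\begin{itemize}
\item The paper uses the elementary bound $V^*_{\textsf{SLM}_{\tau-1}}\le 1-q_1^{m}$ (threat~1 receives at most $m$ effectors in $\tau-1$ rounds), which gives $K\le b$ at once. The cross term's second difference is then at most $b\,q_1^{u-1}q_2^{C-u-1}(q_1-q_2)^2\le b\,q_2^{C-u-1}(1-q_2)^2$, which is the same domination by the $b\,q_2^{C-u}$ term that you perform.
\item Your offline bound $V^*_{\textsf{SLM}_{\tau-1}}\le\Pr(L_1+L_2\le m)$ is actually sharper: your full bound on $K$ lies below $b$ by $(1-q_1)(a-b)/(q_1-q_2)$.
\item However, after you discard the $-(1-q_1)a$ piece you are left with $K\le (1-q_2)b/(q_1-q_2)$, which is weaker than $K\le b$. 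It still suffices because $q_1^{u-1}(q_1-q_2)\le 1-q_2$, so the extra machinery buys nothing here.
\item Minor: the case $q_1=q_2$ is not ``symmetric''. It is trivial because the cross term is then constant in $u$, and your divisions by $q_1-q_2$ are unavailable there.
\end{itemize}

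The \textsf{EAM} half is not carried out, and the mechanism you describe targets a difficulty that does not exist. The coefficient $\gamma:=D^*_{\tau-1}-a/(1-q_1)-b/(1-q_2)$ of $q_1^uq_2^{C-u}$ in $W^*_\tau(u)$ is never negative.

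\begin{itemize}
\item The per-threat cap $V^*_{\textsf{EAM}_{\tau-1}}\le\sum_i(1-q_i^{m})/(1-q_i)$ gives $\gamma\ge 0$ directly. This is the paper's $K\le 0$.
\item Substituting the bound you cite gives even more: $\gamma\ge (a-b)/(q_1-q_2)\ge 0$.
\end{itemize}

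Hence $W^*_\tau$ is a nonnegative combination of the convex functions $q_1^u$, $q_2^{C-u}$ and $q_1^uq_2^{C-u}$. So $V_{\textsf{EAM}_\tau}=\tau C-W^*_\tau$ is concave with no termwise comparison at all.

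The specific step you propose would also fail. Suppose you mirrored your \textsf{SLM} step and kept only the negative part $-b/(q_1-q_2)$ of that lower bound. The resulting contribution $-b(q_1-q_2)q_1^{u-1}q_2^{C-u-1}$ cannot in general be absorbed by the $a\,q_1^u$ term:

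\begin{itemize}
\item Take $q_1=0.99$, $q_2=0.5$, $C=\tau=2$, $u=1$. The negative contribution is $-0.1225$.
\item The $a\,q_1^u$ term contributes only $a(1-q_1)=0.009801$.
\item Only the $b\,q_2^{C-u}$ term, contributing $b(1-q_2)=0.125$, dominates it.
\end{itemize}

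Replace that step by the sign observation. With it, the \textsf{EAM} case is a one-line argument, which is how the paper handles it.
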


\begin{proof}{Proof of Proposition \ref{lem:ctm_n2_concavity}.}
We prove the \cref{eq::SurvivalObjective} case and \cref{eq::EffectiveObjective} case as follows.

\smallskip

\noindent \underline{\cref{eq::SurvivalObjective} case.}
    Let the allocation in the first round $(x_1,x_2)=(x,C-x)$. Let $V_{\textsf{SLM}_\tau}(x)$ be the survival probability that uses $(x_1,x_2)=(x,C-x)$ in the first round and optimal policy starting from round $2$. Then the survival probability can be written as
    \begin{align*}
        V_{\textsf{SLM}_\tau}(\{1,2\})=& \, \max_{x\in\{0,1,\cdots,C\}} V_{\textsf{SLM}_\tau}(x) = \max_{x\in\{0,1,\cdots,C\}} \left(1-q_1^x\right)\left(1-q_2^{C-x}\right)+q_1^x\left(1-q_2^{C-x}\right)\left(1-q_1^{(\tau_1-1)C}\right)\\
        &+q_2^{C-x}\left(1-q_1^x\right)\left(1-q_2^{(\tau_2-1)C}\right)+q_1^x q_2^{C-x}V_{\textsf{SLM}_{\tau-1}}(\{1,2\})\\
        =&\max_{x\in\{0,1,\cdots,C\}} \, 1+K q_1^x q_2^{C-x}-q_1^{(\tau_1-1)C+x}-q_2^{\tau_2 C-x},
    \end{align*}
    where $K:=V_{\textsf{SLM}_{\tau-1}}(\{1,2\})+q_1^{C(\tau-1)}+q_2^{C(\tau-1)}-1$.
    Therefore, the second order difference is
    \begin{align*}
        \Delta^2 V_{\textsf{SLM}_\tau}(x)&:=V_{\textsf{SLM}_\tau}(x+1)-2V_{\textsf{SLM}_\tau}(x)+V_{\textsf{SLM}_\tau}(x-1)\\
        &=K q_1^{x-1} q_2^{C-x-1}\left(q_1-q_2\right)^2-q_1^{(\tau-1)C+x-1}\left(1-q_1\right)^2-q_2^{\tau C-x-1}\left(1-q_2\right)^2.
    \end{align*}
    On the other hand, $V_{\textsf{SLM}_\tau}(x)$ will be smaller than the survival probability when threat 2 is killed and only threat 1 is left to kill, so $V_{\textsf{SLM}_{\tau-1}}(x)$ satisfies
    $V_{\textsf{SLM}_{\tau-1}}(\{1,2\})\leq 1-q_1^{C(\tau-1)}$. Thus, we have $K\leq q_2^{C(\tau-1)}$. Then the concavity can be established by the following inequality:
    \begin{align*}
        \Delta^2 V_{\textsf{SLM}_\tau}(x)&\leq q_1^{x-1} q_2^{\tau C-x-1}\left(q_1-q_2\right)^2-q_1^{(\tau-1)C+x-1}\left(1-q_1\right)^2-q_2^{\tau C-x-1}\left(1-q_2\right)^2\\
        &\leq q_2^{\tau C-x-1}\left(q_1-q_2\right)^2-q_2^{\tau C-x-1}\left(1-q_2\right)^2\leq 0.
    \end{align*}

\smallskip

\noindent \underline{\cref{eq::EffectiveObjective} case.}
Let the allocation in the first round be $(x_1,x_2)=(x,C-x)$. Let $V_{\textsf{EAM}_\tau}(x)$ be the expected number of effective assignments obtained by using $(x,C-x)$ in the first round and then applying an optimal policy from round $2$ onward. Then we have
\begin{align*}
    V_{\textsf{EAM}_\tau}(\{1,2\})
    =&\max_{x\in\{0,1,\cdots,C\}} V_{\textsf{EAM}_\tau}(x)\\
    =&\max_{x\in\{0,1,\cdots,C\}}
    \Bigg[
    \frac{1-q_1^x}{1-q_1}
    +\frac{1-q_2^{C-x}}{1-q_2}
    +q_1^x\left(1-q_2^{C-x}\right)\frac{1-q_1^{(\tau-1)C}}{1-q_1}\\
    &\hspace{3.8cm}
    +q_2^{C-x}\left(1-q_1^x\right)\frac{1-q_2^{(\tau-1)C}}{1-q_2}
    +q_1^xq_2^{C-x}V_{\textsf{EAM}_{\tau-1}}(\{1,2\})
    \Bigg].
\end{align*}
Indeed, the first two terms are the expected number of effective assignments in round $1$, while the remaining three terms correspond respectively to the cases in which only threat $1$ survives, only threat $2$ survives, and both threats survive after round $1$. Rearranging terms yields
\begin{align*}
    V_{\textsf{EAM}_\tau}(x)
    =
    \frac{1}{1-q_1}+\frac{1}{1-q_2}
    +Kq_1^xq_2^{C-x}
    -\frac{q_1^{(\tau-1)C+x}}{1-q_1}
    -\frac{q_2^{\tau C-x}}{1-q_2},
\end{align*}
where
$$
K:=V_{\textsf{EAM}_{\tau-1}}(\{1,2\})
-\frac{1-q_1^{(\tau-1)C}}{1-q_1}
-\frac{1-q_2^{(\tau-1)C}}{1-q_2}.
$$
Therefore, the second-order difference is
\begin{align*}
    \Delta^2 V_{\textsf{EAM}_\tau}(x)
    &:=V_{\textsf{EAM}_\tau}(x+1)-2V_{\textsf{EAM}_\tau}(x)+V_{\textsf{EAM}_\tau}(x-1)\\
    &=Kq_1^{x-1}q_2^{C-x-1}(q_1-q_2)^2
    -q_1^{(\tau-1)C+x-1}(1-q_1)
    -q_2^{\tau C-x-1}(1-q_2).
\end{align*}

On the other hand, over the remaining $\tau-1$ rounds, the number of effective assignments contributed by threat $i$ is pathwise at most $\min\{L_i,C(\tau-1)\}$. Hence
\[
V_{\textsf{EAM}_{\tau-1}}(\{1,2\})
\le
\mathbb E[\min\{L_1,C(\tau-1)\}]+\mathbb E[\min\{L_2,C(\tau-1)\}]=\frac{1-q_1^{(\tau-1)C}}{1-q_1}+\frac{1-q_2^{(\tau-1)C}}{1-q_2}.
\]
Thus $K\le 0$, and we have
$\Delta^2 V_{\textsf{EAM}_\tau}(x)
\le
-q_1^{(\tau-1)C+x-1}(1-q_1)
-q_2^{\tau C-x-1}(1-q_2)
\le 0$. Therefore, $V_{\textsf{EAM}_\tau}(x)$ is concave in $x\in\{0,1,\ldots,C\}$, and the maximization for $V_{\textsf{EAM}_\tau}(\{1,2\})$ is a one-dimensional concave optimization problem. 
    
\end{proof}

\section{Reoptimized fluid policy under \texorpdfstring{\textsf{EAM}$_\tau$}{EAM tau}}\label{appendix:additional_experiments}
In this section, we compare $\pi_{\textsf{EA}}$ with the \emph{reoptimized feasible fluid policy} $\pi_R$ introduced in \cite{brown2025fluid} under \cref{eq::EffectiveObjective}. The policy $\pi_\textsf{R}$ is implemented as follows. In each round, they first solve a fluid-relaxation linear program. The resulting optimal solution induces a randomized allocation in each round that is optimal under the relaxed problem. However, this randomized policy may not satisfy the capacity constraint. To obtain the feasible policy $\pi_{\textsf{R}}$, they sample the number of effectors allocated to each threat prescribed by this randomized policy. The feasibility is then enforced by setting the number of effectors allocated to a threat to zero whenever including it would cause the total assigned effectors to exceed the available capacity.

Now we show how we adapt the reoptimized fluid policy to our model. The fluid relaxation replaces the binary status (alive or neutralized) of each threat by a
continuous mass. Recall that $\mathcal{M}_t\subseteq[n]$ refers to the current active set at the beginning of round $t$. Let $h:=\tau-t+1$ be the remaining rounds. Given $\mathcal{M}_t$, the fluid LP can be written as
\begin{align}
\max_{u\ge 0}\quad
& \sum_{\ell=0}^{h-1}\sum_{i=1}^n\sum_{a=0}^C \frac{1-q_i^a}{1-q_i}u_{\ell i a} \tag{\textsf{FR-EAM}}\label{FR-EAM} \\
\text{s.t.}\quad
& \sum_{a=0}^C u_{0ia}=\mathbf 1\{i\in \mathcal{M}_t\},
    && i=1,\ldots,n, \notag \\
& \sum_{a=0}^C u_{\ell i a}
    =
    \sum_{a=0}^C q_i^a u_{\ell-1,i,a},
    && i=1,\ldots,n,\quad \ell=1,\ldots,h-1,  \notag\\
& \sum_{i=1}^n\sum_{a=0}^C a\,u_{\ell i a}\le C.
    && \ell=0,\ldots,h-1 .  \notag
\end{align}

Here, the decision variable $u_{\ell i a}$ denote the alive fluid mass of assigning threat
$i$ with $a$ effectors in relative round $\ell=0,\ldots,h-1$.
In \cref{FR-EAM}, The first constraint initializes the fluid state from the realized active set. The second constraint refers to the deterministic fluid flow equation. That is, if threat $i$ is alive and receives $a$ effectors, then a fraction $q_i^a$ remains alive in the next round. The last constraint refers to the capacity constraint in each round.

After solving the \cref{FR-EAM}, we then convert the solution into a feasible action for our model. Let $u^*$ be the optimal solution to \cref{FR-EAM}. For each active threat $i\in \mathcal{M}_t$, we sample an allocation $A_i$ according to
$\mathbb P(A_i=a)=u^*_{0ia}$ for $a=0,1,\ldots,C$. Starting from $x_i^t=0$ for all $i$, we then consider each threat in index order
and accept the proposed allocation $A_i$ if it fits within the remaining capacity:
\[
    x_i^t=
    \begin{cases}
    A_i, & \text{if } i\in \mathcal{M}_t \text{ and } \sum_{j<i}x_j+A_i\le C,\\
    0, & \text{otherwise}.
    \end{cases}
\]
This ensures that the implemented allocation satisfies the capacity constraint $\sum_{i=1}^n x_i^t\le C$. We implement this heuristic in every round after we update active set.

Now we compare $\pi_\textsf{R}$ to $\pi_{\textsf{EA}}$ with the following experimental setup. We consider the number of salvos $s \in \{2,\ldots,10\}$. For each value of $s$, we generate 10,000 independent instances. For each instance, salvo $j$ contains a random number of threats drawn independently from $\operatorname{Unif}\{1,2,\ldots,10\}$, and all threats within the same salvo have the same difficulty level, which is sampled independently from $\text{Beta}(4,3)$. Thus, we have $\mathbb{E}[L_i]=3$. The capacity is drawn as $C \sim \operatorname{Unif}\{\lfloor n/2\rfloor,\ldots,2n\}$, and the deadline is drawn as $\tau \sim \operatorname{Unif}\{2,3,4,5\}$. For $\pi_R$, we re-solve the fluid relaxation at each round and sample a randomized allocation from the resulting first-round marginals, followed by a feasibility projection. We report both the average number of effective assignments and the average computation time.

\begin{table}[t]
\small
\centering
\caption{Performance and average runtime comparison between $\pi_{\textsf{EA}}$ and $\pi_{\textsf{R}}$ under varying numbers of salvos. Here, $\hat{V}_{\textsf{EAM}_\tau}^{\pi_{\textsf{EA}}}$ and $\hat{V}_{\textsf{EAM}_\tau}^{\pi_{\textsf{R}}}$ refer to the number of effective assignments averaged over 10,000 instances under $\pi_{\textsf{EA}}$ and $\pi_{\textsf{R}}$, respectively.}
\label{tab:eam_fluid_salvos}
\begin{tabular}{ccccc}
\toprule
$s$ & $\hat{V}_{\textsf{EAM}_\tau}^{\pi_{\textsf{EA}}}$ & $\hat{V}_{\textsf{EAM}_\tau}^{\pi_{\textsf{R}}}$ & $\pi_{\textsf{EA}}$ time (sec) & $\pi_R$ time (sec) \\
\midrule
2  & 26.2951  & 25.8867  & 0.000592 & 0.010030 \\
3  & 40.1836  & 39.7473  & 0.000915 & 0.012359 \\
4  & 54.9293  & 54.4584  & 0.001328 & 0.015560 \\
5  & 69.3305  & 68.8731  & 0.001795 & 0.020514 \\
6  & 83.4329  & 82.9820  & 0.002298 & 0.024452 \\
7  & 98.5022  & 98.0570  & 0.002845 & 0.028498 \\
8  & 113.0328 & 112.5757 & 0.003447 & 0.034184 \\
9  & 127.6908 & 127.2542 & 0.004081 & 0.039775 \\
10 & 141.6469 & 141.2037 & 0.004760 & 0.046163 \\
\bottomrule
\end{tabular}
\end{table}

We have some observations from Table \ref{tab:eam_fluid_salvos}. First, the performance comparison shows that $\pi_{\textsf{EA}}$ consistently outperforms $\pi_{\textsf{R}}$ for all values of $s$, and there is no indication that $\pi_{\textsf{R}}$ improves relative to $\pi_{\textsf{EA}}$ as $s$ increases. Second, from average runtime comparison, $\pi_{\textsf{R}}$ is approximately 10 times slower than $\pi_{\textsf{EA}}$ on the tested instances. This is not surprising, as $\pi_{\textsf{R}}$ requires solving a linear program at each round, while $\pi_{\textsf{EA}}$ is stationary and only involves a simple greedy allocation. Overall, the results indicate that $\pi_{\textsf{EA}}$ is not only easier to implement but also achieves better performance than $\pi_{\textsf{R}}$ in our model.

\section{Extending to \eqref{eq::SurvivalObjective} with Distinct Deadlines}\label{sec:distinct_deadlines}

So far, the formulation of \textsf{SLM}$_\tau$ assumes that all threats share a common deadline $\tau$. In some applications, however, different threats may have different times to impact, so the defender must neutralize different threats by different deadlines. To capture this, let each threat $i\in[n]$ have its own deadline $\tau_i\in\mathbb Z_{>0}$. A natural extension of the survival-likelihood objective is then
\[
V^{*}_{\textsf{SLM}}(\tau)
=
\max_{\pi\in\Pi}
\left(V^{\pi}_{\textsf{SLM}}(\tau)
:=
\Pr\!\left(
\sum_{t=1}^{\tau_i} x_i^t \ge L_i,\ \forall i\in[n]
\right)\right).
\]
In this section, we will specifically analyze the case when all threat difficulties are the same, i.e., $q = q_i$ for all $i.$ As such, we will
find the following equivalent, recursive form to be more useful:

        \begin{itemize}
            \item $V^{*}_{\textsf{SLM}}(\tau) = \max_{x\in \mathcal{A}} V_x(\tau)$
            \item $V_x(\tau):=
                \begin{cases}
                    1 , & \text{if } \tau_1 = \ldots = \tau_n = \infty\\
                    0 , & \text{if any } \tau_i = 0\\
                    \sum_{S \subseteq [n]}V^{*}_{\textsf{SLM}}\big(\tau + \infty\cdot \mathbf{1}_S - \mathbf{1}_{[n] \setminus S}\big) \cdot \prod_{i \in S} (1- q_i^{x_i}) \cdot \prod_{i \notin S} q_i^{x_i}, & \text{otherwise}
        \end{cases}$
\end{itemize}

\subsection{Optimal Policy for 2 Staggered Threats}
The following proposition formalizes the intuition that, when two threats are equally difficult, the one with the earlier deadline should receive absolute priority until its resolution.
\begin{proposition}[First Come, First Serve] \label{prop::Greedy2Deadline} Let there be two threats with equal difficulty levels, i.e., $q_1= q_2=q$, and their deadlines following $0 < \tau_1 < \tau_2$. Then $(x_1^*,x_2^*)=(C,0)$ is optimal. 
\end{proposition}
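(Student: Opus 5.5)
We would set this up as a sample-path (coupling) comparison, in the spirit of the exchange arguments behind \Cref{lemma:switch_dominate} and Case 1 of the proof of \Cref{theorem:fair_is_optimal_when_qi=q}. Because $q_1=q_2=q$, the two demands $L_1,L_2$ are i.i.d.\ $\operatorname{Geom}(1-q)$, so the threats differ only in their deadlines. Success requires $L_1\le \sum_{t\le\tau_1}x_1^t$ and $L_2\le\sum_{t\le\tau_2}x_2^t$. With round-end feedback, we first claim that an optimal policy gives all $C$ effectors to threat 1 in every round $t\le\tau_1$ in which threat 1 is still alive. The first-round statement $(x_1^*,x_2^*)=(C,0)$ is this claim at $t=1$.

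\textbf{Step 1: pathwise dominance.} Fix any policy $\pi$. Since threat 2 may keep receiving effectors until $\tau_2>\tau_1$, we build a modified policy $\pi'$ that, in each round while threat 1 is alive and $t\le\tau_1$, moves every effector $\pi$ would have spent on threat 2 onto threat 1. The deferred effectors for threat 2 are fired later: after threat 1 dies, or in rounds $\tau_1+1,\dots,\tau_2$.

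The cleanest way to do this is an induction on $\tau_2-\tau_1$ together with $\tau_1$, using the recursive form of $V_x(\tau)$. The base case is $\tau_1=1$. There $V^*$ after round 1 equals $\Pr(L_1\le x_1)\cdot$(value of threat 2 alone with $\tau_2-1$ rounds), plus the term in which threat 2 also dies. Call this $(1-q^{x_1})\bigl(1-q^{x_2+(\tau_2-1)C}\bigr)$. Writing $a=x_1$ and $C-a=x_2$, the function $f(a)=(1-q^a)(1-q^{\tau_2C-a})$ is maximized at $a=C$ whenever $\tau_2 C - C\ge C$. That holds because $f$ is symmetric about $\tau_2C/2\ge C$ and unimodal, by the same Pigou--Dalton monotonicity of $f(r)=(1-q^r)(1-q^{\kappa-r})$ used in \Cref{theorem:fair_is_optimal_when_qi=q}.

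\textbf{Step 2: general $\tau_1$.} We would compare $(C,0)$ with $(C-k,k)$ at $t=1$, each followed by optimal continuation. By memorylessness we can redraw fresh demands as in the proof of \Cref{theorem:properties_n=2}, and then enumerate the four outcomes after round 1: both die, only 1 dies, only 2 dies, neither dies. The goal is an inequality of the form
\[
\Pr(\text{only 1 dies})\,V_{\{2\}}(\tau_2-1)+\Pr(\text{none})\,V^*(\tau_1-1,\tau_2-1)
\]
on the $(C,0)$ side dominating the corresponding expression for $(C-k,k)$. The ingredients are the closed forms $V_{\{i\}}(s)=1-q^{sC}$, the induction hypothesis $V^*(\tau_1-1,\tau_2-1)=V_{(C,0)}(\tau_1-1,\tau_2-1)$, and the upper bound $V^*\le \Pr(L_1\le(\tau_1-1)C,\;L_1+L_2\le(\tau_2-1)C)$. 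This bound is the analogue of the single-feedback bound in \eqref{eq:thm2_slm_v_star_ineq}; by concentration it is attained in this equal-$q$ case, since feeding threat 1 first then threat 2 wastes effectors only on threat 1's last round. An alternative route is an exchange: swap the $k$ threat-2 effectors from round 1 to threat 1, and on the event that threat 1 dies in round 1 use those $k$ effectors on threat 2 in round 2. Rounds remain because $\tau_2\ge 2$.

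\textbf{Main obstacle.} The hard part is the exchange step. Shifting $k$ effectors from threat 2 to threat 1 in round 1 changes which threats survive, and the lost early feedback on threat 2 must be shown never to matter. Intuitively, threat 2 gains nothing from being shot before $\tau_1$, since its effectors can always be fired at $\tau_1+1\le\tau_2$. Making this rigorous requires handling the case where threat 1 dies mid-round and the deferred effectors on threat 2 arrive one round later. We would first try to verify the explicit two-round inequality, via an analogue of \Cref{lemma:b(1-r_2)(1-a)<=a(1-b)(1-r_1)} with $q_1=q_2$, and then bootstrap by induction.
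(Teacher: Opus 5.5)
\textbf{There is a genuine gap.} Your base case $\tau_1=1$ is correct. The general case, however, is not proved: the "main obstacle" you name is left open. Two of the tools you line up for it are also false.

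\emph{First, Step 1 is not a pathwise dominance.} Take $C=2$, $\tau_1=1$, $\tau_2=2$, $L_1=1$, $L_2=3$. Then $(1,1)$ followed by $(0,2)$ succeeds, while $(2,0)$ followed by $(0,2)$ fails. The effector you "defer" to threat 2 would need capacity that round 2 does not have. Any dominance can only hold in distribution.

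\emph{Second, the bound is not attained when $C\ge 2$.} The bound is $V^*(\tau_1-1,\tau_2-1)\le\Pr\bigl(L_1\le(\tau_1-1)C,\ L_1+L_2\le(\tau_2-1)C\bigr)$. First-come-first-serve overkills threat 1 in the round it dies. It therefore succeeds only on the strictly smaller event $L_2\le(\tau_2-1-\lceil L_1/C\rceil)C$. So mixing this bound on one side with an exact value on the other does not yield the inequality you want.

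\textbf{The missing idea makes all of this unnecessary.} Because $q_1=q_2$, the probability that neither threat dies in round 1 is $q^{x_1}q^{x_2}=q^C$ for every exhaustive split. The only unknown continuation, $V^*(\tau_1-1,\tau_2-1)$, therefore enters with a split-independent coefficient and cancels. Every other continuation is an explicit single-threat value $1-q^{sC}$. Carry out your own four-outcome enumeration for the split $(a,C-a)$ and you get
\[
V(a)=\text{const}-q^{\,a+C(\tau_1-1)}-q^{\,C\tau_2-a}.
\]
This is exactly your base-case $f$ with an extra factor $q^{C(\tau_1-1)}$. Hence
\[
V(C)-V(C-k)=(1-q^k)\bigl(q^{C\tau_1-k}-q^{C(\tau_2-1)}\bigr)\ge 0,
\]
because $\tau_2-1\ge\tau_1$. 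Equivalently, the continuous maximizer $a=C(\tau_2-\tau_1+1)/2$ is at least $C$. This single Bellman step, with no induction, exchange argument, or bound on $V^*$, is the paper's proof.
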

\begin{proof}{Proof.} After arithmetic simplifications, the Bellman equation can be written
 \begin{align*}
    V_{\textsf{SLM}}^*(\tau_1, \tau_2) = \max_{x \in \mathbb{Z}^2_{\geq0}: x_1 + x_2 \leq C}
    & [1 - q^{x_1} - q^{x_2} + q^C] V_{\textsf{SLM}}^*(\infty, \infty) + [q^{x_2} - q^C] V_{\textsf{SLM}}^*(\infty, \tau_2 - 1)\\
    &+ [q^{x_1} - q^C] V_{\textsf{SLM}}^*(\tau_1 - 1, \infty) + q^C V_{\textsf{SLM}}^*(\tau_1 - 1, \tau_2 - 1).
\end{align*}
Upon substituting $x_2 = C - x_1$ and dropping all terms independent of $x_1,$
we reduce to the equivalent one-dimensional problem \begin{align*}
     \max_{x_1 \in [0, C]_{\mathbb{Z}}} q^{x_1} \left[V_{\textsf{SLM}}^*(\tau_1 -1, \infty) - V_{\textsf{SLM}}^*(\infty, \infty)\right] + \frac{q^C \left[V_{\textsf{SLM}}^*(\infty, \tau_2 -1) - V_{\textsf{SLM}}^*(\infty, \infty)
    \right]}{q^{x_1}},
\end{align*}
which has a unique unconstrained maximizer $\bar{x}$ 
    such that $q^{\bar{x}_1} = \sqrt{\frac{q^C \left[V_{\textsf{SLM}}^*(\infty, \tau_2 -1) - V_{\textsf{SLM}}^*(\infty, \infty)
    \right]}{\left[V_{\textsf{SLM}}^*(\tau_1 -1, \infty) - V_{\textsf{SLM}}^*(\infty, \infty)\right]}} 
    \in [0,1]$. Since 
    \begin{align*}
        1 = q^0 > q > q^2 > \ldots > q^C &\geq q^{C/2}\cdot \sqrt{q^{C(\tau_2 - \tau_1)}} = q^{C/2}\cdot \sqrt{\frac{1 - q^{C(\tau_2 - 1)} - 1}{1 - q^{C(\tau_1 - 1)} - 1}} = q^{\bar{x}_1},
    \end{align*}
    we can conclude that $x_1^* = C$ is the constrained, integer decision that gets $q^{x_1^*}$ closest to $q^{\bar{x}_1}$, which verifies $(x_1^*,x_2^*) = (C,0)$ is the constrained maximizer. \halmos
\end{proof}
\begin{corollary} \label{corollary: V2Deadline} Let there be two deadlines $0 < \tau_1 < \tau_2$, and arbitrary integer capacity $C\geq 1$. Then
        \begin{align*}
        V_{\textsf{SLM}}^*(\tau_1, \tau_2) &= 1 - q^{C \tau_1} - \tau_1 (1 - q^C) q^{C(\tau_2 - 1)}\\
        &= V_{\textsf{SLM}}^*(\tau_1) - \tau_1 [V_{\textsf{SLM}}^*(\tau_2) - V_{\textsf{SLM}}^*(\tau_2 - 1)].
        \end{align*}
    \end{corollary}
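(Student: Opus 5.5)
We proceed by induction on $\tau_1$, keeping the gap $\tau_2-\tau_1>0$ fixed. Proposition \ref{prop::Greedy2Deadline} lets us replace the maximization in the Bellman recursion by the explicit first-round allocation $(C,0)$. Two auxiliary facts come first. A single remaining threat with deadline $s$ is optimally served by putting all $C$ effectors on it each round, so $V_{\textsf{SLM}}^*(\infty,s)=1-q^{Cs}$; symmetrically, $V_{\textsf{SLM}}^*(s,\infty)=1-q^{Cs}$. Also $V_{\textsf{SLM}}^*(0,\cdot)=0$ by the definition of $V_x$.

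Fix $0<\tau_1<\tau_2$. By Proposition \ref{prop::Greedy2Deadline}, $(C,0)$ attains the maximum at state $(\tau_1,\tau_2)$. Plugging $x_1=C$, $x_2=0$ into the simplified Bellman equation from that proof (where $q^{x_2}=1$) gives
\[
V_{\textsf{SLM}}^*(\tau_1,\tau_2)=(1-q^C)\,V_{\textsf{SLM}}^*(\infty,\tau_2-1)+q^C\,V_{\textsf{SLM}}^*(\tau_1-1,\tau_2-1).
\]
The first term is the event that threat 1 is neutralized in round 1; the second is the event that neither threat is neutralized. Using the single-threat formula, this becomes
\[
V_{\textsf{SLM}}^*(\tau_1,\tau_2)=(1-q^C)\bigl(1-q^{C(\tau_2-1)}\bigr)+q^C\,V_{\textsf{SLM}}^*(\tau_1-1,\tau_2-1).
\]
The shifted state $(\tau_1-1,\tau_2-1)$ still has strictly ordered deadlines. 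So whenever $\tau_1-1\ge 1$, Proposition \ref{prop::Greedy2Deadline} applies again and the induction hypothesis can be used.

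For the base case $\tau_1=1$, the last term vanishes, giving $V_{\textsf{SLM}}^*(1,\tau_2)=(1-q^C)(1-q^{C(\tau_2-1)})=1-q^{C}-(1-q^C)q^{C(\tau_2-1)}$, which matches the claim. For the inductive step, substitute the claimed formula for $(\tau_1-1,\tau_2-1)$:
\[
q^C\Bigl[1-q^{C(\tau_1-1)}-(\tau_1-1)(1-q^C)q^{C(\tau_2-2)}\Bigr]
=q^C-q^{C\tau_1}-(\tau_1-1)(1-q^C)q^{C(\tau_2-1)}.
\]
Adding $(1-q^C)-(1-q^C)q^{C(\tau_2-1)}$ yields $1-q^{C\tau_1}-\tau_1(1-q^C)q^{C(\tau_2-1)}$, which is the first identity.

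The second identity is a rewriting, once $V_{\textsf{SLM}}^*(s)$ is read as the single-threat value $1-q^{Cs}$. Then $V_{\textsf{SLM}}^*(\tau_2)-V_{\textsf{SLM}}^*(\tau_2-1)=q^{C(\tau_2-1)}(1-q^C)$, and the identity follows immediately. The only delicate point is justifying that $(C,0)$ is optimal at every intermediate state $(\tau_1-k,\tau_2-k)$ while $\tau_1-k\ge1$. This holds because the hypothesis $0<\tau_1-k<\tau_2-k$ of Proposition \ref{prop::Greedy2Deadline} persists, so we expect no real obstacle.
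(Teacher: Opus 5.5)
Your proof is correct and takes essentially the paper's route. Both arguments use Proposition~\ref{prop::Greedy2Deadline} to reduce the Bellman equation to $V_{\textsf{SLM}}^*(\tau_1,\tau_2)=(1-q^C)\bigl(1-q^{C(\tau_2-1)}\bigr)+q^C\,V_{\textsf{SLM}}^*(\tau_1-1,\tau_2-1)$; the paper unrolls this $\tau_1$ times and sums a geometric series, while you verify the closed form by induction on $\tau_1$, which is equivalent, and your explicit check of the second identity via $V_{\textsf{SLM}}^*(s)=1-q^{Cs}$ fills in a step the paper leaves implicit.
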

    \begin{proof}{Proof.}
    By Proposition \ref{prop::Greedy2Deadline}, 
        \begin{align*}
        V_{\textsf{SLM}}^*(\tau_1, \tau_2) &= V_{\textsf{SLM}}^*(\tau_1-1, \tau_2 -1)q^C + V_{\textsf{SLM}}^*(\infty, \tau_2 - 1)(1 - q^C)\\
        &= V_{\textsf{SLM}}^*(\tau_1-2, \tau_2 -2)q^{2C} + V_{\textsf{SLM}}^*(\infty, \tau_2 - 2)(1-q^C)q^C + V_{\textsf{SLM}}^*(\infty, \tau_2 - 1)(1 - q^C)\\
        &= V_{\textsf{SLM}}^*(\tau_1 - k, \tau_2 - k)q^{kC} + \sum_{i=1}^k V_{\textsf{SLM}}^*(\infty, \tau_2 - i) (1 - q^C)q^{C(i-1)}\\
        &= \sum_{i=1}^{\tau_1} V_{\textsf{SLM}}^*(\infty, \tau_2 - i) (1 - q^C)q^{C(i-1)} \\
        &= (1-q^C) \sum_{i = 1}^{\tau_1} \left(q^{C(i-1)} - q^{C(\tau_2 - 1)}\right) \\
        &= (1-q^C)\left(-\tau_1q^{C(\tau_2 - 1)} + \frac{1 - q^{C\tau_1}}{1 - q^C}\right).
    \end{align*} 

    \halmos
    \end{proof}

\subsection{Optimal Policy for 3 Staggered Threats}
In the case of 3 staggered threats, we show that it is optimal to divide effectors between only the 2 most imminent threats. In contrast to \Cref{prop::Greedy2Deadline}, not all effectors are given to the most imminent threat; rather, a reservation is implemented.
\begin{proposition}[Reservation Policy] 
\label{prop::Greedy3Deadline}
    Let there be three threats with equal difficulty levels, i.e, $q_1= q_2=q_3$ and their deadlines following $0 < \tau_1 < \tau_2<\tau_3$. 
    Define the quantity 
    \begin{align*}
        r^* = q^{C/2} \sqrt{\frac{ 1 - q^{C (\tau_2 -1)} - (\tau_2 -1) (1 - q^C) q^{C(\tau_3 - 2)} - 1 + q^{C(\tau_3 - 1)}}{1 - q^{C (\tau_1 -1)} - (\tau_1 -1) (1 - q^C) q^{C(\tau_3 - 2)} - 1 + q^{C(\tau_3 - 1)}}}, 
    \end{align*}
    and the function \\
    $h(x_1):= q^{x_1} \left[- q^{C (\tau_1 -1)} - (\tau_1 -1) (1 - q^C) q^{C(\tau_3 - 2)}+ q^{C(\tau_3 - 1)}\right] + \frac{q^C \left[- q^{C (\tau_2 -1)} - (\tau_2 -1) (1 - q^C) q^{C(\tau_3 - 2)} + q^{C(\tau_3 - 1)}
    \right]}{q^{x_1}}.$
    
    \noindent 
    Then:
    \begin{enumerate}
        \item If $\log_q(r^*) \geq C$, then $(x_1^*, x_2^*, x_3^*) = (C, 0, 0)$ is optimal. 
        \item 
        Otherwise, 
        $x_1^* \in \argmax_{x_1 \in \{\lfloor \log_{q}(r^*)\rfloor, \lceil \log_{q}(r^*) \rceil\}} h(x_1)$,  $x_2^* = C - x_1^*, $ and $x_3^* = 0$ is optimal.
    \end{enumerate}
\end{proposition}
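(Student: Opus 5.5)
The plan mirrors the proof of \Cref{prop::Greedy2Deadline}, which I will use as the template. First, I would write out the Bellman recursion for $V^*_{\textsf{SLM}}(\tau_1,\tau_2,\tau_3)$ with $q_i=q$, summing over the survivor set $S$ after round one. Every continuation state with at most two finite deadlines can be evaluated in closed form:
\begin{itemize}
\item[] a single finite deadline $\tau$ gives $1-q^{C\tau}$;
\item[] two finite deadlines are given by \Cref{corollary: V2Deadline}.
\end{itemize}

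The first real step is to show that $x_3^*=0$ is without loss of optimality. I would do this with an exchange argument. Take any allocation with $x_3\ge1$ and move one unit from threat $3$ to threat $2$. Threat $3$ has the latest deadline, and the geometric demand is memoryless. So I would couple demands and argue that any unit spent on threat $3$ in round one could instead be spent on it in round $\tau_3-1\ge\tau_2$. That later round is necessarily still available, and by then threats $1,2$ are either resolved or the mission has already failed.

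Concretely, I would define a modified policy. It shifts the unit and later returns one extra unit to threat $3$ in the first round after threats $1$ and $2$ are both neutralized; if that never occurs before $\tau_2$, the mission fails under both policies anyway. On every sample path, this policy survives whenever the original one does. The key point is that threat $3$'s cumulative allocation by $\tau_3$ is unchanged on every path where $\tau_2$ is met. Since $\tau_3>\tau_2$, at least one round after the $\tau_2$-deadline exists, so the bookkeeping closes.

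With $x_3=0$ and $x_2=C-x_1$, I would expand the Bellman expression. Dropping terms independent of $x_1$ should leave $q^{x_1}A+q^C B/q^{x_1}$, where:
\begin{itemize}
\item[] $A$ is the difference $V^*(\tau_1-1,\infty,\tau_3-1)-V^*(\infty,\infty,\tau_3-1)$;
\item[] $B$ is the difference $V^*(\infty,\tau_2-1,\tau_3-1)-V^*(\infty,\infty,\tau_3-1)$.
\end{itemize}
Inserting \Cref{corollary: V2Deadline} for the two-deadline values and $1-q^{C(\tau_3-1)}$ for the single-deadline value should reproduce exactly the brackets in $h$. This is a routine check, which I would not grind through.

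The function $y\mapsto Ay+q^CB/y$ on $y=q^{x_1}>0$ has $A,B<0$, so it is concave, with its unconstrained maximizer at $y=\sqrt{q^CB/A}=r^*$. In the variable $x_1$ the objective is unimodal, with peak at $\log_q r^*$. If $\log_q r^*\ge C$ the objective is increasing on $[0,C]$, giving $x_1^*=C$. Otherwise the integer optimum is the floor or the ceiling, which proves both items.

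I expect the main obstacle to be the exchange step establishing $x_3^*=0$, since the pathwise coupling has to handle the reallocation carefully when the intermediate deadline $\tau_2$ interacts with $\tau_3$. If the coupling gets messy, I would fall back to a direct comparison of the full three-variable Bellman objective. That comparison would show that moving a unit from $x_3$ to $x_2$ changes the value by a nonnegative multiple of $V^*(\cdot,\tau_2-1,\tau_3-1)$-type differences, using the explicit formulas from \Cref{corollary: V2Deadline}.
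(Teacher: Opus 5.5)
The gap is in the step you flag yourself, the pathwise exchange for $x_3^*=0$. As stated it is false, not merely messy. Take $C=3$, $(\tau_1,\tau_2,\tau_3)=(1,2,3)$, and compare the first-round allocations $(1,1,1)$ and $(1,2,0)$ on the path $L=(1,1,7)$.
\begin{itemize}
\item Under $(1,1,1)$, threats 1 and 2 die in round 1, and threat 3 receives $1+3+3=7$ units by round 3, so the mission succeeds.
\item Under $(1,2,0)$, the extra shot on threat 2 is wasted, and threat 3 receives only $3+3=6$ units, so the mission fails.
\end{itemize}
There is no spare unit to ``return'' to threat 3. Once threats 1 and 2 are dead, every policy already fires all $C$ units at threat 3, so the unit moved to threat 2 is recouped only on paths where threat 2 actually needed it.

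There is also an information problem. Your modified policy never observes the outcome of the deferred shot, yet the optimal continuation depends on whether threat 3 is alive. For example, with threat 3 dead, \Cref{prop::Greedy2Deadline} puts everything on threat 1, while with threat 3 alive the proposition itself may reserve. So the modified policy cannot mimic the original one. The dominance holds only in expectation, and it has to be proved at the level of the Bellman objective.

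That is what the paper does, and your fallback is missing the cancellation that makes it work. Fix $x_1$ and condition on threat 1's round-one outcome, writing the objective as $(1-q^{x_1})f(x_2,x_3)+q^{x_1}g(x_2,x_3)$.
\begin{itemize}
\item On the line $x_2+x_3=C-x_1$, the product $q^{x_2}q^{x_3}=q^{C-x_1}$ is constant. Hence the continuation values in which threats 2 and 3 both survive, namely $V^*_{\textsf{SLM}}(\infty,\tau_2-1,\tau_3-1)$ and the unknown three-deadline value $V^*_{\textsf{SLM}}(\tau_1-1,\tau_2-1,\tau_3-1)$, enter with constant coefficients and drop out.
\item Each of $f,g$ then becomes a constant plus $(c-a)q^{x_2}+(b-a)q^{C-x_1}/q^{x_2}$. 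Here $a,b,c$ are the continuation values when threats 2 and 3 are both neutralized, when only threat 3 survives, and when only threat 2 survives, respectively.
\item By the two-deadline corollary following \Cref{prop::Greedy2Deadline}, $(b-a)/(c-a)=q^{C(\tau_3-\tau_2)}$ in both cases. For $g$ this needs $\tau_1\ge 2$; if $\tau_1=1$, then $g\equiv 0$.
\item The maximizing value of $q^{x_2}$ is therefore $q^{(C(\tau_3-\tau_2)+C-x_1)/2}\le q^{C-x_1}$. So $(x_2,x_3)=(C-x_1,0)$ maximizes $f$ and $g$ simultaneously, and hence their combination.
\end{itemize}
In particular, the $V^*_{\textsf{SLM}}(\cdot,\tau_2-1,\tau_3-1)$ terms you mention are exactly the ones that vanish. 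What remains are $V^*_{\textsf{SLM}}(\cdot,\infty,\infty)-V^*_{\textsf{SLM}}(\cdot,\tau_2-1,\infty)$ and $V^*_{\textsf{SLM}}(\cdot,\infty,\infty)-V^*_{\textsf{SLM}}(\cdot,\infty,\tau_3-1)$, all of which have closed forms.

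Your later steps are correct and match the paper: the one-dimensional reduction in $x_1$, and the concavity in $y=q^{x_1}$ with its case split.
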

\begin{proof}{Proof.}
    We begin by writing
    \begin{align*}
        V_{\textsf{SLM}}^*(\tau_1, \tau_2, \tau_3) = \max_{x\in\mathbb{Z}^3_{\geq0}: \sum_i x_i \leq C}(1 - q^{x_1}) f(x_2, x_3) + q^{x_1} g(x_2, x_3),
    \end{align*}
    where 
    \begin{align*}
        f(x_2, x_3) = &(1 - q^{x_2}) \left[(1- q^{x_3}) V_{\textsf{SLM}}^*(\infty, \infty, \infty) + q^{x_3}V_{\textsf{SLM}}^*(\infty, \infty, \tau_3 - 1)\right]\\
        &+ q^{x_2} \left[(1-q^{x_3}) V_{\textsf{SLM}}^*(\infty, \tau_2 - 1, \infty) + q^{x_3}V_{\textsf{SLM}}^*(\infty, \tau_2 - 1, \tau_3 - 1)\right],
    \end{align*}
    \begin{align*}
        g(x_2, x_3) = &(1 - q^{x_2}) \left[(1- q^{x_3}) V_{\textsf{SLM}}^*(\tau_1 - 1, \infty, \infty) + q^{x_3}V_{\textsf{SLM}}^*(\tau_1 - 1, \infty, \tau_3 - 1)\right]\\
        &+ q^{x_2} \left[(1-q^{x_3}) V_{\textsf{SLM}}^*(\tau_1 - 1, \tau_2 - 1, \infty) + q^{x_3}V_{\textsf{SLM}}^*(\tau_1 - 1, \tau_2 - 1, \tau_3 - 1)\right].
    \end{align*}
    Next, we show that
    \[
    x_2^* \in \argmax_{x_2 \leq C - x_1^*} (1-q)^{x_1^*} f(x_2, C - x_1^* - x_2) + q^{x_1^*} g(x_2, C - x_1^* - x_2) \implies x_2^* = C - x_1^*,
    \]
    which holds if simultaneously 
    \[
    \left[\argmax_{x_2 + x_3 \leq C - x_1^*} f(x_2, x_3)\right] = (C - x_1^*, 0), \;\; \left[\argmax_{x_2 + x_3 \leq C - x_1^*} g(x_2, x_3)\right] = (C - x_1^*, 0).
    \]
    The latter can be argued similarly as in Proposition \ref{prop::Greedy2Deadline}, because we know 
    \begin{align*}
    \sqrt{\frac{V_{\textsf{SLM}}^*(\tau_1 - 1, \infty, \tau_3 - 1) - V_{\textsf{SLM}}^*(\tau_1 - 1, \infty, \infty)}{V_{\textsf{SLM}}^*(\tau_1 - 1, \tau_2 - 1, \infty) - V_{\textsf{SLM}}^*(\tau_1 - 1, \infty, \infty)}}q^{\frac{C-x_1^*}{2}} &= \sqrt{\frac{q^{C(\tau_3 - 2)}}{q^{C(\tau_2 - 2)}}}q^{\frac{C-x_1^*}{2}} = q^{\frac{C(\tau_3 - \tau_2)}{2}} q^{\frac{C-x_1^*}{2}} \leq q^{C - \frac{x_1^*}{2}} \\
    &\leq q^{C - x_1^*} < \ldots < q^{0} = 1 
    \end{align*}
    and similarly, 
    \begin{align*}
    \sqrt{\frac{V_{\textsf{SLM}}^*(\infty, \infty, \tau_3 - 1) - V_{\textsf{SLM}}^*(\infty, \infty, \infty)}{V_{\textsf{SLM}}^*(\infty, \tau_2 - 1, \infty) - V_{\textsf{SLM}}^*(\infty, \infty, \infty)}}q^{\frac{C - x_1^*}{2}} &= \sqrt{\frac{1 - q^{C(\tau_3 - 1)} - 1}{1 - q^{C(\tau_2 - 1)} - 1}}q^{\frac{C-x_1^*}{2}} = q^{\frac{C(\tau_3 - \tau_2)}{2}}q^{\frac{C-x_1^*}{2}} \leq q^{C - \frac{x_1^*}{2}} \\
    &\leq q^{C - x_1^*} < \ldots < q^{0} = 1. 
    \end{align*}
To conclude, 
    \begin{align*}
        \max_{x_2 + x_3 \leq C - x_1^*} &(1 - q^{x_1^*}) f(x_2, x_3) + q^{x_1^*} g(x_2, x_3) \leq (1 - q^{x_1^*}) \left[\max_{x_2 + x_3 \leq C - x_1^*} f(x_2, x_3)\right] + q^{x_1^*} \left[\max_{x_2 + x_3 \leq C - x_1^*} g(x_2, x_3)\right]\\
        &= (1-q^{x_1^*}) f(C-x_1^*, 0) + q^{x_1^*} g(C-x_1^*, 0) \leq  \max_{x_2 + x_3 \leq C - x_1^*} (1 - q^{x_1^*}) f(x_2, x_3) + q^{x_1^*} g(x_2, x_3), 
    \end{align*}
    which indicates $(x_2^*, x_3^*) = (C - x_1^*, 0),$ as desired.

Consequently, we may set $x_3 = 0$ and $x_2 = C - x_1$ to find 
    \begin{align*}
        V_{\textsf{SLM}}^*(\tau_1, \tau_2, \tau_3) &= \max_{x\in\mathbb{Z}^3_{\geq0}: \sum_i x_i \leq C}(1 - q^{x_1}) f(x_2, x_3) + q^{x_1} g(x_2, x_3) \\
        &=  \max_{x_1 \in [0,C]_{\mathbb{Z}}} (1-q^{x_1}) \left[(1 - q^{C-x_1})V_{\textsf{SLM}}^*(\infty, \infty, \tau_3 - 1) + q^{C-x_1}V_{\textsf{SLM}}^*(\infty, \tau_2 - 1, \tau_3 - 1)\right]\\
        & ~~~~~~~~~~~+ q^{x_1} \left[(1 - q^{C-x_1})V_{\textsf{SLM}}^*(\tau_1 -1, \infty, \tau_3 - 1) + q^{C-x_1}V_{\textsf{SLM}}^*(\tau_1 - 1, \tau_2 - 1, \tau_3 - 1)\right],
    \end{align*}
    which is equivalent to 
    \[
    \max_{x_1 \in [0,C]_{\mathbb{Z}}} q^{x_1} \left[V_{\textsf{SLM}}^*(\tau_1 -1, \infty, \tau_3 -1) - V_{\textsf{SLM}}^*(\infty, \infty, \tau_3 - 1)\right] + \frac{q^C \left[V_{\textsf{SLM}}^*(\infty, \tau_2 -1, \tau_3 - 1) - V_{\textsf{SLM}}^*(\infty, \infty, \tau_3 - 1)
    \right]}{q^{x_1}}, 
    \]
    solved by an unconstrained $\bar{x}_1$ satisfying $q^{\bar{x}_1} = r^*.$ \halmos
    \end{proof}

\subsection{A Greedy Policy for \eqref{eq::SurvivalObjective} with Distinct Deadlines}
The greedy policy $\pi_{\textsf{SL}}$ can be adapted to distinct deadlines with a similar marginal-priority logic discussed in section \ref{sec:greedy_policies}. The difference is that now the extended policy $\tilde{\pi}_{\textsf{SL}}$ is constructed backward in time. Let $\mathcal I_\tau:=\{i:\tau_i\le r\}$ denote the index set of threats whose deadlines are no greater than $\tau$, so that $\mathcal I_1\subseteq \mathcal I_2\subseteq \cdots \subseteq \mathcal I_{\max_i \tau_i}$. Starting from the last round $\mathcal{I}_1$,  $\tilde{\pi}_{\textsf{SL}}$ applies the greedy allocation to the threats in $\mathcal I_\tau$. Specifically, $\tilde{\pi}_{\textsf{SL}}$ assigns effectors one at a time according to the current cumulative score $s_i=\sum_{k=1}^{\tilde{x}_i} q_i^{-k}$ where $\tilde{x}_i$ is the number of effectors already assigned to threat $i$ since the last round. After all $C$ effectors have been assigned for a given $\tau$, the algorithm proceeds to the previous round by updating $r \leftarrow r - 1$ and repeating the same procedure until $r=\max_i \tau_i$ (i.e., the current round). The allocation $\tilde{x}$ is updated cumulatively across rounds. Finally, we output the allocation constructed under $\mathcal{I}_{\max_i \tau_i}$, which refers to the first round allocation. 

Algorithm \ref{alg:: ML_Estim_adj} has several advantages. On one hand, because $\tilde{x}$ is constructed cumulatively through a backward iteration, threats with later deadlines are processed earlier in the algorithmic iteration and therefore receive effectors earlier in this construction. As a result, they are given lower priority when the allocation is traced back to earlier rounds. On the other hand, the score updates can still capture the effect of heterogeneous difficulty levels. Under $\tilde{\pi}_{\textsf{SL}}$, the allocation used in the actual first round is the one obtained in the final backward step. Thus, the extension preserves the idea of $\pi_{\textsf{SL}}$ while adapting it to the presence of distinct deadlines. We formalize this policy in algorithm \ref{alg:: ML_Estim_adj}.

\begin{algorithm} 
\caption{$\Tilde{\pi}_{\textsf{SL}}$} \label{alg:: ML_Estim_adj}
\begin{algorithmic} 
\small
\Require $n$ alive threats with difficulty $q_1,\cdots,q_n$ and deadlines $\tau_1, \tau_2, \cdots, \tau_n$. Let $\mathcal{I}_r=\{i: \tau_i\leq r\}$.
\State Initialize $\Tilde{x}_i\leftarrow 0 ,x_i\leftarrow 0$ and $s_i\leftarrow 0$ for $i\in [n]$.
\For{$\tau=1,\cdots,\max_{i\in[n]}\tau_i$}
    \For{$e=1,\cdots,C$}
        \State $i^* \gets \argmin_{i\in \mathcal{I}_r}\ s_i$ \Comment{ties broken arbitrarily}
    \State $\Tilde{x}_{i^*} \gets \Tilde{x}_{i^*} + 1$
    \State $s_{i^*} \gets s_{i^*} + q_{i^*}^{-x_{i^*}}$
    \If{$\tau=\max_{i\in[n]}\tau_i$}
        \State $x_{i^*} \gets x_{i^*} + 1$
    \EndIf
    \EndFor
\EndFor\\
\Return Current round allocation $x$
\end{algorithmic}
\end{algorithm}

\section{Additional Plots and Tables for Section \ref{sec:numerical_experiment} %
} \label{appendix:additional_plots}
In this section, we present several additional plots from the experiments described in Sections \ref{sec:exp_hypercube}, \ref{sec:diff_var_Li}. \Cref{fig:hypercube_avg_obj_cn,fig:two_slices_avg_obj} show the average performance under each objective under various experimental setups across $n=2,3,\cdots,7$. Table \ref{tab:simulation} reports the average performance for each objective based on the simulation results.

\begin{figure}[htbp]
\centering

\begin{tikzpicture}

\def\DTMymax{9}
\def\SLMymax{1.0}
\def\EAMymax{28}

\begin{groupplot}[
    group style={
        group size=3 by 3,
        horizontal sep=0.95cm,
        vertical sep=1.00cm
    },
    width=0.29\textwidth,
    height=0.22\textwidth,
    grid=both,
    tick align=outside,
    xtick pos=bottom,
    ytick pos=left,
    tick label style={font=\scriptsize},
    label style={font=\footnotesize},
    title style={font=\footnotesize},
    ylabel style={
        align=center,
        at={(axis description cs:-0.24,0.5)},
        anchor=south
    },
    scaled y ticks=false,
    xtick=data,
    xticklabel style={/pgf/number format/fixed},
    ybar,
    /pgf/bar width=1.1pt,
    enlarge x limits=0.18,
]

\nextgroupplot[
    title={$C/n=1$},
    ylabel={$\bar V_{\textsf{DTM}}^{\pi}(n,C)$},
    ymin=0,
    ymax=\DTMymax,
    ytick={0,2,4,6,8},
    yticklabel style={
        text width=2.0em,
        align=right,
        /pgf/number format/fixed,
        /pgf/number format/precision=1
    }
]
\addplot+[bar shift=-3pt, fill=blue, draw=blue]
table[x=n,y=uniform_random,col sep=comma] {tikzdata/hypercube_new_cn/cn1_avg_obj_min_diffuse.csv};
\addplot+[bar shift=-1.5pt, fill=orange, draw=orange]
table[x=n,y=fair,col sep=comma] {tikzdata/hypercube_new_cn/cn1_avg_obj_min_diffuse.csv};
\addplot+[bar shift=0pt, fill=green!60!black, draw=green!60!black]
table[x=n,y=ml_greedy,col sep=comma] {tikzdata/hypercube_new_cn/cn1_avg_obj_min_diffuse.csv};
\addplot+[bar shift=1.5pt, fill=red, draw=red]
table[x=n,y=ea_greedy,col sep=comma] {tikzdata/hypercube_new_cn/cn1_avg_obj_min_diffuse.csv};
\addplot+[bar shift=3pt, fill=black, draw=black]
table[x=n,y=optimal,col sep=comma] {tikzdata/hypercube_new_cn/cn1_avg_obj_min_diffuse.csv};

\nextgroupplot[
    title={$C/n=2$},
    ymin=0,
    ymax=\DTMymax,
    ytick={0,2,4,6,8},
    yticklabel style={
        text width=2.0em,
        align=right,
        /pgf/number format/fixed,
        /pgf/number format/precision=1
    }
]
\addplot+[bar shift=-3pt, fill=blue, draw=blue]
table[x=n,y=uniform_random,col sep=comma] {tikzdata/hypercube_new_cn/cn2_avg_obj_min_diffuse.csv};
\addplot+[bar shift=-1.5pt, fill=orange, draw=orange]
table[x=n,y=fair,col sep=comma] {tikzdata/hypercube_new_cn/cn2_avg_obj_min_diffuse.csv};
\addplot+[bar shift=0pt, fill=green!60!black, draw=green!60!black]
table[x=n,y=ml_greedy,col sep=comma] {tikzdata/hypercube_new_cn/cn2_avg_obj_min_diffuse.csv};
\addplot+[bar shift=1.5pt, fill=red, draw=red]
table[x=n,y=ea_greedy,col sep=comma] {tikzdata/hypercube_new_cn/cn2_avg_obj_min_diffuse.csv};
\addplot+[bar shift=3pt, fill=black, draw=black]
table[x=n,y=optimal,col sep=comma] {tikzdata/hypercube_new_cn/cn2_avg_obj_min_diffuse.csv};

\nextgroupplot[
    title={$C/n=3$},
    ymin=0,
    ymax=\DTMymax,
    ytick={0,2,4,6,8},
    yticklabel style={
        text width=2.0em,
        align=right,
        /pgf/number format/fixed,
        /pgf/number format/precision=1
    }
]
\addplot+[bar shift=-3pt, fill=blue, draw=blue]
table[x=n,y=uniform_random,col sep=comma] {tikzdata/hypercube_new_cn/cn3_avg_obj_min_diffuse.csv};
\addplot+[bar shift=-1.5pt, fill=orange, draw=orange]
table[x=n,y=fair,col sep=comma] {tikzdata/hypercube_new_cn/cn3_avg_obj_min_diffuse.csv};
\addplot+[bar shift=0pt, fill=green!60!black, draw=green!60!black]
table[x=n,y=ml_greedy,col sep=comma] {tikzdata/hypercube_new_cn/cn3_avg_obj_min_diffuse.csv};
\addplot+[bar shift=1.5pt, fill=red, draw=red]
table[x=n,y=ea_greedy,col sep=comma] {tikzdata/hypercube_new_cn/cn3_avg_obj_min_diffuse.csv};
\addplot+[bar shift=3pt, fill=black, draw=black]
table[x=n,y=optimal,col sep=comma] {tikzdata/hypercube_new_cn/cn3_avg_obj_min_diffuse.csv};

\nextgroupplot[
    ylabel={$\bar V_{\textsf{SLM}}^{\pi}(n,C)$},
    ymin=0,
    ymax=\SLMymax,
    ytick={0,0.2,0.4,0.6,0.8,1.0},
    yticklabel style={
        text width=2.0em,
        align=right,
        /pgf/number format/fixed,
        /pgf/number format/precision=1
    }
]
\addplot+[bar shift=-3pt, fill=blue, draw=blue]
table[x=n,y=uniform_random,col sep=comma] {tikzdata/hypercube_new_cn/cn1_avg_obj_max_ml.csv};
\addplot+[bar shift=-1.5pt, fill=orange, draw=orange]
table[x=n,y=fair,col sep=comma] {tikzdata/hypercube_new_cn/cn1_avg_obj_max_ml.csv};
\addplot+[bar shift=0pt, fill=green!60!black, draw=green!60!black]
table[x=n,y=ml_greedy,col sep=comma] {tikzdata/hypercube_new_cn/cn1_avg_obj_max_ml.csv};
\addplot+[bar shift=1.5pt, fill=red, draw=red]
table[x=n,y=ea_greedy,col sep=comma] {tikzdata/hypercube_new_cn/cn1_avg_obj_max_ml.csv};
\addplot+[bar shift=3pt, fill=black, draw=black]
table[x=n,y=optimal,col sep=comma] {tikzdata/hypercube_new_cn/cn1_avg_obj_max_ml.csv};

\nextgroupplot[
    ymin=0,
    ymax=\SLMymax,
    ytick={0,0.2,0.4,0.6,0.8,1.0},
    yticklabel style={
        text width=2.0em,
        align=right,
        /pgf/number format/fixed,
        /pgf/number format/precision=1
    }
]
\addplot+[bar shift=-3pt, fill=blue, draw=blue]
table[x=n,y=uniform_random,col sep=comma] {tikzdata/hypercube_new_cn/cn2_avg_obj_max_ml.csv};
\addplot+[bar shift=-1.5pt, fill=orange, draw=orange]
table[x=n,y=fair,col sep=comma] {tikzdata/hypercube_new_cn/cn2_avg_obj_max_ml.csv};
\addplot+[bar shift=0pt, fill=green!60!black, draw=green!60!black]
table[x=n,y=ml_greedy,col sep=comma] {tikzdata/hypercube_new_cn/cn2_avg_obj_max_ml.csv};
\addplot+[bar shift=1.5pt, fill=red, draw=red]
table[x=n,y=ea_greedy,col sep=comma] {tikzdata/hypercube_new_cn/cn2_avg_obj_max_ml.csv};
\addplot+[bar shift=3pt, fill=black, draw=black]
table[x=n,y=optimal,col sep=comma] {tikzdata/hypercube_new_cn/cn2_avg_obj_max_ml.csv};

\nextgroupplot[
    ymin=0,
    ymax=\SLMymax,
    ytick={0,0.2,0.4,0.6,0.8,1.0},
    yticklabel style={
        text width=2.0em,
        align=right,
        /pgf/number format/fixed,
        /pgf/number format/precision=1
    }
]
\addplot+[bar shift=-3pt, fill=blue, draw=blue]
table[x=n,y=uniform_random,col sep=comma] {tikzdata/hypercube_new_cn/cn3_avg_obj_max_ml.csv};
\addplot+[bar shift=-1.5pt, fill=orange, draw=orange]
table[x=n,y=fair,col sep=comma] {tikzdata/hypercube_new_cn/cn3_avg_obj_max_ml.csv};
\addplot+[bar shift=0pt, fill=green!60!black, draw=green!60!black]
table[x=n,y=ml_greedy,col sep=comma] {tikzdata/hypercube_new_cn/cn3_avg_obj_max_ml.csv};
\addplot+[bar shift=1.5pt, fill=red, draw=red]
table[x=n,y=ea_greedy,col sep=comma] {tikzdata/hypercube_new_cn/cn3_avg_obj_max_ml.csv};
\addplot+[bar shift=3pt, fill=black, draw=black]
table[x=n,y=optimal,col sep=comma] {tikzdata/hypercube_new_cn/cn3_avg_obj_max_ml.csv};

\nextgroupplot[
    xlabel={$n$},
    xlabel style={font=\footnotesize},
    ylabel={$\bar V_{\textsf{EAM}}^{\pi}(n,C)$},
    ymin=0,
    ymax=\EAMymax,
    ytick={0,5,10,15,20,25},
    yticklabel style={
        text width=2.0em,
        align=right,
        /pgf/number format/fixed,
        /pgf/number format/precision=0
    }
]
\addplot+[bar shift=-3.75pt, fill=blue, draw=blue]
table[x=n,y=uniform_random,col sep=comma] {tikzdata/hypercube_new_cn/cn1_avg_obj_max_effective.csv};
\addplot+[bar shift=-2.25pt, fill=orange, draw=orange]
table[x=n,y=fair,col sep=comma] {tikzdata/hypercube_new_cn/cn1_avg_obj_max_effective.csv};
\addplot+[bar shift=-0.75pt, fill=green!60!black, draw=green!60!black]
table[x=n,y=ml_greedy,col sep=comma] {tikzdata/hypercube_new_cn/cn1_avg_obj_max_effective.csv};
\addplot+[bar shift=0.75pt, fill=red, draw=red]
table[x=n,y=ea_greedy,col sep=comma] {tikzdata/hypercube_new_cn/cn1_avg_obj_max_effective.csv};
\addplot+[bar shift=2.25pt, fill=purple, draw=purple]
table[x=n,y=reoptimized_fluid,col sep=comma] {tikzdata/hypercube_new_cn/cn1_avg_obj_max_effective.csv};
\addplot+[bar shift=3.75pt, fill=black, draw=black]
table[x=n,y=optimal,col sep=comma] {tikzdata/hypercube_new_cn/cn1_avg_obj_max_effective.csv};

\nextgroupplot[
    xlabel={$n$},
    xlabel style={font=\footnotesize},
    ymin=0,
    ymax=\EAMymax,
    ytick={0,5,10,15,20,25},
    yticklabel style={
        text width=2.0em,
        align=right,
        /pgf/number format/fixed,
        /pgf/number format/precision=0
    }
]
\addplot+[bar shift=-3.75pt, fill=blue, draw=blue]
table[x=n,y=uniform_random,col sep=comma] {tikzdata/hypercube_new_cn/cn2_avg_obj_max_effective.csv};
\addplot+[bar shift=-2.25pt, fill=orange, draw=orange]
table[x=n,y=fair,col sep=comma] {tikzdata/hypercube_new_cn/cn2_avg_obj_max_effective.csv};
\addplot+[bar shift=-0.75pt, fill=green!60!black, draw=green!60!black]
table[x=n,y=ml_greedy,col sep=comma] {tikzdata/hypercube_new_cn/cn2_avg_obj_max_effective.csv};
\addplot+[bar shift=0.75pt, fill=red, draw=red]
table[x=n,y=ea_greedy,col sep=comma] {tikzdata/hypercube_new_cn/cn2_avg_obj_max_effective.csv};
\addplot+[bar shift=2.25pt, fill=purple, draw=purple]
table[x=n,y=reoptimized_fluid,col sep=comma] {tikzdata/hypercube_new_cn/cn2_avg_obj_max_effective.csv};
\addplot+[bar shift=3.75pt, fill=black, draw=black]
table[x=n,y=optimal,col sep=comma] {tikzdata/hypercube_new_cn/cn2_avg_obj_max_effective.csv};

\nextgroupplot[
    xlabel={$n$},
    xlabel style={font=\footnotesize},
    ymin=0,
    ymax=\EAMymax,
    ytick={0,5,10,15,20,25},
    yticklabel style={
        text width=2.0em,
        align=right,
        /pgf/number format/fixed,
        /pgf/number format/precision=0
    }
]
\addplot+[bar shift=-3.75pt, fill=blue, draw=blue]
table[x=n,y=uniform_random,col sep=comma] {tikzdata/hypercube_new_cn/cn3_avg_obj_max_effective.csv};
\addplot+[bar shift=-2.25pt, fill=orange, draw=orange]
table[x=n,y=fair,col sep=comma] {tikzdata/hypercube_new_cn/cn3_avg_obj_max_effective.csv};
\addplot+[bar shift=-0.75pt, fill=green!60!black, draw=green!60!black]
table[x=n,y=ml_greedy,col sep=comma] {tikzdata/hypercube_new_cn/cn3_avg_obj_max_effective.csv};
\addplot+[bar shift=0.75pt, fill=red, draw=red]
table[x=n,y=ea_greedy,col sep=comma] {tikzdata/hypercube_new_cn/cn3_avg_obj_max_effective.csv};
\addplot+[bar shift=2.25pt, fill=purple, draw=purple]
table[x=n,y=reoptimized_fluid,col sep=comma] {tikzdata/hypercube_new_cn/cn3_avg_obj_max_effective.csv};
\addplot+[bar shift=3.75pt, fill=black, draw=black]
table[x=n,y=optimal,col sep=comma] {tikzdata/hypercube_new_cn/cn3_avg_obj_max_effective.csv};

\end{groupplot}

\path (group c1r3.south west) -- (group c3r3.south east)
    coordinate[midway] (legendmid);

\matrix[
    matrix of nodes,
    anchor=north,
    row sep=0pt,
    column sep=3pt,
    nodes={anchor=west, inner sep=0pt, outer sep=0pt, font=\footnotesize}
] at ($(legendmid)+(0,-0.8cm)$) {
\tikz{\draw[fill=blue,draw=blue] (0,0) rectangle (0.16,0.10);} &
$\pi_{\textsf{RA}}$ &
\tikz{\draw[fill=orange,draw=orange] (0,0) rectangle (0.16,0.10);} &
$\pi_{\textsf{FA}}$ &
\tikz{\draw[fill=green!60!black,draw=green!60!black] (0,0) rectangle (0.16,0.10);} &
$\pi_{\textsf{SL}}$ &
\tikz{\draw[fill=red,draw=red] (0,0) rectangle (0.16,0.10);} &
$\pi_{\textsf{EA}}$ &
\tikz{\draw[fill=purple,draw=purple] (0,0) rectangle (0.16,0.10);} &
$\pi_{\textsf{R}}$ &
\tikz{\draw[fill=black,draw=black] (0,0) rectangle (0.16,0.10);} &
$\pi^*$ \\
};

\end{tikzpicture}
\caption{Average objective values under \cref{eq::MinDefuse}, (\textsf{SLM}), and (\textsf{EAM}) for different capacity ratios $C/n\in\{1,2,3\}$. Each row corresponds to one objective, and each column corresponds to one value of $C/n$.}
\label{fig:hypercube_avg_obj_cn}
\end{figure}

\begin{figure}[htbp]
\centering
\begin{tikzpicture}

\def\DTMymax{9}
\def\SLMymax{1.0}
\def\EAMymax{28}

\begin{groupplot}[
    group style={
        group size=3 by 3,
        horizontal sep=0.95cm,
        vertical sep=1.00cm
    },
    width=0.29\textwidth,
    height=0.22\textwidth,
    grid=both,
    tick align=outside,
    xtick pos=bottom,
    ytick pos=left,
    tick label style={font=\scriptsize},
    label style={font=\footnotesize},
    title style={font=\footnotesize},
    ylabel style={
        align=center,
        at={(axis description cs:-0.24,0.5)},
        anchor=south
    },
    scaled y ticks=false,
    scaled x ticks=false,
    xtick=data,
    xticklabel style={/pgf/number format/fixed},
    ybar,
    /pgf/bar width=1.1pt,
    enlarge x limits=0.18,
]

\nextgroupplot[
    title={Small Var.},
    ylabel={$\bar V_{\textsf{DTM}}^{\pi}(n,C)$},
    ymin=0,
    ymax=\DTMymax,
    ytick={0,2,4,6,8},
    yticklabel style={
        text width=2.0em,
        align=right,
        /pgf/number format/fixed,
        /pgf/number format/precision=1
    }
]
\addplot+[bar shift=-1.5pt, fill=orange, draw=orange]
table[x=n,y=piFA] {tikzdata/salvo_variance/dtm_small_obj.dat};
\addplot+[bar shift=0pt, fill=green!60!black, draw=green!60!black]
table[x=n,y=piSL] {tikzdata/salvo_variance/dtm_small_obj.dat};
\addplot+[bar shift=1.5pt, fill=red, draw=red]
table[x=n,y=piEA] {tikzdata/salvo_variance/dtm_small_obj.dat};

\nextgroupplot[
    title={Medium Var.},
    ymin=0,
    ymax=\DTMymax,
    ytick={0,2,4,6,8},
    yticklabel style={
        text width=2.0em,
        align=right,
        /pgf/number format/fixed,
        /pgf/number format/precision=1
    }
]
\addplot+[bar shift=-1.5pt, fill=orange, draw=orange]
table[x=n,y=piFA] {tikzdata/salvo_variance/dtm_medium_obj.dat};
\addplot+[bar shift=0pt, fill=green!60!black, draw=green!60!black]
table[x=n,y=piSL] {tikzdata/salvo_variance/dtm_medium_obj.dat};
\addplot+[bar shift=1.5pt, fill=red, draw=red]
table[x=n,y=piEA] {tikzdata/salvo_variance/dtm_medium_obj.dat};

\nextgroupplot[
    title={Large Var.},
    ymin=0,
    ymax=\DTMymax,
    ytick={0,2,4,6,8},
    yticklabel style={
        text width=2.0em,
        align=right,
        /pgf/number format/fixed,
        /pgf/number format/precision=1
    }
]
\addplot+[bar shift=-1.5pt, fill=orange, draw=orange]
table[x=n,y=piFA] {tikzdata/salvo_variance/dtm_large_obj.dat};
\addplot+[bar shift=0pt, fill=green!60!black, draw=green!60!black]
table[x=n,y=piSL] {tikzdata/salvo_variance/dtm_large_obj.dat};
\addplot+[bar shift=1.5pt, fill=red, draw=red]
table[x=n,y=piEA] {tikzdata/salvo_variance/dtm_large_obj.dat};

\nextgroupplot[
    ylabel={$\bar V_{\textsf{SLM}}^{\pi}(n,C)$},
    ymin=0,
    ymax=\SLMymax,
    ytick={0,0.2,0.4,0.6,0.8,1.0},
    yticklabel style={
        text width=2.0em,
        align=right,
        /pgf/number format/fixed,
        /pgf/number format/precision=1
    }
]
\addplot+[bar shift=-1.5pt, fill=orange, draw=orange]
table[x=n,y=piFA] {tikzdata/salvo_variance/slm_small_obj.dat};
\addplot+[bar shift=0pt, fill=green!60!black, draw=green!60!black]
table[x=n,y=piSL] {tikzdata/salvo_variance/slm_small_obj.dat};
\addplot+[bar shift=1.5pt, fill=red, draw=red]
table[x=n,y=piEA] {tikzdata/salvo_variance/slm_small_obj.dat};

\nextgroupplot[
    ymin=0,
    ymax=\SLMymax,
    ytick={0,0.2,0.4,0.6,0.8,1.0},
    yticklabel style={
        text width=2.0em,
        align=right,
        /pgf/number format/fixed,
        /pgf/number format/precision=1
    }
]
\addplot+[bar shift=-1.5pt, fill=orange, draw=orange]
table[x=n,y=piFA] {tikzdata/salvo_variance/slm_medium_obj.dat};
\addplot+[bar shift=0pt, fill=green!60!black, draw=green!60!black]
table[x=n,y=piSL] {tikzdata/salvo_variance/slm_medium_obj.dat};
\addplot+[bar shift=1.5pt, fill=red, draw=red]
table[x=n,y=piEA] {tikzdata/salvo_variance/slm_medium_obj.dat};

\nextgroupplot[
    ymin=0,
    ymax=\SLMymax,
    ytick={0,0.2,0.4,0.6,0.8,1.0},
    yticklabel style={
        text width=2.0em,
        align=right,
        /pgf/number format/fixed,
        /pgf/number format/precision=1
    }
]
\addplot+[bar shift=-1.5pt, fill=orange, draw=orange]
table[x=n,y=piFA] {tikzdata/salvo_variance/slm_large_obj.dat};
\addplot+[bar shift=0pt, fill=green!60!black, draw=green!60!black]
table[x=n,y=piSL] {tikzdata/salvo_variance/slm_large_obj.dat};
\addplot+[bar shift=1.5pt, fill=red, draw=red]
table[x=n,y=piEA] {tikzdata/salvo_variance/slm_large_obj.dat};

\nextgroupplot[
    xlabel={$n$},
    xlabel style={font=\footnotesize},
    ylabel={$\bar V_{\textsf{EAM}}^{\pi}(n,C)$},
    ymin=0,
    ymax=\EAMymax,
    ytick={0,5,10,15,20,25},
    yticklabel style={
        text width=2.0em,
        align=right,
        /pgf/number format/fixed,
        /pgf/number format/precision=0
    }
]
\addplot+[bar shift=-2.25pt, fill=orange, draw=orange]
table[x=n,y=piFA] {tikzdata/salvo_variance/eam_small_obj.dat};
\addplot+[bar shift=-0.75pt, fill=green!60!black, draw=green!60!black]
table[x=n,y=piSL] {tikzdata/salvo_variance/eam_small_obj.dat};
\addplot+[bar shift=0.75pt, fill=red, draw=red]
table[x=n,y=piEA] {tikzdata/salvo_variance/eam_small_obj.dat};
\addplot+[bar shift=2.25pt, fill=purple, draw=purple]
table[x=n,y=piR] {tikzdata/salvo_variance/eam_small_obj.dat};

\nextgroupplot[
    xlabel={$n$},
    xlabel style={font=\footnotesize},
    ymin=0,
    ymax=\EAMymax,
    ytick={0,5,10,15,20,25},
    yticklabel style={
        text width=2.0em,
        align=right,
        /pgf/number format/fixed,
        /pgf/number format/precision=0
    }
]
\addplot+[bar shift=-2.25pt, fill=orange, draw=orange]
table[x=n,y=piFA] {tikzdata/salvo_variance/eam_medium_obj.dat};
\addplot+[bar shift=-0.75pt, fill=green!60!black, draw=green!60!black]
table[x=n,y=piSL] {tikzdata/salvo_variance/eam_medium_obj.dat};
\addplot+[bar shift=0.75pt, fill=red, draw=red]
table[x=n,y=piEA] {tikzdata/salvo_variance/eam_medium_obj.dat};
\addplot+[bar shift=2.25pt, fill=purple, draw=purple]
table[x=n,y=piR] {tikzdata/salvo_variance/eam_medium_obj.dat};

\nextgroupplot[
    xlabel={$n$},
    xlabel style={font=\footnotesize},
    ymin=0,
    ymax=\EAMymax,
    ytick={0,5,10,15,20,25},
    yticklabel style={
        text width=2.0em,
        align=right,
        /pgf/number format/fixed,
        /pgf/number format/precision=0
    }
]
\addplot+[bar shift=-2.25pt, fill=orange, draw=orange]
table[x=n,y=piFA] {tikzdata/salvo_variance/eam_large_obj.dat};
\addplot+[bar shift=-0.75pt, fill=green!60!black, draw=green!60!black]
table[x=n,y=piSL] {tikzdata/salvo_variance/eam_large_obj.dat};
\addplot+[bar shift=0.75pt, fill=red, draw=red]
table[x=n,y=piEA] {tikzdata/salvo_variance/eam_large_obj.dat};
\addplot+[bar shift=2.25pt, fill=purple, draw=purple]
table[x=n,y=piR] {tikzdata/salvo_variance/eam_large_obj.dat};

\end{groupplot}

\path (group c1r3.south west) -- (group c3r3.south east)
    coordinate[midway] (legendmid);

\matrix[
    matrix of nodes,
    anchor=north,
    row sep=0pt,
    column sep=4pt,
    nodes={
        anchor=west,
        inner sep=0pt,
        outer sep=0pt,
        font=\footnotesize
    }
] at ($(legendmid)+(0,-0.8cm)$) {
\tikz{\draw[fill=orange,draw=orange] (0,0) rectangle (0.16,0.10);} &
$\pi_{\textsf{FA}}$ &
\tikz{\draw[fill=green!60!black,draw=green!60!black] (0,0) rectangle (0.16,0.10);} &
$\pi_{\textsf{SL}}$ &
\tikz{\draw[fill=red,draw=red] (0,0) rectangle (0.16,0.10);} &
$\pi_{\textsf{EA}}$ &
\tikz{\draw[fill=purple,draw=purple] (0,0) rectangle (0.16,0.10);} &
$\pi_{\textsf{R}}$ \\
};

\end{tikzpicture}
\caption{Average objective values under different salvo-variance regimes. Rows correspond to DTM, SLM, and EAM, while columns correspond to small, medium, and large variance in $\mathbb{E}[L_i]$.}
\label{fig:two_slices_avg_obj}
\end{figure}

\newcommand{\DataDir}{simulation_uniform_q_005_095_tau3_winfreq}

\pgfplotstableread[col sep=comma]{\DataDir/dtm_summary_avg_obj_win_pct.csv}\dtmsummary
\pgfplotstableread[col sep=comma]{\DataDir/slm_summary_avg_obj_win_pct.csv}\slmsummary
\pgfplotstableread[col sep=comma]{\DataDir/eam_summary_avg_obj_win_pct.csv}\eamsummary

\newcommand{\FormatObj}[1]{%
    \pgfmathprintnumber[
        fixed,
        fixed zerofill,
        precision=2
    ]{#1}%
}

\newcommand{\FormatPct}[1]{%
    \pgfmathprintnumber[
        fixed,
        fixed zerofill,
        precision=1
    ]{#1}\%%
}

\newcommand{\AvgWinThree}[3]{%
    \pgfplotstablegetelem{#2}{#3_avg_obj}\of#1%
    \edef\avgobj{\pgfplotsretval}%
    \pgfplotstablegetelem{#2}{#3_win_pct}\of#1%
    \edef\curpct{\pgfplotsretval}%
    \pgfplotstablegetelem{#2}{piFA_win_pct}\of#1%
    \edef\pctFA{\pgfplotsretval}%
    \pgfplotstablegetelem{#2}{piSL_win_pct}\of#1%
    \edef\pctSL{\pgfplotsretval}%
    \pgfplotstablegetelem{#2}{piEA_win_pct}\of#1%
    \edef\pctEA{\pgfplotsretval}%
    \pgfmathsetmacro{\maxpct}{max(\pctFA,max(\pctSL,\pctEA))}%
    \pgfmathtruncatemacro{\isbest}{(\curpct >= \maxpct - 1e-8) ? 1 : 0}%
    \FormatObj{\avgobj}~(\FormatPct{\curpct})%
    \ifnum\isbest=1 $^{*}$\fi%
}

\newcommand{\AvgWinFour}[3]{%
    \pgfplotstablegetelem{#2}{#3_avg_obj}\of#1%
    \edef\avgobj{\pgfplotsretval}%
    \pgfplotstablegetelem{#2}{#3_win_pct}\of#1%
    \edef\curpct{\pgfplotsretval}%
    \pgfplotstablegetelem{#2}{piFA_win_pct}\of#1%
    \edef\pctFA{\pgfplotsretval}%
    \pgfplotstablegetelem{#2}{piSL_win_pct}\of#1%
    \edef\pctSL{\pgfplotsretval}%
    \pgfplotstablegetelem{#2}{piEA_win_pct}\of#1%
    \edef\pctEA{\pgfplotsretval}%
    \pgfplotstablegetelem{#2}{piR_win_pct}\of#1%
    \edef\pctR{\pgfplotsretval}%
    \pgfmathsetmacro{\maxpct}{max(max(\pctFA,\pctSL),max(\pctEA,\pctR))}%
    \pgfmathtruncatemacro{\isbest}{(\curpct >= \maxpct - 1e-8) ? 1 : 0}%
    \FormatObj{\avgobj}~(\FormatPct{\curpct})%
    \ifnum\isbest=1 $^{*}$\fi%
}

\begin{table}[htbp]
\centering
\caption{Simulation performance under uniformly sampled difficulty levels with fixed $\tau=3$.}
\label{tab:uniform_q_tau3_winfreq_summary}
\label{tab:simulation}
\begin{threeparttable}
\scriptsize
\setlength{\tabcolsep}{2pt}
\renewcommand{\arraystretch}{1.05}
\begin{tabular*}{\textwidth}{@{}l@{\extracolsep{\fill}}lllll@{}}
\toprule
$(C,n)$ & Objective
& $\pi_{\textsf{FA}}$
& $\pi_{\textsf{SL}}$
& $\pi_{\textsf{EA}}$
& $\pi_{\textsf{R}}$ \\
\midrule

\multirow[t]{3}{*}{$(10,10)$}
& \Cref{eq::MinDefuse} & \AvgWinThree{\dtmsummary}{0}{piFA} & \AvgWinThree{\dtmsummary}{0}{piSL} & \AvgWinThree{\dtmsummary}{0}{piEA} & -- \\
& (\textsf{SLM$_3$}) & \AvgWinThree{\slmsummary}{0}{piFA} & \AvgWinThree{\slmsummary}{0}{piSL} & \AvgWinThree{\slmsummary}{0}{piEA} & -- \\
& (\textsf{EAM$_3$}) & \AvgWinFour{\eamsummary}{0}{piFA} & \AvgWinFour{\eamsummary}{0}{piSL} & \AvgWinFour{\eamsummary}{0}{piEA} & \AvgWinFour{\eamsummary}{0}{piR} \\
\addlinespace[1pt]

\multirow[t]{3}{*}{$(20,20)$}
& \Cref{eq::MinDefuse} & \AvgWinThree{\dtmsummary}{1}{piFA} & \AvgWinThree{\dtmsummary}{1}{piSL} & \AvgWinThree{\dtmsummary}{1}{piEA} & -- \\
& (\textsf{SLM$_3$}) & \AvgWinThree{\slmsummary}{1}{piFA} & \AvgWinThree{\slmsummary}{1}{piSL} & \AvgWinThree{\slmsummary}{1}{piEA} & -- \\
& (\textsf{EAM$_3$}) & \AvgWinFour{\eamsummary}{1}{piFA} & \AvgWinFour{\eamsummary}{1}{piSL} & \AvgWinFour{\eamsummary}{1}{piEA} & \AvgWinFour{\eamsummary}{1}{piR} \\
\addlinespace[1pt]

\multirow[t]{3}{*}{$(30,30)$}
& \Cref{eq::MinDefuse} & \AvgWinThree{\dtmsummary}{2}{piFA} & \AvgWinThree{\dtmsummary}{2}{piSL} & \AvgWinThree{\dtmsummary}{2}{piEA} & -- \\
& (\textsf{SLM$_3$}) & \AvgWinThree{\slmsummary}{2}{piFA} & \AvgWinThree{\slmsummary}{2}{piSL} & \AvgWinThree{\slmsummary}{2}{piEA} & -- \\
& (\textsf{EAM$_3$}) & \AvgWinFour{\eamsummary}{2}{piFA} & \AvgWinFour{\eamsummary}{2}{piSL} & \AvgWinFour{\eamsummary}{2}{piEA} & \AvgWinFour{\eamsummary}{2}{piR} \\

\midrule

\multirow[t]{3}{*}{$(20,10)$}
& \Cref{eq::MinDefuse} & \AvgWinThree{\dtmsummary}{3}{piFA} & \AvgWinThree{\dtmsummary}{3}{piSL} & \AvgWinThree{\dtmsummary}{3}{piEA} & -- \\
& (\textsf{SLM$_3$}) & \AvgWinThree{\slmsummary}{3}{piFA} & \AvgWinThree{\slmsummary}{3}{piSL} & \AvgWinThree{\slmsummary}{3}{piEA} & -- \\
& (\textsf{EAM$_3$}) & \AvgWinFour{\eamsummary}{3}{piFA} & \AvgWinFour{\eamsummary}{3}{piSL} & \AvgWinFour{\eamsummary}{3}{piEA} & \AvgWinFour{\eamsummary}{3}{piR} \\
\addlinespace[1pt]

\multirow[t]{3}{*}{$(40,20)$}
& \Cref{eq::MinDefuse} & \AvgWinThree{\dtmsummary}{4}{piFA} & \AvgWinThree{\dtmsummary}{4}{piSL} & \AvgWinThree{\dtmsummary}{4}{piEA} & -- \\
& (\textsf{SLM$_3$}) & \AvgWinThree{\slmsummary}{4}{piFA} & \AvgWinThree{\slmsummary}{4}{piSL} & \AvgWinThree{\slmsummary}{4}{piEA} & -- \\
& (\textsf{EAM$_3$}) & \AvgWinFour{\eamsummary}{4}{piFA} & \AvgWinFour{\eamsummary}{4}{piSL} & \AvgWinFour{\eamsummary}{4}{piEA} & \AvgWinFour{\eamsummary}{4}{piR} \\
\addlinespace[1pt]

\multirow[t]{3}{*}{$(60,30)$}
& \Cref{eq::MinDefuse} & \AvgWinThree{\dtmsummary}{5}{piFA} & \AvgWinThree{\dtmsummary}{5}{piSL} & \AvgWinThree{\dtmsummary}{5}{piEA} & -- \\
& (\textsf{SLM$_3$}) & \AvgWinThree{\slmsummary}{5}{piFA} & \AvgWinThree{\slmsummary}{5}{piSL} & \AvgWinThree{\slmsummary}{5}{piEA} & -- \\
& (\textsf{EAM$_3$}) & \AvgWinFour{\eamsummary}{5}{piFA} & \AvgWinFour{\eamsummary}{5}{piSL} & \AvgWinFour{\eamsummary}{5}{piEA} & \AvgWinFour{\eamsummary}{5}{piR} \\

\midrule

\multirow[t]{3}{*}{$(30,10)$}
& \Cref{eq::MinDefuse} & \AvgWinThree{\dtmsummary}{6}{piFA} & \AvgWinThree{\dtmsummary}{6}{piSL} & \AvgWinThree{\dtmsummary}{6}{piEA} & -- \\
& (\textsf{SLM$_3$}) & \AvgWinThree{\slmsummary}{6}{piFA} & \AvgWinThree{\slmsummary}{6}{piSL} & \AvgWinThree{\slmsummary}{6}{piEA} & -- \\
& (\textsf{EAM$_3$}) & \AvgWinFour{\eamsummary}{6}{piFA} & \AvgWinFour{\eamsummary}{6}{piSL} & \AvgWinFour{\eamsummary}{6}{piEA} & \AvgWinFour{\eamsummary}{6}{piR} \\
\addlinespace[1pt]

\multirow[t]{3}{*}{$(60,20)$}
& \Cref{eq::MinDefuse} & \AvgWinThree{\dtmsummary}{7}{piFA} & \AvgWinThree{\dtmsummary}{7}{piSL} & \AvgWinThree{\dtmsummary}{7}{piEA} & -- \\
& (\textsf{SLM$_3$}) & \AvgWinThree{\slmsummary}{7}{piFA} & \AvgWinThree{\slmsummary}{7}{piSL} & \AvgWinThree{\slmsummary}{7}{piEA} & -- \\
& (\textsf{EAM$_3$}) & \AvgWinFour{\eamsummary}{7}{piFA} & \AvgWinFour{\eamsummary}{7}{piSL} & \AvgWinFour{\eamsummary}{7}{piEA} & \AvgWinFour{\eamsummary}{7}{piR} \\
\addlinespace[1pt]

\multirow[t]{3}{*}{$(90,30)$}
& \Cref{eq::MinDefuse} & \AvgWinThree{\dtmsummary}{8}{piFA} & \AvgWinThree{\dtmsummary}{8}{piSL} & \AvgWinThree{\dtmsummary}{8}{piEA} & -- \\
& (\textsf{SLM$_3$}) & \AvgWinThree{\slmsummary}{8}{piFA} & \AvgWinThree{\slmsummary}{8}{piSL} & \AvgWinThree{\slmsummary}{8}{piEA} & -- \\
& (\textsf{EAM$_3$}) & \AvgWinFour{\eamsummary}{8}{piFA} & \AvgWinFour{\eamsummary}{8}{piSL} & \AvgWinFour{\eamsummary}{8}{piEA} & \AvgWinFour{\eamsummary}{8}{piR} \\

\bottomrule
\end{tabular*}

\begin{tablenotes}
\scriptsize
\item Each entry reports the average objective value, followed by the percentage of simulation replications in which the policy attains the best objective value. If multiple policies tie for the best objective value in a simulation replication, each tied policy is counted as attaining the best value in that replication. For DTM, smaller values are better; for SLM and EAM, larger values are better. The policy $\pi_{\textsf{R}}$ is reported only for EAM. A superscript $*$ marks the policy with the largest reported best-performance percentage in each row.
\end{tablenotes}
\end{threeparttable}
\end{table}

\end{document}